\renewcommand{\@asparaenum@}{%
  \expandafter\list\csname label\@enumctr\endcsname{%
    \usecounter{\@enumctr}%
    \labelwidth\z@
    \labelsep.5em
    \leftmargin\z@
    \parsep\parskip
    \itemsep\z@
    \topsep\z@
    \partopsep\parskip
    \itemindent\parindent
    \advance\itemindent\labelsep
    \def\makelabel##1{\upshape ##1}}}
\theoremstyle{plain}
\newtheorem{theorem}{Theorem}[section]
\newtheorem{proposition}[theorem]{Proposition}
\newtheorem{lemma}[theorem]{Lemma}
\newtheorem{corollary}[theorem]{Corollary}
\newtheorem*{theorem*}{Theorem}
\theoremstyle{definition}
\newtheorem{definition}[theorem]{Definition}
\theoremstyle{remark}
\newtheorem{example}[theorem]{Example}
\newtheorem{remark}[theorem]{Remark}
\newtheorem*{aproof}{Alternative proof of Proposition~\ref{p:equivalence-coalgebra}}
\numberwithin{equation}{section}
\numberwithin{figure}{section}
\numberwithin{table}{section}
\def\field{\Bbbk}
\newcommand{\SEarrow}{\begin{turn}{-45}$\Rightarrow$\end{turn}} 
\let\map=\xrightarrow
\newcommand{\bdot}{\bm\cdot}
\newcommand{\unit}{I}
\newcommand{\Eq}{\mathrm{Eq}}
\DeclareMathOperator{\End}{End}
\DeclareMathOperator{\Hom}{Hom}
\DeclareMathOperator{\Aut}{Aut}
\DeclareMathOperator{\GL}{GL}
\DeclareMathOperator{\Pic}{Pic}
\DeclareMathOperator{\apode}{\textsc{s}}
\newcommand{\resH}{\Hc_{\sigma}}
\newcommand{\indK}{\Kc^{\tau}}
\newcommand{\tgamma}{\overline{\gamma}}
\newcommand{\cgamma}{c}
\newcommand{\cdelta}{d}
\newcommand{\ctau}{b}
\newcommand{\trho}{\overline{\rho}}
\newcommand{\id}{\mathrm{id}}
\newcommand{\action}{\alpha}
\newcommand{\contra}[1]{#1^{\vee}} 
\newcommand{\Fc}{\mathcal{F}}
\newcommand{\Gc}{\mathcal{G}}
\newcommand{\Hc}{\mathcal{H}}
\newcommand{\Kc}{\mathcal{K}}
\newcommand{\Rc}{\mathcal{R}}
\newcommand{\Tc}{\mathcal{T}}
\newcommand{\Sc}{\mathcal{S}}
\newcommand{\Cc}{\mathcal{C}}
\newcommand{\cCc}{\contra{\Cc}}
\newcommand{\tC}{\widetilde{C}}
\newcommand{\hC}{\widehat{C}}
\newcommand{\hD}{\widehat{D}}
\newcommand{\hH}{\widehat{H}}
\newcommand{\hZ}{\widehat{Z}}
\newcommand{\Dc}{\mathcal{D}}
\newcommand{\Ic}{\mathcal{I}}
\newcommand{\Ec}{\mathcal{E}}
\newcommand{\Ac}{\mathcal{A}}
\newcommand{\Bc}{\mathcal{B}}
\newcommand{\Uc}{\mathcal{U}}
\newcommand{\Qc}{\mathcal{Q}}
\newcommand{\QcC}{\Qc_{\Cc}}
\newcommand{\Hb}{\mathbb{H}}
\newcommand{\Gb}{\mathbb{G}}
\newcommand{\Ab}{\mathbb{A}}
\newcommand{\HTS}{\mathsf{Hopf}(\Tc;\Sc,Z)}
\newcommand{\HHA}{\mathsf{Hopf}(H;A,Z)}
\newcommand{\HHAH}{\mathsf{Hopf}(H;A,H)}
\newcommand{\HAN}{\mathsf{Hopf}(A,H)\downarrow N}
\newcommand{\BBC}{\mathsf{Bimod}(B,C)}
\newcommand{\Com}{\mathsf{Com}}
\newcommand{\AH}{A^{\mathrm{co} H}}
\newcommand{\MH}{M^{\mathrm{co} H}}
\newcommand{\Cs}{\mathsf{C}}
\newcommand{\Ds}{\mathsf{D}}
\newcommand{\Es}{\mathsf{E}}
\newcommand{\Hs}{\mathsf{H}}
\newcommand{\Ms}{\mathsf{M}}
\newcommand{\Ns}{\mathsf{N}}
\newcommand{\Ps}{\mathsf{P}}
\newcommand{\Zs}{\mathsf{Z}}
\newcommand{\qand}{\quad\text{and}\quad}
\newcommand{\qqand}{\qquad\text{and}\qquad}
\begin{document}

\title[Generalized Hopf modules]{Generalized {H}opf Modules for bimonads}

\author[M.~Aguiar]{Marcelo Aguiar}
\address{Department of Mathematics\\
Texas A\&M University\\
College Station, TX 77843}
\email{maguiar@math.tamu.edu}
\urladdr{http://www.math.tamu.edu/~maguiar}
\thanks{Aguiar supported in part by NSF grant DMS-1001935.}

\author[S.~U.~Chase]{Stephen U. Chase}
\address{Department of Mathematics\\
Cornell University\\
Ithaca, NY 14850}
\email{chase@math.cornell.edu}

\keywords{Monad, comonad, bimonad, Beck's theorem, Hopf module, Doi-Koppinen Hopf module, Hopf Galois, Sweedler's Fundamental Theorem, Schneider's Structure Theorem, Hilbert's Theorem 90}

\subjclass[2010]{16T05, 16T15, 18A40, 18C15, 18D10, 18D35}

\date{\today}

\begin{abstract}
Brugui\`eres, Lack and Virelizier have recently obtained a vast generalization
of Sweedler's \emph{Fundamental Theorem of Hopf modules}, in which the role
of the Hopf algebra is played by a bimonad. We present an extension of this
result which involves, in addition to the bimonad, a comodule-monad and
a algebra-comonoid over it. As an application we obtain a generalization of another classical theorem from the Hopf algebra literature, due to Schneider,
which itself is an extension of Sweedler's result (to the setting of Hopf Galois extensions).
\end{abstract}

\maketitle

\section*{Introduction}

A Hopf module over a Hopf $\field$-algebra $H$ (with $\field$ a commutative ring) is a $\field$-module which is both an $H$-module and an $H$-comodule and satisfies a condition linking the two structures.Ê The \emph{Fundamental Theorem of Hopf Modules}, originating in the celebrated 1967 theorem of Sweedler~\cite[Theorem ~4.1.1]{Swe:69}, has over the years inspired many variations and generalizations.Ê 
A recent and especially intriguing contribution to this line of inquiry is the paper of Brugui{\`e}res, Lack and Virelizier~\cite{BLV:2011}, which (among other results)
presents a version of Sweedler's theorem in which the role of the Hopf algebra is played by a bimonad $\Tc$ on a monoidal category, a concept introduced by Moerdijk in~\cite{Moe:2002}.Ê In this paper we extend their theorem in two directions.

First, we obtain a bimonad version of the Fundamental Theorem for the so-called relative Hopf modules of the Hopf Galois theory.Ê This theorem, proved by Schneider~\cite[Theorem~I]{Sch:1990} and often called \emph{Schneider's Structure Theorem}~\cite[Section 8.5, especially Theorem 5.6]{Mon:1993}, has an even longer history than Sweedler's, since it is only a slight exaggeration to say that its origin lies in the 
\emph{non-commutative Hilbert's Theorem 90} of classical Galois theory.Ê In our version of the theorem, the role of the Hopf Galois extension is played by a monad $\Sc$ which is, in a sense we make precise, a comodule over the given bimonad $\Tc$.Ê 

The second direction in which we extend the Fundamental Theorem of Brugui{\`e}res, Lack and Virelizer~\cite[Theorem~6.11]{BLV:2011} involves the introduction of a comonoid $C$ into the picture.Ê The Hopf modules with which we deal are then not only algebras over the monad $\Sc$ but also comodules over a comonoid constructed from $C$ and $\Tc$, satisfying an axiom which means essentially that each structure preserves the other.Ê Our structure theorems then relate the category of such modules to the category of $C$-comodules.

Our primary motivation for pursuing these questions - and, in particular, for the use of the comonoid $C$ above - arose from our project to obtain a structure theorem for Hopf modules over a bimonoid in a $2$-monoidal category.Ê Such a category is a generalization of a braided monoidal category; the notion is discussed in~\cite{AguMah:2010}.
Unlike an ordinary monoidal category, a $2$-monoidal category has two distinct units (one for each of the two monoidal operations), and it turns out that, even to obtain a version of Sweedler's original theorem in this context, it is necessary to relate the relevant Hopf modules to comodules over one of the units.Ê 
We plan to present this application of our fundamental theorem in a sequel to this paper. 
It is this application that demands the comonoid $C$ in our version of the Fundamental Theorem.

In Sections~\ref{s:comparing} and~\ref{s:conjugating} we present a retrospective of monadicity theory for comonads.Ê In Section~\ref{s:conjugating} we pile another brick atop the edifice of that theory with our introduction of the conjugate of a comonad, which is a second comonad that is constructed from the original one and a pair of adjoint functors relating the underlying category to another.Ê We prove a comonadicity theorem for the conjugate comonad that is both a generalization and a corollary of the dual of Beck's \emph{Monadicity Theorem}~\cite[Theorem~VI.7.1, Exercise~VI.6]{Mac:1998}.
 We derive a universal property of the conjugate comonad, and at the end of the section use it to obtain necessary and sufficient conditions for a colax morphism of comonads with a right adjoint to yield an equivalence of their corresponding categories of coalgebras.

In Section~\ref{s:bimonad} we introduce comodule-monads over a bimonad, as well as the more general notion of a comodule over a comonoidal adjunction.Ê Sections~\ref{s:bimonad} and~\ref{s:comod} consist mostly of basic properties of and technical results about these concepts.Ê In Section~\ref{s:bimonad} we also define the Hopf modules with which we deal in the rest of the paper.Ê At the end of Section~\ref{s:comod} we discuss analogues, in our setting, of the \emph{Hopf} and \emph{Galois operators} of~\cite[Sections~2.6 and~2.8]{BLV:2011}.Ê The latter of these, when invertible, generalizes one of the familiar criteria for a Hopf Galois extension~\cite[Definition~8.1.1]{Mon:1993}.

In Section~\ref{s:fundamental} we prove our main theorems for comodule-monads over bimonads, which we derive from more general theorems for comodules over comonoidal adjunctions.Ê In Section~\ref{s:schneider} we apply these theorems to obtain a generalization of the above-mentioned Structure Theorem of Schneider for Hopf modules over a Hopf Galois extension; our version, as with the main theorems of Section~\ref{s:fundamental}, incorporates a coalgebra $C$ into the context.Ê At the end of the section we provide some historical remarks, and a discussion of the connection of these results with Hilbert's Theorem 90.

Any treatment of the structure theorem for relative Hopf modules involves, explicitly or implicitly, the juggling of three distinct sets of phenomena: The category equivalence of the theorem, the invertibility of the Galois operator (the Galois condition), and, finally, existence of certain equalizers and their preservation and reflection by certain functors (faithful flatness conditions).Ê In our categorical setting, managing the relationships among these various conditions leads to some complexity in the statements of our theorems.Ê However, the reader who wishes to gain an overview of the substance of our work, without digesting the proofs, can steer the following path through the paper: Definition~\ref{d:conjugate-comonad}, Theorem~\ref{t:descent}, and Corollary~\ref{c:descent}, followed by Proposition~\ref{p:universal} and Theorem~\ref{t:equivalence-coalgebra-conv}; then Definitions~\ref{d:comodule}, \ref{d:comod-monad}, \ref{d:hopf-mod}, and~\ref{d:galois-map}; and, finally, Theorems~\ref{t:hopf-conv} and~\ref{t:schneider}.

\section{Comonads and their coalgebras}\label{s:comparing}

The material in this section is for the most part standard.
We recall basic notions pertaining to coalgebras over a comonad
and set up the relevant notation in Sections~\ref{ss:coalgebra}--\ref{ss:beck}.
We then consider two comonads linked by a morphism (lax or colax) and
discuss the corresponding functors on the categories of coalgebras
(Sections~\ref{ss:cores} and~\ref{ss:coind}). Mates and
the Adjoint Lifting Theorem are reviewed in Sections~\ref{ss:mates} and~\ref{ss:cores-coind}, in a manner convenient to our purposes. A relationship between conservative
and faithful functors is discussed in Section~\ref{ss:con-fai}.

\subsection{Coalgebras over a comonad}\label{ss:coalgebra}

Let $\Ns$ be a category and
\[
\Dc:\Ns\to\Ns
\]
a \emph{comonad}. We use $(\delta,\epsilon)$ to denote its structure. They are natural transformations
\[
\delta: \Dc\to\Dc^2
\qand
\epsilon:\Dc\to\Ic
\]
which are coassociative and counital, called the \emph{comultiplication} and \emph{counit} of $\Dc$, respectively. Throughout, $\Ic$ denotes the identity functor of the appropriate category.

We let $\Ns_{\Dc}$ denote the category of \emph{$\Dc$-coalgebras} in $\Ns$~\cite[Section~VI.2]{Mac:1998}. The objects are pairs $(N, d)$ where $N$ is an object of $\Dc$ and $d : N\to\Dc(N)$ is a morphism that is coassociative and counital.

For any $\Dc$-coalgebra $(N,d)$, the parallel pair of arrows
\begin{equation}\label{e:pair}
\xymatrix@C+5pt{
\Dc(N) \ar@<0.5ex>[r]^-{\delta_N} \ar@<-0.5ex>[r]_-{\Dc(d)} & \Dc^2(N)
}
\end{equation}
is \emph{coreflexive}. Indeed, the map 
\[
 \xymatrix@C+5pt{
\Dc(N)  & \Dc^2(N) \ar[l]_-{\Dc(\epsilon_N)}
}
\]
is a common retraction: $\Dc(\epsilon_N)\delta_N=\id_{\Dc(N)}=\Dc(\epsilon_N)\Dc(d)$.
Moreover, the diagram
\begin{equation}\label{e:splitfork}
\xymatrix@C+5pt{
N \ar[r]^-{d} & \Dc(N) \ar@<0.5ex>[r]^-{\delta_N} \ar@<-0.5ex>[r]_-{\Dc(d)} & \Dc^2(N)
}
\end{equation}
in $\Ns_{\Dc}$ is a \emph{split cofork} in $\Ns$~\cite[Section~VI.7, pp.~148--149]{Mac:1998}.
The splitting data is
\[
 \xymatrix@C+5pt{
N & \Dc(N) \ar[l]_-{\epsilon_N} & \Dc^2(N) \ar[l]_-{\epsilon_{\Dc(N)}}.
}
\]
Explicitly, we have
\[
\delta_N d=\Dc(d)d, \quad 
\epsilon_N d=\id_N, \quad \epsilon_{\Dc(N)}\delta_N=\id_{\Dc(N)}, \qand
d\epsilon_N = \epsilon_{\Dc(N)}\Dc(d). 
\]

\subsection{The adjunction of a comonad}\label{ss:adj-comonad}

The adjunction associated to the comonad $\Dc$~\cite[Theorem~VI.2.1]{Mac:1998} 
\[
\xymatrix{
\Ns_{\Dc} \ar@/^/[r]^{\Uc_\Dc} & \Ns  \ar@/^/[l]^{\Fc_\Dc}
}
\]
consists of  the \emph{forgetful} functor $\Uc_\Dc$ (left adjoint)
\[
\Uc_\Dc(N,d) := N
\]
and the \emph{cofree coalgebra} functor $\Fc_\Dc$ (right adjoint)
\[
\Fc_\Dc(N) := \bigl(\Dc(N),\delta_N)\bigr).
\]
The unit and counit of the adjunction are
\[
\eta_{(N,d)}:  (N,d)  \map{d}  \bigl(\Dc(N),\delta_N\bigr) =  \Fc_\Dc\Uc_\Dc(N,d)
\qand
\xi_{N}: \Uc_\Dc\Fc_\Dc(N) = \Dc(N)  \map{\epsilon_N} N
\]
respectively.
Adjunctions of this form are called \emph{comonadic}.

A functor is called \emph{conservative} if it reflects isomorphisms.
The forgetful functor $\Uc_\Dc$ is conservative: if $f$
is a morphism of $\Dc$-coalgebras and $f$ is invertible in $\Ns$,
then $f^{-1}$ is a morphism of $\Dc$-coalgebras.

Another fundamental property of the forgetful functor is the following.

\begin{lemma}\label{l:creation}
The forgetful functor $\Uc_{\Dc}:\Ns_{\Dc}\to \Ns$ creates all 
limits that exist in its codomain and that are preserved by both $\Dc$ and $\Dc^2$,
and creates all colimits that exist in its codomain.
\end{lemma}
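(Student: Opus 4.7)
The result is the standard dualization of the fact that the forgetful functor from the Eilenberg--Moore category of a monad creates limits (with no preservation hypothesis) and creates colimits preserved by the monad and its square; here, for comonads, the roles of limits and colimits are exchanged. I would treat the two halves separately. In both cases the plan is to lift the universal object from $\Ns$ to $\Ns_{\Dc}$ by equipping it with a coalgebra structure, and then to verify that the lifted cone or cocone is universal among coalgebra cones or cocones.

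For a diagram $D : J \to \Ns_{\Dc}$ with $D(i) = (N_i, d_i)$ whose underlying diagram admits a colimit $(L, \{q_i\})$ in $\Ns$, the family $\Dc(q_i) \circ d_i : N_i \to \Dc(L)$ forms a cocone on $\Uc_{\Dc} \circ D$, since each $d_i$ commutes with the coalgebra morphisms of the diagram and $\Dc$ is a functor. Hence it factors uniquely through a morphism $d : L \to \Dc(L)$ with $d \circ q_i = \Dc(q_i) \circ d_i$. Coassociativity $\delta_L \circ d = \Dc(d) \circ d$ and counitality $\epsilon_L \circ d = \id_L$ follow because, after composition with each $q_i$, both sides reduce to the same expression using the coalgebra laws for $d_i$ together with the naturality of $\delta$ and $\epsilon$; since $\{q_i\}$ is jointly epimorphic, the equalities hold on $L$. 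Uniqueness of the lift and the universal property of $(L,d)$ in $\Ns_{\Dc}$ are then immediate: a coalgebra cocone from $D$ to some $(M, d_M)$ factors through $L$ in $\Ns$ by the colimit property, and one more application of joint epimorphism shows the induced map is a coalgebra morphism.

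The limit case is formally dual but requires the preservation hypotheses. Given a diagram whose underlying diagram has a limit $(L, \{p_i\})$ in $\Ns$ preserved by $\Dc$ and $\Dc^2$, the maps $d_i \circ p_i : L \to \Dc(N_i)$ form a cone on $\Dc \circ \Uc_{\Dc} \circ D$, and preservation by $\Dc$ produces a unique $d : L \to \Dc(L)$ with $\Dc(p_i) \circ d = d_i \circ p_i$. Counitality comes from precomposing $\epsilon_L \circ d$ with $p_i$ and invoking naturality of $\epsilon$; joint monomorphism of $\{p_i\}$ then yields $\epsilon_L \circ d = \id_L$. Coassociativity is the equality of two parallel maps $L \to \Dc^2(L)$ which agree after applying each $\Dc^2(p_i)$, by coassociativity of $d_i$ and naturality; preservation of the limit by $\Dc^2$ forces the equality itself. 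Universality of $(L,d)$ as a limit in $\Ns_{\Dc}$ is handled analogously, using preservation by $\Dc$ to promote the factoring map in $\Ns$ to a coalgebra morphism.

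The proof is routine diagram-chasing; there is no essential obstacle beyond tracking which preservation hypothesis is used at each step. The slight asymmetry with the monad case---colimits are created unconditionally while limits need preservation by both $\Dc$ and $\Dc^2$---is inherent: constructing the lifted coalgebra structure involves $\Dc(L)$, hence the need for preservation by $\Dc$, while coassociativity is an equation in $\Dc^2(L)$, hence preservation by $\Dc^2$.
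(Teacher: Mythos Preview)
Your argument is correct and follows the standard route. The paper itself does not give a proof here: it simply cites \cite[Propositions~4.3.1 and~4.3.2]{Bor:1994ii} and \cite[Theorem~V.1.5]{MS:2004}. What you have written is essentially the content of those references, dualized appropriately, so there is nothing to compare beyond noting that you have supplied the details the paper omits.

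One small remark on presentation: in the colimit half you appeal to the $q_i$ being ``jointly epimorphic'' to deduce equalities of maps out of $L$. This is of course true, but it is worth being explicit that this is exactly the universal property of the colimit (two maps out of $L$ agreeing on all $q_i$ must coincide), since ``jointly epimorphic'' in isolation is slightly weaker than what a colimiting cocone provides. Similarly, in the limit half, the phrase ``joint monomorphism of $\{p_i\}$'' is shorthand for the universal property of the limit cone, and for $\{\Dc(p_i)\}$ and $\{\Dc^2(p_i)\}$ it is precisely the preservation hypotheses that supply this. You use these correctly; I mention it only because the paper is careful to say that creation is meant in the strict sense of Mac Lane, and your argument does deliver strict creation (unique lift, and the lift is a (co)limit), though you might state the uniqueness of the lifted structure $d$ in the limit case as explicitly as you do in the colimit case.
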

\begin{proof}
This is contained in~\cite[Propositions~4.3.1 and~4.3.2]{Bor:1994ii}.
For the first statement, see also~\cite[Theorem~V.1.5]{MS:2004}.
\end{proof}

Above, limit creation is understood in the strict sense of~\cite[Section~V.1]{Mac:1998}.
We will mainly employ creation of equalizers. Let us make this notion explicit.

Suppose 
\[
\xymatrix@C+5pt{
 (N,d) \ar@<0.5ex>[r]^-{f} \ar@<-0.5ex>[r]_-{g} & (N',d')
}
\]
is a a parallel pair of morphisms of $\Dc$-coalgebras for which an equalizer
\[
\xymatrix@C+5pt{
 E \ar[r]^{e} & N \ar@<0.5ex>[r]^-{f} \ar@<-0.5ex>[r]_-{g} & N'
}
\]
exists in $\Ns$. Suppose also that
\[
\xymatrix@C+5pt{
 \Dc(E) \ar[r]^{\Dc(e)} & \Dc(N) \ar@<0.5ex>[r]^-{\Dc(f)} \ar@<-0.5ex>[r]_-{\Dc(g)} & \Dc(N')
}
\qand
\xymatrix@C+5pt{
 \Dc^2(E) \ar[r]^{\Dc^2(e)} & \Dc^2(N) \ar@<0.5ex>[r]^-{\Dc^2(f)} \ar@<-0.5ex>[r]_-{\Dc^2(g)} & \Dc^2(N')
}
\]
are equalizers in $\Ns$.
Then $E$ has a unique $\Dc$-coalgebra structure $d''$ for which
\[
\xymatrix@C+5pt{
 (E,d'') \ar[r]^{e} & (N,d) \ar@<0.5ex>[r]^-{f} \ar@<-0.5ex>[r]_-{g} & (N',d')
}
\]
is an equalizer in $\Ns_{\Dc}$. In particular, the original pair has an equalizer in $\Ns_{\Dc}$
and this is preserved by $\Uc_{\Dc}$.

\subsection{The comparison functor}\label{ss:comparison}

Suppose that an adjunction $(\Hc,\Kc)$ is given
\[
\xymatrix{
\Ms \ar@/^/[r]^{\Hc} & \Ns \ar@/^/[l]^{\Kc} 
}
\]
with structure maps (unit and counit)
\[
\eta:\Ic\to\Kc\Hc
\qand
\xi:\Hc\Kc\to\Ic.
\]
The functor
\[
\Hc\Kc:\Ns\to\Ns
\]
is then a comonad on $\Ns$~\cite[Section~VI.1]{Mac:1998}. 
The comultiplication is
\begin{equation}\label{e:com-adj}
 \Hc\Kc = \Hc\Ic\Kc \map{\Hc\eta\Kc} \Hc\Kc\Hc\Kc
\end{equation}
  and the counit is
\begin{equation}\label{e:cou-adj}
\Hc\Kc\map{\xi}\Ic.
\end{equation}

The \emph{comparison} functor
\begin{equation}\label{e:comp-fun}
\Qc: \Ms \to \Ns_{\Hc\Kc}
\end{equation}
is the unique one such that
\begin{equation*}
\begin{gathered}
\xymatrix@R-10pt@C+5pt{
\Ms \ar[rd]^-{\Hc}  \ar[dd]_{\Qc}  &  \\
& \Ns \\
 \Ns_{\Hc\Kc} \ar[ru]_{\Uc_{\Hc\Kc}}
}
\end{gathered}
\qand
\begin{gathered}
\xymatrix@R-10pt@C+5pt{
 \Ms \ar[dd]_{\Qc}  & \\
 & \Ns \ar[lu]_-{\Kc} \ar[ld]^{\Fc_{\Hc\Kc}} \\
 \Ns_{\Hc\Kc}   &
}
\end{gathered}
\end{equation*}
commute~\cite[Theorem~VI.3.1]{Mac:1998}. Explicitly, 
\begin{equation}\label{e:comp-def}
\Qc(M)=\bigl(\Hc(M),\Hc(\eta_M)\bigr).
\end{equation}

\subsection{Beck's Comonadicity Theorem}\label{ss:beck}

We recall this fundamental result of Beck which describes precisely when
the comparison functor  $\Qc$ of Section~\ref{ss:comparison} is an equivalence.

A pair 
of parallel arrows in $\Ms$ is said to be \emph{$\Hc$-split} if 
its image under $\Hc$ is part of a split cofork.

Consider the following hypotheses.
\begin{align}
\label{e:coreflexive}
& \text{Any pair which is  coreflexive and $\Hc$-split has an equalizer in the category $\Ms$.}\\
\label{e:Hpreserve}
& \text{The functor $\Hc$ preserves the equalizer of any such pair.}\\
\label{e:Hconserve}
& \text{The functor $\Hc$ is conservative.}
\end{align}

\begin{theorem}[Beck's Comonadicity Theorem]\label{t:beck}
The functor $\Qc:\Ms \to \Ns_{\Hc\Kc}$ 
is an equivalence if and only if hypotheses~\eqref{e:coreflexive} through~\eqref{e:Hconserve} hold.
\end{theorem}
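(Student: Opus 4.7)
My plan is to follow the classical proof of (the dual of) Beck's theorem, splitting into the two implications.

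\emph{Necessity.} I would start by observing that $\Hc=\Uc_{\Hc\Kc}\Qc$ is the composite of a conservative functor (Section~\ref{ss:adj-comonad}) with an equivalence, which immediately yields~\eqref{e:Hconserve}. For~\eqref{e:coreflexive} and~\eqref{e:Hpreserve}, given a coreflexive $\Hc$-split pair $f,g$ in $\Ms$, I would push it through $\Qc$ to get a coreflexive pair in $\Ns_{\Hc\Kc}$ whose image under $\Uc_{\Hc\Kc}$ is $\Hc(f),\Hc(g)$, which sits in a split cofork. Since split coforks are absolute equalizers (and so preserved by $\Hc\Kc$ and $(\Hc\Kc)^2$), Lemma~\ref{l:creation} produces the equalizer in $\Ns_{\Hc\Kc}$ and has $\Uc_{\Hc\Kc}$ both preserve and reflect it. Transporting back through a quasi-inverse of $\Qc$ yields the equalizer in $\Ms$, preserved by $\Hc=\Uc_{\Hc\Kc}\Qc$.

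\emph{Sufficiency.} Assuming~\eqref{e:coreflexive}--\eqref{e:Hconserve}, I would construct an inverse $\Rc\colon\Ns_{\Hc\Kc}\to\Ms$ to $\Qc$. For a coalgebra $(N,d)$, consider
$$\xymatrix@C+10pt{\Kc(N)\ar@<0.5ex>[r]^-{\Kc(d)}\ar@<-0.5ex>[r]_-{\eta_{\Kc(N)}} & \Kc\Hc\Kc(N).}$$
This pair is coreflexive with common retraction $\Kc(\xi_N)$: the identity $\Kc(\xi_N)\Kc(d)=\Kc(\xi_N d)=\id_{\Kc(N)}$ uses the counitality of $d$ as an $\Hc\Kc$-coalgebra, and $\Kc(\xi_N)\eta_{\Kc(N)}=\id_{\Kc(N)}$ is a triangle identity. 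It is $\Hc$-split because applying $\Hc$ and using~\eqref{e:com-adj} reproduces the pair~\eqref{e:pair} for $\Hc\Kc$, which sits inside the split cofork~\eqref{e:splitfork} with equalizing map $d$. By~\eqref{e:coreflexive} an equalizer $e_{(N,d)}\colon \Rc(N,d)\to\Kc(N)$ exists in $\Ms$, and universality makes $\Rc$ functorial. By~\eqref{e:Hpreserve}, $\Hc(e_{(N,d)})$ is an equalizer of the image pair; comparing with~\eqref{e:splitfork} gives a canonical isomorphism $\Hc\Rc(N,d)\cong N$, which a short diagram chase shows respects $\Hc\Kc$-coalgebra structures, yielding $\Qc\Rc\cong\Ic_{\Ns_{\Hc\Kc}}$.

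For the other composite, I would check that for each $M\in\Ms$ the unit $\eta_M\colon M\to\Kc\Hc(M)$ equalizes the pair defining $\Rc\Qc(M)$ (by naturality of $\eta$ together with a triangle identity), producing a comparison $M\to\Rc\Qc(M)$. Applying $\Hc$ sends both source and target onto equalizers of the same split cofork, so this comparison becomes an isomorphism in $\Ns$; hypothesis~\eqref{e:Hconserve} then upgrades it to an isomorphism in $\Ms$, natural in $M$. The main obstacle — and the reason all three hypotheses appear in the statement — is exactly this last step: without conservativity, $\Rc$ inverts $\Qc$ only after passage through $\Hc$, and the equivalence fails to descend to $\Ms$ itself. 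The other subtlety, checking naturality of the two isomorphisms $\Qc\Rc\cong\Ic$ and $\Rc\Qc\cong\Ic$, is a routine exercise in the universal property of equalizers and so need not be belabored.
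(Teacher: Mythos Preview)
Your outline is a correct and complete account of the classical Beck argument. The paper, however, takes a different route: it gives no proof at all, treating Beck's theorem as a known result and simply citing the literature (the dual of~\cite[Theorem~3.3.10]{BW:2005} and~\cite[Theorem~2.1]{BLV:2011}, with related versions in~\cite[Theorem~4.4.4]{Bor:1994ii} and~\cite[Exercise~VI.6]{Mac:1998}).

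Your approach has the virtue of being self-contained and of exhibiting exactly how each hypothesis enters: \eqref{e:coreflexive} furnishes the candidate inverse $\Rc$, \eqref{e:Hpreserve} yields $\Qc\Rc\cong\Ic$, and \eqref{e:Hconserve} is precisely what is needed to upgrade $\Rc\Qc\cong\Ic$ from an isomorphism after applying $\Hc$ to a genuine one in $\Ms$. The paper's choice to cite rather than reprove is reasonable given that Beck's theorem is standard and the paper's contribution lies in the Conjugate Comonadicity Theorem that builds on it; but your write-up would serve well as an expanded version of the proof for a reader unfamiliar with the references.
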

\begin{proof} This is the dual of Beck's Monadicity 
Theorem, in the form given in~\cite[Theorem~3.3.10]{BW:2005}
and~\cite[Theorem~2.1]{BLV:2011}.
A closely related version is given in~\cite[Theorem~4.4.4]{Bor:1994ii} and in~\cite[Exercise~VI.6]{Mac:1998}.
\end{proof}

We expand on this result in Section~\ref{ss:descent}. We mention in passing that
in the special case when the adjunction $(\Hc,\Kc)$ is monadic,
additional information on the functor $\Qc$ is given in~\cite[Theorem~10.1]{FreMac:1971}.

\subsection{Transferring coalgebras along a colax morphism of comonads}\label{ss:cores}

Let $\Ms$ and $\Ns$ be categories, $\Cc$ a comonad on $\Ms$, and $\Dc$
a comonad on $\Ns$. 
\[
\xymatrix{
\Ms \ar@(ul,dl)_{\Cc}  & \Ns  \ar@(ur,dr)^{\Dc}
}
\]
We use similar notation for the comonad $\Cs$ and for the associated adjunction as for that for $\Ds$, as in Section~\ref{ss:comparison}.

Let
\[
\Hc:\Ms\to\Ns
\qand
\sigma:\Hc\Cc \to \Dc\Hc
\] 
be a functor and a natural transformation, respectively, such that the following diagrams commute.
\begin{align}\label{e:mor-comonad}
&  
\begin{gathered}
\xymatrix@+5pt{    
\Hc\Cc\Cc \ar[r]^-{\sigma\Cc} & \Dc\Hc\Cc \ar[r]^-{\Dc\sigma} & \Dc\Dc\Hc\\
\Hc\Cc \ar[rr]_-{\sigma} \ar[u]^{\Hc\delta} & & \Dc\Hc \ar[u]_{\delta\Hc}
}
\end{gathered}
& & 
\begin{gathered}
\xymatrix@=1.6pc{ 
\Hc\Cc \ar[rr]^-{\sigma}\ar[rd]_{\Hc\epsilon} & & \Dc\Hc \ar[ld]^{\epsilon\Hc}\\
& \Hc
}
\end{gathered}
\end{align}  
One then says that 
\[
(\Hc,\sigma): (\Ms,\Cc) \to (\Ns,\Dc)
\]
is a \emph{colax morphism} of comonads. 

In this situation, there is an induced functor 
\[
\resH : \Ms_{\Cc} \to \Ns_{\Dc}
\]
on the categories of coalgebras, defined as follows. Given a $\Cc$-coalgebra
$(M,c)$, we let
\[
\resH(M,c) := \bigl(\Hc(M),d\bigr)
\]
where
\begin{equation}\label{e:Dcoalg}
d: \Hc(M) \map{\Hc(c)} \Hc\Cc(M) \map{\sigma_M} \Dc\Hc(M).
\end{equation}
Diagrams~\eqref{e:mor-comonad} guarantee that $\resH(M,c)$ is a $\Dc$-coalgebra.
If $f:M\to M'$ is a morphism of $\Cc$-coalgebras, then $\Hc(f):\Hc(M)\to\Hc(M')$ is a
morphism of $\Dc$-coalgebras, by naturality of $\sigma$. This defines
$\resH$ on morphisms. 

It is easy to see that the diagram
\begin{equation}\label{e:cores-forget}
\begin{gathered}
\xymatrix{
\Ms \ar[r]^-{\Hc} & \Ns  \\
\Ms_\Cc \ar[r]_-{\resH} \ar[u]^{\Uc_\Cc}  & \Ns_\Dc \ar[u]_{\Uc_\Dc}
}
\end{gathered}
\end{equation}
is commutative. On the other hand, the diagram
\begin{equation}
\label{e:cores-cofree}
\begin{gathered}
\xymatrix{
\Ms \ar[r]^-{\Hc} \ar[d]_{\Fc_\Cc} & \Ns \ar[d]^{\Fc_\Dc} \\
\Ms_\Cc \ar[r]_-{\resH}   & \Ns_\Dc 
}
\end{gathered}
\end{equation}
commutes (up to isomorphism) if and only if the transformation $\sigma$ is invertible~\cite[Lemma~3]{Joh:1975}.

Consider now the special case in which $\Ms=\Ns$ and $\Hc=\Ic$ is the
identity functor. Thus the comonads $\Cs$ and $\Ds$ act on the same category $\Ns$.
Given a transformation $\sigma:\Cc\to\Dc$ such that $(\Ic,\sigma):(\Ns,\Cc)\to(\Ns,\Dc)$ is a colax morphism of comonads, there is the induced functor 
$\Ic_{\sigma}: \Ns_{\Cc} \to \Ns_{\Dc}$ between coalgebra categories, and as a 
special case of~\eqref{e:cores-forget} we have the commutative diagram
\begin{equation}\label{e:cores-forget-id}
\begin{gathered}
\xymatrix{
 \Ns_\Cc\ar[rd]_{\Uc_\Cc} \ar[rr]^-{\Ic_{\sigma}} & & \Ns_\Dc \ar[ld]^{\Uc_\Dc} \\
& \Ns.
}
\end{gathered}
\end{equation}

\begin{lemma}\label{l:functor-transformation}
Let $\Fc:\Ns_\Cc\to\Ns_\Dc$ be a functor such that
\begin{equation}\label{e:cores-forget-F}
\begin{gathered}
 \xymatrix{
 \Ns_\Cc\ar[rd]_{\Uc_\Cc} \ar[rr]^-{\Fc} & & \Ns_\Dc \ar[ld]^{\Uc_\Dc} \\
& \Ns}
\end{gathered}
\end{equation}
commutes. Then there exists a unique  transformation $\sigma:\Cc\to\Dc$ such that 
$(\Ic,\sigma):(\Ns,\Cc)\to(\Ns,\Dc)$ is a colax morphism of comonads and
$\Fc=\Ic_{\sigma}$.
\end{lemma}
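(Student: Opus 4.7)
The plan is to recover $\sigma$ from the action of $\Fc$ on cofree $\Cc$-coalgebras. For each $N$, the cofree coalgebra $\Fc_\Cc(N) = (\Cc(N),\delta^\Cc_N)$ has underlying object $\Cc(N)$, so by~\eqref{e:cores-forget-F}, $\Fc(\Fc_\Cc(N))$ is a $\Dc$-coalgebra whose underlying object is again $\Cc(N)$; denote its coaction by $\tilde{d}_N : \Cc(N) \to \Dc\Cc(N)$. I define
\[
\sigma_N := \Dc(\epsilon^\Cc_N) \circ \tilde{d}_N : \Cc(N) \to \Dc(N),
\]
where $\epsilon^\Cc, \delta^\Cc$ and $\epsilon^\Dc, \delta^\Dc$ denote the counit and comultiplication of $\Cc$ and $\Dc$ respectively. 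Naturality of $\sigma$ follows because $\Cc(f) : \Fc_\Cc(N) \to \Fc_\Cc(N')$ is a morphism of $\Cc$-coalgebras (by naturality of $\delta^\Cc$), so $\Fc$ makes $\Cc(f)$ a morphism $(\Cc(N),\tilde{d}_N) \to (\Cc(N'),\tilde{d}_{N'})$ of $\Dc$-coalgebras; post-composing with $\Dc(\epsilon^\Cc_{N'})$ and invoking naturality of $\epsilon^\Cc$ yields $\Dc(f)\circ\sigma_N = \sigma_{N'}\circ\Cc(f)$.

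Next I prove the key identity: for any $\Cc$-coalgebra $(M,c)$, the $\Dc$-coaction $d$ of $\Fc(M,c)$ (on the underlying object $M$, by~\eqref{e:cores-forget-F}) satisfies $d = \sigma_M \circ c$. Indeed, $c : (M,c) \to \Fc_\Cc(M)$ is the adjunction unit of Section~\ref{ss:adj-comonad} and hence a morphism of $\Cc$-coalgebras, so $\Fc$ makes it a $\Dc$-coalgebra morphism, giving $\Dc(c)\circ d = \tilde{d}_M \circ c$; post-composing with $\Dc(\epsilon^\Cc_M)$ and using the counit law $\epsilon^\Cc_M\circ c = \id_M$ yields $d = \sigma_M\circ c$. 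Specializing to $\Fc_\Cc(N)$ in particular gives the identity $\tilde{d}_N = \sigma_{\Cc(N)}\circ\delta^\Cc_N$. I then verify the colax-comonad axioms~\eqref{e:mor-comonad} (with $\Hc = \Ic$): the counit axiom reduces via naturality of $\epsilon^\Dc$ and the counit law for $\tilde{d}_N$ to $\epsilon^\Cc_N$; for coassociativity, naturality of $\delta^\Dc$ and coassociativity of $\tilde{d}_N$ combine to $\delta^\Dc_N\circ\sigma_N = \Dc(\sigma_N)\circ\tilde{d}_N$, into which one substitutes the identity above to recover the desired diagram. With the colax axioms established, $\Ic_\sigma$ is defined, and $d = \sigma_M \circ c$ is precisely the statement $\Fc = \Ic_\sigma$.

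Uniqueness is immediate: if $\Fc = \Ic_{\sigma'}$, evaluation on $\Fc_\Cc(N)$ forces $\tilde{d}_N = \sigma'_{\Cc(N)}\circ\delta^\Cc_N$, and post-composition with $\Dc(\epsilon^\Cc_N)$ combined with naturality of $\sigma'$ and counitality of $\delta^\Cc$ yields $\sigma'_N = \sigma_N$. The entire argument is a sequence of routine diagram chases; the only real idea is to extract $\sigma$ by probing $\Fc$ on cofree coalgebras and post-composing with the counit of $\Cc$. There is no serious obstacle, though one must take mild care to sequence the steps so that the verification of coassociativity already has access to the identity $\tilde{d}_N = \sigma_{\Cc(N)}\circ\delta^\Cc_N$ — that is, the identity $d = \sigma_M \circ c$ should be proved before the colax axioms, not after.
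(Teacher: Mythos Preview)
Your proof is correct and complete. The paper does not actually prove this lemma; it simply cites \cite[Theorem~6.3]{BW:2005} and \cite[Theorem~6]{Str:1972}. Your direct argument---extracting $\sigma$ from the $\Dc$-coaction on cofree $\Cc$-coalgebras via $\sigma_N = \Dc(\epsilon^\Cc_N)\circ\tilde d_N$, then bootstrapping the identity $d=\sigma_M\circ c$ from the unit $c:(M,c)\to\Fc_\Cc(M)$---is the standard one, and your sequencing (proving $d=\sigma_M\circ c$ before the colax axioms so that $\tilde d_N=\sigma_{\Cc(N)}\circ\delta^\Cc_N$ is available for coassociativity) is exactly right.
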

\begin{proof}
This is~\cite[Theorem~6.3]{BW:2005}. A more general result is given 
in~\cite[Theorem~6] {Str:1972}.
\end{proof}

The hypothesis that diagram~\eqref{e:cores-forget-F} commute can be described explicitly as follows. If $(N,a)$ is an object of $\Ns_{\Cc}$, then $\Fc(N,a) = (N,b)$ for a suitable morphism $b: N\to \Dc(N)$; moreover, if $f: (N,a)\to(N',a')$ is a morphism in $\Ns_ {\Cc}$ (i.e., a morphism $f: N\to N'$ in $\Ns$ such that $\Cc(f)a = a'f$), and $\Fc(N,a) = (N,b)$ and 
$\Fc(N',a') = (N,b')$, then also $\Dc(f)b = b'f$.

\subsection{Transferring coalgebras along a lax morphism of comonads}\label{ss:coind}

We return to the setting
\[
\xymatrix{
\Ms \ar@(ul,dl)_{\Cc}  & \Ns  \ar@(ur,dr)^{\Dc}
}
\]
where $\Cc$ and $\Dc$ are comonads. Let
\[
\Kc:\Ns\to\Ms
\qand
\tau:\Cc\Kc \to \Kc\Dc
\] 
be a functor and a natural transformation, respectively, such that the following diagrams commute.
\begin{align}\label{e:lax-mor-comonad}
&  
\begin{gathered}
\xymatrix@+5pt{    
\Cc\Cc\Kc \ar[r]^-{\Cc\tau} & \Cc\Kc\Dc \ar[r]^-{\tau\Dc} & \Kc\Dc\Dc\\
\Cc\Kc \ar[rr]_-{\tau} \ar[u]^{\delta\Kc} & & \Kc\Dc \ar[u]_{\Kc\delta}
}
\end{gathered}
& & 
\begin{gathered}
\xymatrix@=1.6pc{ 
\Cc\Kc \ar[rr]^-{\tau}\ar[rd]_{\epsilon\Kc} & & \Kc\Dc \ar[ld]^{\Kc\epsilon}\\
& \Kc
}
\end{gathered}
\end{align}  
One then says that 
\[
(\Kc,\tau): (\Ns,\Dc) \to (\Ms,\Cc)
\]
is a \emph{lax morphism} of comonads.

Given a $\Dc$-coalgebra $(N,d)$, consider the maps
\begin{equation}\label{e:eq-coalg}
\Cc\Kc(N)\map{\delta_{\Kc(N)}} \Cc\Cc\Kc(N) \map{\Cc(\tau_N)} \Cc\Kc\Dc(N)
\qand
\Cc\Kc(N)\map{\Cc\Kc(d)} \Cc\Kc\Dc(N).
\end{equation}

\begin{lemma}\label{l:eq-coalg}
The maps in~\eqref{e:eq-coalg} form a coreflexive pair in the category $\Ms_{\Cc}$.
\end{lemma}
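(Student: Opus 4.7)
The plan is to check two things: (i) both arrows are morphisms of $\Cc$-coalgebras between the cofree coalgebras $\Fc_{\Cc}(\Kc(N))=(\Cc\Kc(N),\delta_{\Kc(N)})$ and $\Fc_{\Cc}(\Kc\Dc(N))=(\Cc\Kc\Dc(N),\delta_{\Kc\Dc(N)})$; and (ii) the pair admits a common retraction in $\Ms_{\Cc}$.

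For (i), the second map $\Cc\Kc(d)$ is simply $\Fc_{\Cc}$ applied to the morphism $\Kc(d)\colon \Kc(N)\to\Kc\Dc(N)$ of $\Ms$, hence a coalgebra morphism by functoriality of the cofree coalgebra functor. For the first map, recall from the cofree-forgetful adjunction $(\Uc_{\Cc},\Fc_{\Cc})$ that coalgebra morphisms $\Fc_{\Cc}(\Kc(N))\to\Fc_{\Cc}(\Kc\Dc(N))$ correspond bijectively to morphisms $\phi\colon \Cc\Kc(N)\to\Kc\Dc(N)$ in $\Ms$, via $\phi\mapsto \Cc(\phi)\circ\delta_{\Kc(N)}$. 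The first map of the lemma is precisely the one associated to $\phi=\tau_N$.

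For (ii), I propose the common retraction $r:=\Cc\Kc(\epsilon_N)\colon \Cc\Kc\Dc(N)\to\Cc\Kc(N)$, where $\epsilon_N\colon \Dc(N)\to N$ is the counit of $\Dc$. Since $r=\Fc_{\Cc}(\Kc(\epsilon_N))$, it is automatically a morphism of $\Cc$-coalgebras by the same functoriality argument. Counitality of $(N,d)$, i.e.\ $\epsilon_N\circ d=\id_N$, gives $r\circ\Cc\Kc(d)=\id$ at once. That $r$ retracts the other arrow is a two-step verification:
\[
r\circ\Cc(\tau_N)\circ\delta_{\Kc(N)}
= \Cc\bigl(\Kc(\epsilon_N)\circ\tau_N\bigr)\circ\delta_{\Kc(N)}
= \Cc(\epsilon_{\Kc(N)})\circ\delta_{\Kc(N)}
= \id,
\]
where the second equality is the counit axiom for the lax morphism $(\Kc,\tau)$ (second diagram of~\eqref{e:lax-mor-comonad}) and the third is counitality of the comonad $\Cc$.

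The argument is straightforward bookkeeping, combining the universal property of $\Fc_{\Cc}$ with the counit axioms for $\Cc$, $\Dc$, and $\tau$. No serious obstacle is anticipated; the only point requiring a moment's care is recognizing the two maps as morphisms in $\Ms_{\Cc}$ rather than merely in $\Ms$, which is handled by viewing both source and target as cofree coalgebras.
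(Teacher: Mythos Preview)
Your proof is correct and follows essentially the same route as the paper: the same retraction $\Cc\Kc(\epsilon_N)$ is used, with the same appeal to the counit axiom for $(\Kc,\tau)$ and counitality of $\Cc$ and of $(N,d)$. The only cosmetic difference is that you package the verification that the arrows lie in $\Ms_{\Cc}$ via the cofree functor $\Fc_{\Cc}$ and its adjunction, whereas the paper checks directly that $\delta_{\Kc(N)}$ is a coalgebra map by coassociativity and that arrows of the form $\Cc(f)$ are coalgebra maps by naturality of $\delta$; these are of course the same facts.
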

\begin{proof}
Each arrow in~\eqref{e:eq-coalg} is a morphism of $\Cc$-coalgebras (the arrow
$\delta_{\Kc(N)}$ by coassociativity of $\delta$, and the arrows in the image of $\Cc$ by naturality of $\delta$). 
The map
$\Cc\Kc(\epsilon_N)$, where $\epsilon:\Dc\to\Ic$ is the counit of the comonad $\Dc$,
is a common retraction of the two maps (this uses the second diagram in~\eqref{e:lax-mor-comonad}), and is a morphism of $\Cc$-coalgebras.
\end{proof}

We fix the transformation $\tau$
and make the following assumption.
\begin{equation}\label{e:existence}
\text{The pair~\eqref{e:eq-coalg} possesses an equalizer in
$\Ms_{\Cc}$ for all $\Dc$-coalgebras $(N,d)$.}
\end{equation}

We let $\indK(N,d)$ denote the equalizer in $\Ms_{\Cc}$ of the pair~\eqref{e:eq-coalg}.
This defines a functor
\[
\indK: \Ns_{\Dc} \to \Ms_{\Cc}
\]
on objects. The maps in~\eqref{e:eq-coalg} are natural with respect to morphisms of $\Dc$-coalgebras; this
allows us to define $\indK$ on morphisms. By construction, there is a canonical 
morphism of coalgebras
\begin{equation}\label{e:indK}
\indK(N,d) \to \Cc\Kc(N).
\end{equation}

\begin{lemma}\label{l:coind-cofree}
The diagram
\begin{equation}
\label{e:coind-cofree}
\begin{gathered}
\xymatrix{
\Ns \ar[r]^-{\Kc} \ar[d]_{\Fc_\Dc} & \Ms \ar[d]^{\Fc_\Cc} \\
\Ns_\Dc \ar[r]_-{\indK} & \Ms_\Cc
}
\end{gathered}
\end{equation}
is commutative (up to isomorphism). 
\end{lemma}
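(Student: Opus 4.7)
Fix an object $N\in\Ns$; my plan is to exhibit a natural isomorphism $\indK\Fc_\Dc(N)\cong\Fc_\Cc\Kc(N)$ in $\Ms_\Cc$. By definition, $\indK\Fc_\Dc(N)=\indK(\Dc N,\delta_N)$ is the equalizer in $\Ms_\Cc$ of the parallel pair
\[
 f := \Cc(\tau_{\Dc N})\cdot\delta_{\Kc\Dc N}, \qquad g := \Cc\Kc(\delta_N) \,:\, \Cc\Kc\Dc N \rightrightarrows \Cc\Kc\Dc^2 N,
\]
whereas $\Fc_\Cc\Kc(N)=(\Cc\Kc N,\delta_{\Kc N})$. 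The correct candidate for the equalizing morphism turns out to be
\[
 e_N \,:=\, \Cc(\tau_N)\cdot\delta_{\Kc N}\,:\,\Cc\Kc N \longrightarrow \Cc\Kc\Dc N,
\]
and the plan is to show that $\Fc_\Cc\Kc(N)$ together with $e_N$ realizes the desired equalizer.

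The first step is to verify that $e_N$ is a morphism in $\Ms_\Cc$ from $\Fc_\Cc\Kc(N)$ to $\Fc_\Cc\Kc\Dc(N)$; this is a short chase using naturality of $\delta$ at $\tau_N$ together with coassociativity. The second step is to check $f\cdot e_N = g\cdot e_N$: naturality of $\delta$, coassociativity, and the first square of~\eqref{e:lax-mor-comonad} (applied at $N$) collapse both sides to the common expression $\Cc(\Kc\delta_N\cdot\tau_N)\cdot\delta_{\Kc N}$.

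The heart of the argument is the third step: promoting $e_N$ to an equalizer in $\Ms_\Cc$. For this I introduce the splittings
\[
 s_N := \Cc\Kc(\epsilon_N)\,,\qquad t_N := \Cc\Kc\Dc(\epsilon_N)\,,
\]
where $\epsilon$ denotes the counit of $\Dc$. Counitality of $\Dc$ together with the second square of~\eqref{e:lax-mor-comonad} gives $s_N e_N=\id$ and $t_N g=\id$; naturality of $\tau$ at $\epsilon_N$ and of $\delta$ at $\Kc(\epsilon_N)$ gives $e_N s_N = t_N f$. Hence $(e_N,f,g,s_N,t_N)$ is a split, i.e.\ absolute, equalizer in $\Ms$, and in particular is preserved by $\Cc$ and $\Cc^2$. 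Lemma~\ref{l:creation} then guarantees that $\Uc_\Cc$ creates it as an equalizer in $\Ms_\Cc$; since $e_N$ has already been shown to be a morphism out of $\Fc_\Cc\Kc(N)$, the created equalizer must be $(\Fc_\Cc\Kc(N),e_N)$ itself.

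Naturality of the resulting isomorphism in $N$ is automatic from the naturality of every ingredient ($\tau$, $\delta$, $\epsilon$, $\Kc$, $\Cc$). The only genuine obstacle I anticipate is guessing the right candidate $e_N$ and the splittings $s_N,t_N$; once these are in hand, every remaining verification is a mechanical diagram chase using only coassociativity/counitality of $\delta,\epsilon$, naturality, and the two lax-morphism squares~\eqref{e:lax-mor-comonad}.
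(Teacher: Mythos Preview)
Your proof is correct and follows essentially the same route as the paper: the same equalizing map $e_N=\Cc(\tau_N)\delta_{\Kc N}$, the same parallel pair, and the same splitting data $\Cc\Kc(\epsilon_N)$, $\Cc\Kc\Dc(\epsilon_N)$ (up to relabelling). The only minor difference is that the paper observes the splitting maps are themselves morphisms of cofree $\Cc$-coalgebras (being of the form $\Cc(-)$), so the split cofork already lives in $\Ms_\Cc$ and is an equalizer there directly; you instead establish the split equalizer in $\Ms$ and invoke Lemma~\ref{l:creation} to lift it. Both routes are equally valid.
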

\begin{proof}
Let $N$ be an object of $\Ns$. Consider the maps $f$, $g$ and $h$
defined as follows.
\begin{gather*}
f: \Cc\Kc(N) \map{\delta_{\Kc(N)}} \Cc\Cc\Kc(N) \map{\Cc(\tau_N)} 
\Cc\Kc\Dc(N)\\
g: \Cc\Kc\Dc(N) \map{\Cc\Kc(\delta_N)} \Cc\Kc\Dc\Dc(N)\\
h:\Cc\Kc\Dc(N) \map{\delta_{\Kc\Dc(N)}} \Cc\Cc\Kc\Dc(N)
\map{\Cc(\tau_{\Dc(N)})} \Cc\Kc\Dc\Dc(N).
\end{gather*}
Note the pair $(g,h)$ is~\eqref{e:eq-coalg} for the
$\Dc$-coalgebra $\Fc_\Dc(N)=(\Dc(N),\delta_N)$, whose
equalizer is $\indK\Fc_\Dc(N)$.
We show below that the diagram 
\[
\xymatrix{
\Cc\Kc(N) \ar[r]^-{f} & \Cc\Kc\Dc(N) \ar@<0.5ex>[r]^-{g} \ar@<-0.5ex>[r]_-{h} & \Cc\Kc\Dc\Dc(N)
}
\]
is a split cofork in the category $\Ms_{\Cc}$.
It then follows that $f$ is the equalizer of $(g,h)$, from which
$\Fc_\Cc\Kc(N)\cong \indK\Fc_\Dc(N)$.

First of all, $f$ is a morphism of $\Cc$-coalgebras, because so are
$\delta_{\Kc(N)}$ (by coassociativity) and $\Cc(\tau_N)$ (by naturality).

That the diagram is a cofork is shown by the commutativity of the
following diagram.
\[
\xymatrix@R-5pt{
\Cc\Kc \ar[r]^-{\delta\Kc} \ar[d]_{\delta\Kc} & \Cc\Cc\Kc \ar[r]^-{\Cc\tau} \ar[d]^{\delta\Cc\Kc} & \Cc\Kc\Dc \ar[d]^{\delta\Kc\Dc} \\
\Cc\Cc\Kc \ar[r]^-{\Cc\delta\Kc} \ar[d]_{\Cc\tau} & \Cc\Cc\Cc\Kc \ar[r]^-{\Cc\Cc\tau} & \Cc\Cc\Kc\Dc \ar[d]^{\Cc\tau\Dc} \\
\Cc\Kc\Dc \ar[rr]_-{\Cc\Kc\delta} & & \Cc\Kc\Dc\Dc
}
\]
The top left corner commutes
by coassociativity and the top right by naturality. The commutativity
of the bottom rectangle follows from that of the first diagram~\eqref{e:lax-mor-comonad}.

Consider now the maps $p$ and $q$ defined by
\begin{gather*}
p:\Cc\Kc\Dc\Dc \map{\Cc\Kc\Dc\epsilon} \Cc\Kc\Dc\\
q:\Cc\Kc\Dc \map{\Cc\Kc\epsilon} \Cc\Kc.
\end{gather*}
We show they split the above cofork. 

We have $pg=\id$ by the counit axiom for $\Dc$. Also, $qf=\id$
in view of the commutative diagram
\[
\xymatrix@R-5pt{
\Cc\Kc \ar[r]^-{\delta\Kc} \ar@{=}[rd] & \Cc\Cc\Kc \ar[r]^-{\Cc\tau} \ar[d]^{\Cc\epsilon\Kc} & \Cc\Kc\Dc \ar[d]^{\Cc\Kc\epsilon} \\
& \Cc\Kc \ar@{=}[r] & \Cc\Kc
}
\]
where again we use counitality and the second diagram in~\eqref{e:lax-mor-comonad}. It only remains to check that $ph=fq$. This holds in view of the diagram
\[
\xymatrix@R-5pt{
\Cc\Kc\Dc \ar[r]^-{\delta\Kc\Dc} \ar[d]_{\Cc\Kc\epsilon} & \Cc\Cc\Kc\Dc \ar[r]^-{\Cc\tau\Dc} \ar[d]_{\Cc\Cc\Kc\epsilon} & \Cc\Kc\Dc\Dc \ar[d]^{\Cc\Kc\Dc\epsilon} \\
\Cc\Kc \ar[r]_-{\delta\Kc}  & \Cc\Cc\Kc \ar[r]_-{\Cc\tau} & \Cc\Kc\Dc
}
\]
which commutes by naturality.
\end{proof}

\begin{remark}
We will apply Lemma~\ref{l:coind-cofree} in the same context 
as that of 
the Adjoint Lifting Theorem (Theorem~\ref{t:adjoint} below).
In this situation, diagram~\eqref{e:coind-cofree} is right adjoint 
to~\eqref{e:cores-forget}, and hence the commutativity of any one
of these diagrams is equivalent to that of the other.
\end{remark}

\subsection{Mates}\label{ss:mates}

Suppose that, in addition to the comonads $\Cc$ and $\Dc$ as in Sections~\ref{ss:cores}--\ref{ss:coind}, an adjunction $(\Hc,\Kc)$ is given, as below.
\[
\xymatrix{
\Ms \ar@/^/[r]^{\Hc} & \Ns \ar@/^/[l]^{\Kc} 
}
\]
Let
\[
\eta:\Ic\to\Kc\Hc
\qand
\xi:\Hc\Kc\to\Ic
\]
be the unit and counit of the adjunction. 

In this situation, there is a bijective correspondence between transformations
\[
\sigma:\Hc\Cc\to\Dc\Hc
\]
and transformations
\[
\tau: \Cc\Kc\to\Kc\Dc.
\]
Given $\sigma$, the transformation $\tau$ is defined by
\[
\xymatrix{
\tau: \Cc\Kc=\Ic\Cc\Kc \map{\eta\Cc\Kc} \Kc\Hc\Cc\Kc
\map{\Kc\sigma\Kc} \Kc\Dc\Hc\Kc \map{\Kc\Dc\xi} \Kc\Dc\Ic = \Kc\Dc.
}
\]
Conversely, given $\tau$, the transformation $\sigma$ is defined by
\[
\xymatrix{
\sigma: \Hc\Cc=\Hc\Cc\Ic \map{\Hc\Cc\eta} \Hc\Cc\Kc\Hc
\map{\Hc\tau\Hc} \Hc\Kc\Dc\Hc \map{\xi\Dc\Hc} \Ic\Dc\Hc=\Dc\Hc.
}
\]
The transformations $\sigma$ and $\tau$ are said to be \emph{mates}.
They determine each other. Moreover,~\cite[Theorem~9]{Str:1972} 
\begin{multline*}
(\Hc,\sigma): (\Ms,\Cc) \to (\Ns,\Dc)
\text{ is a colax morphism of comonads} \\
 \iff  (\Kc,\tau):  (\Ns,\Dc)\to (\Ms,\Cc)
\text{ is a lax morphism of comonads.}
\end{multline*}

For more information on mates, see~\cite[Section~2.2]{KelStr:1974} or~\cite[Section~6.1]{Lei:2004}.

\subsection{Transferring coalgebras along an adjunction}\label{ss:cores-coind}

We now make use of all the preceding data 
discussed in Sections~\ref{ss:cores}--\ref{ss:mates}, as summarized by the
following diagrams.
\begin{equation*}
\begin{gathered}
\xymatrix{
\Ms \ar@(ul,dl)_{\Cc} \ar@/^/[r]^{\Hc} & \Ns \ar@/^/[l]^{\Kc} \ar@(ur,dr)^{\Dc}
}
\end{gathered}
\qquad
\begin{gathered}
\xymatrix{
\Ms \ar[r]^-{\Hc} \ar@{}[rd]|{\SEarrow \!\!\sigma} & \Ns\\
\Ms \ar[u]^-{\Cc} \ar[r]_{\Hc} & \Ns \ar[u]_{\Dc}
}
\end{gathered}
\qquad
\begin{gathered}
\xymatrix{
\Ms  \ar[d]_{\Cc} \ar@{}[rd]|{\SEarrow \!\!\tau} & \Ns \ar[l]_-{\Kc} \ar[d]^{\Dc} \\
\Ms   & \Ns \ar[l]^{\Kc} 
}
\end{gathered}
\end{equation*}
The transformations $\sigma$ and $\tau$ are mates for which $(\Hc,\sigma)$ and $(\Kc,\tau)$
are morphisms of comonads (colax and lax, respectively) and
we assume hypothesis~\eqref{e:existence}.

According to the
constructions of Sections~\ref{ss:cores}--\ref{ss:coind}, there are functors between
categories of coalgebras as follows.
\[
\xymatrix{
\Ms_{\Cc} \ar@/^/[r]^{\resH} & \Ns_{\Dc} \ar@/^/[l]^{\indK} 
}
\]
We proceed to turn them into an adjoint pair.

We first define a transformation
\[
\Hat{\eta}:\Ic\to \indK\resH.
\]
Let $(M,c)$ be a $\Cc$-coalgebra, write $\resH(M,c)=\bigl(\Hc(M),d\bigr)$. 
One verifies that the diagram 
\begin{equation}\label{e:fork2}
\xymatrix@C+5pt{
M \ar[r]^-{e} & \Cc\Kc\Hc(M) \ar@<0.5ex>[r]^-{f} \ar@<-0.5ex>[r]_-{g} & \Cc\Kc\Dc\Hc(M)
}
\end{equation}
is a cofork, where the maps $e,f$ and $g$ are defined as follows.
\begin{gather*}
e:M \map{c} \Cc(M) \map{\Cc(\eta_M)} \Cc\Kc\Hc(M)\\
f: \Cc\Kc\Hc(M) \map{\delta_{\Kc\Hc(M)}} \Cc\Cc\Kc\Hc(M)
\map{\Cc(\tau_{\Hc(M)})} \Cc\Kc\Dc\Hc(M)\\
g:\Cc\Kc\Hc(M) \map{\Cc\Kc\Hc(c)} \Cc\Kc\Hc\Cc(M)
\map{\Cc\Kc(\sigma_M)} \Cc\Kc\Dc\Hc(M).
\end{gather*}
The verification uses the definition of mate.

The map $e$ is a morphism of $\Cc$-coalgebras (by coassociativity of $c$ and
naturality of $\delta$).
In view of the definition of $d$~\eqref{e:Dcoalg}, the pair $(f,g)$ coincides with the pair of arrows in~\eqref{e:eq-coalg}, for $N=\Hc(M)$. It follows from the definition of 
$\indK\bigl(\Hc(M),d\bigr)$ that there is a unique morphism of $\Cc$-coalgebras
\[
\Hat{\eta}_{(M,c)}: (M,c) \to \indK\resH(M,c)
\]
such that
\[
\xymatrix{
M \ar[r]^-{e} \ar[d]_{\Hat{\eta}}
& \Cc\Kc\Hc(M) \\
\indK\resH(M,c) \ar[ru]_-{\Eq(f,g)}
}
\]
commutes.

 We turn to the transformation
\[
\Hat{\xi}:\resH\indK \to\Ic.
\]
On a $\Dc$-coalgebra $(N,d)$, $\Hat{\xi}$ is defined as the composite
\[
\resH\indK(N,d)=\Hc\indK(N,d) \map{} \Hc\Cc\Kc(N) \map{\Hc(\epsilon_{\Kc(N)})}
\Hc\Ic\Kc(N) = \Hc\Kc(N) \map{\xi_N} N,
\]
where the first map is obtained by applying $\Hc$ to~\eqref{e:indK}. One 
verifies that this is a morphism of $\Dc$-coalgebras.

\begin{theorem}[The Adjoint Lifting Theorem]\label{t:adjoint}
Under hypothesis~\eqref{e:existence}, the functors
\[
\xymatrix{
\Ms_{\Cc} \ar@/^/[r]^{\resH} & \Ns_{\Dc} \ar@/^/[l]^{\indK} 
}
\]
of Sections~\ref{ss:cores}--\ref{ss:coind}, with the transformations
$\Hat{\eta}$ and $\Hat{\xi}$ above, form an adjunction.
\end{theorem}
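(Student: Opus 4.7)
The plan is to verify the two triangle identities
\[
\Hat{\xi}\resH \cdot \resH\Hat{\eta} = \id_{\resH}
\qqand
\indK\Hat{\xi} \cdot \Hat{\eta}\indK = \id_{\indK}.
\]
Both forgetful functors $\Uc_\Cc$ and $\Uc_\Dc$ are faithful, since a coalgebra morphism is determined by its underlying morphism, so each identity can be tested after descending to the base category.

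For the first identity, fix $(M,c)\in\Ms_\Cc$, set $\resH(M,c)=\bigl(\Hc(M),d\bigr)$, and let $p:\Uc_\Cc\indK\resH(M,c)\to \Cc\Kc\Hc(M)$ denote the canonical arrow of~\eqref{e:indK}. Applying $\Uc_\Dc$ and using~\eqref{e:cores-forget}, the identity becomes an equality of endomorphisms of $\Hc(M)$. By the defining triangle of $\Hat{\eta}_{(M,c)}$, the composite $p\cdot\Uc_\Cc\Hat{\eta}_{(M,c)}$ equals $e=\Cc(\eta_M)c$, whence the construction of $\Hat{\xi}$ unfolds the full composite to $\xi_{\Hc(M)}\cdot\Hc(\epsilon_{\Kc\Hc(M)})\cdot\Hc\Cc(\eta_M)\cdot\Hc(c)$. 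Naturality of $\epsilon$ rewrites $\epsilon_{\Kc\Hc(M)}\cdot\Cc(\eta_M)$ as $\eta_M\cdot\epsilon_M$, counitality of $c$ cancels $\epsilon_M\cdot c$, and the remaining map $\xi_{\Hc(M)}\cdot\Hc(\eta_M)$ is the identity by a triangle identity of $(\Hc,\Kc)$.

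For the second identity, fix $(N,d)\in\Ns_\Dc$ and let $q:\indK(N,d)\to \Fc_\Cc\Kc(N)$ be the defining equalizer in $\Ms_\Cc$, with underlying morphism $p_{(N,d)}$. Since $q$ is monic in $\Ms_\Cc$, it suffices to show $q\cdot\indK\Hat{\xi}_{(N,d)}\cdot\Hat{\eta}_{\indK(N,d)}=q$, which by faithfulness of $\Uc_\Cc$ can be checked in $\Ms$. Setting $M=\Uc_\Cc\indK(N,d)$ with $\Cc$-coalgebra structure $c$, the construction of $\indK$ on morphisms expresses $p_{(N,d)}\cdot\Uc_\Cc\indK\Hat{\xi}_{(N,d)}$ as $\Cc\Kc\bigl(\Uc_\Dc\Hat{\xi}_{(N,d)}\bigr)\cdot p_{\resH\indK(N,d)}$, and a further composition with $\Uc_\Cc\Hat{\eta}_{\indK(N,d)}$ yields $\Cc\Kc\bigl(\Uc_\Dc\Hat{\xi}_{(N,d)}\bigr)\cdot \Cc(\eta_M)\cdot c$. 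The adjoint transpose $\Kc\bigl(\Uc_\Dc\Hat{\xi}_{(N,d)}\bigr)\cdot\eta_M$ simplifies via naturality of $\eta$ and the remaining triangle identity $\Kc\xi\cdot\eta\Kc=\id_\Kc$ to $\epsilon_{\Kc(N)}\cdot p_{(N,d)}$. Since $p_{(N,d)}$ is a morphism of $\Cc$-coalgebras into the cofree coalgebra $\Fc_\Cc\Kc(N)=\bigl(\Cc\Kc(N),\delta_{\Kc(N)}\bigr)$, $\Cc(p_{(N,d)})\cdot c$ equals $\delta_{\Kc(N)}\cdot p_{(N,d)}$, and counitality of $\delta$ collapses the composite to $p_{(N,d)}$, as desired.

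The main obstacle is clerical rather than conceptual: both $\Hat{\eta}$ and $\Hat{\xi}$ are defined through universal factorizations (an equalizer for $\Hat{\eta}$, the image of~\eqref{e:indK} for $\Hat{\xi}$), so each verification requires postcomposing with the canonical arrow~\eqref{e:indK} and unpacking. The mate relation between $\sigma$ and $\tau$ is already absorbed into the cofork~\eqref{e:fork2} that defines $\Hat{\eta}$ and into Lemma~\ref{l:eq-coalg}, so the triangle identities themselves reduce to manipulations involving only $\eta,\xi,\delta,\epsilon$ and the triangle identities of the original adjunction $(\Hc,\Kc)$.
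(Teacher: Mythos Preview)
Your proof is correct. You verify the two triangle identities directly, reducing each to a computation in the base categories via faithfulness of the forgetful functors, and then unwinding the definitions of $\Hat{\eta}$ and $\Hat{\xi}$ through their universal factorizations. The calculations are accurate: the first identity collapses to $\xi_{\Hc(M)}\cdot\Hc(\eta_M)=\id$ after counitality and naturality, and the second to $\Cc(\epsilon_{\Kc(N)})\cdot\delta_{\Kc(N)}\cdot p_{(N,d)}=p_{(N,d)}$ after the other triangle identity and the coalgebra-morphism property of $p_{(N,d)}$.

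The paper takes a different route: it does not carry out any computation, but simply observes that the statement is dual to a standard result in the literature (Borceux, \emph{Handbook of Categorical Algebra}~II, Theorem~4.5.6), and then remarks that the usual hypothesis there---existence of equalizers of all coreflexive pairs in $\Ms_\Cc$---can be weakened to~\eqref{e:existence} because, by Lemma~\ref{l:eq-coalg}, the only pairs for which equalizers are actually needed are coreflexive. Your approach is self-contained and makes the weaker hypothesis~\eqref{e:existence} visibly sufficient, since you invoke equalizers only of the specific pairs~\eqref{e:eq-coalg}; the paper's approach is shorter but defers the substance to an external reference. Both are entirely standard for this kind of result.
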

\begin{proof}
This statement is dual to~\cite[Theorem~4.5.6]{Bor:1994ii}; see also~\cite[Exercise~4.8.6]{Bor:1994ii}. 
The theorem is given under the hypothesis that $\Ms_{\Cc}$ has equalizers
of all coreflexive pairs.
By Lemma~\ref{l:eq-coalg}, the pairs of the form~\eqref{e:eq-coalg}
are coreflexive, so this hypothesis is stronger than~\eqref{e:existence}.
The result continues to hold under the weaker hypothesis, with the same proof.
\end{proof}

Additional references for the Adjoint Lifting Theorem are~\cite[Theorem 7.4.b]{BW:2005},
~\cite[Theorem~2]{Joh:1975},
and~\cite[Proposition~A.1.1.3]{Joh:2002}.

\begin{remark}\label{r:linton}
Consider the case in which $\Ms=\Ns$ and $\Hc=\Kc=\Ic$. In this case the mates $\sigma$ and $\tau$ coincide; they consist of a natural transformation $\Cc\to\Dc$
that commutes with the comonad structures, and $\resH(M,c)=(M,\sigma_M c)$.
In this special case, the result of Theorem~\ref{t:adjoint} appears (in dual form) in work of Linton~\cite[Corollary~1]{Lin:1969}. This and other early instances of the Adjoint Lifting Theorem in the literature are listed by Johnstone in~\cite[page~295]{Joh:1975}.
Note that, even though $(\Hc,\Kc)$ is in this case
an adjoint equivalence, the categories of coalgebras $\Ms_\Cc$ and $\Ns_\Dc$ need
not be equivalent.

On the other hand,
suppose that $\Cc=\Ic$ is the identity comonad on $\Ms$, $\Dc=\Hc\Kc$ is
the comonad on $\Ns$ associated to the adjunction $(\Hc,\Kc)$, and $\sigma$ is
the transformation
\[
\Hc\Cc=\Hc\Ic \map{\Hc\eta} \Hc\Kc\Hc = \Dc\Hc.
\]
It is easy to see that then $(\Hc,\sigma)$ is a colax morphism of comonads
and that 
\[
\resH:\Ms\to \Ns_{\Dc}
\]
is the comparison functor of Section~\ref{ss:comparison}. As already mentioned, Beck's Comonadicity Theorem (Theorem~\ref{t:beck}) gives conditions under which the comparison functor is an equivalence. We are interested in a similar situation in which neither comonad $\Cc$ nor $\Dc$ is trivial but we can still conclude that $(\resH,\indK)$
is an adjoint equivalence. This is the topic of Section~\ref{ss:descent}.
\end{remark}

\subsection{Conservative and faithful functors}\label{ss:con-fai}

The following miscellaneous results will be useful in Section~\ref{s:schneider}.
A category $\Ms$ is \emph{balanced} if an arrow $f$ in $\Ms$ that is both
an epimorphism and a monomorphism is in fact an isomorphism.

\begin{proposition}\label{p:con-fai}
Let $\Hc:\Ms\to\Ns$ be a functor.
\begin{enumerate}[(i)]
\item Assume that $\Ms$ has all equalizers (or coequalizers) and $\Hc$ preserves them. 
If $\Hc$ is conservative, then it is faithful.
\item Assume that $\Ms$ is balanced. If $\Hc$ is faithful,
then it is conservative.
\end{enumerate}
\end{proposition}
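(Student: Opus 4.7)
For part (i), the plan is to exploit the standard observation that a parallel pair $f,g$ is equal precisely when its equalizer is an isomorphism. Suppose $f,g\colon M\to M'$ satisfy $\Hc(f)=\Hc(g)$, and let $e\colon E\to M$ be their equalizer in $\Ms$. Since $\Hc$ preserves this equalizer, $\Hc(e)$ is an equalizer of the pair $\Hc(f),\Hc(g)$; but this pair is a single arrow, whose equalizer is (up to unique iso) just $\id_{\Hc(M)}$. Hence $\Hc(e)$ is an isomorphism. By conservativity of $\Hc$, the morphism $e$ is then an isomorphism in $\Ms$, and from $fe=ge$ we conclude $f=g$. The coequalizer version is entirely dual: take $q\colon M'\to Q$ the coequalizer of $f,g$, observe $\Hc(q)$ is invertible, deduce $q$ invertible, and conclude $f=g$ by right cancellation.

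For part (ii), the plan is to show that any $f\colon M\to M'$ with $\Hc(f)$ invertible is simultaneously monic and epic in $\Ms$, whence invertible by the balanced hypothesis. For monicity, given $a,b\colon L\to M$ with $fa=fb$, apply $\Hc$ to obtain $\Hc(f)\Hc(a)=\Hc(f)\Hc(b)$; since $\Hc(f)$ is an isomorphism in $\Ns$ it is in particular monic there, so $\Hc(a)=\Hc(b)$, and faithfulness of $\Hc$ yields $a=b$. The proof that $f$ is epic is dual, using that $\Hc(f)$ is also epic in $\Ns$.

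Neither part presents a real obstacle; both are short diagram arguments resting on the two folklore facts (equalizer of an equal pair is iso; invertible morphisms are both mono and epi). The only point that requires a word of care is that, in (i), $\Hc$ preserves the specific equalizer $e$, so $\Hc(e)$ is itself an equalizer of $(\Hc(f),\Hc(g))$ and thus can be compared directly to the canonical equalizer $\id_{\Hc(M)}$ of an equal pair; this is where conservativity is then applied to pull the isomorphism back to $\Ms$.
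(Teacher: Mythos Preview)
Your proof is correct and follows essentially the same approach as the paper: part (i) uses the equalizer-of-equal-pairs-is-iso trick combined with conservativity, and part (ii) argues that faithful functors reflect monos and epis (the paper cites this fact from Borceux, while you spell it out). No changes needed.
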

\begin{proof}
Assume that $\Ms$ has equalizers and these are preserved by $\Hc$.
Let $f,g:M\to M'$ be morphisms of $\Ms$ such that
$\Hc(f)=\Hc(g)$. Then, there are equalizers
\[
\xymatrix{
E \ar[r]^-{i} & M \ar@<0.5ex>[r]^-{f} \ar@<-0.5ex>[r]_-{g} & M'
}
\qqand
\xymatrix{
\Hc(E) \ar[r]^-{\Hc(i)} & \Hc(M) \ar@<0.5ex>[r]^-{\Hc(f)} \ar@<-0.5ex>[r]_-{\Hc(g)} & \Hc(M')
}
\]
in $\Ms$ and $\Ns$ respectively. Since $\Hc(f)=\Hc(g)$, it follows that $\Hc(i)$
is an isomorphism. Since $\Hc$ is conservative, $i$ is likewise an isomorphism,
and so $f=g$. Thus $\Hc$ is faithful. This proves one alternative of statement (i);
the remaining one follows since the notions of ``faithful'' and ``conservative'' are self-dual.

Statement (ii) follows from the fact that a faithful functor reflects both epimorphisms and
monomorphisms~\cite[Propositions~1.7.6 and~1.8.4]{Bor:1994i}.
\end{proof}

The following is an immediate consequence.

\begin{corollary}\label{c:con-fai}
Let $\Hc:\Ms\to\Ns$ be a left (or right) exact functor between abelian categories. Then
$\Hc$ is conservative if and only if it is faithful.
\end{corollary}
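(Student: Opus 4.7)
The plan is to deduce the corollary directly from Proposition~\ref{p:con-fai} by verifying its hypotheses in the abelian setting. The two implications correspond to the two parts of that proposition, so I would organize the proof accordingly.

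For the forward direction, suppose $\Hc$ is conservative. Since $\Ms$ is abelian, it has all finite limits and finite colimits; in particular it has all equalizers and all coequalizers. If $\Hc$ is left exact, it preserves finite limits, hence equalizers, so Proposition~\ref{p:con-fai}(i) (first alternative) applies and gives that $\Hc$ is faithful. If instead $\Hc$ is right exact, it preserves coequalizers, and the parenthetical alternative of Proposition~\ref{p:con-fai}(i) yields the same conclusion. This direction does not really use the abelian structure beyond the existence and preservation of the appropriate (co)equalizers.

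For the converse, suppose $\Hc$ is faithful. The point is that any abelian category is balanced: a morphism that is both a monomorphism and an epimorphism admits a kernel–cokernel factorization which forces it to be an isomorphism (this is a standard part of the axiomatics, and no exactness hypothesis on $\Hc$ is needed here). Thus Proposition~\ref{p:con-fai}(ii) applies, and $\Hc$ is conservative.

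The only mild subtlety is making sure that in the forward direction a single hypothesis (\emph{either} left exact \emph{or} right exact) suffices; this is why Proposition~\ref{p:con-fai}(i) was stated with an ``or'' from the start, and its self-dual phrasing is exactly what lets us run the same argument in both cases. No new calculation is required; the entire content is an assembly of Proposition~\ref{p:con-fai} with the standard facts that abelian categories have finite (co)limits and are balanced.
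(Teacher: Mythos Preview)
Your proposal is correct and follows exactly the approach the paper intends: the corollary is stated as an immediate consequence of Proposition~\ref{p:con-fai}, and you have simply spelled out why---abelian categories have (co)equalizers, left/right exactness gives preservation for part~(i), and abelian categories are balanced for part~(ii).
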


\section{Conjugating a comonad by an adjunction}\label{s:conjugating}

Consider the situation
\[
\xymatrix{
\Ms \ar@(ul,dl)_{\Cc} \ar@/^/[r]^{\Hc} & \Ns \ar@/^/[l]^{\Kc} 
}
\]
in which $(\Hc,\Kc)$ is an adjunction and $\Cc$ is a comonad. In Section~\ref{ss:conjugate-comonad} we
turn the composite functor
$\Hc\Cc\Kc$ 
into a comonad on $\Ns$. We say this comonad is obtained from $\Cc$ by conjugation
under the adjunction $(\Hc,\Kc)$ and denote it by $\cCc$. 
The comonads $\Cc$ and $\cCc$ are related by a canonical colax morphism
(Section~\ref{ss:colax}). This provides the ingredients for 
an application of the Adjoint Lifting Theorem in Section~\ref{ss:lifting-conjugate},
which results in an adjunction between the categories of coalgebras over $\Cc$
and $\cCc$.
Our main goal here, achieved in Section~\ref{ss:descent}, is to obtain precise conditions when this is in fact
an adjoint equivalence. We call this result the \emph{Conjugate Comonadicity
Theorem}. It is an extension of Beck's Comonadicity Theorem (and follows from it).
We expand on the study of the conjugate comonad in Sections~\ref{ss:universal} and~\ref{ss:equivalence-coalgebra}, where we establish
a universal property of $\cCc$ and derive a criterion for a colax morphism of
comonads to induce an equivalence between coalgebra categories.

Let 
\[
\delta:\Cc\to\Cc^2 \qand \epsilon:\Cc\to\Ic
\]  
be the comultiplication 
and the counit of the comonad $\Cc$. Let
\[
\eta:\Ic\to\Kc\Hc \qand \xi:\Hc\Kc\to\Ic
\] 
be the unit and the counit of the adjunction $(\Hc,\Kc)$.
We let $\cCc$ denote the composite functor
\[
\cCc:\Ns\map{\Kc}\Ms\map{\Cc}\Ms\map{\Hc}\Ns.
\]
(The notation does not show that the functor $\cCc$, and the comonad structure
to be defined below, depend not only on the comonad $\Cc$ but also on the adjunction $(\Hc,\Kc)$.)

\subsection{The conjugate comonad}\label{ss:conjugate-comonad}

We turn the functor $\cCc$ into a comonad on $\Ns$. 

Consider the adjunction $(\Uc_\Cc,\Fc_\Cc)$ associated to
the comonad $\Cc$, as in Section~\ref{ss:comparison}. 
Composing the adjunctions
\begin{equation}\label{e:comp-adj}
\xymatrix{
\Ms_{\Cc}  \ar@/^/[r]^{\Uc_\Cc} & \Ms \ar@/^/[r]^{\Hc} \ar@/^/[l]^{\Fc_\Cc} & \Ns \ar@/^/[l]^{\Kc} 
}
\end{equation}
we obtain an adjunction $(\Hc\Uc_\Cc, \Fc_\Cc\Kc)$. The comonad on $\Ns$ defined by this adjunction (as in Section~\ref{ss:comparison}) is 
\[
\Hc\Uc_\Cc\Fc_\Cc\Kc=\Hc\Cc\Kc=\cCc,
\] 
and the structure turns out to be as follows.

The comultiplication is
\begin{equation}\label{e:conjugate-com}
\cCc=\Hc\Cc\Kc \map{\Hc \delta \Kc} \Hc\Cc\Cc\Kc = \Hc\Cc\Ic\Cc\Kc \map{\Hc\Cc\eta\Cc\Kc} \Hc\Cc\Kc\Hc\Cc\Kc=(\cCc)^2.
\end{equation}
 The counit is
\begin{equation}\label{e:conjugate-cou}
\cCc=\Hc\Cc\Kc\map{\Hc\epsilon\Kc}\Hc\Ic\Kc=\Hc\Kc\map{\xi}\Ic.
\end{equation}

\begin{definition}\label{d:conjugate-comonad}
We refer to the functor $\cCc=\Hc\Cc\Kc:\Ns\to\Ns$ with the above structure
as the \emph{conjugate comonad} of $\Cc$ under the adjunction $(\Hc,\Kc)$.
\end{definition}

\begin{remark} When $\Cc=\Ic$ is the identity comonad, the conjugate comonad
is simply the comonad $\Hc\Kc$ defined by the adjunction, as in Section~\ref{ss:comparison}.
On the other hand, as explained above,
the general case of the construction can be reduced to this
special one.
\end{remark}

Consider again the composite adjunction~\eqref{e:comp-adj}.
We use $\QcC$ to denote the corresponding comparison functor,
as in~\eqref{e:comp-fun}. It follows from the discussion in Section~\ref{ss:comparison}
that $\QcC$ is the unique functor 
\begin{equation}\label{e:comp-funC}
\QcC: \Ms_{\Cc} \to \Ns_{\cCc}
\end{equation}
between categories of coalgebras such that
\begin{equation*}
\begin{gathered}
\xymatrix{
\Ms \ar[r]^-{\Hc} & \Ns \\
\Ms_{\Cc} \ar[u]^{\Uc_{\Cc}} \ar[r]_{\QcC} & \Ns_{\cCc} \ar[u]_{\Uc_{\cCc}}
}
\end{gathered}
\qand
\begin{gathered}
\xymatrix{
 \Ms \ar[d]_{\Fc_{\Cc}} & \Ns \ar[l]_-{\Kc} \ar[d]^{\Fc_{\cCc}} \\
 \Ms_{\Cc} \ar[r]_{\QcC} & \Ns_{\cCc}   &
}
\end{gathered}
\end{equation*}
commute. Explicitly, if $(M,c)$ is a $\Cc$-coalgebra,
then $\QcC(M,c)=\bigl(\Hc(M),d\bigr)$ where $d$ is the composite
\begin{equation}\label{e:comp-coalg}
\Hc(M) \map{\Hc(c)} \Hc\Cc(M) \map{\Hc\Cc(\eta_M)} \Hc\Cc\Kc\Hc(M)=\cCc\Hc(M).
\end{equation}

In Section~\ref{ss:descent} we discuss conditions under which the functor $\QcC$
is an equivalence.


\subsection{A colax morphism}\label{ss:colax}

 Define a transformation $\gamma$ by
\begin{equation}\label{e:def-sigma}
\gamma: \Hc\Cc =\Hc\Cc\Ic\map{\Hc\Cc\eta} \Hc\Cc\Kc\Hc = \cCc\Hc.
\end{equation}

\begin{lemma}\label{l:conjugate-comonad}
The transformation $\gamma$ turns $\Hc$ into a colax morphism of comonads
\[
(\Hc,\gamma): (\Ms,\Cc)\to (\Ns,\cCc).
\]
In addition, its mate is the transformation $\tgamma$ given by
\begin{equation}\label{e:def-tau}
\tgamma: \Cc\Kc = \Ic\Cc\Kc\map{\eta\Cc\Kc} \Kc\Hc\Cc\Kc = \Kc\cCc.
\end{equation}
\end{lemma}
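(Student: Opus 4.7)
The plan is to verify all three assertions of the lemma by direct computations, exploiting only naturality of $\eta$, $\delta$, $\epsilon$ and the triangle identities for the adjunction $(\Hc,\Kc)$. No deep input is required; the challenge is purely bookkeeping of whiskerings.

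First, to check that $(\Hc,\gamma)$ is a colax morphism, I would verify the two diagrams in~\eqref{e:mor-comonad}. For the comultiplication square, I need to show
\[
(\cCc\gamma)(\gamma\Cc)(\Hc\delta) \;=\; (\Delta\Hc)\,\gamma,
\]
where $\Delta:\cCc\to\cCc^2$ is the comultiplication~\eqref{e:conjugate-com}. Substituting $\gamma=\Hc\Cc\eta$ and $\Delta=(\Hc\Cc\eta\Cc\Kc)(\Hc\delta\Kc)$, the left-hand side evaluated at $M$ becomes
\[
\Hc\Cc\Kc\Hc\Cc(\eta_M)\circ\Hc\Cc(\eta_{\Cc M})\circ\Hc(\delta_M),
\]
while the right-hand side becomes
\[
\Hc\Cc(\eta_{\Cc\Kc\Hc M})\circ\Hc(\delta_{\Kc\Hc M})\circ\Hc\Cc(\eta_M).
\]
Naturality of $\delta$ applied to the arrow $\eta_M$ transforms $\Hc(\delta_{\Kc\Hc M})\circ\Hc\Cc(\eta_M)$ into $\Hc\Cc\Cc(\eta_M)\circ\Hc(\delta_M)$; naturality of $\eta$ applied to $\Cc(\eta_M)$ (whiskered by $\Cc$) transforms $\Hc\Cc\Kc\Hc\Cc(\eta_M)\circ\Hc\Cc(\eta_{\Cc M})$ into $\Hc\Cc(\eta_{\Cc\Kc\Hc M})\circ\Hc\Cc\Cc(\eta_M)$. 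Chaining these two rewrites converts one side into the other.

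For the counit triangle, I need $(E\Hc)\gamma=\Hc\epsilon$, where $E=\xi\circ\Hc\epsilon\Kc$ is the counit~\eqref{e:conjugate-cou} of $\cCc$. Evaluated at $M$, the composite is
\[
\xi_{\Hc M}\circ\Hc\epsilon_{\Kc\Hc M}\circ\Hc\Cc(\eta_M).
\]
Naturality of $\epsilon:\Cc\to\Ic$ applied to $\eta_M:M\to\Kc\Hc(M)$ yields $\epsilon_{\Kc\Hc M}\circ\Cc(\eta_M)=\eta_M\circ\epsilon_M$, so the expression collapses to $\xi_{\Hc M}\circ\Hc(\eta_M)\circ\Hc(\epsilon_M)$, and the triangle identity $\xi_{\Hc M}\circ\Hc(\eta_M)=\id_{\Hc M}$ leaves precisely $\Hc(\epsilon_M)$.

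Second, to identify the mate, I would substitute $\sigma=\gamma=\Hc\Cc\eta$ into the formula for $\tau$ from Section~\ref{ss:mates}:
\[
\tgamma \;=\; (\Kc\cCc\xi)\circ(\Kc\gamma\Kc)\circ(\eta\Cc\Kc).
\]
At $N\in\Ns$, the last two factors are $\Kc\Hc\Cc\Kc(\xi_N)\circ\Kc\Hc\Cc(\eta_{\Kc N})$, which by functoriality of $\Kc\Hc\Cc$ equal $\Kc\Hc\Cc\bigl(\Kc(\xi_N)\circ\eta_{\Kc N}\bigr)$. The triangle identity for $(\Hc,\Kc)$ gives $\Kc(\xi_N)\circ\eta_{\Kc N}=\id_{\Kc N}$, so these two factors compose to the identity and only $\eta\Cc\Kc$ survives. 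This yields $\tgamma=\eta\Cc\Kc$ as asserted.

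The main obstacle is nothing more than tracking whiskerings correctly in the comultiplication step, where four different whiskerings of $\eta$ and $\delta$ appear and one must recognize that exactly one naturality square for $\delta$ and one naturality square for $\eta$ bridge the two sides; all remaining verifications are immediate applications of triangle identities.
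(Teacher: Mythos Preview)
Your proof is correct and follows essentially the same approach as the paper: both verify the two colax-morphism axioms using naturality of $\delta$ and $\eta$ (respectively $\epsilon$) together with the triangle identity $\xi\Hc\circ\Hc\eta=\id$, and both compute the mate using the other triangle identity $\Kc\xi\circ\eta\Kc=\id$. The only difference is presentational---the paper organizes each verification as a single commutative diagram, whereas you write out the chain of equalities explicitly---but the underlying reasoning is identical.
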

\begin{proof} In order to check that $\gamma$ is a colax morphism, consider the following diagram.
\[
\xymatrix@C+20pt{
 \Hc\Cc\ar@/^2.2pc/[rr]^-{\gamma} \ar[d]_{\Hc\delta} \ar[r]^-{\Hc\Cc\eta} & \Hc\Cc\Kc\Hc\ar@{=}[r] \ar[d]^{\Hc\delta\Kc\Hc} &     \cCc\Hc \ar[ddd]^{\delta\Hc} \\
 \Hc\Cc\Cc \ar[r]^{\Hc\Cc\Cc\eta} \ar[d]_{\Hc\Cc\eta\Cc} \ar@/_3.2pc/[dd]_{\gamma\Cc} &  
\Hc\Cc\Cc\Kc\Hc \ar[d]^{\Hc\Cc\eta\Cc\Kc\Hc} \\
 \Hc\Cc\Kc\Hc\Cc \ar@{=}[d] \ar[r]_-{\Hc\Cc\Kc\Hc\Cc\eta} & \Hc\Cc\Kc\Hc\Cc\Kc\Hc \ar@{=}[rd]   \\
\cCc\Hc\Cc \ar[rr]_-{\cCc\gamma} & & \cCc\cCc\Hc
}
\]
The outer diagrams commute by definition of $\gamma$ and
of $\delta:\cCc\to\cCc\cCc$. The inner rectangles commute by naturality.
It follows that the diagram commutes, and this yields the first condition in~\eqref{e:mor-comonad}. 

The second condition in~\eqref{e:mor-comonad} is the commutativity of the
following diagram. 
\[
\xymatrix@C+20pt{
\Hc\Cc \ar[r]^-{\Hc\epsilon} \ar[d]_{\Hc\Cc\eta} \ar@/_2.5pc/[dd]_-{\gamma}  & \Hc\ar[d]^-{\Hc\eta} \ar@/^2.5pc/@{=}[dd] \\
\Hc\Cc\Kc\Hc \ar[r]^{\Hc\epsilon\Kc\Hc} \ar@{=}[d] & \Hc\Kc\Hc \ar[d]^{\xi\Hc} \\
\cCc\Hc \ar[r]_-{\epsilon\Hc} & \Hc
}
\]
This employs one of the axioms involving the unit and counit of the adjunction $(\Hc,\Kc)$.

Finally, the fact that the mate of $\gamma$ is given by~\eqref{e:def-tau} follows
from the commutativity of the following diagram.
\[
\xymatrix@C+5pt{
\Cc\Kc \ar[r]^-{\eta\Cc\Kc} \ar[dd]_{\tgamma} & \Kc\Hc\Cc\Kc \ar[d]^{\Kc\Hc\Cc\eta\Kc} 
\ar@/^4.5pc/[dd]^{\Kc\gamma\Kc}  \ar@{=}[ldd] \\
& \Kc\Hc\Cc\Kc\Hc\Kc \ar@{=}[d] \\
\Kc\cCc & \Kc\cCc\Hc\Kc \ar[l]^{\Kc\cCc\xi}
}
\]
This employs the remaining axiom involving the unit and counit of the adjunction.
\end{proof}

We now have two functors $\Ms_{\Cc} \to \Ns_{\cCc}$ between categories of coalgebras. The comparison functor $\QcC$
defined in~\eqref{e:comp-funC} and the functor $\Hc_{\gamma}$
induced by the colax morphism $(\Hc,\gamma)$ as in Section~\ref{ss:cores}.

\begin{lemma}\label{l:comp-colax}
The functors $\QcC$ and $\Hc_{\gamma}$ coincide.
\end{lemma}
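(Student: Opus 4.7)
The plan is simply to unwind the definitions of the two functors and observe that they agree both on objects and on morphisms.

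For the object level, let $(M,c)$ be a $\Cc$-coalgebra. By~\eqref{e:comp-coalg}, the comparison functor sends $(M,c)$ to the pair $\bigl(\Hc(M),d\bigr)$, where $d$ is the composite
\[
\Hc(M)\map{\Hc(c)}\Hc\Cc(M)\map{\Hc\Cc(\eta_M)}\Hc\Cc\Kc\Hc(M)=\cCc\Hc(M).
\]
On the other hand, by~\eqref{e:Dcoalg} (with $\sigma$ replaced by $\gamma$ and $\Dc$ by $\cCc$), the induced functor $\Hc_{\gamma}$ sends $(M,c)$ to $\bigl(\Hc(M),d'\bigr)$, where
\[
d': \Hc(M)\map{\Hc(c)}\Hc\Cc(M)\map{\gamma_M}\cCc\Hc(M).
\]
But according to the definition of $\gamma$ in~\eqref{e:def-sigma}, the component $\gamma_M$ is precisely $\Hc\Cc(\eta_M)$. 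Hence $d=d'$, and the two functors agree on objects.

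For the morphism level, note that the comparison functor satisfies $\Uc_{\cCc}\QcC=\Hc\Uc_{\Cc}$ by the defining property recalled just before~\eqref{e:comp-coalg}, while $\Hc_{\gamma}$ satisfies the analogous equality by~\eqref{e:cores-forget}. Since $\Uc_{\cCc}$ is faithful (being the forgetful functor of a comonad), both functors must act on a morphism $f$ of $\Cc$-coalgebras by $\Hc(f)$. Therefore $\QcC$ and $\Hc_{\gamma}$ coincide.

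The argument is essentially a direct comparison of definitions, and I do not anticipate any real obstacle; the only subtlety worth flagging is the replacement of the generic transformation $\sigma$ in~\eqref{e:Dcoalg} by the specific $\gamma$ from~\eqref{e:def-sigma}, after which the two prescriptions for the coalgebra structure on $\Hc(M)$ literally coincide.
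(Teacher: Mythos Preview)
Your proof is correct and follows essentially the same approach as the paper: unwind the definitions via~\eqref{e:comp-coalg},~\eqref{e:Dcoalg}, and~\eqref{e:def-sigma} to see that both functors yield the same coalgebra structure on $\Hc(M)$, and then observe that both act as $\Hc$ on morphisms. Your invocation of faithfulness of $\Uc_{\cCc}$ for the morphism-level check is a slight embellishment---the paper simply notes that both functors send $f$ to $\Hc(f)$ directly from the definitions---but it is harmless.
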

\begin{proof}
Let $(M,c)$ be a $\Cc$-coalgebra. We calculate $\Hc_{\gamma}(M,c)$ using~\eqref{e:Dcoalg} and~\eqref{e:def-sigma}, and $\QcC(M,c)$ using~\eqref{e:comp-coalg}. In both cases the result is $\bigl(\Hc(M),d\bigr)$ where $d$ is as in~\eqref{e:comp-coalg}.
On morphisms both functors agree with $\Hc$.
\end{proof}

\subsection{Lifting for the conjugate comonad}\label{ss:lifting-conjugate}

We now consider the comonad $\Cc$ together with its conjugate $\cCc$
\[
\xymatrix{
\Ms \ar@(ul,dl)_{\Cc} \ar@/^/[r]^{\Hc} & \Ns \ar@/^/[l]^{\Kc}  \ar@(ur,dr)^{\cCc}
}
\]
in the context of the Adjoint Lifting Theorem of Section~\ref{ss:cores-coind}.
We work with the transformation $\gamma$ and its mate $\tgamma$ of Section~\ref{ss:colax}.

The first thing to note is that the pair of arrows~\eqref{e:eq-coalg}
is in this situation of a special kind. 

\begin{lemma}\label{l:Hsplit}
For any $\cCc$-coalgebra $(N,d)$, the pair of arrows
\[
\Cc\Kc(N)\map{\delta_{\Kc(N)}} \Cc\Cc\Kc(N) \map{\Cc(\tgamma_N)} \Cc\Kc\cCc(N)
\qand
\Cc\Kc(N)\map{\Cc\Kc(d)} \Cc\Kc\cCc(N)
\]
in the category $\Ms_{\Cc}$ is coreflexive and $\Hc\Uc_{\Cc}$-split.
\end{lemma}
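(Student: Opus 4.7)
The plan is to verify the two assertions—coreflexivity in $\Ms_{\Cc}$ and $\Hc\Uc_{\Cc}$-splitness—separately. Both will follow almost formally from how the conjugate comonad was set up, with the second part carrying the only real content.

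First, coreflexivity in $\Ms_{\Cc}$ is immediate from Lemma~\ref{l:eq-coalg} applied to the lax morphism $(\Kc,\tgamma):(\Ns,\cCc)\to(\Ms,\Cc)$ furnished by Lemma~\ref{l:conjugate-comonad}: the displayed pair is precisely~\eqref{e:eq-coalg} with $\Dc=\cCc$ and $\tau=\tgamma$, evaluated at $(N,d)$, and Lemma~\ref{l:eq-coalg} produces the common retraction in $\Ms_{\Cc}$ from the counit of $\cCc$.

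For $\Hc\Uc_{\Cc}$-splitness, the strategy is to push the pair through $\Hc\Uc_{\Cc}$ and recognize the image as the canonical parallel pair~\eqref{e:pair} associated to the $\cCc$-coalgebra $(N,d)$. Since $\Uc_{\Cc}$ is the identity on underlying morphisms, it suffices to apply $\Hc$. The second arrow $\Hc\Cc\Kc(d)$ becomes literally $\cCc(d)$. For the first arrow, substitute $\tgamma_N=\eta_{\Cc\Kc(N)}$ from~\eqref{e:def-tau} into the definition~\eqref{e:conjugate-com} of the comultiplication of $\cCc$; this exhibits
\[
\Hc\Cc(\tgamma_N)\circ\Hc\delta_{\Kc(N)}:\cCc(N)\to\cCc\cCc(N)
\]
as exactly the comultiplication of $\cCc$ at $N$. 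Hence the image pair coincides with the canonical pair $\bigl(\delta_N,\cCc(d)\bigr)$ of~\eqref{e:pair} for the $\cCc$-coalgebra $(N,d)$, and the split cofork~\eqref{e:splitfork}—with the two retractions obtained from the counit of $\cCc$—supplies the required splitting data in $\Ns$.

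The only potentially tricky step is the identification of the composite $\Hc\Cc(\tgamma_N)\circ\Hc\delta_{\Kc(N)}$ with the comultiplication of $\cCc$ at $N$, but this is pure bookkeeping once the definitions are unpacked; no diagram chase and neither axiom in~\eqref{e:mor-comonad} is needed. The lemma is in effect a tautology encoding the very recipe by which the conjugate comonad structure of Section~\ref{ss:conjugate-comonad} was defined, so the writeup should be very short.
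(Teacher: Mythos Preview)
Your proposal is correct and follows essentially the same approach as the paper: both invoke Lemma~\ref{l:eq-coalg} for coreflexivity, then apply $\Hc\Uc_{\Cc}$ and identify the resulting pair with the canonical pair~\eqref{e:pair} for the $\cCc$-coalgebra $(N,d)$ by unpacking $\tgamma=\eta\Cc\Kc$ and the definition~\eqref{e:conjugate-com} of the comultiplication, so that the split cofork~\eqref{e:splitfork} furnishes the splitting. Your write-up is slightly more explicit than the paper's, but there is no substantive difference.
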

\begin{proof} 
The pair is coreflexive by Lemma~\ref{l:eq-coalg} applied to the comonad $\cCc$. Applying $\Hc\Uc_{\Cc}$ to the pair we obtain
\[
\cCc(N)\map{\Hc(\delta_{\Kc(N)})} \Hc\Cc\Cc\Kc(N) \map{\Hc\Cc(\tgamma_N)} (\cCc)^2(N)
\qand
\cCc(N)\map{\cCc(d)} (\cCc)^2(N).
\]
According to~\eqref{e:def-tau}, $\tgamma=\eta\Cc\Kc$.
Recalling~\eqref{e:conjugate-com} and~\eqref{e:conjugate-cou}, we see that
the previous pair is precisely the pair~\eqref{e:pair} associated to the $\cCc$-coalgebra $(N,d)$.
We know from Section~\ref{ss:coalgebra} that this pair is part of the split cofork~\eqref{e:splitfork}.
\end{proof}

Consider now the following hypothesis.
\begin{equation}
\label{e:preserve}
\begin{split} 
& \text{The functors $\Cc$ and $\Cc^2$ preserve the equalizer of any parallel pair in $\Ms$}\\
& \text{which is both coreflexive and $\Hc$-split.}
\end{split}
\end{equation}

\begin{lemma}\label{l:lifting-conjugate}
Suppose that:
\begin{itemize}
\item The category $\Ms$ satisfies hypothesis~\eqref{e:coreflexive}.
\item The comonad $\Cc$ satisfies hypothesis~\eqref{e:preserve}.
\end{itemize}
Then the category $\Ms_{\Cc}$ possesses equalizers of all parallel pairs which
are both coreflexive and $\Hc\Uc_{\Cc}$-split.
Moreover, these equalizers are preserved by $\Uc_{\Cc}$.
\end{lemma}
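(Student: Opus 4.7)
The proof plan is essentially an application of Lemma~\ref{l:creation} (creation of limits by the forgetful functor $\Uc_{\Cc}$) to the underlying pair obtained by applying $\Uc_{\Cc}$. I will unpack this in three steps.

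First, I would take a parallel pair
\[
\xymatrix@C+5pt{ (M,c) \ar@<0.5ex>[r]^-{f} \ar@<-0.5ex>[r]_-{g} & (M',c') }
\]
in $\Ms_{\Cc}$ that is coreflexive and $\Hc\Uc_{\Cc}$-split, and apply $\Uc_{\Cc}$ to it. The resulting pair in $\Ms$ is still coreflexive (the common retraction in $\Ms_{\Cc}$ is sent to a common retraction in $\Ms$) and, tautologically, $\Hc$-split (since $\Hc\Uc_{\Cc}$-split for the original pair means exactly that $\Hc$ applied to $\Uc_{\Cc}$ of the pair is part of a split cofork in $\Ns$). By hypothesis~\eqref{e:coreflexive} on $\Ms$, this pair therefore possesses an equalizer $e:E\to M$ in $\Ms$.

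Next, I would invoke hypothesis~\eqref{e:preserve} on $\Cc$: since the pair $(f,g)$ in $\Ms$ is coreflexive and $\Hc$-split, both $\Cc$ and $\Cc^{2}$ preserve its equalizer. This is precisely the input required by Lemma~\ref{l:creation}, which tells us that $\Uc_{\Cc}$ creates any limit in $\Ms$ that is preserved by $\Cc$ and $\Cc^{2}$. Applied to the equalizer $e$, this produces a unique $\Cc$-coalgebra structure $d'':E\to\Cc(E)$ making
\[
\xymatrix@C+5pt{ (E,d'') \ar[r]^-{e} & (M,c) \ar@<0.5ex>[r]^-{f} \ar@<-0.5ex>[r]_-{g} & (M',c') }
\]
an equalizer in $\Ms_{\Cc}$, and this equalizer is preserved by $\Uc_{\Cc}$ (as spelled out in the discussion of creation of equalizers after Lemma~\ref{l:creation}).

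Finally, I would remark that this covers both conclusions of the lemma simultaneously: existence of the equalizer in $\Ms_{\Cc}$, and its preservation by $\Uc_{\Cc}$. There is no real obstacle here; the only point requiring any care is verifying that the pair in $\Ms$ obtained by forgetting remains coreflexive and $\Hc$-split, so that hypotheses~\eqref{e:coreflexive} and~\eqref{e:preserve} genuinely apply. The meat of the argument has already been done in Lemma~\ref{l:creation}, and this lemma simply records the particular instance of it that we will need later (together with Lemma~\ref{l:Hsplit}) in the setting of the conjugate comonad.
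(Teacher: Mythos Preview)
Your proposal is correct and follows essentially the same approach as the paper: forget to $\Ms$ to obtain a coreflexive $\Hc$-split pair, use hypothesis~\eqref{e:coreflexive} to produce an equalizer, use hypothesis~\eqref{e:preserve} to ensure $\Cc$ and $\Cc^2$ preserve it, and then invoke Lemma~\ref{l:creation} to lift the equalizer to $\Ms_{\Cc}$ with preservation by $\Uc_{\Cc}$. The paper's proof is virtually identical to yours, only slightly more terse.
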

\begin{proof}
Consider such a pair 
\[
\xymatrix@C+5pt{
 (M,c) \ar@<0.5ex>[r]^-{f} \ar@<-0.5ex>[r]_-{g} & (M',c')
}
\]
of arrows in $\Ms_{\Cc}$.  Forgetting the $\Cc$-coalgebra structure we obtain
a pair of parallel arrows in $\Ms$ which is coreflexive and $\Hc$-split. By hypothesis~\eqref{e:coreflexive}, this pair has an equalizer 
\[
\xymatrix@C+5pt{
 E \ar[r] & M \ar@<0.5ex>[r]^-{f} \ar@<-0.5ex>[r]_-{g} & M'
}
\]
in $\Ms$, and by~\eqref{e:preserve}, this equalizer is preserved by $\Cc$
and $\Cc^2$.
By Lemma~\ref{l:creation}, $E$ has
a $\Cc$-coalgebra structure $c''$ for which 
\[
\xymatrix@C+5pt{
 (E,c'') \ar[r] & (M,c) \ar@<0.5ex>[r]^-{f} \ar@<-0.5ex>[r]_-{g} & (M',c')
}
\]
is an equalizer in $\Ms_{\Cc}$. This shows that $\Ms_{\Cc}$ possesses 
the required equalizers and that they are preserved by $\Uc_{\Cc}$.
\end{proof}

\begin{proposition}\label{p:lifting-conjugate}
Suppose that:
\begin{itemize}
\item The category $\Ms$ satisfies hypothesis~\eqref{e:coreflexive}.
\item The comonad $\Cc$ satisfies hypothesis~\eqref{e:preserve}.
\end{itemize}
Then there is an adjunction
\[
\xymatrix{
\Ms_{\Cc} \ar@/^/[r]^{\Hc_{\gamma}} & \Ns_{\cCc} \ar@/^/[l]^{\Kc^{\tgamma}}.
}
\]
The transformations $\gamma$ and $\tgamma$ are as in Section~\ref{ss:colax}
and the adjunction as in Section~\ref{ss:cores-coind}.
\end{proposition}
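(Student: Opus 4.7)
The plan is to reduce this statement to a direct application of the Adjoint Lifting Theorem (Theorem~\ref{t:adjoint}). That theorem, applied to the colax morphism $(\Hc,\gamma)$ and the lax morphism $(\Kc,\tgamma)$ of Lemma~\ref{l:conjugate-comonad}, produces the claimed adjunction provided one verifies hypothesis~\eqref{e:existence}; namely, that for every $\cCc$-coalgebra $(N,d)$ the coreflexive pair~\eqref{e:eq-coalg}
\[
\xymatrix@C+5pt{
\Cc\Kc(N) \ar@<0.5ex>[r] \ar@<-0.5ex>[r] & \Cc\Kc\cCc(N)
}
\]
admits an equalizer in $\Ms_{\Cc}$. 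So the proof reduces to this existence question.

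First, I observe that Lemma~\ref{l:Hsplit} already tells us that this pair is not only coreflexive in $\Ms_{\Cc}$ but in fact $\Hc\Uc_{\Cc}$-split. This is the key observation, since it brings the pair precisely into the scope of Lemma~\ref{l:lifting-conjugate}: that lemma states that, under the two standing hypotheses of the proposition (namely that $\Ms$ satisfies~\eqref{e:coreflexive} and $\Cc$ satisfies~\eqref{e:preserve}), the category $\Ms_{\Cc}$ possesses equalizers of all coreflexive $\Hc\Uc_{\Cc}$-split pairs. Consequently the desired equalizer exists, i.e.\ hypothesis~\eqref{e:existence} holds.

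With~\eqref{e:existence} verified, I invoke Theorem~\ref{t:adjoint} for the pair of mates $(\gamma,\tgamma)$. The output is exactly an adjunction $\Hc_{\gamma}\dashv\Kc^{\tgamma}$ between $\Ms_{\Cc}$ and $\Ns_{\cCc}$, with unit $\Hat\eta$ and counit $\Hat\xi$ as constructed in Section~\ref{ss:cores-coind}.

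Since every nontrivial content has been packaged into the preceding lemmas, there is no real obstacle; the only thing to be careful about is to trace through the identifications correctly, checking that the pair considered in Lemma~\ref{l:Hsplit} is the same as the pair~\eqref{e:eq-coalg} to which~\eqref{e:existence} refers (with $\Dc=\cCc$ and $\tau=\tgamma$), so that the two lemmas combine cleanly to deliver the hypothesis required by Theorem~\ref{t:adjoint}.
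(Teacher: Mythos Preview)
Your proof is correct and follows exactly the same route as the paper: invoke Lemma~\ref{l:Hsplit} to see that the relevant pair is coreflexive and $\Hc\Uc_{\Cc}$-split, then Lemma~\ref{l:lifting-conjugate} to obtain the equalizer in $\Ms_{\Cc}$, verifying~\eqref{e:existence} so that Theorem~\ref{t:adjoint} applies. The paper's argument is identical, just more tersely stated.
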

\begin{proof}
It follows from Lemmas~\ref{l:Hsplit} 
and~\ref{l:lifting-conjugate} that the category $\Ms_{\Cc}$ possesses
equalizers of all pairs of the form~\eqref{e:eq-coalg} for $\Dc=\cCc$. Thus,
 hypothesis~\eqref{e:existence} holds.
The adjunction then follows from Theorem~\ref{t:adjoint}.
\end{proof}

We proceed to show that under certain additional hypotheses the
preceding is in fact
an adjoint equivalence.

\subsection{The Conjugate Comonadicity Theorem}\label{ss:descent}

We return to the comparison functor $\QcC$ of~\eqref{e:comp-funC}.

\begin{theorem}[The Conjugate Comonadicity Theorem]\label{t:descent}
The following are equivalent statements.
\begin{enumerate}[(i)]
\item For all comonads $\Cc$ on $\Ms$ which satisfy hypothesis~\eqref{e:preserve},
the comparison functor  $\QcC:\Ms_{\Cc} \to \Ns_{\cCc}$
is an equivalence.
\item Hypotheses~\eqref{e:coreflexive} through~\eqref{e:Hconserve} hold.
\end{enumerate}
\end{theorem}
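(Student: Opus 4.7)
The plan is to reduce the theorem to Beck's Comonadicity Theorem (Theorem~\ref{t:beck}) applied to the composite adjunction~\eqref{e:comp-adj}
\[
\xymatrix{
\Ms_{\Cc}  \ar@/^/[r]^{\Uc_\Cc} & \Ms \ar@/^/[r]^{\Hc} \ar@/^/[l]^{\Fc_\Cc} & \Ns \ar@/^/[l]^{\Kc}
}
\]
whose left adjoint is $\Hc\Uc_\Cc$ and whose comparison functor, by construction, is precisely $\QcC$. The proof then splits into two implications, both of which follow by carefully translating hypotheses~\eqref{e:coreflexive}--\eqref{e:Hconserve} for $\Hc$ into the corresponding hypotheses for $\Hc\Uc_\Cc$.

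For (ii) $\Rightarrow$ (i), assume~\eqref{e:coreflexive}--\eqref{e:Hconserve} hold and fix a comonad $\Cc$ satisfying~\eqref{e:preserve}. I must verify Beck's three conditions for the functor $\Hc\Uc_\Cc:\Ms_\Cc\to\Ns$. The first two (existence and preservation of equalizers of coreflexive $\Hc\Uc_\Cc$-split pairs) follow from Lemma~\ref{l:lifting-conjugate}: a coreflexive $\Hc\Uc_\Cc$-split pair in $\Ms_\Cc$ maps to a coreflexive $\Hc$-split pair in $\Ms$ under $\Uc_\Cc$, so the lemma produces an equalizer in $\Ms_\Cc$ that is preserved by $\Uc_\Cc$; applying~\eqref{e:Hpreserve} to the underlying pair in $\Ms$ shows that $\Hc$ (and hence $\Hc\Uc_\Cc$) preserves it. For conservativity, recall from Section~\ref{ss:adj-comonad} that $\Uc_\Cc$ is always conservative; combined with~\eqref{e:Hconserve}, the composite $\Hc\Uc_\Cc$ is conservative as well. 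Beck's theorem then yields that $\QcC$ is an equivalence.

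For (i) $\Rightarrow$ (ii), the natural move is to specialize (i) to the identity comonad $\Cc = \Ic$ on $\Ms$, which trivially satisfies~\eqref{e:preserve} since $\Ic$ and $\Ic^2 = \Ic$ preserve all limits. In this case $\Ms_\Cc = \Ms$, $\cCc = \Hc\Kc$, and the formula~\eqref{e:comp-coalg} for $\QcC$ reduces to $\QcC(M) = \bigl(\Hc(M),\Hc(\eta_M)\bigr)$, which is exactly the classical comparison functor~\eqref{e:comp-def} for the adjunction $(\Hc,\Kc)$. The assumption in (i) thus asserts that this comparison functor is an equivalence, and Beck's Comonadicity Theorem applied in the reverse direction delivers hypotheses~\eqref{e:coreflexive}--\eqref{e:Hconserve}. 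The only delicate point is checking that the various hypotheses about coreflexive $\Hc\Uc_\Cc$-split pairs translate back correctly to hypotheses about coreflexive $\Hc$-split pairs under the specialization $\Cc=\Ic$, but this is immediate because $\Uc_\Ic$ is just the identity. No step looks substantively hard; the main work is bookkeeping to make sure the equalizer condition on $\Ms_\Cc$ in Beck's theorem lines up exactly with what Lemma~\ref{l:lifting-conjugate} produces from~\eqref{e:coreflexive} and~\eqref{e:preserve}.
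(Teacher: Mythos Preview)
Your proposal is correct and follows essentially the same route as the paper: both directions reduce to Beck's theorem for the composite adjunction $(\Hc\Uc_\Cc,\Fc_\Cc\Kc)$, with (ii) $\Rightarrow$ (i) handled via Lemma~\ref{l:lifting-conjugate} plus conservativity of $\Uc_\Cc$, and (i) $\Rightarrow$ (ii) by specializing to $\Cc=\Ic$. The paper's proof is slightly terser but the argument is identical.
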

\begin{proof} 
The  identity comonad $\Ic$ satisfies~\eqref{e:preserve} trivially,
and $\Qc_{\Ic}$ is the comparison functor of~\eqref{e:comp-fun}.
Thus, (i) implies (ii) by Theorem~\ref{t:beck} (Beck's).

For the converse, assume~\eqref{e:coreflexive}--\eqref{e:Hconserve}
and the comonad $\Cc$ satisfies~\eqref{e:preserve}.
It suffices to verify that the composite adjunction $(\Hc\Uc_\Cc, \Fc_\Cc\Kc)$ of~\eqref{e:comp-adj} satisfies the hypotheses of Beck's theorem.

By Lemma~\ref{l:lifting-conjugate}, the category $\Ms_{\Cc}$ possesses equalizers of all parallel pairs which
are both coreflexive and
$\Hc\Uc_{\Cc}$-split, and they are preserved by $\Uc_{\Cc}$.
Hence these equalizers are preserved by $\Hc\Uc_\Cc$, in view of~\eqref{e:Hpreserve}.
In addition, since both $\Hc$ and $\Uc_{\Cc}$ are conservative (the former by~\eqref{e:Hconserve}), so is their composite $\Hc\Uc_{\Cc}$.
Thus the hypotheses  of Beck's theorem are satisfied by the adjunction $(\Hc\Uc_\Cc, \Fc_\Cc\Kc)$ and the proof is complete.
\end{proof}

\begin{remark}
Hypotheses~\eqref{e:coreflexive}--\eqref{e:Hconserve} enter both in Theorem~\ref{t:beck} and in Theorem~\ref{t:descent}. Thus, one may view the Conjugate Comonadicity Theorem as adding one equivalent condition (statement (i) above)
to those in Beck's theorem.
\end{remark}

We connect with the result of Proposition~\ref{p:lifting-conjugate}.

\begin{corollary}\label{c:descent}
Assume hypotheses~\eqref{e:coreflexive} through \eqref{e:Hconserve} 
and~\eqref{e:preserve} hold. Then the adjunction
\[
\xymatrix{
\Ms_{\Cc} \ar@/^/[r]^{\Hc_{\gamma}} & \Ns_{\cCc} \ar@/^/[l]^{\Kc^{\tgamma}}.
}
\]
of Proposition~\ref{p:lifting-conjugate} is an adjoint equivalence.
\end{corollary}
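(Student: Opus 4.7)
The plan is to deduce the corollary immediately by combining the three results that have already been set up: Proposition~\ref{p:lifting-conjugate} (which produces the adjunction), Theorem~\ref{t:descent} (which makes the comparison functor an equivalence), and Lemma~\ref{l:comp-colax} (which identifies that comparison functor with $\Hc_{\gamma}$). So at the outset I would not expect any new calculation; the content of the corollary is essentially a bookkeeping assembly of what has been proved.

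First I would invoke Proposition~\ref{p:lifting-conjugate}: since hypotheses~\eqref{e:coreflexive} and~\eqref{e:preserve} are assumed, the pair $(\Hc_{\gamma},\Kc^{\tgamma})$ is indeed an adjunction between $\Ms_{\Cc}$ and $\Ns_{\cCc}$. Next I would apply Theorem~\ref{t:descent}. All of~\eqref{e:coreflexive}--\eqref{e:Hconserve} hold by assumption, so condition~(ii) of that theorem is satisfied; condition~(i) then gives that $\QcC$ is an equivalence of categories. Finally, by Lemma~\ref{l:comp-colax}, $\QcC=\Hc_{\gamma}$, so the left adjoint $\Hc_{\gamma}$ of the adjunction is already an equivalence.

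To conclude I would appeal to the standard fact that an adjunction whose left adjoint is an equivalence of categories is automatically an adjoint equivalence: the right adjoint, being determined up to unique natural isomorphism, must coincide up to natural isomorphism with a quasi-inverse of the left adjoint, and consequently both the unit $\Hat{\eta}$ and counit $\Hat{\xi}$ of the adjunction $(\Hc_{\gamma},\Kc^{\tgamma})$ are natural isomorphisms. This yields the asserted adjoint equivalence.

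There is really no obstacle to overcome here; the work has been done in Theorem~\ref{t:descent} and Lemma~\ref{l:comp-colax}. The only point that deserves a sentence of care is the very last step, translating ``the left adjoint is an equivalence'' into ``the adjunction is an adjoint equivalence,'' which is purely formal and follows from the uniqueness of adjoints.
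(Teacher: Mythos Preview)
Your proposal is correct and follows essentially the same approach as the paper: invoke Proposition~\ref{p:lifting-conjugate} for the existence of the adjunction, use Lemma~\ref{l:comp-colax} to identify $\Hc_{\gamma}$ with $\QcC$, and apply Theorem~\ref{t:descent} to conclude that this functor is an equivalence, hence the adjunction is an adjoint equivalence. The paper's proof is slightly terser but the logical structure is identical.
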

\begin{proof}
In this situation, the hypotheses of Proposition~\ref{p:lifting-conjugate} are satisfied,
so the adjunction exists. From Lemma~\ref{l:comp-colax}, we know that
$\Hc_{\gamma}=\QcC$.  Since the latter functor is an equivalence by Theorem~\ref{t:descent},
the adjunction is an adjoint equivalence.
\end{proof}


\subsection{Universal property of the conjugate comonad}\label{ss:universal}

We consider the following situation.
\[
\xymatrix{
\Ms \ar@(ul,dl)_{\Cc} \ar@/^/[r]^{\Hc} & \Ns \ar@/^/[l]^{\Kc} \ar@(u,r)^{\cCc} \ar@(d,r)_{\Dc}
}
\]
We discuss a universal property of the conjugate comonad $\cCc$ of Definition~\ref{d:conjugate-comonad} and the colax morphism $(\Hc,\gamma)$ of Lemma~\ref{l:conjugate-comonad}. Briefly, it states that any colax morphism from $\Cc$ to $\Dc$
factors through $\gamma$.

\begin{proposition}\label{p:universal}
Let $\Dc:\Ns\to\Ns$ be a functor and $\sigma:\Hc\Cc\to\Dc\Hc$ a transformation.
\begin{enumerate}[(i)]
\item There is a unique transformation
$
\sigma':\cCc\to\Dc
$
such that the following diagram commutes.
\begin{equation}\label{e:universal}
\begin{gathered}
\xymatrix@-5pt{
\Hc\Cc \ar[rr]^-{\sigma} \ar[rd]_-{\gamma} & &  \Dc\Hc\\
 & \cCc\Hc \ar@{-->}[ru]_-{\sigma'\Hc} & \\
}
\end{gathered}
\end{equation}
\item Moreover, if $\Ds$ is a comonad and
$
(\Hc,\sigma): (\Ms,\Cc)\to (\Ns,\Dc)
$
is a colax morphism of comonads, then $(\Ic,\sigma'): (\Ns,\cCc)\to (\Ns,\Dc)$ is another such morphism.
\end{enumerate}
\end{proposition}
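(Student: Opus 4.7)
The plan is to prove (i) by producing $\sigma'$ explicitly from the adjunction data and then to deduce (ii) as a formal consequence of the uniqueness in (i). For existence in (i) I would set
\[
\sigma' \;:=\; \Dc\xi \,\circ\, \sigma\Kc \colon \cCc \;=\; \Hc\Cc\Kc \;\longrightarrow\; \Dc\Hc\Kc \;\longrightarrow\; \Dc.
\]
Commutativity of~\eqref{e:universal} then means $\sigma'\Hc \circ \gamma = \sigma$, which at a component $M$ reads $\Dc(\xi_{\Hc(M)}) \circ \sigma_{\Kc\Hc(M)} \circ \Hc\Cc(\eta_M) = \sigma_M$. I would first use naturality of $\sigma$ at the morphism $\eta_M$ to rewrite $\sigma_{\Kc\Hc(M)} \circ \Hc\Cc(\eta_M) = \Dc\Hc(\eta_M) \circ \sigma_M$, and then apply the triangle identity $\xi\Hc \circ \Hc\eta = \id_\Hc$.

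For uniqueness, I would start from an arbitrary $\tilde\sigma':\cCc\to\Dc$ satisfying~\eqref{e:universal}, specialize to $\Kc(N)$ to get $\tilde\sigma'_{\Hc\Kc(N)} \circ \gamma_{\Kc(N)} = \sigma_{\Kc(N)}$, and apply naturality of $\tilde\sigma'$ along $\xi_N\colon \Hc\Kc(N)\to N$. Since $\cCc(\xi_N) \circ \gamma_{\Kc(N)} = \Hc\Cc(\Kc\xi_N \circ \eta_{\Kc(N)})$ collapses to the identity by the other triangle identity, this forces $\tilde\sigma'_N = \Dc(\xi_N) \circ \sigma_{\Kc(N)}$, matching the explicit formula above.

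For (ii), the idea is to reduce each comonad-morphism axiom for $\sigma'$ to an equality of two transformations $\cCc \to \Es$ (for $\Es = \Ic$ and $\Es = \Dc^2$) that both factor $\gamma$-ly through a common transformation $\Hc\Cc\to\Es\Hc$; the uniqueness just established then finishes the argument. Concretely, for the counit axiom I would check that $(\epsilon^\Dc \circ \sigma')\Hc \circ \gamma$ equals $\Hc\epsilon^\Cc$ by using $\sigma'\Hc\circ\gamma = \sigma$ together with the counit axiom for the colax morphism $(\Hc,\sigma)$, and that $\epsilon^{\cCc}\Hc \circ \gamma = \Hc\epsilon^\Cc$ holds because $(\Hc,\gamma)$ is already known to be a colax morphism by Lemma~\ref{l:conjugate-comonad}; uniqueness (applied with target $\Ic$) then gives $\epsilon^\Dc\circ\sigma' = \epsilon^{\cCc}$.

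The comultiplication axiom is handled in the same spirit, with target $\Dc^2$ and reference transformation $\delta^\Dc\Hc\circ\sigma$. The easy side is $(\delta^\Dc\circ\sigma')\Hc\circ\gamma = \delta^\Dc\Hc\circ(\sigma'\Hc\circ\gamma) = \delta^\Dc\Hc\circ\sigma$. The harder side is showing that $(\Dc\sigma'\circ\sigma'\cCc\circ\delta^{\cCc})\Hc\circ\gamma$ equals the same transformation; this is the main technical hurdle, and I would do it by substituting the comultiplication axiom for $(\Hc,\gamma)$ (i.e.\ $\delta^{\cCc}\Hc\circ\gamma = \cCc\gamma\circ\gamma\Cc\circ\Hc\delta^\Cc$), then using naturality of $\sigma'$ applied to the components of $\gamma$ to commute $\sigma'\cCc\Hc\circ\cCc\gamma$ past as $\Dc\gamma\circ\sigma'\Hc\Cc$, and finally collapsing two copies of $\sigma'\Hc\circ\gamma = \sigma$ to land on $\Dc\sigma\circ\sigma\Cc\circ\Hc\delta^\Cc = \delta^\Dc\Hc\circ\sigma$, where the last equality is the comultiplication axiom for $(\Hc,\sigma)$. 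Invoking uniqueness from (i) concludes the proof. The only real bookkeeping challenge is keeping the horizontal compositions (whiskerings) straight in this last diagram chase; conceptually, the universal property proved in (i) does all the work.
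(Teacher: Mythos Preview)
Your proof of (i) is correct and matches the paper's argument exactly: the same explicit formula $\sigma' = \Dc\xi \circ \sigma\Kc$, the same use of naturality plus a triangle identity for existence, and the same use of the other triangle identity for uniqueness.

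For (ii) you take a genuinely different route from the paper. The paper verifies the counit and comultiplication axioms for $(\Ic,\sigma')$ by direct diagram chases, expanding $\sigma'$ via its defining formula and tracking each piece through naturality and the adjunction identities. Your argument is more conceptual: you observe that both sides of each axiom are transformations $\cCc\to\Es$ (with $\Es=\Ic$ or $\Es=\Dc^2$), and you show they agree by checking that their whiskerings with $\Hc$ precomposed with $\gamma$ coincide, then invoking the uniqueness clause of (i). This works because $(\Hc,\gamma)$ is already known to be a colax morphism (Lemma~\ref{l:conjugate-comonad}), so the axioms for $\gamma$ supply exactly the identities needed to collapse the composites down to the corresponding axioms for $(\Hc,\sigma)$. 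Your bookkeeping in the comultiplication case is correct: the naturality step $\sigma'\cCc\Hc \circ \cCc\gamma = \Dc\gamma \circ \sigma'\Hc\Cc$ is just naturality of $\sigma'$ at the components of $\gamma$, and the two instances of $\sigma'\Hc\circ\gamma=\sigma$ then reduce everything to $\Dc\sigma\circ\sigma\Cc\circ\Hc\delta^\Cc$. What your approach buys is that the heavy lifting is concentrated in (i), and (ii) becomes a formal consequence of the universal property together with Lemma~\ref{l:conjugate-comonad}; the paper's approach is more self-contained but requires larger diagrams.
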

\begin{proof} We use $(\delta,\epsilon)$ to denote the structure maps of all comonads.

We let $\sigma':\Hc\Cc\Kc\to\Dc$ be the composite
\begin{equation}\label{e:universal2}
\cCc=\Hc\Cc\Kc \map{\sigma\Kc} \Dc\Hc\Kc \map{\Dc\xi} \Dc\Ic=\Dc.
\end{equation}

We prove (i). Consider the following diagram.
\[
\xymatrix@C+15pt@R-5pt{
\Hc\Cc \ar[r]^-{\gamma} \ar@{=}[d] & \cCc\Hc \ar[rr]^-{\sigma'\Hc} \ar@{=}[d] & &\Dc\Hc \ar@{=}[d] \\
 \Hc\Cc \ar[r]^-{\Hc\Cc\eta} \ar[dr]_{\sigma} & \Hc\Cc\Kc\Hc \ar[r]^-{\sigma\Kc\Hc} & 
 \Dc\Hc\Kc\Hc \ar[r]^-{\Dc\xi\Hc} & \Dc\Hc\\
 & \Dc\Hc \ar[ur]_{\Dc\Hc\eta} \ar@/_1.3pc/@{=}[rru] & & 
}
\]
The top left rectangles commute by definition of $\gamma$~\eqref{e:def-sigma}
and $\sigma'$. The bottom diagrams commute by naturality and one of the adjunction axioms. The commutativity of the whole diagram results, and this is~\eqref{e:universal}. 
 
To complete the proof of (i), we verify the uniqueness of $\sigma'$. 
Assume another transformation $\sigma''$ makes~\eqref{e:universal}
commutative.
Consider the following diagram.

\[
\xymatrix@C+15pt@R-5pt{
\Hc\Cc\Kc\ar@{=}@/^2.2pc/[rr] \ar[r]^-{\Hc\Cc\eta\Kc}
\ar[dr]|{\gamma\Kc}  \ar[ddr]|{\sigma\Kc}
\ar@/_3.8pc/[ddrr]_-{\sigma'}
& \Hc\Cc\Kc\Hc\Kc \ar[r]^-{\Hc\Cc\Kc\xi} \ar@{=}[d] & \Hc\Cc\Kc  \ar@{=}[d] \\
& \cCc\Hc\Kc \ar[r]^-{\cCc\xi} \ar[d]^(.45){\sigma''\Hc\Kc} & \cCc \ar[d]^{\sigma''} \\
& \Dc\Hc\Kc \ar[r]^{\Dc\xi} & \Dc
}
\]
The top left triangle commutes by definition of $\gamma$~\eqref{e:def-sigma}.
The triangle below it commutes by assumption and the bottom piece by
definition of $\sigma'$. The top piece commutes by
one of the adjunction axioms and the remaining rectangles by naturality.
We deduce that $\sigma'=\sigma''$.

We turn to (ii). Assume that $(\Hc,\sigma): (\Ms,\Cc)\to (\Ns,\Dc)$
is a colax morphism of comonads. The fact that $\sigma'$ preserves counits (second diagram in~\eqref{e:lax-mor-comonad}) is the commutativity of the following diagram.
\[
\xymatrix@C+15pt@R-5pt{
\cCc= \Hc\Cc\Kc\ar@/^2.2pc/[rr]^-{\sigma'} \ar[r]^-{\sigma\Kc}
\ar[dr]^-{\Hc\epsilon\Kc} \ar[ddr]_{\epsilon} & \Dc\Hc\Kc \ar[r]^-{\Dc\xi} \ar[d]^(.4){\epsilon\Hc\Kc} &     \Dc \ar[ddl]^{\epsilon} \\
& \Hc\Kc\ar[d]^(.4){\xi} & \\
& \Ic & 
}
\]
The central triangle commutes by the second diagram in~\eqref{e:lax-mor-comonad} for $\sigma$. The left triangle is~\eqref{e:conjugate-cou} and the right one commutes
by naturality. The fact that $\sigma'$ preserves comultiplications (first diagram in~\eqref{e:lax-mor-comonad}) is the commutativity of the following diagram.
\[
\xymatrix@C+15pt@R-5pt{
\Hc\Cc\Kc\ar@/^2.2pc/[rrr]^-{\sigma'} \ar[rr]^-{\sigma\Kc}
\ar[d]^-{\Hc\delta\Kc}  \ar@/_2.2pc/[dd]_-{\delta}
& & \Dc\Hc\Kc \ar[r]^(.45){\Dc\xi} \ar[d]^{\delta\Hc\Kc} &     \Dc \ar[d]^{\delta} \\
\Hc\Cc\Cc\Kc \ar[r]^-{\sigma\Cc\Kc}
\ar[d]^-{\Hc\Cc\eta\Cc\Kc}  
& \Dc\Hc\Cc\Kc \ar[r]^-{\Dc\sigma\Kc} \ar[d]^{\Dc\Hc\eta\Cc\Kc} 
\ar@/_1.3pc/[rr]_-{\Dc\sigma'}
&    \Dc\Dc\Hc\Kc \ar[r]^-{\Dc\Dc\xi} & \Dc\Dc  \\
\Hc\Cc\Kc\Hc\Cc\Kc \ar[r]_{\sigma\Kc\cCc} \ar[dr]_{\sigma'\cCc} & 
\Dc\Hc\Kc\Hc\Cc\Kc \ar[d]^{\Dc\xi\cCc} & & \\
& \Dc\cCc \ar[rruu]_-{\Dc\sigma'}
}
\]
Above, the top left rectangle commutes by the first diagram in~\eqref{e:lax-mor-comonad} for $\sigma$.  The diagram in the bottom right commutes since the
vertical composition is the identity (by one of the adjunction axioms). The remaining pieces
commute either by naturality or by definition of $\sigma'$ or $\delta$. This
 completes
the proof of (ii) and of the proposition.
\end{proof}

\begin{remark}
The commutativity of~\eqref{e:universal} can be formulated in terms
of compositions of colax morphisms of comonads. It states that diagram
\begin{equation}\label{e:universal3}
\begin{gathered}
\xymatrix@-5pt{
 (\Ms, \Cc) \ar[rr]^-{(\Hc,\sigma)} \ar[rd]_{(\Hc,\gamma)} & & (\Ns,\Dc)\\
 & (\Ns,\cCc) \ar[ru]_{(\Ic,\sigma')} & 
 }
 \end{gathered}
\end{equation}
commutes
\end{remark}


\subsection{Equivalence between categories of coalgebras}\label{ss:equivalence-coalgebra}

We discuss necessary and sufficient conditions under which a colax
morphism of comonads gives rise to an equivalence of coalgebra categories.
In Proposition~\ref{p:equivalence-coalgebra}
we treat the special case of comonads on the same category; 
the general case is dealt with in the next section.

Lemmas~\ref{l:creation-fibration}--\ref{l:fibration-equivalence} contain preparatory material. We are indebted to Ignacio Lopez-Franco for help with these results.
They appear to be known, but we have included short proofs for convenience.  

A functor $\Fc:\Cs\to\Ds$ is a \emph{discrete isofibration}
if for any isomorphism $f:A\to\Fc(B)$ in $\Ds$, there exists (a unique object $\Tilde{A}$ and)
a unique isomorphism $\Tilde{f}:\Tilde{A}\to B$ in $\Cs$ such that $\Fc(\Tilde{f})=f$
(and $\Fc(\Tilde{A})=A$).
In this situation, we say that $\Tilde{f}$ is a \emph{lifting} of $f$.

\begin{lemma}\label{l:creation-fibration}
If a functor $\Fc$ creates isomorphisms, it is a discrete isofibration.
\end{lemma}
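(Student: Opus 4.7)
The plan is simply to unwind and match the two definitions. Suppose we are given an isomorphism $f : A \to \Fc(B)$ in $\Ds$ with $B$ an object of $\Cs$. To exhibit $\Fc$ as a discrete isofibration I must produce a unique object $\Tilde{A}$ of $\Cs$ with $\Fc(\Tilde{A}) = A$, together with a unique isomorphism $\Tilde{f} : \Tilde{A} \to B$ in $\Cs$ such that $\Fc(\Tilde{f}) = f$.

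I would apply the hypothesis that $\Fc$ creates isomorphisms directly to $f$. Creation in the strict sense (the same sense used for limit creation in Lemma~\ref{l:creation}) furnishes exactly the required data: a unique object $\Tilde{A}$ of $\Cs$ with $\Fc(\Tilde{A}) = A$ and a unique morphism $\Tilde{f} : \Tilde{A} \to B$ in $\Cs$ with $\Fc(\Tilde{f}) = f$; moreover, because what is being created is an isomorphism, the lifted morphism $\Tilde{f}$ is itself an isomorphism in $\Cs$. This is precisely the list of conditions that define a discrete isofibration as stated immediately before the lemma.

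There is no real obstacle here beyond making sure the two uniqueness clauses line up. The only point worth underlining in the write-up is that the uniqueness of $\Tilde{f}$ (as an arrow, not merely as an isomorphism) among all $\Cs$-morphisms $\Tilde{A} \to B$ lifting $f$ is subsumed by the strict creation property, since any such lift is in particular an arrow in $\Cs$ over $f$; likewise for $\Tilde{A}$. Thus the proof reduces to one sentence invoking the definition of creation applied to $f$, and observing that a creation of isomorphisms automatically produces an isomorphism as its lift.
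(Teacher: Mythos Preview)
Your proof is correct and follows essentially the same approach as the paper: both unwind the definition of ``creates isomorphisms'' and observe that it immediately yields the data required of a discrete isofibration. The paper devotes a sentence to spelling out how an isomorphism is viewed as a limiting cone (of the functor from the unit category), thereby making precise what ``creates isomorphisms'' means in the strict sense of limit creation; you allude to this by pointing to Lemma~\ref{l:creation}, which is fine.
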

\begin{proof}
We clarify the hypothesis. An isomorphism $f:A\to B$ in $\Cs$
is a limiting cone of the functor that sends the object of the unit category to 
the object $B$ in $\Cs$.
Then $\Fc$ creates this type of limit if and only if given an isomorphism
$f:A\to\Fc(B)$ in $\Ds$, there exists a unique morphism $\Tilde{f}:\Tilde{A}\to B$ in $\Cs$ such that $\Fc(\Tilde{f})=f$, and moreover, $\Tilde{f}$ is an isomorphism.
This is stronger than saying that $\Fc$ is a discrete isofibration.
\end{proof}

\begin{lemma}\label{l:fibration}
 Let $\Fc:\Cs\to\Ds$ and $\Uc:\Ds\to\Es$ be functors such that
 both $\Uc$ and $\Uc\Fc$ are discrete isofibrations. Then $\Fc$ is another
 discrete isofibration.
\end{lemma}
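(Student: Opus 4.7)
The plan is a direct diagram chase using the universal properties encoded in the hypothesis. Let $f:A\to\Fc(B)$ be an isomorphism in $\Ds$. I want to produce a unique isomorphism $\Tilde{f}:\Tilde{A}\to B$ in $\Cs$ with $\Fc(\Tilde{f})=f$.

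First I apply $\Uc$ to obtain an isomorphism $\Uc(f):\Uc(A)\to\Uc\Fc(B)$ in $\Es$. Since $\Uc\Fc:\Cs\to\Es$ is a discrete isofibration by assumption, this iso admits a unique lifting to an isomorphism $\Tilde{f}:\Tilde{A}\to B$ in $\Cs$ with $\Uc\Fc(\Tilde{f})=\Uc(f)$ (and, automatically, $\Uc\Fc(\Tilde{A})=\Uc(A)$). This $\Tilde{f}$ is the candidate.

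The key step is to verify that $\Fc(\Tilde{f})=f$ (note that a priori we only know that $\Uc$ sends these two morphisms of $\Ds$ to the same arrow $\Uc(f)$). Observe that $\Fc(\Tilde{f}):\Fc(\Tilde{A})\to\Fc(B)$ and $f:A\to\Fc(B)$ are both isomorphisms in $\Ds$ with codomain $\Fc(B)$, and both are sent by $\Uc$ to the isomorphism $\Uc(f):\Uc(A)\to\Uc\Fc(B)$. Since $\Uc$ is itself a discrete isofibration, the lifting of $\Uc(f)$ along $\Uc$ with codomain $\Fc(B)$ is unique; hence $\Fc(\Tilde{A})=A$ and $\Fc(\Tilde{f})=f$, as required.

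Finally, for uniqueness in $\Cs$, suppose $\Tilde{f}':\Tilde{A}'\to B$ is another isomorphism in $\Cs$ with $\Fc(\Tilde{f}')=f$. Applying $\Uc$ gives $\Uc\Fc(\Tilde{f}')=\Uc(f)$, and the uniqueness clause of the discrete isofibration $\Uc\Fc$ forces $\Tilde{f}'=\Tilde{f}$ (and $\Tilde{A}'=\Tilde{A}$). There is no real obstacle here; the only point that requires care is distinguishing where the uniqueness of $\Uc$ versus that of $\Uc\Fc$ is invoked—the former to identify $\Fc(\Tilde{f})$ with $f$, and the latter to obtain uniqueness of the lift in $\Cs$.
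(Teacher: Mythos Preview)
Your proof is correct and follows essentially the same approach as the paper: lift $\Uc(f)$ along the discrete isofibration $\Uc\Fc$ to obtain the candidate $\Tilde{f}$, then use the uniqueness clause for $\Uc$ to conclude $\Fc(\Tilde{f})=f$, and finally use uniqueness for $\Uc\Fc$ to establish uniqueness of the lift. The paper's proof is terser (it dismisses uniqueness with ``similar''), but the argument is identical.
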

\begin{proof}
Given $f:A\to\Fc(B)$ in $\Ds$, let $\Tilde{f}:\Tilde{A}\to B$ in $\Cs$ be the unique lift
of $\Uc(f)$ under $\Uc\Fc$. Then both $f$ and $\Fc(\Tilde{f})$ are lifts of $\Uc(f)$ under $\Uc$,
so $f=\Fc(\Tilde{f})$ by uniqueness. This proves that liftings under $\Fc$ exist. Uniqueness is similar.
\end{proof}

\begin{lemma}\label{l:fibration-equivalence}
Let $\Fc$ be a discrete isofibration. If $\Fc$ is part of an equivalence, then
it is part of an isomorphism.
\end{lemma}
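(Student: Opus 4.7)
The plan is to use the discrete isofibration property to transport the quasi-inverse $\Gc$ of $\Fc$ into a strict inverse. Let the equivalence data be $(\Gc,\eta,\xi)$ with natural isomorphisms $\eta:\Ic\to\Gc\Fc$ and $\xi:\Fc\Gc\to\Ic$. As a preparatory step I would replace this by an adjoint equivalence: it is a classical fact that, keeping $\Gc$ and $\eta$ fixed, one can modify $\xi$ to a natural isomorphism satisfying the triangle identity $\xi_{\Fc(C)}\circ\Fc(\eta_C)=\id_{\Fc(C)}$ for every $C\in\Cs$. This step is independent of the isofibration hypothesis.

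Next, for each $D\in\Ds$, apply the discrete isofibration property of $\Fc$ to the isomorphism $\xi_D^{-1}:D\to\Fc\Gc(D)$: this yields a unique object $\Gc'(D)\in\Cs$ with $\Fc\Gc'(D)=D$ together with a unique isomorphism $\alpha_D:\Gc'(D)\to\Gc(D)$ satisfying $\Fc(\alpha_D)=\xi_D^{-1}$. Extend $\Gc'$ to morphisms by transport, setting
\[
\Gc'(f):=\alpha_{D'}^{-1}\circ\Gc(f)\circ\alpha_D
\]
for $f:D\to D'$ in $\Ds$. Functoriality is automatic, and naturality of $\xi$ gives $\Fc\Gc'(f)=\xi_{D'}\circ\Fc\Gc(f)\circ\xi_D^{-1}=f$, so $\Fc\Gc'=\id_{\Ds}$ holds on the nose.

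It remains to verify that $\Gc'\Fc=\id_{\Cs}$. Given $C\in\Cs$, the object $\Gc'\Fc(C)$ is, by construction, the unique lift through $\Fc$ of the isomorphism $\xi_{\Fc(C)}^{-1}:\Fc(C)\to\Fc\Gc\Fc(C)$. By the triangle identity, $\xi_{\Fc(C)}^{-1}=\Fc(\eta_C)$, and $\eta_C:C\to\Gc\Fc(C)$ is an isomorphism in $\Cs$ covering this arrow; uniqueness of the lift forces $\Gc'\Fc(C)=C$ and $\alpha_{\Fc(C)}=\eta_C$. On morphisms, naturality of $\eta$ yields $\Gc'\Fc(g)=\eta_{C'}^{-1}\circ\Gc\Fc(g)\circ\eta_C=g$. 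The main obstacle, and the reason one must first pass to an adjoint equivalence, is precisely this last step: without the triangle identity one could still produce $\Gc'$ with $\Fc\Gc'=\id_{\Ds}$ strictly, but the identification $\Gc'\Fc=\id_{\Cs}$ would fail because the lift of $\xi_{\Fc(C)}^{-1}$ need not coincide with $C$.
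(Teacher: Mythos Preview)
Your proof is correct but takes a genuinely different route from the paper's. The paper argues indirectly: it uses that an equivalence is full, faithful, and essentially surjective, and then shows $\Fc$ is bijective on objects. Injectivity comes from the \emph{uniqueness} of lifts (if $\Fc(A)=\Fc(B)$, the identity on $\Fc(B)$ has both $\id_B$ and some iso $A\to B$ as lifts, forcing $A=B$); surjectivity from their \emph{existence}. Being full, faithful, and bijective on objects, $\Fc$ is an isomorphism.

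Your approach, by contrast, explicitly manufactures the strict inverse $\Gc'$ by lifting each $\xi_D^{-1}$ through $\Fc$ and transporting $\Gc$ along these lifts. This is more constructive and makes the inverse visible, but the cost is the detour through an adjoint equivalence: you need the triangle identity $\xi_{\Fc(C)}^{-1}=\Fc(\eta_C)$ to recognise $C$ itself as the unique lift over $\Fc(C)$ and thereby obtain $\Gc'\Fc=\id_\Cs$. The paper's argument avoids this step entirely, needing neither the quasi-inverse $\Gc$ nor the unit and counit explicitly, and is correspondingly shorter. Both arguments use the uniqueness clause of the discrete isofibration property in an essential way, but at different points: you use it to pin down $\Gc'\Fc(C)=C$, while the paper uses it to get injectivity on objects directly.
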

\begin{proof}
Since $\Fc$ is part of an equivalence, it is full, faithful, and essentially
surjective on objects~\cite[Theorem IV.4.1]{Mac:1998}. 

Let $A$ and $B$ be objects of $\Cs$
such that $\Fc(A)=\Fc(B)$. Since $\Fc$ is full and faithful, there is an isomorphism
$f:A\to B$ such that $\Fc(f)=\id_{\Fc(B)}$. Then $f$ is a lifting of $\id_{\Fc(B)}$. 
By uniqueness, $f$ must be $\id_{B}$, and so $A=B$. 
Thus $\Fc$ is injective on objects.

Let $A$ be an object of $\Ds$. Since $\Fc$ is essentially surjective on objects,
there is an object $B$ of $\Cs$ and an isomorphism $f:A\to\Fc(B)$. Let
$\Tilde{f}:\Tilde{A}\to B$ be a lifting of $f$. Then $\Fc(\Tilde{A})=A$.
Thus $\Fc$ is surjective on objects.

Since $\Fc$ is full, faithful, and bijective on objects, it is part of an isomorphism.
\end{proof}

We return to coalgebra categories.  In the proof below, we apply Lemma~\ref{l:fibration-equivalence} to the case in which the functor $\Fc$ is as in Lemma~\ref{l:functor-transformation}.  Note that, in that case, the condition that $\Fc$ be bijective on objects means simply the following. Given any object $(N,b)$ of $\Ns_{\Dc}$, there is a unique morphism $a: N\to\Cc(N)$ such that $(N,a)$ is an object of $\Ns_{\Cc}$ and $\Fc(N,a) = (N,b)$.  

\begin{proposition}\label{p:equivalence-coalgebra}
Let $\Dc$ and $\Ec$ be two comonads on the same
category $\Ns$.
Let $\rho:\Ec\to\Dc$ be a transformation such that $(\Ic,\rho):(\Ns,\Ec)\to(\Ns,\Dc)$ is a colax morphism of comonads. Consider the induced functor 
$\Ic_{\rho}: \Ns_{\Ec} \to \Ns_{\Dc}$ of Section~\ref{ss:cores}.
Then:
\[
\text{$\Ic_{\rho}$ is an equivalence $\iff$ $\rho$ is invertible.}
\]
\end{proposition}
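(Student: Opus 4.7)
The forward (easy) direction is direct: if $\rho$ is pointwise invertible, naturality gives that $\rho^{-1}:\Dc\to\Ec$ is a natural transformation, and the axioms~\eqref{e:mor-comonad} for $(\Ic,\rho)$ can be inverted (the horizontal composite $\rho\rho$ has inverse $\rho^{-1}\rho^{-1}$), so $(\Ic,\rho^{-1}):(\Ns,\Dc)\to(\Ns,\Ec)$ is a colax morphism of comonads. On coalgebras this gives $\Ic_{\rho^{-1}}\Ic_{\rho}(N,e)=(N,\rho^{-1}_N\rho_N e)=(N,e)$ and likewise $\Ic_{\rho}\Ic_{\rho^{-1}}=\id$, so $\Ic_{\rho}$ is (part of) an isomorphism of categories, hence an equivalence.

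For the converse, the plan is to first promote the presumed equivalence to a strict isomorphism of categories using the isofibration machinery, and then extract $\rho^{-1}$ by a calculation at cofree coalgebras. First, observe that the forgetful functor $\Uc_{\Dc}$ creates isomorphisms: given an isomorphism $f:N\to M$ in $\Ns$ and a $\Dc$-coalgebra structure $d$ on $M$, the transported structure $\Dc(f^{-1})df$ on $N$ is the unique $\Dc$-coalgebra structure making $f$ an isomorphism of $\Dc$-coalgebras lifting $f$. By Lemma~\ref{l:creation-fibration}, $\Uc_{\Dc}$ is a discrete isofibration, and similarly so is $\Uc_{\Ec}=\Uc_{\Dc}\,\Ic_{\rho}$ (using~\eqref{e:cores-forget-id}). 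Lemma~\ref{l:fibration} then tells us $\Ic_{\rho}$ is a discrete isofibration, and Lemma~\ref{l:fibration-equivalence} upgrades it from an equivalence to an isomorphism of categories. Let $\Fc:\Ns_{\Dc}\to\Ns_{\Ec}$ denote its strict inverse.

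Since $\Uc_{\Ec}\Fc=\Uc_{\Dc}\Ic_{\rho}\Fc=\Uc_{\Dc}$, Lemma~\ref{l:functor-transformation} produces a unique transformation $\rho':\Dc\to\Ec$ such that $(\Ic,\rho'):(\Ns,\Dc)\to(\Ns,\Ec)$ is a colax morphism and $\Fc=\Ic_{\rho'}$. The equations $\Fc\Ic_{\rho}=\id$ and $\Ic_{\rho}\Fc=\id$ unwind to
\[
\rho'_N\rho_N\,e=e\quad\text{for every }\Ec\text{-coalgebra }(N,e),
\qquad
\rho_N\rho'_N\,d=d\quad\text{for every }\Dc\text{-coalgebra }(N,d).
\]
To extract pointwise invertibility, specialize the first equation to the cofree $\Ec$-coalgebra $(\Ec(M),\delta^{\Ec}_{M})$ to get $\rho'_{\Ec(M)}\rho_{\Ec(M)}\delta^{\Ec}_{M}=\delta^{\Ec}_{M}$; applying $\Ec(\epsilon^{\Ec}_{M})$ to both sides, using naturality of $\rho$ and $\rho'$ to slide $\Ec(\epsilon^{\Ec}_{M})$ past them, and then using the counit axiom $\Ec(\epsilon^{\Ec}_{M})\delta^{\Ec}_{M}=\id$, one obtains $\rho'_{M}\rho_{M}=\id_{\Ec(M)}$. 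The dual computation with the cofree $\Dc$-coalgebra $(\Dc(M),\delta^{\Dc}_{M})$ yields $\rho_{M}\rho'_{M}=\id_{\Dc(M)}$.

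The main obstacle is that an equivalence of categories only gives invertibility up to natural isomorphism, whereas the conclusion $\rho$ invertible requires a genuine pointwise inverse; bridging this gap is exactly the role of the discrete isofibration lemmas, which have no analogue for general functors and crucially exploit the fact that an object of $\Ns_{\Dc}$ has \emph{no} automorphisms in the fibre over a fixed object of $\Ns$ that the forgetful functor cannot detect.
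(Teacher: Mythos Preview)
Your proof is correct and follows essentially the same route as the paper: both use Lemmas~\ref{l:creation-fibration}--\ref{l:fibration-equivalence} to upgrade the equivalence $\Ic_{\rho}$ to a strict isomorphism, and then invoke Lemma~\ref{l:functor-transformation} to realise the inverse as $\Ic_{\rho'}$ for some $\rho':\Dc\to\Ec$. The only difference is in the final step: the paper observes that $\Ic_{\rho'}\Ic_{\rho}=\Ic_{\rho'\rho}=\id=\Ic_{\id_{\Ec}}$ and appeals directly to the \emph{uniqueness} clause of Lemma~\ref{l:functor-transformation} to conclude $\rho'\rho=\id_{\Ec}$ (and symmetrically $\rho\rho'=\id_{\Dc}$), whereas you extract the same identities by an explicit computation at cofree coalgebras. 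Your computation is valid, but the paper's appeal to uniqueness is slightly cleaner and avoids the naturality-chase.
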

\begin{proof}
If $\rho$ is invertible, then $\Ic_{\rho}$ is an equivalence with inverse
$\Ic_{\rho^{-1}}$, by functoriality. 

We prove the converse.
By Lemma~\ref{l:creation},
the forgetful functors $\Uc_{\Dc}$ and $\Uc_{\Ec}$
create isomorphisms; hence,
by Lemma~\ref{l:creation-fibration}, they are discrete isofibrations.
According to~\eqref{e:cores-forget-id}, we have $\Uc_{\Dc}\Ic_{\rho}=\Uc_{\Ec}$.
It follows from Lemma~\ref{l:fibration} that $\Ic_{\rho}$ is a discrete isofibration.
By Lemma~\ref{l:fibration-equivalence}, $\Ic_{\rho}$ is in fact an isomorphism,
being an equivalence.
Its inverse satisfies $\Uc_{\Dc}=\Uc_{\Ec}(\Ic_{\rho})^{-1}$.
It follows from Lemma~\ref{l:functor-transformation}
that $(\Ic_{\rho})^{-1}=\Ic_{\sigma}$ for some $\sigma:\Dc\to\Ec$. 
By uniqueness in Lemma~\ref{l:functor-transformation}, $\sigma$ and $\rho$ are inverses.
\end{proof}

In the presence of additional hypotheses, an alternative proof of
Proposition~\ref{p:equivalence-coalgebra} is available. Assume that:
\begin{itemize}
\item The category $\Ns$ has equalizers of all coreflexive pairs.
\item The comonad $\Ec$ preserves them.
\end{itemize}

\begin{aproof}
The hypotheses on $\Ns$ and $\Ec$ imply that $\Ns_{\Ec}$ has all equalizers
of coreflexive pairs, in view of Lemma~\ref{l:creation}. This
allows us to apply the Adjoint Lifting Theorem (Theorem~\ref{t:adjoint})
to the comonads $\Ec$ and $\Dc$, the trivial adjunction on $\Ns$ ($\Hc=\Kc=\Ic_{\Ns}$), and the colax morphism of comonads $(\Ic,\rho)$. We obtain an adjunction
\[
\xymatrix{
\Ns_{\Ec} \ar@/^/[r]^{\Ic_{\rho}} & \Ns_{\Dc} \ar@/^/[l]^{\Ic^{\trho}} 
}
\]
where $\trho$ is the mate of $\rho$. 
(We have $\trho=\rho$, but this is not needed for the rest of the argument.)
We know from~\eqref{e:coind-cofree} that the diagram
\[
\xymatrix@-5pt{
& \Ns \ar[dl]_{\Fc_\Ec} \ar[rd]^{\Fc_\Dc} & \\
\Ns_\Ec  & & \Ns_\Dc \ar[ll]^-{\Ic^{\trho}}
}
\]
commutes up to isomorphism. 
Now, if $\Ic_{\rho}$ is an equivalence, then $(\Ic_{\rho})^{-1}$ must be isomorphic to
$\Ic^{\trho}$, by uniqueness of adjoints. Hence the diagram
\[
\xymatrix@-5pt{
& \Ns \ar[dl]_{\Fc_\Ec} \ar[rd]^{\Fc_\Dc} & \\
\Ns_\Ec \ar[rr]_-{\Ic_{\rho}} & & \Ns_\Dc 
}
\]
commutes up to isomorphism. Note this is~\eqref{e:cores-cofree}.
But then, as recalled in Section~\ref{ss:cores},
the transformation $\rho$ is invertible. \qed
\end{aproof}

\subsection{Equivalence between categories of coalgebras: the general case}
\label{ss:equivalence-coalgebra-gen}

We now determine when a colax morphism of comonads from $(\Ms,\Cc)$ to $(\Ns,\Dc)$
gives rise to an equivalence from $\Cc$-coalgebras to $\Dc$-coalgebras.
Roughly, this happens when $\Dc$ is isomorphic to the conjugate comonad $\cCc$.
The two theorems that follow provide the precise statements. For both of them,
we consider the following situation.
\[
\xymatrix{
\Ms \ar@(ul,dl)_{\Cc} \ar@/^/[r]^{\Hc} & \Ns \ar@/^/[l]^{\Kc}  \ar@(ur,dr)^{\Dc}
}
\]
Thus, $\Cc$ is a comonad on $\Ms$, $\Dc$ is a comonad on $\Ns$, and $(\Hc,\Kc)$
is an adjunction.
Also, let $\sigma:\Hc\Cc\to\Dc\Hc$ be a transformation
such that $(\Hc,\sigma): (\Ms,\Cc)\to (\Ns,\Dc)$
is a colax morphism of comonads. Let $\sigma':\cCc\to\Dc$ be the transformation
afforded by the universal property of the conjugate comonad
$\cCc$, as discussed in Proposition~\ref{p:universal}.
Finally, let $\tau$ be the mate of $\sigma$, as in Section~\ref{ss:mates}.

\begin{theorem}\label{t:equivalence-coalgebra}
In the above situation, assume that:
\begin{itemize}
\item The category $\Ms$ and the functor $\Hc$ satisfy hypotheses~\eqref{e:coreflexive}--\eqref{e:Hconserve}.
\item The functor $\Cc$ satisfies hypothesis~\eqref{e:preserve}.
\end{itemize}
The functors
\[
\xymatrix{
\Ms_{\Cc} \ar@/^/[r]^{\resH} & \Ns_{\Dc} \ar@/^/[l]^{\indK} 
}
\]
of Sections~\ref{ss:cores}--\ref{ss:coind} form an adjoint equivalence if and only if
the transformation $\sigma':\cCc\to\Dc$ is invertible.
\end{theorem}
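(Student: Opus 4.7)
The plan is to exploit the factorization of $(\Hc,\sigma)$ through the conjugate comonad, reducing the question to the comparison functor for $\cCc$ and the case of comonads on a single category treated in Proposition~\ref{p:equivalence-coalgebra}.

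First I would record the key identity
\[
\resH \;=\; \Ic_{\sigma'}\circ \Hc_{\gamma}
\]
as functors $\Ms_{\Cc}\to\Ns_{\Dc}$. This follows from the factorization of colax morphisms of comonads displayed in diagram~\eqref{e:universal3} of the remark after Proposition~\ref{p:universal}: the composite $(\Ic,\sigma')\circ(\Hc,\gamma)$, computed by the usual composition rule for colax morphisms, has underlying functor $\Hc$ and structure $2$-cell $(\sigma'\Hc)\circ\gamma=\sigma$ by the defining property~\eqref{e:universal} of $\sigma'$. The assignment sending a colax morphism to its induced functor on coalgebra categories (Section~\ref{ss:cores}) is manifestly functorial in the morphism, so the factorization of colax morphisms descends to the claimed factorization of functors. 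Alternatively, one checks this directly on a $\Cc$-coalgebra $(M,c)$: both sides produce the pair $\bigl(\Hc(M),\sigma_M\Hc(c)\bigr)$, once one substitutes the definition of $\gamma$ and the identity $\sigma=(\sigma'\Hc)\gamma$.

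Next I would apply the Conjugate Comonadicity Theorem. By Lemma~\ref{l:comp-colax}, $\Hc_{\gamma}$ coincides with the comparison functor $\QcC$; and under the standing hypotheses~\eqref{e:coreflexive}--\eqref{e:Hconserve} and~\eqref{e:preserve}, Corollary~\ref{c:descent} asserts that $(\Hc_{\gamma},\Kc^{\tgamma})$ is an adjoint equivalence between $\Ms_{\Cc}$ and $\Ns_{\cCc}$. Consequently the factored functor $\resH=\Ic_{\sigma'}\circ\Hc_{\gamma}$ is an equivalence of categories if and only if $\Ic_{\sigma'}:\Ns_{\cCc}\to\Ns_{\Dc}$ is, since composition with an equivalence both preserves and reflects this property.

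Now I would invoke Proposition~\ref{p:equivalence-coalgebra}, applied to the two comonads $\cCc$ and $\Dc$ on $\Ns$ and the comonad morphism $\sigma':\cCc\to\Dc$ (whose comonad-morphism property was verified in Proposition~\ref{p:universal}(ii)): $\Ic_{\sigma'}$ is an equivalence precisely when $\sigma'$ is invertible. Combining the two equivalences gives: $\resH$ is an equivalence $\iff\sigma'$ is invertible. Finally, for the upgrade from ``equivalence'' to ``adjoint equivalence,'' one uses the standard fact that any left adjoint that happens to be an equivalence automatically forms an adjoint equivalence with its given right adjoint; applied to the Adjoint-Lifting adjunction $(\resH,\indK)$ of Section~\ref{ss:cores-coind}, this completes the proof.

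The only step where I foresee any subtlety is the functoriality in the first paragraph: the factorization of colax comonad morphisms must be tracked carefully through the constructions of Section~\ref{ss:cores}, since this is what lets us transfer the Conjugate Comonadicity Theorem from the universal case $\Dc=\cCc$ to the arbitrary $\Dc$ under consideration. Once that identification is made, the rest is a bookkeeping assembly of Corollary~\ref{c:descent} and Proposition~\ref{p:equivalence-coalgebra}.
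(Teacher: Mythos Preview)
Your argument follows exactly the paper's approach: the factorization $\resH=\Ic_{\sigma'}\circ\Hc_{\gamma}$ coming from~\eqref{e:universal3}, the equivalence $\Hc_{\gamma}$ supplied by Corollary~\ref{c:descent}, and the reduction to Proposition~\ref{p:equivalence-coalgebra}. That part is fine.

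There is, however, one genuine gap in your final step. You appeal to ``the Adjoint-Lifting adjunction $(\resH,\indK)$ of Section~\ref{ss:cores-coind}'' as though it is already in hand, but the Adjoint Lifting Theorem (Theorem~\ref{t:adjoint}) requires hypothesis~\eqref{e:existence}: for every $\Dc$-coalgebra $(N,d)$, the coreflexive pair~\eqref{e:eq-coalg} must have an equalizer in $\Ms_{\Cc}$. This is \emph{not} among the standing hypotheses of the theorem, and you have not verified it. Without it, $\indK$ is not even defined, so the sentence ``any left adjoint that is an equivalence forms an adjoint equivalence with its given right adjoint'' has no adjunction to apply to. The paper closes this gap as follows: once $\sigma'$ is shown to be invertible, $\Dc$ is isomorphic (as a comonad) to $\cCc$; and $\cCc$ satisfies~\eqref{e:existence} by the argument in the proof of Proposition~\ref{p:lifting-conjugate} (via Lemmas~\ref{l:Hsplit} and~\ref{l:lifting-conjugate}), hence so does $\Dc$. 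You should insert this verification before invoking the adjoint-equivalence upgrade.
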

\begin{proof} 
Let $\gamma$ be the transformation in~\eqref{e:def-sigma}.
The hypotheses on $\Ms$, $\Hc$ and $\Cc$ ensure that Corollary~\ref{c:descent}
applies, and hence the induced functor $\Hc_{\gamma}$ is an equivalence. 
On the other hand, from~\eqref{e:universal3}
we have the following commutative diagram.
 \[
 \xymatrix@-5pt{
 \Ms_{\Cc} \ar[rr]^-{\resH} \ar[rd]_{\Hc_{\gamma}} & & \Ns_{\Dc}\\
 & \Ns_{\cCc} \ar[ru]_{\Ic_{\sigma'}} & 
 }
 \]
Hence,
\[
\text{$\resH$ is an equivalence $\iff$
$\Ic_{\sigma'}$ is an equivalence $\iff$
$\sigma'$ is invertible,} 
\]
the latter by Proposition~\ref{p:equivalence-coalgebra}.

It remains to verify that $\indK$ is a right adjoint of $\resH$. This follows from Theorem~\ref{t:adjoint}, provided we show that $\Dc$ satisfies hypothesis~\eqref{e:existence}. Now, as in the proof of Proposition~\ref{p:lifting-conjugate}, 
the comonad $\cCc$ satisfies hypothesis~\eqref{e:existence}. It is easy to
see that then the same is true for the isomorphic comonad $\Dc$.
\end{proof}

It is useful to state this result under simpler (though stronger) hypotheses.

\begin{theorem}\label{t:equivalence-coalgebra-conv}
In the above situation, assume that:
\begin{itemize}
\item The category $\Ms$ has equalizers of all coreflexive pairs.
\item The functors $\Cc$ and $\Hc$ preserve these equalizers.
\item The functor $\Hc$ is conservative.
\end{itemize}
Then the conclusion of Theorem~\ref{t:equivalence-coalgebra} holds.
\end{theorem}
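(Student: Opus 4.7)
The plan is to derive this as a direct corollary of Theorem~\ref{t:equivalence-coalgebra}, by showing that the hypotheses stated here are strictly stronger than the four hypotheses~\eqref{e:coreflexive}, \eqref{e:Hpreserve}, \eqref{e:Hconserve}, and~\eqref{e:preserve} used there. There is no new conceptual content; the point is that the simpler hypotheses are easier to check and more familiar.

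First I would verify the hypotheses on $\Ms$ and $\Hc$. Since $\Ms$ has equalizers of \emph{all} coreflexive pairs, in particular it has them for pairs that are coreflexive and $\Hc$-split, which gives~\eqref{e:coreflexive}. Similarly, since $\Hc$ preserves equalizers of all coreflexive pairs, it preserves them for the subclass of coreflexive $\Hc$-split pairs, giving~\eqref{e:Hpreserve}. Conservativity of $\Hc$ is~\eqref{e:Hconserve} verbatim.

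Next I would verify hypothesis~\eqref{e:preserve}, which requires that both $\Cc$ and $\Cc^{2}$ preserve equalizers of parallel pairs that are coreflexive and $\Hc$-split. By assumption $\Cc$ preserves equalizers of all coreflexive pairs, so it certainly preserves equalizers of the smaller class of pairs required by~\eqref{e:preserve}. The only subtle point is $\Cc^{2}$: given a coreflexive $\Hc$-split pair in $\Ms$ with equalizer $E$, one first applies $\Cc$ to obtain an equalizer diagram whose parallel pair is again coreflexive (the retraction transports along $\Cc$), and then applies $\Cc$ again, using once more that $\Cc$ preserves coreflexive equalizers. Thus $\Cc^2$ also preserves the equalizer of the original pair, and~\eqref{e:preserve} holds.

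With all four hypotheses~\eqref{e:coreflexive}--\eqref{e:Hconserve} and~\eqref{e:preserve} verified, Theorem~\ref{t:equivalence-coalgebra} applies and yields exactly the desired conclusion: $(\resH,\indK)$ is an adjoint equivalence if and only if $\sigma':\cCc\to\Dc$ is invertible. No step here is truly an obstacle; the only mild subtlety, as noted above, is checking that $\Cc^2$ preserves the relevant equalizers, which reduces to iterated application of the single hypothesis that $\Cc$ preserves coreflexive equalizers together with the fact that coreflexivity is preserved by any functor.
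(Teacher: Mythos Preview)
Your proposal is correct and follows essentially the same approach as the paper: you reduce to Theorem~\ref{t:equivalence-coalgebra} by checking that the simpler hypotheses imply~\eqref{e:coreflexive}--\eqref{e:Hconserve} and~\eqref{e:preserve}, and you handle the only nontrivial point (preservation by $\Cc^2$) exactly as the paper does, by noting that any functor preserves coreflexive pairs so one can iterate the hypothesis on $\Cc$.
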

\begin{proof}
Since coreflexive pairs are preserved by arbitrary functors,
their equalizers are preserved not only by $\Cc$ but also by $\Cc^2$.
Thus the given hypotheses are stronger than those of Theorem~\ref{t:equivalence-coalgebra}.
\end{proof}

%

We close the section by examining the extent to which the hypotheses
on the functor $\Hc$ are necessary for $\resH$ to be an equivalence.

\begin{proposition}\label{p:equivalence-coalgebra-conv}
Still in the above situation, consider the following hypotheses.
\begin{enumerate}[(i)]
\item The functor $\resH: \Ms_{\Cc}\to\Ns_{\Dc}$ is an equivalence.
\item The functors $\Cc$ and $\Dc$ preserve all existing equalizers in $\Ms$ and $\Ns$, respectively.
\item The transformation $\sigma':\cCc\to\Dc$ is invertible.
\end{enumerate}
We then have the following statements.
\begin{itemize}
\item If \textup{(i)} holds, then the functor $\Hc\Uc_\Cc$ is conservative.
\item Assume \textup{(i)} and \textup{(ii)} hold.
 Let $(f,g)$ be a parallel pair in $\Ms_{\Cc}$ such that the pair
$\bigl(\Uc_\Cc(f),\Uc_\Cc(g)\bigr)$ has an equalizer in $\Ms$ and
the pair $\bigl(\Hc\Uc_\Cc(f),\Hc\Uc_\Cc(g)\bigr)$ has an equalizer in $\Ns$.
Then the functor $\Hc$ preserves the equalizer of $\bigl(\Uc_\Cc(f),\Uc_\Cc(g)\bigr)$.
\item Assume \textup{(i)} and \textup{(iii)} hold. Then any parallel pair in $\Ms_{\Cc}$ which is coreflexive and $\Hc\Uc_{\Cc}$-split
has an equalizer in $\Ms_{\Cc}$.
\end{itemize}
\end{proposition}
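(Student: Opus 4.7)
The plan is to exploit the identity $\Hc\Uc_{\Cc} = \Uc_{\Dc}\resH$ from diagram \eqref{e:cores-forget} throughout, together with Lemma \ref{l:creation} on creation of equalizers by the comonadic forgetful functors, and the standard fact that an equivalence of categories preserves and reflects all limits.

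The first statement is immediate: $\Uc_{\Dc}$ is conservative by the observation made in Section \ref{ss:adj-comonad}, any equivalence is conservative, and the composite $\Uc_{\Dc}\resH = \Hc\Uc_{\Cc}$ is therefore conservative.

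For the second statement the strategy is to transfer the equalizer in three steps. Hypothesis (ii) guarantees that $\Cc$ and $\Cc^2$ preserve the given equalizer of $(\Uc_{\Cc}(f),\Uc_{\Cc}(g))$ in $\Ms$, so Lemma \ref{l:creation} lifts it to an equalizer $(E,c'')\to(M,c)$ of $(f,g)$ in $\Ms_{\Cc}$. Since $\resH$ is an equivalence it sends this to an equalizer of $(\resH(f),\resH(g))$ in $\Ns_{\Dc}$. On the other hand, hypothesis (ii) also makes $\Dc$ and $\Dc^2$ preserve the (assumed to exist) equalizer of $(\Hc\Uc_{\Cc}(f),\Hc\Uc_{\Cc}(g))$ in $\Ns$, so Lemma \ref{l:creation} implies that $\Uc_{\Dc}$ preserves the equalizer in $\Ns_{\Dc}$. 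Applying $\Uc_{\Dc}$ to $\resH(E,c'')$ and using $\Uc_{\Dc}\resH=\Hc\Uc_{\Cc}$ identifies $\Hc(E)$ with the equalizer of $(\Hc\Uc_{\Cc}(f),\Hc\Uc_{\Cc}(g))$, which is the desired preservation.

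For the third statement, any $\Hc\Uc_{\Cc}$-split parallel pair in $\Ms_{\Cc}$ has image under $\Hc\Uc_{\Cc}=\Uc_{\Dc}\resH$ forming part of a split cofork in $\Ns$; since split (i.e.\ absolute) equalizers are preserved by every functor, in particular by $\Dc$ and $\Dc^2$, Lemma \ref{l:creation} produces an equalizer of $(\resH(f),\resH(g))$ in $\Ns_{\Dc}$, and reflection of limits by the equivalence $\resH$ yields the required equalizer of $(f,g)$ in $\Ms_{\Cc}$. The main hazard throughout is the careful bookkeeping of which of ``preserves'', ``creates'', or ``reflects'' is being invoked at each step. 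Hypothesis (iii) does not need to enter this direct argument for statement three; alternatively, one could use Lemma \ref{l:comp-colax} and diagram \eqref{e:universal3} to factor $\resH=\Ic_{\sigma'}\circ\QcC$ and transfer the problem through the conjugate comonad, in which case (iii) is what permits identifying $\Ns_{\Dc}$ with $\Ns_{\cCc}$ via Proposition \ref{p:equivalence-coalgebra}.
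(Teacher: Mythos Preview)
Your arguments for the first two bullets are correct and essentially identical to the paper's, differing only in the order in which you shuttle the equalizer between $\Ms$, $\Ms_{\Cc}$, $\Ns_{\Dc}$, and $\Ns$: you start on the $\Ms$ side and push forward, while the paper starts on the $\Ns$ side and pulls back. The ingredients (Lemma~\ref{l:creation}, the identity $\Hc\Uc_{\Cc}=\Uc_{\Dc}\resH$, and preservation/reflection of limits by equivalences) are the same.

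For the third bullet your route is genuinely different from the paper's. The paper uses hypothesis (iii) in an essential way: it invokes the factorization $\resH=\Ic_{\sigma'}\QcC$ from~\eqref{e:universal3} and Lemma~\ref{l:comp-colax}, combines (i) with the invertibility of $\sigma'$ to conclude that $\QcC$ is an equivalence, and then reads off the desired existence of equalizers from Beck's Theorem~\ref{t:beck} applied to the adjunction~\eqref{e:comp-adj}. Your direct argument bypasses all of this: you observe that an $\Hc\Uc_{\Cc}$-split pair maps to a split cofork in $\Ns$, absolute equalizers satisfy the preservation hypotheses of Lemma~\ref{l:creation} automatically, so the pair $(\resH(f),\resH(g))$ acquires an equalizer in $\Ns_{\Dc}$, and then the quasi-inverse of $\resH$ transports this back to an equalizer of $(f,g)$ in $\Ms_{\Cc}$. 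This is correct, more elementary, and shows that hypothesis (iii) is in fact superfluous for the third statement (and coreflexivity is not needed either). The paper's approach, by contrast, ties the conclusion back into the Conjugate Comonadicity framework, which is thematically natural for the paper but logically heavier here.
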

\begin{proof}
From~\eqref{e:cores-forget}, $\Hc\Uc_\Cc=\Uc_\Dc\resH$. 
The first statement
then follows since $\Uc_\Dc$ is conservative (Section~\ref{ss:adj-comonad})

We turn to the second statement. 
By hypothesis, 
\[
\bigl(\Uc_\Dc\resH(f),\Uc_\Dc\resH(g)\bigr)=\bigl(\Hc\Uc_\Cc(f),\Hc\Uc_\Cc(g)\bigr)
\] 
has an equalizer in $\Ns$.
Since $\Dc$ preserves all existing equalizers, the same is true of $\Dc^2$.
Hence, by Lemma~\ref{l:creation}, $\bigl(\resH(f),\resH(g)\bigr)$ has an equalizer in $\Ns_\Dc$ and this is preserved by $\Uc_\Dc$. Then, $(f,g)$ has an equalizer in $\Ns$
and this is preserved by $\resH$, since $\resH$ is an equivalence.

As for $\Dc$ above, the hypothesis on $\Cc$ ensures that Lemma~\ref{l:creation}
applies to it. Hence
the equalizer of 
$\bigl(\Uc_\Cc(f),\Uc_\Cc(g)\bigr)$ is the image under $\Uc_\Cc$ of the
equalizer of $(f,g)$. Therefore, applying $\Hc$ to the former equalizer
is the same as applying $\Hc\Uc_\Cc=\Uc_\Dc\resH$ to the latter, which
as explained above is the equalizer of $\bigl(\Uc_\Dc\resH(f),\Uc_\Dc\resH(g)\bigr)$.
This proves the second statement.

Finally, as in the proofs of the previous theorems, we have
$\resH = \Ic_{\sigma'} \Hc_{\gamma}= \Ic_{\sigma'} \QcC$. Hypotheses (i) and (iii) imply that $\QcC$
is an equivalence. Applying Theorem~\ref{t:beck} (Beck's) to the adjunction~\eqref{e:comp-adj}, we deduce that~\eqref{e:coreflexive} holds for this adjunction, and
this is the third statement.
\end{proof}

\section{Bimonads and comodule-monads}\label{s:bimonad}

Comonoidal functors and their comodules are reviewed in Sections~\ref{ss:comonoidal} and~\ref{ss:comod-functor}.
The former are functors $\Fc:\Cs\to\Ds$ between monoidal categories which
are compatible with the monoidal structures in such a way that comonoids are
preserved. 
The latter are
functors $\Hc:\Ms\to\Ns$ between module-categories
(categories on which $\Cs$ and $\Ds$ act, respectively) which are such
that if $M$ is a comodule over a comonoid $C$, then $\Hc(M)$ is a comodule
over $\Fc(C)$.

A bimonad is a monad with a compatible comonoidal structure.
Sections~\ref{ss:bimonad} and~\ref{ss:comod-monad} review this notion (introduced by Moerdijk~\cite{Moe:2002})
and that of comodule-monad over a bimonad. Another important notion reviewed 
in Section~\ref{ss:bimonad} is that
of an algebra-comonoid over a bimonad $\Tc$. These objects may be seen 
either as $\Tc$-algebras in the category of comonoids or as comonoids in the
category of $\Tc$-algebras.

Section~\ref{ss:hopf-mod} introduces generalized Hopf modules.
This is a central notion in the paper. It depends on three ingredients: a bimonad $\Tc$,
a comodule-monad $\Sc$ over $\Tc$, and an algebra-comonoid $Z$ over $\Tc$.
For a special choice of these data, generalized Hopf modules become precisely
the Hopf modules of Brugui\`eres, Lack and Virelizier~\cite[Section~4.2]{BruVir:2007}
and~\cite[Section~6.5]{BLV:2011} (see Remark~\ref{r:hopf-mod}).
Generalized Hopf modules may be seen either as $\Sc$-algebras in the category
of $Z$-comodules, or as $Z$-comodules in the category of $\Sc$-algebras
(Proposition~\ref{p:hopf-mod}).

We use the symbol $\bdot$ to denote the
monoidal operation of all monoidal categories, and $\unit$ for 
the unit object. We also use $\bdot$ to
denote the action of a monoidal category on a (left) module-category.
For the axioms defining module-categories, see for instance~\cite[Section~1]{JK:2001}.

\subsection{Comonoidal functors}\label{ss:comonoidal}

Let $\Cs$ and $\Ds$ be monoidal categories.

\begin{definition}\label{d:comonoidal}
A \emph{comonoidal functor}
$(\Fc,\psi,\psi_0):(\Cs,\bdot,\unit) \to(\Ds,\bdot,\unit)$ consists of a functor
\[
\Fc:\Cs\to\Ds,
\]
a natural transformation
\[
\psi_{X,Y}: \Fc(X\bdot Y) \to \Fc(X)\bdot \Fc(Y)
\]
of functors $\Cs\times\Cs\to\Cs$, and a map
\[
\psi_0: \Fc(\unit)\to\unit,
\]
subject to certain axioms; see for instance~\cite[Definition~3.2]{AguMah:2010}.
\end{definition}

Comonoidal functors are also called \emph{colax monoidal functors}.

Let $\Com(\Cs)$ denote the category of comonoids in $\Cs$.
Comonoidal functors preserve comonoids. 
Let $C$ be a comonoid in $\Cs$. We use
\[
\Delta:C\to C\bdot C
\qand
\epsilon:C\to\unit
\]
to denote its structure. Then $\Fc(C)$ is a comonoid in $\Ds$ with
structure
\begin{equation}\label{e:coprod-colax}
\begin{gathered}
D=\Fc(C)\map{\Fc(\Delta)} \Fc(C\bdot C) \map{\psi_{C,C}} \Fc(C)\bdot\Fc(C)=D\bdot D
\\
D=\Fc(C)\map{\Fc(\epsilon)} \Fc(\unit)\map{\psi_0} \unit.
\end{gathered}
\end{equation}
In this manner, $\Fc$ induces a functor
\begin{equation}\label{e:comon-transfer}
\Com(\Cs)\to\Com(\Ds)
\end{equation}
which we also denote by $\Fc$.

\subsection{Bimonads}\label{ss:bimonad}

\begin{definition}\label{d:bimonad}
A \emph{bimonad on $\Cs$}  is a monad $\Tc:\Cs\to\Cs$ that is in addition a comonoidal functor, in such a way that the monad structure maps are morphisms of comonoidal functors. 
\end{definition}

For more details, see~\cite[Definition~1.1]{Moe:2002},~\cite[Section~2.3]{BruVir:2007}, or~\cite[Section~2.4]{BLV:2011}.

\begin{remark}
Bimonads are also called  \emph{opmonoidal monads}~\cite{McC:2002,Szl:2003}
or \emph{comonoidal monads}.
They were originally called \emph{Hopf monads}~\cite{Moe:2002}.
In more recent work~\cite{BLV:2011,BruVir:2007}, the latter term is reserved for certain special bimonads.
\end{remark}

We use
\[
\mu:\Tc^2\to\Tc \qand \iota:\Ic\to\Tc
\]
to denote the monad structure maps and
\[
\psi_{X,Y}:\Tc(X\bdot Y) \to \Tc(X)\bdot\Tc(Y)
\]
for the comonoidal structure of the bimonad $\Tc$.

Let $\Cs^\Tc$ the category of algebras over the monad $\Tc$~\cite[Section~VI.2]{Mac:1998} (or Section~\ref{ss:coalgebra}).
The objects are pairs $(A,a)$ where $A$ is an object of $\Cs$
and $a:\Tc(A)\to A$ is a morphism that is associative and unital.

The monoidal structure of $\Cs$ is inherited by $\Cs^\Tc$~\cite[Proposition~1.4]{Moe:2002}. This makes use
of the comonoidal structure of the bimonad $\Tc$.
Explicitly, if $(A,a)$ and $(B,b)$ are two $\Tc$-algebras, then so is $(A\bdot B,c)$ with
\[
c:\Tc(A\bdot B) \map{\psi_{A,B}} \Tc(A)\bdot\Tc(B) \map{a\bdot b} A\bdot B.
\]
Also, the unit object of $\Cs$ is a $\Tc$-algebra with
\[
\Tc(\unit)\map{\psi_0} \unit.
\]

As in~\eqref{e:coprod-colax}, the functor $\Tc$ restricts to $\Com(\Cs)$, and 
in fact induces a monad
on this category~\cite[Proposition~2.1]{Moe:2002}, which we also denote by $\Tc$.

Algebras for the monad $\Tc$ on the category of comonoids coincide with
comonoids in the monoidal category of $\Tc$-algebras~\cite[Proposition~2.2]{Moe:2002}. More precisely, there is an isomorphism of categories
\[
\Com(\Cs)^{\Tc} = \Com(\Cs^{\Tc}).
\]

\begin{definition}\label{d:comTalg}
An object of the previous category is called a \emph{$\Tc$-algebra-comonoid}.
\end{definition}
(This differs from the terminology in~\cite[Definition~2.3]{Moe:2002}.)

Explicitly, a $\Tc$-algebra-comonoid $(Z,\action,\Delta,\epsilon)$ consists of an object $Z$ of $\Cs$ together with arrows
\[
\action:\Tc(Z)\to Z,\quad \Delta:Z\to Z\bdot Z \qand \epsilon:Z\to\unit
\]
such that $(Z,\action)$ is a $\Tc$-algebra, $(Z,\Delta,\epsilon)$ is a comonoid, and diagrams
\begin{equation}\label{e:comTalg}
\begin{gathered}
\xymatrix{
\Tc(Z) \ar[rr]^-{\action} \ar[d]_{\Tc(\Delta)} & & Z \ar[d]^{\Delta} \\
\Tc(Z\bdot Z) \ar[r]_-{\psi_{Z,Z}} & \Tc(Z)\bdot\Tc(Z) \ar[r]_-{\action\bdot \action} & Z\bdot Z
}
\end{gathered}
\qand
\begin{gathered}
\xymatrix{
\Tc(Z) \ar[r]^-{\action} \ar[d]_{\Tc(\epsilon)} &  Z \ar[d]^{\epsilon} \\
\Tc(\unit) \ar[r]_-{\psi_0} & \unit
}
\end{gathered}
\end{equation}
commute. These diagrams express the fact that $\Delta$ and $\epsilon$ are
morphisms of $\Tc$-algebras, or equivalently that $\action$ is  morphism of comonoids.

Let $C$ be an arbitrary comonoid in $\Cs$. Since $\Tc$ is a monad on $\Com(\Cs)$,
the comonoid $Z=\Tc(C)$ carries a canonical structure of $\Tc$-algebra which turns it
into a $\Tc$-algebra-comonoid. Explicitly, the structure is
\begin{equation}\label{e:freecomTalg}
\begin{gathered}
\Tc(Z)=\Tc^2(C)\map{\mu_C}\Tc(C)=Z\\
Z = \Tc(C)\map{\Tc(\Delta)} \Tc(C\bdot C) \map{\psi_{C,C}} \Tc(C)\bdot\Tc(C)=Z\bdot Z\\
Z = \Tc(C)\map{\Tc(\epsilon)} \Tc(\unit) \map{\psi_0} \unit.
\end{gathered}
\end{equation}
We refer to $\Tc(C)$ as the \emph{free} $\Tc$-algebra-comonoid on $C$.

\subsection{Comodules over comonoidal functors}\label{ss:comod-functor}

Let $(\Fc,\psi):(\Cs,\bdot) \to(\Ds,\bdot)$ be a comonoidal functor (Definition~\ref{d:comonoidal}). Let $\Ms$ and $\Ns$ be module-categories over $\Cs$ and $\Ds$,
respectively. The action of the monoidal categories is from the left.

\begin{definition}\label{d:comodule}
A \emph{comodule} over $(\Fc,\psi)$ is a pair $(\Hc,\chi)$ where
\[
\Hc:\Ms\to\Ns
\]
is a functor and
\[
\chi_{X,M}: \Hc(X\bdot M) \to \Fc(X)\bdot \Hc(M)
\]
is a natural transformation of functors $\Cs\times\Ms\to\Ns$
such that diagrams
\begin{gather}\label{e:comodule1}
\begin{gathered}
\xymatrix@C+25pt{
\Hc(X\bdot Y\bdot M) \ar[r]^-{\chi_{X\bdot Y,M}} \ar[d]_{\chi_{X,Y\bdot M}} & 
\Fc(X\bdot Y)\bdot\Hc(M)  \ar[d]^{\psi_{X,Y}\bdot\id_{\Hc(M)}}\\
\Fc(X)\bdot\Hc(Y\bdot M) \ar[r]_-{\id_{\Fc(X)}\bdot \chi_{Y,M}} & \Fc(X)\bdot\Fc(Y)\bdot\Hc(M)
}
\end{gathered}
\\
\label{e:comodule2}
\begin{gathered}
\xymatrix@C+5pt{
\Hc(\unit\bdot M) \ar[r]^-{\chi_{\unit,M}} \ar@{=}[d] & 
\Fc(\unit)\bdot \Hc(M)  \ar[d]^{\psi_{0}\bdot\id_{\Hc(M)}}\\
\Hc(M) \ar@{=}[r] & \unit\bdot\Hc(M)
}
\end{gathered}
\end{gather}
commute for all objects $X,Y$ of $\Cs$ and $M$ of $\Ms$.
\end{definition}

Let $(\Hc,\chi)$ and $(\Hc',\chi')$ be two comodules over $(\Fc,\psi)$.
A \emph{morphism of comodules} is a natural transformation $f:\Hc\to\Hc'$ such that
\[
\xymatrix@C+10pt{
\Hc(X\bdot M) \ar[r]^-{\chi_{X,M}} \ar[d]_{f_{X\bdot M}} & 
\Fc(X)\bdot \Hc(M)  \ar[d]^{\id_{\Fc(X)}\bdot f_M}\\
\Hc'(X\bdot M) \ar[r]_-{\chi'_{X,M}} & \Fc(X)\bdot\Hc'(M)
}
\]
commutes for all $X$ of $\Cs$ and $M$ of $\Ms$.

Comodules can be composed. First recall that if 
\[
\Fc:\Cs\to\Ds \qand \Fc':\Ds\to\Es
\]
are comonoidal functors (with structure $\psi$ and $\psi'$), then their composite
\[
\Fc'\Fc:\Cs\to\Es
\] 
is comonoidal with structure transformation
\[
\Fc'\Fc(X\bdot Y) \map{\Fc'(\psi_{X,Y})} \Fc'\bigl(\Fc(X)\bdot\Fc(Y)\bigr) 
\map{\psi'_{\Fc(X),\Fc(Y)}} \Fc'\Fc(X) \bdot \Fc'\Fc(Y)
\]
and counit structure map
\[
\Fc'\Fc(\unit) \map{\Fc'(\psi_0)} \Fc'(\unit) \map{\psi'_0} \unit.
\]
For details, see for instance~\cite[Theorem~3.21]{AguMah:2010}. Now suppose
that 
\[
\Hc:\Ms\to\Ns \qand \Hc':\Ns\to\Ps
\]
are comodules over $\Fc$ and $\Fc'$ respectively (with transformations $\chi$ and $\chi'$). Then $\Hc'\Hc$ is a comodule over $\Fc'\Fc$ with transformation
\begin{equation}\label{e:comod-compose}
\Hc'\Hc(X\bdot M) \map{\Hc'(\chi_{X,M})} \Hc'\bigl(\Fc(X)\bdot\Hc(M)\bigr) 
\map{\chi'_{\Fc(X),\Hc(M)}} \Fc'\Fc(X) \bdot \Hc'\Hc(M).
\end{equation}

\subsection{Comodules over comonoids}\label{ss:comod-comonoid}

Let $C$ be a comonoid in $\Cs$. A $C$-comodule $(M,d)$ consists of
an object $M$ in $\Ms$ and an arrow 
\[
d:M\to C\bdot M
\]
subject to the usual associativity and counit axioms. 

Let $\hC:\Ms\to\Ms$ denote the left action by $C$ on the category $\Ms$:
on an object $M$ in $\Ms$,
\begin{equation}\label{e:hat-comonad}
\hC(M) = C\bdot M.
\end{equation}
The functor $\hC$ is a comonad on $\Ms$ and
the category $\Ms_{\hC}$ of coalgebras for this comonad is precisely the
category of left $C$-comodules in $\Ms$.

Let $M$ be a $C$-comodule. Given a comonoidal functor $\Fc$ and an $\Fc$-comodule $\Hc$
(Definition~\ref{d:comodule}), the object $\Hc(M)$ is an $\Fc(C)$-comodule with
\[
\Hc(M) \map{\Hc(d)} \Hc(C\bdot M) \map{\chi_{C,M}} \Hc(C)\bdot\Hc(M).
\]
Compare with~\eqref{e:coprod-colax}. In this manner, $\Hc$ induces a functor
\begin{equation}\label{e:comod-transfer}
\Ms_{\hC} \to \Ns_{\widehat{\Fc(C)}}
\end{equation}
which we also denote by $\Hc$.

By employing functors from the one-arrow category, one may view
the transformations of comonoids under comonoidal functors
(and of comodules over comonoids
under comodules over comonoidal functors)
as an instance of the composition of comonoidal functors
(and of comodules over them) discussed in Section~\ref{ss:comod-functor}.

\subsection{Comodules over bimonads}\label{ss:comod-monad}

Let $\Tc$ be a bimonad on a monoidal category $\Cs$.
Let $\Ms$ be a $\Cs$-module-category and let $\Sc:\Ms\to\Ms$ be a monad. We use $\mu$ and $\iota$ to denote the monad structure of either $\Tc$ or $\Sc$. 
Suppose that $\Sc$ is a comodule over $(\Tc,\psi)$ via 
\[
\chi_{X,M}:\Sc(X\bdot M) \to \Tc(X)\bdot \Sc(M).
\]

\begin{definition}\label{d:comod-monad}
We say that $(\Sc,\chi)$ is a \emph{comodule-monad} over the bimonad $(\Tc,\psi)$
if the following diagrams commute for all $X$ in $\Cs$ and $M$ in $\Ms$.
\begin{gather}
\label{e:comod-monad1}
\begin{gathered}
\xymatrix@C+30pt{
\Sc^2(X\bdot M) \ar[d]_{\mu_{X\bdot M}} \ar[r]^-{\Sc(\chi_{X,M})}  & 
\Sc\bigl(\Tc(X)\bdot\Sc(M)\bigr) \ar[r]^-{\chi_{\Tc(X),\Sc(M)}}  &
\Tc^2(X)\bdot \Sc^2(M) \ar[d]^{\mu_X\bdot\mu_M}\\
\Sc(X\bdot M) \ar[rr]_-{\chi_{X,M}} & & \Tc(X)\bdot \Sc(M)
}
\end{gathered}\\
\label{e:comod-monad2}
\begin{gathered}
\xymatrix@C-25pt{
& X\bdot M \ar[dl]_{\iota_{X\bdot M}}  \ar[rd]^{\iota_{X}\bdot \iota_{M}} & \\
\Sc(X\bdot M) \ar[rr]_-{\chi_{X,M}}  & & \Tc(X)\bdot\Sc(M) 
}
\end{gathered}
\end{gather}
\end{definition}

As discussed in Section~\ref{ss:comod-functor}, $\Sc^2:\Ms\to\Ms$ (the composite of $\Sc$ with itself) is a comodule over $\Tc^2:\Cs\to\Cs$. In addition, since $\mu:\Tc^2\to\Tc$
is a morphism of comonoidal functors, we may turn $\Sc^2$ into a comodule over $\Tc$. Similarly, the morphism $\iota:\Ic\to\Tc$ allows us to turn 
the identity functor $\Ic:\Ms\to\Ms$ into a comodule over $\Tc$. 
The conditions in Definition~\ref{d:comod-monad} may then be reformulated as follows:  $\mu:\Sc^2\to\Sc$ and $\iota:\Ic\to\Sc$ are morphisms of comodules over $(\Tc,\psi)$.

\begin{remark}
Let 
\[
\Hc,\Hc': \Ms\to\Ms
\]
be comodules over $\Tc$. As discussed in Section~\ref{ss:comod-functor}, the composite $\Hc'\Hc:\Ms\to\Ms$ is then a comodule over $\Tc^2:\Cs\to\Cs$. In addition, since $\mu:\Tc^2\to\Tc$
is a morphism of comonoidal functors, we may turn $\Hc'\Hc$ into a comodule over $\Tc$. 
As mentioned above, $\Ic:\Ms\to\Ms$ is a comodule over $\Tc$.
These statements may be summarized as follows.
Let $\End(\Ms)$ be the category of endofunctors of $\Ms$. It is strict monoidal under
composition. Let $\End_{\Tc}(\Ms)$ denote the category of comodules over $(\Tc,\psi)$. Then $\End_{\Tc}(\Ms)$ is strict monoidal in such a way that the forgetful functor
$\End_{\Tc}(\Ms)\to \End(\Ms)$ is strict monoidal.

It follows from Definition~\ref{d:comod-monad} that $(\Sc,\chi)$ is a
comodule-monad over $(\Tc,\psi)$ if and only if it is a  monoid in the monoidal category
$\End_{\Tc}(\Ms)$.
\end{remark}

Recall (Section~\ref{ss:bimonad}) that the category of $\Tc$-algebras is monoidal.
There is an action of $\Cs^\Tc$ on $\Ms^\Sc$:
given a $\Tc$-algebra $(A,a)$ and a $\Sc$-algebra $(B,b)$, define
\begin{equation}\label{e:alg-cat}
c:\Sc(A\bdot B) \map{\chi_{A,B}} \Tc(A)\bdot\Sc(B) \map{a\bdot b} A\bdot B.
\end{equation}

\begin{proposition}\label{p:alg-cat}
$(A\bdot B,c)$ is an $\Sc$-algebra.
\end{proposition}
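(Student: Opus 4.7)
The plan is to verify the two algebra axioms (associativity and unitality) for $(A\bdot B, c)$ by direct diagram chases, combining naturality of $\chi$ with the fact that $(A,a)$ is a $\Tc$-algebra, $(B,b)$ is an $\Sc$-algebra, and the two compatibility axioms~\eqref{e:comod-monad1} and~\eqref{e:comod-monad2} from Definition~\ref{d:comod-monad}.

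For unitality, I would compute $c \circ \iota_{A \bdot B}$ directly: using diagram~\eqref{e:comod-monad2}, the map $\chi_{A,B}\circ \iota_{A\bdot B}$ equals $\iota_A \bdot \iota_B$, and then $(a\bdot b)\circ(\iota_A\bdot \iota_B) = (a\iota_A)\bdot (b\iota_B) = \id_A \bdot \id_B = \id_{A\bdot B}$ by the unit axioms of the $\Tc$-algebra $(A,a)$ and the $\Sc$-algebra $(B,b)$. This is the short part.

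For associativity, the task is to show $c\circ \Sc(c) = c\circ \mu_{A\bdot B}$. Starting from the right-hand side and working leftward, I would first apply~\eqref{e:comod-monad1} to rewrite $\chi_{A,B}\circ \mu_{A\bdot B}$ as $(\mu_A\bdot \mu_B)\circ \chi_{\Tc(A),\Sc(B)}\circ \Sc(\chi_{A,B})$, then absorb the $\mu_A$ and $\mu_B$ into $a$ and $b$ using the associativity axioms of the two algebras, yielding $(a\Tc(a))\bdot (b\Sc(b))$. Finally, naturality of $\chi$ with respect to the pair $(a,b)$ gives $(\Tc(a)\bdot \Sc(b))\circ \chi_{\Tc(A),\Sc(B)} = \chi_{A,B}\circ \Sc(a\bdot b)$, which lets me recombine the result as $(a\bdot b)\circ \chi_{A,B}\circ \Sc\bigl((a\bdot b)\circ \chi_{A,B}\bigr) = c\circ \Sc(c)$.

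The only point requiring care — and the main thing to keep straight in the diagram — is the interplay between the two uses of naturality of $\chi$ (with respect to $a$ and $b$ individually versus with respect to the product $a\bdot b$) and the axiom~\eqref{e:comod-monad1}, which itself couples $\mu_X$ on the $\Cs$-side with $\mu_M$ on the $\Ms$-side. No genuine obstacle is expected; the proof is essentially the same verification that, for an ordinary bialgebra $H$ and an $H$-comodule algebra $S$, the tensor product of an $H$-module and an $S$-module is naturally an $S$-module.
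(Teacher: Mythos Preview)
Your proof is correct and follows essentially the same approach as the paper: the paper presents the associativity check as a single commutative diagram whose three pieces commute respectively by~\eqref{e:comod-monad1}, naturality of $\chi$, and the associativity axioms for $a$ and $b$, and dismisses unitality as following similarly from~\eqref{e:comod-monad2}. Your equational argument uses exactly these three ingredients in the same order, just written out linearly rather than diagrammatically.
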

\begin{proof}
Associativity for $c$ holds by the commutativity of the following diagram.
\[
\xymatrix@C+15pt{
\Sc^2(A\bdot B) \ar[rr]^-{\mu_{A\bdot B}} \ar[d]_{\Sc(\chi_{A,B})} 
\ar@/_4.5pc/[dd]_-{\Sc(c)} & & \Sc(A\bdot B) \ar[d]^{\chi_{A,B}} \ar@/^3.5pc/[dd]^-{c} \\
\Sc\bigl(\Tc(A)\bdot\Sc(B)\bigr) \ar[r]^-{\chi_{\Tc(A),\Sc(B)}} \ar[d]_{\Sc(a\bdot b)} &
\Tc^2(A)\bdot \Sc^2(B) \ar[r]^-{\mu_A\bdot\mu_B} \ar[d]|{\Tc(a)\bdot\Sc(b)} & \Tc(A)\bdot\Sc(B) \ar[d]^{a\bdot b}\\
\Sc(A\bdot B) \ar[r]_{\chi_{A,B}} & \Tc(A)\bdot\Sc(B) \ar[r]_-{a\bdot b} & A\bdot B
}
\]
The top rectangle commutes by~\eqref{e:comod-monad1}, the bottom left square
by naturality of $\chi$ and the bottom right square by associativity for $a$ and $b$.
Unitality for $c$ follows similarly from~\eqref{e:comod-monad2}. 
\end{proof}

The action of $\Cs^\Tc$ on $\Ms^\Sc$ defined in Proposition~\ref{p:alg-cat} turns 
$\Ms^\Sc$ into a $\Cs^\Tc$-module-category;
associativity and unitality follow from those of the action of $\Cs$ on $\Ms$
together with the comodule axioms~\eqref{e:comodule1}--\eqref{e:comodule2}.

\subsection{Generalized Hopf modules}\label{ss:hopf-mod}

Assume now that we are given:
\begin{itemize}
\item A monoidal category $\Cs$ and a $\Cs$-module-category $\Ms$.
\item A bimonad $\Tc$ on $\Cs$ (Definition~\ref{d:bimonad}).
\item A $\Tc$-comodule-monad $\Sc$ on $\Ms$ (Definition~\ref{d:comod-monad}).
\item A $\Tc$-algebra-comonoid $Z$ in $\Cs$ (Definition~\ref{d:comTalg}).
\end{itemize}

We employ the notation of Sections~\ref{ss:comonoidal}--\ref{ss:comod-monad}.
In particular, the structure of $Z$ is denoted by $(\action,\Delta,\epsilon)$.

\begin{definition}\label{d:hopf-mod}
A \emph{Hopf $(\Tc;\Sc,Z)$-module} $(M,\sigma,\zeta)$ consists of an object $M$ in $\Ms$
and arrows
\[
\sigma:\Sc(M)\to M
\qand
\zeta: M\to Z\bdot M
\]
such that $(M,\sigma)$ is an $\Sc$-algebra, $(M,\zeta)$ is a (left) $Z$-comodule,
and the following diagram commutes.
\begin{equation}\label{e:hopf-mod}
\begin{gathered}
\xymatrix@C+10pt@R-5pt{
\Sc(M) \ar[r]^-{\sigma} \ar[d]_{\Sc(\zeta)} & M \ar[dd]^{\zeta} \\
\Sc(Z\bdot M) \ar[d]_{\chi_{Z,M}} & \\
\Tc(Z)\bdot \Sc(M)  \ar[r]_-{\action \bdot \sigma} & Z\bdot M 
}
\end{gathered}
\end{equation}
\end{definition}

A morphism of Hopf $(\Tc;\Sc,Z)$-modules is an arrow in $\Ms$ that is both
a morphism of $\Sc$-algebras and of $Z$-comodules. We let $\HTS$
denote the category of Hopf $(\Tc;\Sc,Z)$-modules.

\begin{remark}\label{r:hopf-mod}
Suppose that $Z=\unit$ is the trivial $\Tc$-algebra-comonoid (the unit object of $\Cs$).
In this case~\eqref{e:hopf-mod} is automatic (follows from~\eqref{e:comodule2})
and a Hopf $(\Tc;\Sc,Z)$-module is just an $\Sc$-algebra. 

On the other hand, when $\Ms=\Cs$, $\Sc=\Tc$, and $Z=\Tc(\unit)$ is the free
$\Tc$-algebra-comonoid on the trivial comonoid in $\Cs$,
a Hopf $(\Tc;\Sc,Z)$-module is precisely a Hopf module in the
sense of  Brugui\`eres, Lack and Virelizier~\cite[Section~4.2]{BruVir:2007}
and~\cite[Section~6.5]{BLV:2011}.
For this reason, we refer to Hopf $(\Tc;\Sc,Z)$-modules loosely as generalized
Hopf modules. 
\end{remark}

\begin{example}\label{eg:hopf-mod}
Consider the case in which still $\Ms=\Cs$ and $\Sc=\Tc$, but the $\Tc$-algebra-comonoid $Z$ is arbitrary. Then $(Z,\action,\Delta)$ is a Hopf $(\Tc;\Sc,Z)$-module. Indeed,~\eqref{e:hopf-mod} coincides in this case with the first diagram
in~\eqref{e:comTalg}.
\end{example}

We return to generalized Hopf modules. Since $Z$ is a comonoid in $\Cs^{\Tc}$,
and this category acts on $\Ms^{\Sc}$, the comonad $\hZ$ on $\Ms^{\Sc}$
is defined, as in~\eqref{e:hat-comonad}.

On the other hand, $Z$ is also a comonoid in $\Cs$, and
since the functor $\Sc$ is a $\Tc$-comodule, it sends $Z$-comodules
to $\Tc(Z)$-comodules:
\[
\Ms_{\hZ} \to \Ms_{\widehat{\Tc(Z)}}.
\]
This is an instance of~\eqref{e:comod-transfer}. Now, as mentioned after diagram~\eqref{e:comTalg}, the structure map
$\action:\Tc(Z)\to Z$ is a morphism of comonoids, and hence induces a functor
\[
\Ms_{\widehat{\Tc(Z)}} \to \Ms_{\hZ}.
\]
Composing these two functors we obtain a new one
\[
\Ms_{\hZ} \to \Ms_{\hZ}
\]
which we still denote by $\Sc$ and which is still a monad.
Explicitly, if $(M, \zeta)$ is a $Z$-comodule in $\Ms$, then $\Sc(M,\zeta) = (\Sc(M), \gamma)$, where $\gamma$ is the composition 
\[
\Sc(M) \map{\Sc(\zeta)} \Sc(Z\bdot M) \map{\chi_{Z,M}} \Tc(Z)\bdot \Sc(M) 
\map{\alpha\bdot\id} Z\bdot \Sc(M).
\]

The category of generalized Hopf modules admits the following descriptions.

\begin{proposition}\label{p:hopf-mod}
There are isomorphisms of categories
\[
(\Ms^{\Sc})_{\hZ} = \HTS = (\Ms_{\hZ})^{\Sc}.
\]
\end{proposition}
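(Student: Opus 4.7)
The proof is essentially a matter of unpacking the two iterated constructions and observing that in both cases the compatibility axiom between the algebra and coalgebra data reduces to the single square~\eqref{e:hopf-mod}.

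First I would describe an object of $(\Ms^{\Sc})_{\hZ}$. Such an object is a pair consisting of an $\Sc$-algebra $(M,\sigma)$ together with a coaction $\zeta:M\to Z\bdot M$ in the category $\Ms^{\Sc}$, where $Z\bdot M$ carries the $\Sc$-algebra structure of Proposition~\ref{p:alg-cat} (with $Z$ regarded as a $\Tc$-algebra via $\action$). Coassociativity and counitality of $\zeta$ with respect to the comonoid $Z$ in $\Cs^{\Tc}$ reduce to the same axioms with respect to $Z$ as a comonoid in $\Cs$, since the forgetful functor $\Cs^{\Tc}\to\Cs$ is strict monoidal. The only additional content is that $\zeta$ be a morphism of $\Sc$-algebras, and upon substituting the structure map of $Z\bdot M$ from~\eqref{e:alg-cat}, this condition is precisely diagram~\eqref{e:hopf-mod}. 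Hence objects of $(\Ms^{\Sc})_{\hZ}$ and of $\HTS$ coincide.

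Next I would do the symmetric unpacking for $(\Ms_{\hZ})^{\Sc}$. Such an object is a pair consisting of a $Z$-comodule $(M,\zeta)$ together with an action $\sigma:\Sc(M)\to M$ in $\Ms_{\hZ}$, where $\Sc(M)$ carries the $Z$-comodule structure described in the paragraph preceding the proposition, namely the composite $(\action\bdot\id)\circ\chi_{Z,M}\circ\Sc(\zeta)$. Associativity and unitality of $\sigma$ with respect to $\Sc$ as a monad on $\Ms_{\hZ}$ reduce to the same axioms with respect to $\Sc$ as a monad on $\Ms$, because the forgetful functor $\Ms_{\hZ}\to\Ms$ is faithful and creates these structures. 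The extra requirement that $\sigma$ be a morphism of $Z$-comodules is, after plugging in the $Z$-coaction on $\Sc(M)$ just recalled, once again exactly diagram~\eqref{e:hopf-mod}. Thus objects of $(\Ms_{\hZ})^{\Sc}$ also coincide with those of $\HTS$.

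Finally, morphisms are straightforward. A morphism in $(\Ms^{\Sc})_{\hZ}$ is a morphism of $\Sc$-algebras that is also a morphism of $Z$-comodules in $\Ms$, because the $Z$-comodule structure on $\Sc$-algebras is defined by the same underlying map $\zeta$. An analogous remark applies to $(\Ms_{\hZ})^{\Sc}$. These are exactly the morphisms of $\HTS$. Putting object and morphism identifications together gives the two equalities of categories.

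The step most likely to require care is verifying that the two ways of reading diagram~\eqref{e:hopf-mod} really do coincide on the nose with the respective compatibility conditions, once the structures from Proposition~\ref{p:alg-cat} and~\eqref{e:comod-transfer} are substituted. Since no new diagrammatic identities are invoked and the match is literal, I expect no genuine obstacle; the proof is an exercise in bookkeeping.
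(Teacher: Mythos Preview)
Your proposal is correct and follows essentially the same approach as the paper's own proof: unpack the objects of each iterated category, observe that the extra compatibility condition in both cases is precisely diagram~\eqref{e:hopf-mod}, and note that morphisms match trivially. The paper's version is terser but makes exactly the same identifications.
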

\begin{proof}
An object of $(\Ms^{\Sc})_{\hZ}$ is an $\Sc$-algebra in $\Ms$ with a $Z$-comodule
structure $\zeta:M\to Z\bdot M$ which is a morphism of $\Sc$-algebras,
with the $\Sc$-algebra structure of $Z\bdot M$ as in~\eqref{e:alg-cat}.
An object of $(\Ms_{\hZ})^{\Sc}$ is a $Z$-comodule in $\Ms$ with an $\Sc$-algebra
structure $\sigma:\Sc(M)\to M$ which is a morphism of $Z$-comodules. In both cases,
the conditions are expressed by the commutativity of~\eqref{e:hopf-mod}.
\end{proof}

\begin{remark}\label{r:mixed}
It is possible to show the existence of a \emph{mixed distributive law}
between the monad $\Sc$ on $\Ms$ and the comonad $\hZ$ on $\Ms$.
Proposition~\ref{p:hopf-mod} is then a consequence of a general result
for such laws; see~\cite[Propositions~2.1 and~2.2]{Bur:1973} or~\cite[Theorem~2.4]{Wol:1973}. The law is the transformation $\Sc\hZ\to\hZ\Sc$ defined by
\[
\Sc(Z\bdot M) \map{\chi_{Z,M}} \Tc(Z)\bdot \Sc(M) \map{\action\bdot\id} Z\bdot \Sc(M).
\]
\end{remark}

\section{Comodules over comonoidal adjunctions}\label{s:comod}

Adjunctions $(\Fc,\Gc)$ between monoidal categories
and adjunctions $(\Hc,\Kc)$ between module-categories
(both compatible with the monoidal structures) 
are discussed in Section~\ref{ss:comod-adj}. It is well-known that in this
situation the comonoidal structure of $\Gc$ is necessarily invertible; we show the
same is true for the comonoidal structure of $\Kc$ (while this follows from a result
of Kelly, we provide a direct proof).

The monad of a comonoidal adjunction
is a bimonad; moreover, the monad of a comodule over such an adjunction is a comodule-monad. These statements are
discussed in Section~\ref{ss:adj-bimonad} and (in the converse direction) in Section~\ref{ss:bimonad-adj}. They elaborate
on results from~\cite[Section~2.5]{BLV:2011}.

In Section~\ref{ss:Hopf-Galois} we discuss two canonical transformations,
following ideas of Brugui\`eres, Lack and Virelizier~\cite[Sections~2.6 and~2.8]{BLV:2011}. The \emph{Hopf operator} is associated to a comodule over a comonoidal adjunction
and the \emph{Galois map} is associated to a comodule-monad over a bimonad.
Our goal is to show that when the adjunctions are monadic,
invertibility of the Hopf operator is equivalent to invertibility of the Galois map
(Corollary~\ref{c:Galois-Hopf}). This is a result of Brugui\`eres, Lack and 
Virelizier~\cite[Lemma~2.18]{BLV:2011}
which is extended here to the setting of comodules.

\subsection{Comodules over comonoidal adjunctions}\label{ss:comod-adj}
Suppose now that
\[
\xymatrix{
\Cs  \ar@/^/[r]^{\Fc} & \Ds \ar@/^/[l]^{\Gc} 
}
\]
is a \emph{comonoidal adjunction}~\cite[Definition~3.88]{AguMah:2010}. (These adjunctions are called \emph{colax-colax} in ~\cite{AguMah:2010}.)
We let 
\[
\eta:\Ic\to\Gc\Fc \qand \xi:\Fc\Gc\to\Ic
\] 
denote the unit and counit of the adjunction.
Thus, $\Fc$ and $\Gc$ are comonoidal functors and $\eta$ and $\xi$ are morphisms of comonoidal functors.
We use $\psi$ to denote the comonoidal structure of either $\Fc$ or $\Gc$.
Recall that in this situation $\Gc$ is necessarily strong, that is, the transformation
$\psi_{V,W}:\Gc(V\bdot W) \to \Gc(V)\bdot\Gc(W)$ is invertible.
We generalize this fact in Proposition~\ref{p:rightstrong} below.
Both the former result and its generalization are special cases of a result of Kelly;
see Remark~\ref{r:kelly}.

Suppose also that
\[
\xymatrix{
\Ms \ar@/^/[r]^{\Hc} & \Ns \ar@/^/[l]^{\Kc} 
}
\]
is an adjunction,  $\Hc$ is an $\Fc$-comodule and $\Kc$ is a $\Gc$-comodule.
We use $\chi$ to denote the comodule structure of either $\Hc$ or $\Kc$
and $\eta$ and $\xi$ for the unit and counit of the adjunction $(\Hc,\Kc)$.
Since comonoidal functors and their comodules compose (Section~\ref{ss:comod-functor}), $\Gc\Fc$ is comonoidal
and $\Kc\Hc$ is a comodule over it.

\begin{definition}\label{d:comod-adj}
In the above situation, we say that the adjunction
$(\Hc,\Kc)$ is a \emph{comodule} over the comonoidal adjunction $(\Fc,\Gc)$ if the
following diagrams commute
\begin{equation}\label{e:comod-adj}
\begin{gathered}
\xymatrix@C-30pt{
& X\bdot M \ar[dl]_{\eta_{X\bdot M}}  \ar[rd]^{\eta_{X}\bdot \eta_{M}} & \\
\Kc\Hc(X\bdot M) \ar[rr]  & &\Gc\Fc(X)\bdot\Kc\Hc(M) 
}
\qquad
\xymatrix@C-30pt{
\Hc\Kc(V\bdot N) \ar[dr]_{\xi_{V\bdot N}} \ar[rr]  & & \Fc\Gc(V)\bdot\Hc\Kc(N) \ar[ld]^{\xi_{V}\bdot \xi_{N}}\\
 & V\bdot N &
}
\end{gathered}
\end{equation}
for all objects $X$ of $\Cs$, $M$ of $\Ms$, $V$ of $\Ds$, and $N$ of $\Ns$.
\end{definition}

These conditions may be reformulated as follows. First note that
$\eta:\Ic\to\Gc\Fc$ allows us to regard $\Ic:\Ms\to\Ms$ as a comodule over $\Gc\Fc$;
also, $\xi:\Fc\Gc\to\Ic$ allows us to regard
$\Hc\Kc$ as a comodule over  $\Ic:\Ds\to\Ds$. 
Then diagrams~\eqref{e:comod-adj}
state that $\eta:\Ic\to\Kc\Hc$ is a morphism of comodules over $\Gc\Fc$ and 
$\xi:\Hc\Kc\to\Ic$ is a morphism of comodules over $\Ic$.

\begin{proposition}\label{p:rightstrong}
If $(\Hc,\Kc)$ is a comodule over the comonoidal adjunction $(\Fc,\Gc)$, then
the comodule structure of $\Kc$ 
\[
\chi_{V,N}:  \Kc(V\bdot N) \to \Gc(V)\bdot\Kc(N)
\]
is invertible.
\end{proposition}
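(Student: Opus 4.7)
The natural strategy is the classical mate construction: build a candidate inverse to $\chi_{V,N}$ using the units and counits of both adjunctions together with the $\Fc$-comodule structure of $\Hc$, then verify invertibility by a diagram chase. This mimics (and extends) the standard proof that the right adjoint in a comonoidal adjunction is necessarily strong.

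Concretely, for objects $V$ of $\Ds$ and $N$ of $\Ns$, I would define
\[
\phi_{V,N}:\Gc(V)\bdot\Kc(N)\map{\eta} \Kc\Hc\bigl(\Gc(V)\bdot\Kc(N)\bigr)\map{\Kc(\chi_{\Gc(V),\Kc(N)})} \Kc\bigl(\Fc\Gc(V)\bdot\Hc\Kc(N)\bigr) \map{\Kc(\xi_{V}\bdot\xi_N)} \Kc(V\bdot N),
\]
using, in the middle step, the $\Fc$-comodule structure of $\Hc$. The claim is that $\phi_{V,N}$ is a two-sided inverse of $\chi_{V,N}$.

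To verify $\phi_{V,N}\circ\chi_{V,N}=\id$, I would expand the composite by naturality of $\eta$ to move it past $\chi_{V,N}$ (getting $\eta$ sitting before $V\bdot N$ inside the $\Kc$), then use the first diagram in~\eqref{e:comod-adj} for the object $V\bdot N$ (viewing it via $\eta_V\bdot\eta_N$) to rewrite $\Kc(\chi)\,\eta$ in terms of $\eta_V\bdot\eta_N$; finally, one of the triangle identities for $(\Fc,\Gc)$ collapses $\xi\circ\Fc(\eta)$ and a triangle identity for $(\Hc,\Kc)$ collapses $\Kc(\xi)\circ\eta$, yielding the identity. For $\chi_{V,N}\circ\phi_{V,N}=\id$, the argument is dual: apply $\chi$ to the defining composite, push it inside using naturality, then apply the second diagram in~\eqref{e:comod-adj} to the object $\Gc(V)\bdot\Kc(N)$, which reduces the composite to $\xi_V\bdot\xi_N$ followed (or preceded) by $\eta_V\bdot\eta_N$; the triangle identities for both adjunctions again finish the job.

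The only genuine obstacle is bookkeeping: each of the two verifications is a moderately large diagram whose commutativity rests on (i) naturality of $\eta,\xi,\chi,\psi$, (ii) the two triangle identities for each of the adjunctions $(\Fc,\Gc)$ and $(\Hc,\Kc)$, and (iii) the two compatibility diagrams in~\eqref{e:comod-adj}. Since this is exactly the same mate-calculus that proves $\Gc$ is strong in a comonoidal adjunction (the case $\Ms=\Cs$, $\Ns=\Ds$, $\Hc=\Fc$, $\Kc=\Gc$), there are no conceptual subtleties; the proposition is the evident generalization to the comodule setting, and can alternatively be quoted from Kelly's general theorem on doctrinal adjunctions as noted in Remark~\ref{r:kelly}.
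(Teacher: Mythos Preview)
Your approach is essentially identical to the paper's: the candidate inverse $\phi_{V,N}$ you define is exactly the paper's $\gamma_{V,N}$ (obtained as the adjoint transpose of $(\xi_V\bdot\xi_N)\circ\chi_{\Gc(V),\Kc(N)}$), and the verification proceeds by the same combination of naturality, the two compatibility squares in~\eqref{e:comod-adj}, and the triangle identities. One small correction: you have the roles of the two diagrams in~\eqref{e:comod-adj} swapped---it is the \emph{second} diagram (the one for $\xi$) that is used for $\phi_{V,N}\circ\chi_{V,N}=\id_{\Kc(V\bdot N)}$, since $V$ lives in $\Ds$ and $N$ in $\Ns$, while the \emph{first} diagram (for $\eta$, specialized to $X=\Gc(V)$, $M=\Kc(N)$) handles $\chi_{V,N}\circ\phi_{V,N}=\id_{\Gc(V)\bdot\Kc(N)}$.
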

\begin{proof}
Let $V$ be an object of $\Ds$ and $N$ one of $\Ns$. Consider the composite
\[
\Hc\bigl(\Gc(V)\bdot\Kc(N)\bigr) \map{\chi_{\Gc(V),\Kc(N)}}
\Fc\Gc(V)\bdot \Hc\Kc(N) \map{\xi_V\bdot\xi_N}
V\bdot N.
\]
By adjointness, it corresponds to a map
\[
\gamma_{V,N}: \Gc(V)\bdot\Kc(N) \to \Kc(V\bdot N).
\]

Now consider the following diagram.
\[
\def\labelstyle{\scriptstyle}
\xymatrix@R+1pc@C-1.3pc{
\Gc(V)\bdot\Kc(N)\ar@{<-}[rr]^-{\chi_{V,N}}\ar@{<-}[rd]|-{\Gc(\xi_V)\bdot\Kc(\xi_N)}
\ar@{<-}[dd]|{\id_{\Gc(V)\bdot\Kc(N)}} & & \Kc(V\bdot N)\ar@{<-}[dr]^-{\ \Kc(\xi_V\bdot\xi_N)}
\ar@{<--}@/^2.9pc/[lldd]^-{\gamma_{V,N}}\\
& \Gc\Fc\Gc(V)\bdot\Kc\Hc\Kc(N)\ar@{<-}[rr]^-{\chi_{\Fc\Gc(V),\Hc\Kc(N)}}
\ar@{<-}[dl]|-{\eta_{\Gc(V)}\bdot\eta_{\Kc(N)}} & & \Kc\left(\Fc\Gc(V) \bdot \Hc\Kc(N)\right)
\ar@{<-}[dl]^-{\ \ \ \Kc(\chi_{\Gc(V),\Kc(N)})} \\
\Gc(V)\bdot\Kc(N) & & \Kc\Hc\left(\Gc(V)\bdot\Kc(N)\right)\ar@{<-}[ll]^-{\eta_{\Gc(V) \bdot \Kc(N)}}
}
\]
The top front square commutes by naturality of $\chi$.
The bottom front square is a special case of the first diagram in~\eqref{e:comod-adj}
(the case $X=\Gc(V)$, $M=\Kc(N)$);
it commutes since $(\Hc,\Kc)$ is a comodule over $(\Fc,\Gc)$.
The front triangle commutes by the adjunction axioms.
There are two faces on the back, a triangle on the left and
a square on the right.
The back square commutes by definition of
$\gamma$. It follows that the back triangle commutes.
This says that $ \chi_{V,N}\gamma_{V,N}=\id_{\Gc(V) \bdot \Kc(N)}$.

Similarly, the diagram
\[
\def\labelstyle{\scriptstyle}
\xymatrix@R+1pc@C-.4pc{
\Kc(V \bdot N) & & \Kc(V\bdot N)\ar@{<-}[dr]^-{\ \Kc(\xi_V\bdot\xi_N)}
\ar@{<--}@/^2pc/[lldd]^(0.6){\gamma_{V,N}}\ar@{<-}[ll]_-{\id_{\Kc(V \bdot N)}}\ar@{<-}[ld]|-{\Kc(\xi_{V \bdot N})}
\\
& \Kc\Hc\Kc(V \bdot N)
\ar@{<-}[ul]|-{\eta_{\Kc(V \bdot N)}} & & \hspace*{-10pt}\Kc\left(\Fc\Gc(V)\bdot\Hc\Kc(N)\right)
\ar@{<-}[dl]^-{\ \ \ \Kc(\chi_{\Gc(V),\Kc(N)})} \\
\Gc(V)\bdot\Kc(N) \ar@{<-}[uu]^-{\chi_{V,N}} & & \Kc\Hc\left(\Gc(V)\bdot\Kc(N)\right) \ar@{<-}[ll]^-{\eta_{\Gc(V) \bdot \Kc(N)}}\ar@{<-}[lu]_(0.4){\Kc\Hc(\chi_{V,N})}
}
\]
shows that $\gamma_{V,N} \chi_{V,N} = \id_{\Kc(V \bullet N)}$.
(The quadrilateral on the right results from applying $\Kc$ to the second diagram
in~\eqref{e:comod-adj}.)
Thus, $\chi$ and $\gamma$ are inverses.
\end{proof}

\begin{remark}\label{r:kelly}
When $\Fc=\Hc$ and $\Gc=\Kc$, Proposition~\ref{p:rightstrong} recovers~\cite[Proposition~3.96]{AguMah:2010}, which is a special case of
a result of Kelly~\cite[Theorem~1.4]{Kel:1974}. Proposition~\ref{p:rightstrong}
may also be viewed as a special case of Kelly's result: to this end, one constructs
a doctrine whose categories are pairs $(\Cs,\Ms)$ with $\Cs$ a monoidal category
and $\Ms$ a $\Cs$-module-category.
\end{remark}

\subsection{From bimonads (and their comodule-monads) to comonoidal adjunctions
(and their comodules)}\label{ss:bimonad-adj}

Let $\Tc$ be a monad on a category $\Cs$.
The adjunction associated to $\Tc$~\cite[Theorem~VI.2.1]{Mac:1998} 
(or Section~\ref{ss:adj-comonad}) is denoted
\[
\xymatrix@C+10pt{
\Cs \ar@/^/[r]^{\Fc^\Tc} & \Cs^\Tc  \ar@/^/[l]^{\Uc^\Tc}
}
\]
where $\Cs^\Tc$ is the category of $\Tc$-algebras, $\Fc^\Tc$ is the free algebra functor (left adjoint)
\[
\Fc^\Tc(X) := \bigl(\Tc(X),\mu_X\bigr)
\]
and $\Uc^\Tc$ is the forgetful functor (right adjoint)
\[
\Uc^\Tc(A,a) := A.
\]
The unit and counit of the adjunction are
\[
\eta_{X}:  X \map{\iota_X} \Tc(X)  = \Uc^\Tc\Fc^\Tc(X) 
\qand
\xi_{(A,a)}:    \Fc^\Tc\Uc^\Tc(A,c) = \bigl(\Tc(A),\mu_A\bigr)   \map{a} (A,a).
\]
respectively. Adjunctions of this form are called \emph{monadic}.

Suppose now that $\Cs$ is monoidal and $\Tc$ is a bimonad.
As recalled in Section~\ref{ss:bimonad}, the category
 $\Cs^\Tc$ is then monoidal. In addition, the functors
$\Fc^\Tc$ and $\Uc^\Tc$ are comonoidal, and
$(\Fc^\Tc,\Uc^\Tc)$
is a comonoidal adjunction~\cite[Example~2.4]{BLV:2011}.

Let $\Ms$ be a $\Cs$-module-category and $\Sc$ a comodule-monad on $\Ms$ over 
$\Tc$. Recall that $\Ms^\Sc$ is a $\Cs^\Tc$-module-category with the action
of Proposition~\ref{p:alg-cat}. Consider now the adjunction
\[
\xymatrix@C+10pt{
\Ms \ar@/^/[r]^{\Fc^\Sc} & \Ms^\Sc  \ar@/^/[l]^{\Uc^\Sc}
}
\]
associated to the monad $\Sc$.

\begin{proposition}\label{p:bimonad-adj}
The functor $\Fc^\Sc$ is a comodule over $\Fc^\Tc$, 
$\Uc^\Sc$ is a comodule over $\Uc^\Tc$, and the adjunction $(\Fc^\Sc,\Uc^\Sc)$ is a comodule over the adjunction $(\Fc^\Tc,\Uc^\Tc)$. 
\end{proposition}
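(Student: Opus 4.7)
The plan is to construct the three required comodule structures explicitly, verify each satisfies the axioms of Definition~\ref{d:comodule} and (for the adjunction) Definition~\ref{d:comod-adj}, and observe that each verification reduces to the comodule-monad axioms of Definition~\ref{d:comod-monad} or to the definition~\eqref{e:alg-cat} of the $\Cs^{\Tc}$-action on $\Ms^{\Sc}$.

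First I would define the comodule structure on $\Fc^{\Sc}$ over $\Fc^{\Tc}$. The underlying arrow in $\Ms$ should be the transformation $\chi_{X,M}:\Sc(X\bdot M)\to \Tc(X)\bdot\Sc(M)$ expressing that $\Sc$ is a comodule over $\Tc$. To make this an arrow in $\Ms^{\Sc}$, I must check that $\chi_{X,M}:\Fc^{\Sc}(X\bdot M)\to \Fc^{\Tc}(X)\bdot\Fc^{\Sc}(M)$ is a morphism of $\Sc$-algebras, where the target carries the action given by~\eqref{e:alg-cat}. Writing out this condition produces exactly diagram~\eqref{e:comod-monad1}, which holds by hypothesis. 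The comodule axioms~\eqref{e:comodule1} and~\eqref{e:comodule2} for $(\Fc^{\Sc},\chi)$ are equalities of morphisms in $\Ms^{\Sc}$, but $\Uc^{\Sc}$ is faithful and the underlying tensor products in $\Cs^{\Tc}$ and $\Ms^{\Sc}$ are computed in $\Cs$ and $\Ms$ respectively, so these reduce to the comodule axioms for $\Sc$ as a comodule over the comonoidal functor $\Tc$, which are part of the assumed structure.

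For $\Uc^{\Sc}$ over $\Uc^{\Tc}$, the forgetful functors discard the algebra structures, and an object $(A,a)\bdot(B,b)$ of $\Ms^{\Sc}$ has underlying object $A\bdot B$ in $\Ms$, so the canonical comparison
\[
\chi_{(A,a),(B,b)}:\Uc^{\Sc}\bigl((A,a)\bdot(B,b)\bigr)\to \Uc^{\Tc}(A,a)\bdot\Uc^{\Sc}(B,b)
\]
is the identity. The two comodule axioms then hold trivially.

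It remains to verify that $(\Fc^{\Sc},\Uc^{\Sc})$ is a comodule over $(\Fc^{\Tc},\Uc^{\Tc})$, i.e.\ that both diagrams in~\eqref{e:comod-adj} commute. Using the composition rule~\eqref{e:comod-compose} together with the fact that the structure on $\Uc^{\Sc}$ is the identity, the composite comodule structure on $\Uc^{\Sc}\Fc^{\Sc}=\Sc$ over $\Uc^{\Tc}\Fc^{\Tc}=\Tc$ is simply $\chi_{X,M}$. Recalling that the unit of $(\Fc^{\Sc},\Uc^{\Sc})$ at $M$ is $\iota_{M}$ and similarly for $\Tc$, the first diagram of~\eqref{e:comod-adj} becomes precisely~\eqref{e:comod-monad2}, which holds by hypothesis. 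For the second diagram, given $V=(A,a)$ in $\Cs^{\Tc}$ and $N=(B,b)$ in $\Ms^{\Sc}$, the composite comodule structure on $\Fc^{\Sc}\Uc^{\Sc}$ over $\Fc^{\Tc}\Uc^{\Tc}$ at $(V,N)$ has underlying map $\chi_{A,B}$; the counit $\xi_{V}\bdot \xi_{N}$ has underlying map $a\bdot b$; and $\xi_{V\bdot N}$ has underlying map $c=(a\bdot b)\chi_{A,B}$ by the very definition~\eqref{e:alg-cat} of the action of $\Cs^{\Tc}$ on $\Ms^{\Sc}$. Since $\Uc^{\Sc}$ is faithful, the second diagram commutes.

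The only mild obstacle is bookkeeping: correctly tracking the tensor product $\Fc^{\Tc}(X)\bdot \Fc^{\Sc}(M)$ in $\Ms^{\Sc}$ (including its induced $\Sc$-action from~\eqref{e:alg-cat}) so that the first morphism-of-algebras check recognizes itself as~\eqref{e:comod-monad1}, and correctly identifying the composite comodule structures via~\eqref{e:comod-compose}. Once this bookkeeping is in place, every required equation is one of the comodule-monad axioms or the defining equation~\eqref{e:alg-cat}, so no further computation is needed.
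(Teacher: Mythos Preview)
Your proposal is correct and follows essentially the same approach as the paper: the comodule structure on $\Fc^{\Sc}$ is $\chi$ (with the $\Sc$-algebra-morphism check reducing to~\eqref{e:comod-monad1}), the structure on $\Uc^{\Sc}$ is the identity, and the two diagrams in~\eqref{e:comod-adj} reduce respectively to~\eqref{e:comod-monad2} and to the definition~\eqref{e:alg-cat}. If anything, you are slightly more thorough in noting that the comodule axioms~\eqref{e:comodule1}--\eqref{e:comodule2} for $\Fc^{\Sc}$ descend from those for $\Sc$ via faithfulness of $\Uc^{\Sc}$.
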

\begin{proof}
The comodule structure of $\Sc$ defines a morphism of $\Sc$-algebras
\[
\Fc^\Sc(X\bdot M)=\bigl(\Sc(X\bdot M),\mu_{X\bdot M}\bigr)
\map{\chi_{X,M}} \bigl(\Tc(X)\bdot\Sc(M),c\bigr) = \Fc^\Tc(X)\bdot \Fc^\Sc(M),
\]
where $c$ is the $\Sc$-algebra structure~\eqref{e:alg-cat} for the free algebras
$(A,a)=\bigl(\Tc(X),\mu_X\bigr)$
and $(B,b)=\bigl(\Sc(M),\mu_M\bigr)$.
Indeed, this is equivalent to~\eqref{e:comod-monad1}.
This turns $\Fc^\Sc$ into a $\Fc^\Tc$-comodule.

For the forgetful functor we have
\[
\Uc^\Sc\bigl((A,a)\bdot(B,b)\bigr)=\Uc^\Sc(A\bdot B,c) = A\bdot B
=\Uc^\Tc(A,a)\bdot\Uc^\Sc(B,b),
\]
so the identity turns $\Uc^\Sc$ into a $\Uc^\Tc$-comodule.

Finally, consider axioms~\eqref{e:comod-adj} for the adjunction
$(\Fc^\Sc,\Uc^\Sc)$ to be a comodule over $(\Fc^\Tc,\Uc^\Tc)$.
The first diagram in~\eqref{e:comod-adj} is the same as~\eqref{e:comod-monad2}
and hence commutes.
The counits of the adjunction are given by the algebra structures, so
the second diagram commutes by~\eqref{e:alg-cat}.
\end{proof}

\subsection{From comonoidal adjunctions (and their comodules) to bimonads (and their comodule-monads)}\label{ss:adj-bimonad}

We go back to the situation of Section~\ref{ss:comod-adj}. 
Given a comonoidal adjunction
\[
\xymatrix{
\Cs  \ar@/^/[r]^{\Fc} & \Ds \ar@/^/[l]^{\Gc},
}
\]
consider the associated monad $\Tc=\Gc\Fc$ on $\Cs$~\cite[Section~VI.1]{Mac:1998} (or Section~\ref{ss:comparison}). Its structure is as follows.
\begin{equation}\label{e:adj-monad}
\begin{gathered}
\mu:\Tc^2 = \Gc\Fc\Gc\Fc\map{\Gc\xi\Fc}\Gc\Ic\Fc=\Tc\\
\iota:\Ic\map{\eta}\Gc\Fc=\Tc\\
\end{gathered}
\end{equation}

Since comonoidal functors compose (Section~\ref{ss:comod-functor}), the functor
$\Tc$ is comonoidal. Moreover, $\Tc$ is a bimonad~\cite[Theorem~2.6]{BruVir:2007}. 

Suppose now that
\[
\xymatrix{
\Ms \ar@/^/[r]^{\Hc} & \Ns \ar@/^/[l]^{\Kc} 
}
\]
is an adjunction,  $\Hc$ is an $\Fc$-comodule and $\Kc$ is a $\Gc$-comodule.
Since comodules compose, the monad $\Sc=\Kc\Hc$ on $\Ms$ is
 a comodule over $\Tc$.

\begin{proposition}\label{p:adj-bimonad}
Suppose $(\Hc,\Kc)$ is a comodule over the comonoidal adjunction $(\Fc,\Gc)$.
Then $\Sc$ is a comodule-monad over the bimonad $\Tc$.
\end{proposition}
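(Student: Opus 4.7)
The plan is to verify directly the two axioms \eqref{e:comod-monad1} and \eqref{e:comod-monad2} in Definition~\ref{d:comod-monad} for the monad $\Sc = \Kc\Hc$ equipped with the composite comodule structure~\eqref{e:comod-compose} over $\Tc = \Gc\Fc$. Following the reformulation given right after Definition~\ref{d:comod-monad}, this is equivalent to showing that the monad structure maps $\iota_\Sc = \eta: \Ic \to \Kc\Hc$ (unit of $(\Hc,\Kc)$) and $\mu_\Sc = \Kc\xi\Hc: \Kc\Hc\Kc\Hc \to \Kc\Hc$ (from the counit of $(\Hc,\Kc)$) are morphisms of $\Tc$-comodules, where $\Ic$ becomes a $\Tc$-comodule via $\iota_\Tc = \eta: \Ic \to \Gc\Fc$ and $\Sc^2$ becomes a $\Tc$-comodule by transporting its native $\Tc^2$-comodule structure (obtained from composing four comodules as in~\eqref{e:comod-compose}) along $\mu_\Tc = \Gc\xi\Fc$.

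The unit axiom~\eqref{e:comod-monad2} is essentially immediate: when one spells out the composite structure $\chi_{X,M}: \Kc\Hc(X\bdot M) \to \Kc(\Fc(X)\bdot\Hc(M)) \to \Gc\Fc(X)\bdot\Kc\Hc(M)$ and precomposes with $\eta_{X\bdot M}$, the resulting requirement is precisely the commutativity of the first square in~\eqref{e:comod-adj}, which holds by hypothesis.

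For the multiplication axiom~\eqref{e:comod-monad1}, I would perform a single diagram chase comparing two routes from $\Kc\Hc\Kc\Hc(X\bdot M)$ to $\Gc\Fc(X)\bdot\Kc\Hc(M)$. The strategy is to slide the comodule transformations $\chi$ (for $\Hc$ over $\Fc$ and for $\Kc$ over $\Gc$) past the counit $\xi$ of $(\Hc,\Kc)$: at the innermost level the relevant compatibility is pure naturality of $\chi_{\Hc}$ and $\chi_{\Kc}$ in their functorial arguments, while the crucial step that swaps $\Kc\xi_{\Hc(\cdots)}\Hc$ on the left with $\Gc\xi_{\Fc(\cdots)}\Fc$ on the right is exactly the second square in~\eqref{e:comod-adj} applied at the objects $V = \Fc(X)$ and $N = \Hc(M)$ (after transporting through $\Kc$ and using one of the triangle identities for $(\Hc,\Kc)$).

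The main obstacle will simply be bookkeeping: the diagram has several layers of functors ($\Kc\Hc\Kc\Hc$, $\Gc\Fc\Gc\Fc$, and mixed terms like $\Kc\Hc\Kc\bigl(\Fc(X)\bdot\Hc(M)\bigr)$), and one must be careful to apply the second axiom of~\eqref{e:comod-adj} at the correct instance (where $\Hc\Kc$ is being collapsed to $\Ic$ on both sides of the tensor simultaneously) rather than at some other pair. Once this chase is assembled, the proof reduces to invoking naturality of the various $\chi$'s plus the two axioms of Definition~\ref{d:comod-adj}; no further hypothesis is required.
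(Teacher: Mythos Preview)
Your proposal is correct and follows essentially the same approach as the paper: the unit axiom~\eqref{e:comod-monad2} is literally the first diagram in~\eqref{e:comod-adj}, and the multiplication axiom~\eqref{e:comod-monad1} is established by a diagram chase whose key cell is the second diagram in~\eqref{e:comod-adj} at $V=\Fc(X)$, $N=\Hc(M)$ (applied inside $\Kc$), with the remaining cells commuting by naturality of $\xi$ and of the two $\chi$'s. One minor point: no triangle identity for $(\Hc,\Kc)$ is actually needed---once you identify $\mu_{\Sc}=\Kc\xi\Hc$ and $\mu_{\Tc}=\Gc\xi\Fc$, naturality alone handles the rest.
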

\begin{proof}
Consider the following diagram.
\[
\def\labelstyle{\scriptstyle}
\def\objectstyle{\scriptstyle}
\xymatrix@C+14pt{
\Sc^2(X\bdot M) \ar[rr]^-{\Sc(\chi_{X,M})} \ar@{=}[d] \ar@/_3.9pc/[ddddd]|-{\mu_{X,M}}
& & \Sc\bigl(\Tc(X)\bdot\Sc(M)\bigr) \ar@{=}[d] 
\ar@/^2pc/[dddr]^-{\chi_{\Tc(X),\Sc(M)}}\\
\Kc\Hc\Kc\Hc(X\bdot M) \ar[r]^-{\Kc\Hc\Kc(\chi_{X,M})} \ar[dd]|{\Kc(\xi_{\Hc(X\bdot M)})} & \Kc\Hc\Kc\bigl(\Fc(X)\bdot\Hc(M)\bigr) \ar[r]^-{\Kc\Hc(\chi_{\Fc(X),\Hc(M)})}
\ar[dd]|{\Kc(\xi_{\Fc(X)\bdot\Hc(M)})} & 
\Kc\Hc\bigl(\Gc\Fc(X)\bdot\Kc\Hc(M)\bigr) \ar[d]^{\Kc(\chi_{\Gc\Fc(X),\Kc\Hc(M)})} \\ 
& & \Kc\bigl(\Fc\Gc\Fc(X)\bdot\Hc\Kc\Hc(M)\bigr) \ar[dl]|-{\Kc(\xi_{\Fc(X)}\bdot\xi_{\Hc(M)})} \ar[d]^{\chi_{\Fc\Gc\Fc(X),\Hc\Kc\Hc(M)}}\\
\Kc\Hc(X\bdot M) \ar[r]^-{\Kc(\chi_{X,M})} \ar@{=}[dd] & \Kc\bigl(\Fc(X)\bdot\Hc(M)\bigr)
\ar[rd]|-{\chi_{\Fc(X),\Hc(M)}} & \Gc\Fc\Gc\Fc(X)\bdot \Kc\Hc\Kc\Hc(M) \ar@{=}[r] \ar[d]^{\Gc(\xi_{\Fc(X)})\bdot\Kc(\xi_{\Hc(M)})} & \Tc^2(X)\bdot\Sc^2(M) \ar@/^2pc/[ddl]^-{\mu_X\bdot\mu_M}\\
& & \Gc\Fc(X)\bdot \Kc\Hc(M) \ar@{=}[d] \\
\Sc(X\bdot M) \ar[rr]_-{\chi_{X,M}} & & \Tc(X)\bdot \Sc(M)
}
\]
The diagrams around the boundary commute by definition. 
The commutativity of the quadrilateral in the center follows from that of the second
diagram in~\eqref{e:comod-adj}.
The two remaining quadrilaterals commute by naturality of $\xi$ and $\chi$.
It follows that diagram~\eqref{e:comod-monad1} commutes.
The first diagram in~\eqref{e:comod-adj} is the same as~\eqref{e:comod-monad2},
and so $\Sc$ is a comodule-monad over $\Tc$. 
\end{proof}

\subsection{Hopf operators and Galois maps}\label{ss:Hopf-Galois}

Let $(\Fc,\Gc)$ be a comonoidal adjunction and $(\Hc,\Kc)$ a comodule over it,
as in Definition~\ref{d:comod-adj}.

\begin{definition}\label{d:hopf-oper}
The \emph{Hopf operator} is the map $\Hb_{X,N}$ defined by
\begin{equation}\label{e:Hopf}
\Hc\bigl(X\bdot\Kc(N)\bigr)
\map{\chi_{X,\Kc(N)}} \Fc(X)\bdot \Hc\Kc(N) 
\map{\id_{\Fc(X)}\bdot\xi_N} \Fc(X)\bdot N
\end{equation}
for all objects $X$ of $\Cs$ and $N$ of $\Ns$.
\end{definition}

Let $\Tc$ be a bimonad and $\Sc$ a comodule-monad over it, as in Definition~\ref{d:comod-monad}.

\begin{definition}\label{d:galois-map}
The \emph{fusion} or \emph{Galois map} is the map $\Gb_{X,M}$ defined by
\begin{equation}\label{e:Galois}
\Sc\bigl(X\bdot\Sc(M)\bigr)
\map{\chi_{X,\Sc(M)}}
\Tc(X)\bdot\Sc^2(M) \map{\id_{\Tc(X)}\bdot\mu_{M}}
\Tc(X)\bdot\Sc(M)
\end{equation}
for all objects $X$ of $\Cs$ and $M$ of $\Ms$.
\end{definition}

\begin{remark}
The Hopf operator and the fusion map for comonoidal adjunctions were introduced by
Brugui\`eres, Lack and Virelizier~\cite[Sections~2.6 and~2.8]{BLV:2011}.
Above we have extended these notions to the setting of comodules over comonoidal
adjunctions. In the same manner, Proposition~\ref{p:Hopf-Galois}
and Corollary~\ref{c:Galois-Hopf} below correspond to~\cite[Proposition~2.14 and Lemma~2.18]{BLV:2011}.
\end{remark}

Let $(\Fc,\Gc)$ be a comonoidal adjunction, $(\Hc,\Kc)$ a comodule over it,
and $\Hb$ the corresponding Hopf operator.
Let $\Tc=\Gc\Fc$ and $\Sc=\Kc\Hc$. By Proposition~\ref{p:adj-bimonad},
$\Tc$ is a bimonad and $\Sc$ is a $\Tc$-comodule-monad.
Let $\Gb$ be the corresponding Galois map.

In order to bridge between the Hopf operator and the Galois map,
we introduce an auxiliary transformation $\Ab_{X,N}$ defined by
\begin{equation}\label{e:aux}
\Sc\bigl(X\bdot \Kc(N)\bigr)
\map{\chi_{X,\Kc(N)}}
\Tc(X)\bdot\Sc\Kc(N) = \Tc(X)\bdot \Kc\Hc\Kc(N) 
 \map{\id_{\Tc(X)}\bdot \Kc(\xi_N)}
\Tc(X)\bdot \Kc(N)
\end{equation}
for all objects $X$ of $\Cs$ and $N$ of $\Ns$.

\begin{lemma}\label{l:aux}
In the preceding situation, we have
\[
\Ab_{X,\Hc(M)} = \Gb_{X,M} \qand \chi_{\Fc(X),N} \Kc(\Hb_{X,N}) = \Ab_{X,N}
\]
for all $X$ in $\Cs$, $M$ in $\Ms$, and $N$ in $\Ns$.
\end{lemma}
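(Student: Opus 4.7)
The plan is to unwind both identities by expanding $\Tc = \Gc\Fc$, $\Sc = \Kc\Hc$, the multiplications $\mu$ as in~\eqref{e:adj-monad} (so that $\mu_M = \Kc(\xi_{\Hc(M)})$ for $\Sc$), and the composite comodule structure $\chi^{\Sc}$ via~\eqref{e:comod-compose}, namely
\[
\chi^{\Sc}_{X,M}:\Kc\Hc(X\bdot M)\map{\Kc(\chi_{X,M})} \Kc\bigl(\Fc(X)\bdot\Hc(M)\bigr)\map{\chi_{\Fc(X),\Hc(M)}} \Gc\Fc(X)\bdot\Kc\Hc(M).
\]
After these substitutions, both identities should collapse to tautologies modulo one naturality square.

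For the first identity $\Ab_{X,\Hc(M)}=\Gb_{X,M}$, I would substitute $N=\Hc(M)$ in the definition~\eqref{e:aux} of $\Ab$. The second factor of $\Ab_{X,\Hc(M)}$ becomes $\id_{\Tc(X)}\bdot \Kc(\xi_{\Hc(M)})$, which is exactly $\id_{\Tc(X)}\bdot\mu_M$ appearing in the definition~\eqref{e:Galois} of $\Gb_{X,M}$. The first factor $\chi^{\Sc}_{X,\Kc(N)}$ with $N=\Hc(M)$ is literally $\chi^{\Sc}_{X,\Sc(M)}$, which is the first factor of $\Gb_{X,M}$. So the two composites agree on the nose.

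For the second identity $\chi_{\Fc(X),N}\Kc(\Hb_{X,N})=\Ab_{X,N}$, I would apply $\Kc$ to~\eqref{e:Hopf}, giving
\[
\Kc(\Hb_{X,N}):\Kc\Hc\bigl(X\bdot\Kc(N)\bigr)\map{\Kc(\chi_{X,\Kc(N)})}\Kc\bigl(\Fc(X)\bdot\Hc\Kc(N)\bigr)\map{\Kc(\id_{\Fc(X)}\bdot\xi_N)}\Kc\bigl(\Fc(X)\bdot N\bigr),
\]
then post-compose with $\chi_{\Fc(X),N}$. On the other hand, expanding $\chi^{\Sc}_{X,\Kc(N)}$ in the definition of $\Ab_{X,N}$ via~\eqref{e:comod-compose} yields the composite
\[
\Kc\Hc\bigl(X\bdot\Kc(N)\bigr)\map{\Kc(\chi_{X,\Kc(N)})}\Kc\bigl(\Fc(X)\bdot\Hc\Kc(N)\bigr)\map{\chi_{\Fc(X),\Hc\Kc(N)}}\Gc\Fc(X)\bdot\Kc\Hc\Kc(N)\map{\id\bdot\Kc(\xi_N)}\Gc\Fc(X)\bdot\Kc(N).
\]
Both composites share the first arrow $\Kc(\chi_{X,\Kc(N)})$. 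The remaining two-step tails differ only in the order in which one applies $\Kc(\id_{\Fc(X)}\bdot\xi_N)$ followed by $\chi_{\Fc(X),N}$, versus $\chi_{\Fc(X),\Hc\Kc(N)}$ followed by $\id_{\Gc\Fc(X)}\bdot\Kc(\xi_N)$. These two tails coincide by naturality of $\chi$ in its second argument applied to the morphism $\xi_N:\Hc\Kc(N)\to N$ in $\Ns$ (with first argument fixed at $\Fc(X)$).

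There is no genuine obstacle here: the entire argument is a straightforward bookkeeping exercise with definitions. The only nontrivial ingredient is the single naturality square for $\chi$, and even that is immediate. I would present the two identities as short commutative-diagram verifications, the first being essentially the unfolding of $\mu=\Kc\xi\Hc$ and the second a single naturality rectangle.
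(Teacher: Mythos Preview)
Your proposal is correct and follows essentially the same approach as the paper: the first identity is reduced to $\mu_M=\Kc(\xi_{\Hc(M)})$, and the second to the expansion of $\chi^{\Sc}$ via~\eqref{e:comod-compose} together with the naturality square for $\chi_{\Fc(X),-}$ applied to $\xi_N$. The paper presents exactly these two verifications as commutative diagrams.
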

\begin{proof}
Consider the following diagram.
\[
\xymatrix@C+30pt@R-5pt{
\Sc\bigl(X\bdot\Kc\Hc(M)\bigr) \ar[r]^-{\chi_{X,\Kc\Hc(M)}} \ar@{=}[d]  &
\Tc(X)\bdot \Kc\Hc\Kc\Hc(M)\bigr) \ar[r]^-{\id\bdot\Kc(\xi_{\Hc(M)})} \ar@{=}[d] 
& \Tc(X)\bdot \Kc\Hc(M) \ar@{=}[d] \\
\Sc\bigl(X\bdot\Sc(M)\bigr) \ar[r]_-{\chi_{X,\Sc(N)}} 
& \Tc(X)\bdot \Sc\Sc(M)\bigr) \ar[r]_-{\id\bdot\mu_M}
& \Tc(X)\bdot\Sc(M)
}
\]
The maps along the top equal the corresponding maps along the bottom;
in the case of the right maps this is~\eqref{e:adj-monad}.
By~\eqref{e:aux} and~\eqref{e:Galois},
the composite along the top is $\Ab_{X,\Hc(M)}$ and that along the bottom is
$\Gb_{X,M}$. This proves the first equality.

Consider now the following diagram.
\[
\xymatrix@C+30pt@R-5pt{
\Kc\Hc\bigl(X\bdot\Kc(N)\bigr) \ar[r]^-{\Kc(\chi_{X,\Kc(N)})} \ar@{=}[dd] &
\Kc\bigl(\Fc(X)\bdot\Hc\Kc(N)\bigr) \ar[r]^-{\Kc(\id\bdot\xi_N)} 
\ar[d]^{\chi_{\Fc(X),\Hc\Kc(N)}} &
\Kc\bigl(\Fc(X)\bdot N\bigr) \ar[d]^{\chi_{\Fc(X),N}}\\
& \Gc\Fc(X)\bdot\Kc\Hc\Kc(N) \ar[r]^-{\id\bdot\Kc(\xi_N)} \ar@{=}[d]
& \Gc\Fc(X)\bdot\Kc(N) \ar@{=}[d]\\
\Sc\bigl(X\bdot\Kc(N)\bigr) \ar[r]_-{\chi_{X,\Kc(N)}} 
& \Tc(X)\bdot\Sc\Kc(N) \ar[r]_-{\id\bdot\Kc(\xi_N)}
& \Tc(X)\bdot\Kc(N)
}
\]
The left half commutes by~\eqref{e:comod-compose} and the right corner
by naturality. The top row is $\Kc(\Hb_{X,N})$ and the bottom $\Ab_{X,N}$.
This proves the second equality.
\end{proof}

We can now relate the invertibility of the Hopf operator to that of the Galois map.

\begin{proposition}\label{p:Hopf-Galois}
Let $(\Fc,\Gc)$ be a comonoidal adjunction, $(\Hc,\Kc)$ a comodule over it,
and $\Hb$ the corresponding Hopf operator.
Let $\Tc=\Gc\Fc$, $\Sc=\Kc\Hc$, and $\Gb$ the corresponding Galois map.
Fix an object $X$ in $\Cs$ and consider the following statements.
\begin{enumerate}[(i)]
\item The transformation $\Hb_{X,-}$ is invertible.
\item The transformation $\Ab_{X,-}$ is invertible.
\item The transformation $\Gb_{X,-}$ is invertible.
\end{enumerate}
We have that
\[
\textup{(i) $\Rightarrow$ (ii) $\Rightarrow$ (iii).}
\]
Moreover, if the adjunction $(\Hc,\Kc)$ is monadic, then all three statements are equivalent.
\end{proposition}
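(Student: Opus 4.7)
The first two implications follow quickly from Lemma~\ref{l:aux} together with Proposition~\ref{p:rightstrong}. For (i)~$\Rightarrow$~(ii), the second identity of Lemma~\ref{l:aux} factors $\Ab_{X,N}=\chi_{\Fc(X),N}\circ\Kc(\Hb_{X,N})$; the first factor is invertible by Proposition~\ref{p:rightstrong}, and $\Kc$ preserves isomorphisms, so invertibility of $\Hb_{X,-}$ forces invertibility of $\Ab_{X,-}$. For (ii)~$\Rightarrow$~(iii), the first identity $\Ab_{X,\Hc(M)}=\Gb_{X,M}$ of Lemma~\ref{l:aux} restricts invertibility of $\Ab_{X,-}$ on all of $\Ns$ to invertibility of $\Gb_{X,-}$ on all of $\Ms$.

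Assume from now on that $(\Hc,\Kc)$ is monadic. The implication (ii)~$\Rightarrow$~(i) is immediate: a monadic right adjoint is conservative, so $\Kc(\Hb_{X,N})$ invertible implies $\Hb_{X,N}$ invertible, and the factorization of the previous paragraph (together with invertibility of $\chi$) closes the circuit.

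The substantive implication, and what I expect to be the main obstacle, is (iii)~$\Rightarrow$~(ii). My plan is to exploit the canonical presentation of each object of $\Ns$ afforded by monadicity. Given $N$ in $\Ns$, set $M=\Kc(N)$ and $\sigma=\Kc(\xi_N)$, so that under the equivalence $\Ns\simeq\Ms^{\Sc}$ the object $N$ is identified with the $\Sc$-algebra $(M,\sigma)$. Applying $\Kc$ to the canonical cofork $\Hc\Kc\Hc\Kc(N)\rightrightarrows\Hc\Kc(N)\to N$ in $\Ns$ yields the split coequalizer
\[
\Sc^2(M)\rightrightarrows \Sc(M)\xrightarrow{\sigma} M
\]
in $\Ms$, with parallel arrows $\Sc(\sigma)$ and $\mu_M$ and splittings given by the unit $\iota$ of $\Sc$. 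Since split coequalizers are absolute, both endofunctors $\Sc(X\bdot -)$ and $\Tc(X)\bdot -$ of $\Ms$ carry this to a coequalizer. Naturality of $\Ab_{X,-}$ along the arrows of the canonical presentation then produces a morphism between the two resulting coequalizer diagrams, whose three vertical components are $\Ab_{X,\Hc\Sc(M)}$, $\Ab_{X,\Hc(M)}$, and $\Ab_{X,N}$. By the first identity of Lemma~\ref{l:aux} the former two coincide with $\Gb_{X,\Sc(M)}$ and $\Gb_{X,M}$, both invertible by hypothesis~(iii). A standard two-out-of-three argument for morphisms of coequalizers then forces $\Ab_{X,N}$ to be invertible as well, completing the proof.
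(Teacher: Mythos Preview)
Your proof is correct and follows the same route as the paper's. The arguments for (i)~$\Rightarrow$~(ii)~$\Rightarrow$~(iii) and for (ii)~$\Rightarrow$~(i) are identical to the paper's; for (iii)~$\Rightarrow$~(ii) the paper simply invokes \cite[Lemma~2.19]{BLV:2011} (a transformation $\alpha:\Ac\Kc\to\Bc\Kc$ is invertible whenever $\alpha\Hc$ is and $(\Hc,\Kc)$ is monadic), whereas you supply the standard proof of that lemma inline via the Beck split coequalizer presentation---so your treatment is slightly more self-contained but not genuinely different. One small terminological slip: the diagram $\Hc\Kc\Hc\Kc(N)\rightrightarrows\Hc\Kc(N)\to N$ is a fork (coequalizer shape), not a cofork in the sense the paper uses.
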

\begin{proof}
The implications (i) $\Rightarrow$ (ii) $\Rightarrow$ (iii) are immediate from
Lemma~\ref{l:aux} and the fact that  $\chi_{\Fc(X),N}$ (an instance of the comonoidal structure of $\Kc$) is invertible by Proposition~\ref{p:rightstrong}.

To show (iii) $\Rightarrow$ (ii) we appeal to~\cite[Lemma~2.19]{BLV:2011}:
if $\Ac,\Bc:\Ms\to \Ms'$ are two functors and $\alpha:\Ac\Kc\to\Bc\Kc$ is a natural transformation, then
monadicity of $(\Hc,\Kc)$ together with invertibility of $\alpha\Hc$ imply
invertibility of $\alpha$. 
Applying this result to $\alpha=\Ab_{X,-}$ yields the desired implication,
since $\Ab_{X,\Hc(-)} = \Gb_{X,-}$ by Lemma~\ref{l:aux}.

Finally, by monadicity, $\Kc$ reflects isomorphisms. 
Since $\Kc(\Hb_{X,-}) = \chi_{\Fc(X),-}^{-1} \Ab_{X,-}$ by Lemma~\ref{l:aux}, 
we have that (ii) $\Rightarrow$ (i).
\end{proof}

\begin{corollary}\label{c:Galois-Hopf}
Let $\Tc$ be a bimonad on $\Cs$, $\Sc$ a comodule-monad on $\Ms$ over $\Tc$,
and $\Gb$ the corresponding Galois map. Let $\Hb$ be the Hopf operator
corresponding to the adjunctions $(\Fc^\Tc,\Uc^\Tc)$ and $(\Fc^\Sc,\Uc^\Sc)$.
Let $X$ be an object $\Cs$.
Then invertibility of the Galois map $\Gb_{X,M}$ for every $M$ of $\Ms$
is equivalent
to invertibility of the Hopf operator $\Hb_{X,(B,b)}$ for every $\Sc$-algebra $(B,b)$.
\end{corollary}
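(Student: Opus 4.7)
The plan is to recognize this as a direct instance of Proposition~\ref{p:Hopf-Galois} applied to the monadic adjunctions $(\Fc^\Tc,\Uc^\Tc)$ and $(\Fc^\Sc,\Uc^\Sc)$, so essentially the work is bookkeeping: check that all the structures line up so that the input data of the corollary matches the hypotheses of the proposition.

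First I would observe that by Proposition~\ref{p:bimonad-adj}, the adjunction $(\Fc^\Tc,\Uc^\Tc)$ is comonoidal, $(\Fc^\Sc,\Uc^\Sc)$ is a comodule over it, and both adjunctions are monadic by construction (this is the whole point of the Eilenberg--Moore construction reviewed in Section~\ref{ss:bimonad-adj}). Next I would verify that the bimonad $\Uc^\Tc\Fc^\Tc$ associated to $(\Fc^\Tc,\Uc^\Tc)$ is precisely the original $\Tc$ (with its original bimonad structure), and similarly that the $\Tc$-comodule-monad $\Uc^\Sc\Fc^\Sc$ coincides with the original $\Sc$. This is standard and uses only the explicit descriptions of the unit, counit, free, and forgetful functors together with the description of the composed comonoidal/comodule structure given in~\eqref{e:comod-compose} and~\eqref{e:adj-monad}.

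With this identification in place, the Galois map $\Gb_{X,M}$ of Definition~\ref{d:galois-map} built from $(\Tc,\Sc)$ coincides (by unfolding the definitions) with the Galois map built from the adjunctions $(\Fc^\Tc,\Uc^\Tc)$ and $(\Fc^\Sc,\Uc^\Sc)$ via Proposition~\ref{p:adj-bimonad}, while the Hopf operator $\Hb_{X,N}$ of Definition~\ref{d:hopf-oper} for these adjunctions has $N$ ranging over objects of $\Ns=\Ms^\Sc$, that is, over arbitrary $\Sc$-algebras $(B,b)$. Now Proposition~\ref{p:Hopf-Galois}, whose final clause requires precisely monadicity of $(\Hc,\Kc)=(\Fc^\Sc,\Uc^\Sc)$, asserts the equivalence of invertibility of $\Hb_{X,-}$ and invertibility of $\Gb_{X,-}$; this is exactly the statement of the corollary.

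There is no serious obstacle, as the claim is essentially a reformulation of Proposition~\ref{p:Hopf-Galois} in the monadic setting. The only point requiring care is ensuring that passing from $(\Tc,\Sc)$ to the adjunctions $(\Fc^\Tc,\Uc^\Tc),(\Fc^\Sc,\Uc^\Sc)$ and then back via Proposition~\ref{p:adj-bimonad} returns the same $(\Tc,\Sc)$ with the same comonoidal and comodule structures; once this identification is made explicit, the corollary follows by citing Proposition~\ref{p:Hopf-Galois} with no further calculation.
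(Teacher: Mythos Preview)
Your proposal is correct and follows exactly the paper's approach: the paper's proof is the single sentence ``This is the special case of Proposition~\ref{p:Hopf-Galois} in which both adjunctions are monadic,'' and your write-up simply unpacks the bookkeeping (via Proposition~\ref{p:bimonad-adj} and the identification $\Uc^\Sc\Fc^\Sc=\Sc$) that justifies applying that proposition here.
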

\begin{proof}
This is the special case of Proposition~\ref{p:Hopf-Galois} in which both
adjunctions are monadic.
\end{proof}

\section{The Fundamental Theorem of generalized Hopf modules}\label{s:fundamental}

In this section we prove the main result of the paper.
The progression of results leading to it is as follows.
Consider a comonoidal functor $\Fc:\Cs\to\Ds$, a comonoid $C$ in $\Cs$,
and the comonoid $D=\Fc(C)$ in $\Ds$. 
An $\Fc$-comodule $\Hc$ gives rise to
a canonical colax morphism relating the comonads $\hC$ and $\hD$. 
This is discussed in Section~\ref{ss:chi-colax}. 
If $\Kc$ is a right adjoint for $\Hc$, we may consider the conjugate comonad $\tC$.
Its universal property (Section~\ref{ss:universal}) affords a transformation
relating $\tC$ to $\hD$. We identify this transformation in Section~\ref{ss:hopf-colax}:
if $\Fc$ is part of a comonoidal adjunction, and $(\Hc,\Kc)$ is a
comodule adjunction over it, then the transformation coincides with
the Hopf operator of Section~\ref{ss:Hopf-Galois}. 
Ultimately we are interested in
comparing the categories of $C$-comodules and $D$-comodules.
The Conjugate Comonadicity Theorem and the companion results of Section~\ref{s:conjugating} allow us to reduce this question to the invertibility of the Hopf operator
$\Hb_{C}$.
Finally, in Section~\ref{ss:fundamental} we consider the case when the adjunctions
are monadic, arising from a bimonad $\Tc$ and a $\Tc$-comodule-monad $\Sc$.
The comparison now takes place between $C$-comodules
and Hopf $(\Tc;\Sc,\Tc(C))$-modules,
and the invertibility that controls it is that of the Galois map $\Gb_{C}$. This is the Fundamental
Theorem.

The Fundamental Theorem reduces to~\cite[Theorem~6.11]{BLV:2011}
in the special case when $C$ is the trivial comonoid and $\Tc=\Sc$.
We have followed the approach to its proof taken by Brugui\`eres, Lack and Virelizier
in~\cite{BLV:2011}. The additional complications brought in by $C$ are handled
with the aid of the Conjugate Comonadicity Theorem.

\subsection{From comodules to colax morphisms of comonads}\label{ss:chi-colax}

We place ourselves in the situation of Section~\ref{ss:comod-functor}.
Thus, $\Cs$ and $\Ds$ are monoidal categories which act on categories
$\Ms$ and $\Ns$ (respectively), and
\[
\Fc:\Cs\to\Ds \qand \Hc:\Ms\to\Ns
\]
are functors such that $\Fc$ is comonoidal with structure $\psi$ and $\Hc$
is an $\Fc$-comodule with structure $\chi$.

Let $C$ be a comonoid in $\Cs$ and $D=\Fc(C)$ the corresponding comonoid in 
$\Ds$, as in~\eqref{e:coprod-colax}.
Consider the comonads $\hC$ on $\Ms$ and $\hD$ on $\Ns$, as in Section~\ref{ss:comod-comonoid}.
The $\Fc$-comodule structure of $\Hc$ 
\[
\chi_{C,M}: \Hc(C\bdot M) \map{\chi_{C,M}} \Fc(C)\bdot \Hc(M)=D\bdot \Hc(M)
\]
yields a natural transformation 
\[
\chi_C:\Hc\hC\to \hD\Hc.
\]

\begin{lemma}\label{l:comodule-colax}
The pair
\[
(\Hc,\chi_C):(\Ms,\hC) \to (\Ns,\hD)
\]
is a colax morphism of comonads.
\end{lemma}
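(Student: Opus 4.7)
The plan is to verify directly the two diagrams in \eqref{e:mor-comonad} for the pair $(\Hc,\chi_C)$, unwinding the comonad structures of $\hC$ and $\hD$ in terms of the comonoid structures of $C$ and $D=\Fc(C)$. The key tools available are the comodule axioms \eqref{e:comodule1} and \eqref{e:comodule2} for $\chi$, the definition \eqref{e:coprod-colax} of the comonoid structure of $D$ (namely $\Delta_D=\psi_{C,C}\circ\Fc(\Delta)$ and $\epsilon_D=\psi_0\circ\Fc(\epsilon)$), and naturality of $\chi_{-,-}$ in its first argument.

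For the comultiplication square, I would evaluate both composites on an object $M$ of $\Ms$. The comultiplication of $\hC$ at $M$ is $\Delta\bdot\id_M$ and that of $\hD$ at $\Hc(M)$ is $\Delta_D\bdot\id_{\Hc(M)}$. First I apply \eqref{e:comodule1} with $X=Y=C$ to rewrite $(\id_{D}\bdot\chi_{C,M})\circ\chi_{C,C\bdot M}$ as $(\psi_{C,C}\bdot\id_{\Hc(M)})\circ\chi_{C\bdot C,M}$. Then naturality of $\chi$ in the first argument, applied to $\Delta\colon C\to C\bdot C$, lets me pull $\Hc(\Delta\bdot\id_M)$ across the $\chi$, producing $(\Fc(\Delta)\bdot\id_{\Hc(M)})\circ\chi_{C,M}$. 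Composing the two $D$-factors by the formula for $\Delta_D$ gives exactly $(\Delta_D\bdot\id_{\Hc(M)})\circ\chi_{C,M}$, as required.

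For the counit triangle, the argument is a shorter variant of the same two moves. Using naturality of $\chi$ applied to $\epsilon\colon C\to\unit$, I rewrite $(\Fc(\epsilon)\bdot\id_{\Hc(M)})\circ\chi_{C,M}$ as $\chi_{\unit,M}\circ\Hc(\epsilon\bdot\id_M)$, and then \eqref{e:comodule2} identifies $(\psi_{0}\bdot\id_{\Hc(M)})\circ\chi_{\unit,M}$ with the identity under the canonical isomorphism $\unit\bdot\Hc(M)\cong\Hc(M)$. Assembled in order, this yields $(\epsilon_D\bdot\id_{\Hc(M)})\circ\chi_{C,M}=\Hc(\epsilon\bdot\id_M)$.

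Neither step is really an obstacle; the only subtle point is bookkeeping, namely tracking the unitor identifications carefully in the counit axiom and being explicit about which naturality one is invoking (naturality of $\chi$ as a transformation of functors $\Cs\times\Ms\to\Ns$, specialized to the first variable). I expect the proof to fit in a single commutative diagram for each of the two conditions, with each cell labeled by either an instance of \eqref{e:comodule1}/\eqref{e:comodule2}, naturality of $\chi$, or the definition \eqref{e:coprod-colax} of the comonoid structure transferred along the comonoidal functor $\Fc$.
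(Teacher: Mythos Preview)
Your proposal is correct and follows essentially the same route as the paper's proof: both verify the two conditions in~\eqref{e:mor-comonad} by combining the comodule axioms~\eqref{e:comodule1}--\eqref{e:comodule2}, naturality of $\chi$ in its first argument, and the definition~\eqref{e:coprod-colax} of the comonoid structure on $D=\Fc(C)$. The paper presents this as two commutative diagrams with cells labeled exactly as you anticipate.
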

\begin{proof}
Consider the following diagram.
\[
\xymatrix@C+4.5pt@R-5pt{    
\Hc(C\bdot C\bdot M) \ar[r]^-{\chi_{C,C\bdot M}} \ar[rrd]_{\chi_{C\bdot C,M}}
& \Fc(C)\bdot\Hc(C\bdot M) \ar[r]^-{\id\bdot\chi_{C,M}} & 
\Fc(C)\bdot \Fc(C)\bdot\Hc(M) \ar@{=}[r] & D\bdot D\bdot\Hc(M) \\
& & \Fc(C\bdot C)\bdot\Hc(M) \ar[u]_{\psi_{C,C}\bdot\id} \\
\Hc(C\bdot M) \ar[rr]_-{\chi_{C,M}} \ar[uu]^{\Hc(\Delta\cdot\id)} & & 
\Fc(C)\bdot\Hc(M) \ar[u]_{\Fc(\Delta)\bdot\id} 
\ar@{=}[r] & D\bdot\Hc(M) \ar[uu]_-{\Delta\bdot\id}
}
\]
The diagram in the top left commutes by~\eqref{e:comodule1}, the one in the bottom by naturality, and the one on the right by~\eqref{e:coprod-colax}.
The commutativity of the entire diagram yields the first condition in~\eqref{e:mor-comonad}. Similarly, the commutativity of
\[
\xymatrix@C+5pt@R-5pt{ 
\Hc(C\bdot M) \ar[r]^-{\chi_{C,M}} \ar[d]_{\Hc(\epsilon\bdot\id)} & \Fc(C)\bdot\Hc(M)
\ar@{=}[r] \ar[d]^{\Fc(\epsilon)\bdot\id} & D\bdot\Hc(M) \ar@/^1pc/[ldd]^-{\epsilon\bdot\id} \\
\Hc(\unit\bdot M) \ar[r]_-{\chi_{\unit,M}} \ar@{=}[d] & \Fc(\unit)\bdot\Hc(M) \ar[d]^{\psi_0\bdot\id} \\
\Hc(M) \ar@{=}[r] & \unit\bdot\Hc(M)
}
\]
yields the second condition in~\eqref{e:mor-comonad}.
\end{proof}

We now progress to the situation of Section~\ref{ss:comod-adj}. Thus, we are given
adjunctions
\[
\xymatrix{
\Cs  \ar@/^/[r]^{\Fc} & \Ds \ar@/^/[l]^{\Gc} 
}
\qquad
\xymatrix{
\Ms \ar@/^/[r]^{\Hc} & \Ns \ar@/^/[l]^{\Kc} 
}
\]
the first being comonoidal and the second being a comodule over the first.

Let
\[
\rho_C : \hC\Kc\to \Kc\hD
\]
be the mate of the transformation $\chi_C:\Hc\hC\to \hD\Hc$. It follows from Lemma~\ref{l:comodule-colax} (and the discussion in Section~\ref{ss:mates}) that $(\Kc,\rho_C):(\Ns,\hD)\to(\Ms,\hC)$ is a lax morphism of comonads.

\begin{lemma}\label{l:comodule-colax2}
On an object $N$ of $\Ns$, the transformation $\rho_C$ is the composite
\[
\hC\Kc(N) = C\bdot \Kc(N) \map{\eta_C\bdot\id} \Gc\Fc(C)\bdot\Kc(N) 
= \Gc(D)\bdot\Kc(N) \map{\chi_{D,N}^{-1}} \Kc(D\bdot N)=\Kc\hD(N).
\]
\end{lemma}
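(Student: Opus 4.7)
The plan is to unfold the definition of the mate from Section~\ref{ss:mates} and then simplify using the comodule-adjunction axioms together with a triangle identity. Explicitly, the mate $\rho_C$ of $\chi_C:\Hc\hC\to\hD\Hc$ is the composite
\[
\hC\Kc \map{\eta\hC\Kc} \Kc\Hc\hC\Kc \map{\Kc\chi_C\Kc} \Kc\hD\Hc\Kc \map{\Kc\hD\xi} \Kc\hD,
\]
so on $N\in\Ns$ it reads $\Kc(\id_D\bdot\xi_N)\circ\Kc(\chi_{C,\Kc(N)})\circ\eta_{C\bdot\Kc(N)}$. Since $\chi_{D,N}$ is invertible by Proposition~\ref{p:rightstrong}, it suffices to verify that $\chi_{D,N}\circ\rho_C(N)=\eta_C\bdot\id_{\Kc(N)}$.

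The next step is to push $\chi_{D,N}$ inward past $\Kc(\id_D\bdot\xi_N)$ using naturality of $\chi$ in its second argument. This yields
\[
\chi_{D,N}\circ\rho_C(N)=\bigl(\id_{\Gc(D)}\bdot\Kc(\xi_N)\bigr)\circ\chi_{D,\Hc\Kc(N)}\circ\Kc(\chi_{C,\Kc(N)})\circ\eta_{C\bdot\Kc(N)}.
\]
The key observation is that the three rightmost arrows reproduce the comodule structure of the composite $\Kc\Hc$ over $\Gc\Fc$, according to the formula~\eqref{e:comod-compose}, preceded by the unit of the adjunction. I would then apply the first diagram in~\eqref{e:comod-adj} (with $X=C$, $M=\Kc(N)$), which precisely states that this composite equals $\eta_C\bdot\eta_{\Kc(N)}$.

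To conclude, I would invoke the triangle identity $\Kc(\xi_N)\circ\eta_{\Kc(N)}=\id_{\Kc(N)}$ for the adjunction $(\Hc,\Kc)$, which reduces $(\id_{\Gc(D)}\bdot\Kc(\xi_N))\circ(\eta_C\bdot\eta_{\Kc(N)})$ to $\eta_C\bdot\id_{\Kc(N)}$, as desired. The main conceptual obstacle is recognizing in the middle of the calculation the composite comodule structure of $\Kc\Hc$ and thereby identifying the right hypothesis~\eqref{e:comod-adj} to apply; once this is seen, the remainder is naturality and a unit-counit triangle, so no serious difficulty is expected.
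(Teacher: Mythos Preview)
Your proposal is correct and follows essentially the same argument as the paper: the paper packages exactly these steps (definition of the mate, naturality of $\chi$, the first diagram in~\eqref{e:comod-adj} with $X=C$ and $M=\Kc(N)$, and a triangle identity) into a single commutative diagram, concluding that $\chi_{D,N}\circ\rho_C=\eta_C\bdot\id$ and hence $\rho_C=\chi_{D,N}^{-1}\circ(\eta_C\bdot\id)$.
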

The last arrow is the inverse of the $\Gc$-comodule structure map of $\Kc$, afforded
by Proposition~\ref{p:rightstrong}.
\begin{proof}
Consider the following diagram.
\[
\xymatrix@C+20pt{
\Kc\Hc\bigl(C\bdot\Kc(N)\bigr) \ar[r]^-{\Kc(\chi_{C,\Kc(N)})} & \Kc\bigl(\Fc(C)\bdot\Hc\Kc(N)\bigr) \ar[d]^{\chi_{\Fc(C),\Hc\Kc(N)}} \ar[r]^-{\Kc(\id\bdot\xi_N)} & \Kc\bigl(\Fc(C)\bdot N\bigr) \ar[d]^{\chi_{\Fc(C),N}} \\
C\bdot \Kc(N)\ar[u]^-{\eta_{C\bdot\Kc(N)}} \ar[r]_-{\eta_C\bdot\eta_{\Kc(N)}} 
\ar[rd]_-{\eta_c\bdot\id} &
\Gc\Fc(C)\bdot \Kc\Hc\Kc(N) \ar[r]_-{\id\bdot\Kc(\xi_N)} & \Gc\Fc(C)\bdot\Kc(N) \\
& \Gc\Fc(C)\bdot\Kc(N) \ar@{=}[ru] \ar[u]_{\id\bdot\eta_{\Kc(N)}}
}
\]
According to the definition of mate (Section~\ref{ss:mates}), $\rho_C$ is the map from $C\bdot \Kc(N)$ to
$\Kc\bigl(\Fc(C)\bdot N\bigr)$ obtained by going around the top left corner of the diagram.
The diagram commutes: the rectangle on the left by the first diagram in~\eqref{e:comod-adj},
and the other smaller diagrams by naturality, functoriality, and one of the adjunction axioms.
Therefore $\rho_C$ is also obtained by going around the bottom right corner, and this gives the result. 
\end{proof}

\subsection{The Hopf operator as a colax morphism}\label{ss:hopf-colax}

We continue in the situation at the end of Section~\ref{ss:chi-colax}, and bring
the conjugate comonad into the discussion.

Let $C$ and $D$ be as before. In addition to the comonads $\hC$ and $\hD$,
we may consider the conjugate comonad of $\hC$, as in Section~\ref{ss:conjugate-comonad}. Let us denote the latter by $\tC$ (rather than $\contra{(\hC)}$). Thus,
\[
\tC=\Hc\hC\Kc.
\]

Setting $X=C$ in~\eqref{e:Hopf}, we see that the Hopf operator
yields a natural transformation
\[
\tC(N) = \Hc\bigl(C\bdot\Kc(N)\bigr)
\map{\Hb_{C,N}} \Fc(C)\bdot N = \hD(N).
\]
We denote this transformation by
\[
\Hb_{C}: \tC \to \hD.
\]

\begin{lemma}\label{l:hopf-colax}
The pair 
\[
(\Ic,\Hb_{C}): (\Ns,\tC)\to (\Ns,\hD)
\] 
is a colax morphism of comonads.
\end{lemma}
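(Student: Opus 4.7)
The plan is to obtain the result as a direct application of the universal property of the conjugate comonad, namely Proposition~\ref{p:universal}(ii), starting from the colax morphism of comonads provided by Lemma~\ref{l:comodule-colax}.

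First, I would invoke Lemma~\ref{l:comodule-colax} applied to the comodule $(\Hc,\chi)$ over $(\Fc,\psi)$ and the comonoid $C$ in $\Cs$: this gives that the pair
\[
(\Hc,\chi_{C}):(\Ms,\hC) \to (\Ns,\hD)
\]
is a colax morphism of comonads. Since we are in the setting of Section~\ref{ss:chi-colax}, the adjunction $(\Hc,\Kc)$ is at our disposal, so the conjugate comonad $\tC=\Hc\hC\Kc$ of $\hC$ under this adjunction is defined.

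Next, applying Proposition~\ref{p:universal} with $\Cc=\hC$, $\Dc=\hD$, and $\sigma=\chi_{C}$, we obtain a transformation $\chi_{C}':\tC\to\hD$ such that $(\Ic,\chi_{C}'):(\Ns,\tC)\to(\Ns,\hD)$ is itself a colax morphism of comonads. By formula~\eqref{e:universal2}, this transformation is the composite
\[
\tC=\Hc\hC\Kc \map{\chi_{C}\Kc} \hD\Hc\Kc \map{\hD\xi} \hD,
\]
and evaluating it at an object $N$ of $\Ns$ gives
\[
\Hc\bigl(C\bdot\Kc(N)\bigr)\map{\chi_{C,\Kc(N)}}\Fc(C)\bdot\Hc\Kc(N)\map{\id\bdot\xi_{N}}\Fc(C)\bdot N,
\]
which is exactly the Hopf operator $\Hb_{C,N}$ of Definition~\ref{d:hopf-oper}. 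Hence $\chi_{C}'=\Hb_{C}$, and the claim follows.

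The entire argument is bookkeeping: there is no real obstacle, only the identification $\chi_{C}'=\Hb_{C}$, which amounts to unwinding definitions. One could equally well give a direct verification of the two axioms in~\eqref{e:mor-comonad} for $(\Ic,\Hb_{C})$, expanding the comultiplication~\eqref{e:conjugate-com} and counit~\eqref{e:conjugate-cou} of $\tC$ and using the comodule axioms~\eqref{e:comodule1}--\eqref{e:comodule2} together with the adjunction triangle identities; but routing the verification through Proposition~\ref{p:universal} avoids reproving, in the special case at hand, the diagram chases already carried out in Lemma~\ref{l:conjugate-comonad} and Proposition~\ref{p:universal}(ii).
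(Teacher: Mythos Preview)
Your proof is correct and follows essentially the same approach as the paper: invoke Lemma~\ref{l:comodule-colax} to obtain the colax morphism $(\Hc,\chi_C)$, apply Proposition~\ref{p:universal}(ii) to produce the induced colax morphism $(\Ic,\chi_C')$, and then identify $\chi_C'$ with $\Hb_C$ via~\eqref{e:universal2} and~\eqref{e:Hopf}. The paper's argument is identical in structure and content.
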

\begin{proof}
We apply the universal property of Proposition~\ref{p:universal} to the
colax morphism of comonads
 $(\Hc,\chi_C):(\Ms,\hC) \to (\Ns,\hD)$ of Lemma~\ref{l:comodule-colax}.
This yields a
transformation $\chi_C':\tC \to \hD$ such that $(\Ic,\chi_C')$ is a colax morphism of comonads.
According to~\eqref{e:universal2}, $\chi_C'$ is the composite
\[
\tC(N) \map{\chi_{C,\Kc(N)}} \Fc(C)\bdot\Hc\Kc(N)=D\bdot \Hc\Kc(N) \map{\id_D\bdot \xi_N} D\bdot N = \hD(N).
\]
Comparing to~\eqref{e:Hopf}, we see that $\chi_C'= \Hb_{C}$, which gives the result.
\end{proof}

\begin{remark} When $C=\unit$ is the trivial comonoid in $\Cs$,
Lemma~\ref{l:hopf-colax} becomes~\cite[Lemma~6.5]{BLV:2011}. 
\end{remark}

We now wish to apply the Adjoint Lifting Theorem (Theorem~\ref{t:adjoint}) to
the adjunction $(\Hc,\Kc)$ and the mates $\chi_C$ and $\rho_C$
of Section~\ref{ss:chi-colax}. When the theorem applies,
we obtain an adjunction
\begin{equation}\label{e:hopf-colax}
\xymatrix@C+15pt{
\Ms_{\hC} \ar@/^/[r]^{\Hc_{\chi_C}} & \Ns_{\hD} \ar@/^/[l]^{\Kc^{\rho_C}} 
}
\end{equation}
between comodule categories. According to next results,
the Hopf operator determines when this is an adjoint equivalence.

\begin{theorem}\label{t:hopf-colax}
Let $C$ be a comonoid in $\Cs$ and $D=\Fc(C)$. Assume that:
\begin{itemize}
\item The category $\Ms$ and the functor $\Hc$ satisfy hypotheses~\eqref{e:coreflexive}--\eqref{e:Hconserve}.
\item The functor $\hC$ satisfies hypothesis~\eqref{e:preserve}.
 \end{itemize}
In this situation the adjunction~\eqref{e:hopf-colax} is defined. Moreover, it
is an adjoint equivalence if and only
if the transformation $\Hb_{C}$ is invertible. 
\end{theorem}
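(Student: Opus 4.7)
The plan is to obtain this as a direct corollary of Theorem~\ref{t:equivalence-coalgebra}, applied to the adjunction $(\Hc,\Kc)$ together with the colax morphism of comonads $(\Hc,\chi_C):(\Ms,\hC)\to(\Ns,\hD)$ established in Lemma~\ref{l:comodule-colax}. Specifically, I would take $\Cc=\hC$, $\Dc=\hD$, and $\sigma=\chi_C$ in the setup of Theorem~\ref{t:equivalence-coalgebra}. Inspecting the hypotheses, the conditions~\eqref{e:coreflexive}--\eqref{e:Hconserve} on $\Ms$ and $\Hc$ and the condition~\eqref{e:preserve} on the comonad $\hC$ are precisely what is assumed here, so Theorem~\ref{t:equivalence-coalgebra} applies verbatim.

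First I would record that the adjunction~\eqref{e:hopf-colax} is indeed defined: the mate of $\chi_C$ under $(\Hc,\Kc)$ is $\rho_C$ (this was already identified in Section~\ref{ss:chi-colax}, cf.\ Lemma~\ref{l:comodule-colax2}), and under hypotheses~\eqref{e:coreflexive} and~\eqref{e:preserve} the existence of the adjoint pair $(\Hc_{\chi_C},\Kc^{\rho_C})$ is guaranteed by Proposition~\ref{p:lifting-conjugate}. Theorem~\ref{t:equivalence-coalgebra} then asserts that this pair is an adjoint equivalence if and only if the transformation $\chi_C':\tC\to\hD$ supplied by the universal property of the conjugate comonad (Proposition~\ref{p:universal}) is invertible.

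To finish, I would invoke Lemma~\ref{l:hopf-colax}, which identifies $\chi_C'$ with the Hopf operator $\Hb_C$. Consequently, invertibility of $\chi_C'$ is the same as invertibility of $\Hb_C$, and the desired equivalence follows. No genuinely new computation should be needed; the proof is an assembly of Theorem~\ref{t:equivalence-coalgebra}, Lemma~\ref{l:comodule-colax}, and Lemma~\ref{l:hopf-colax}. The only point worth double-checking is that the transformation furnished by the universal property in our setting really does agree with $\Hb_C$ as a map $\tC\to\hD$, but this is exactly the content of the proof of Lemma~\ref{l:hopf-colax}, where the two composites are compared term by term using~\eqref{e:universal2} and~\eqref{e:Hopf}.
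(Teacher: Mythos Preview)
Your proposal is correct and follows essentially the same approach as the paper: apply Theorem~\ref{t:equivalence-coalgebra} with $\Cc=\hC$, $\Dc=\hD$, $\sigma=\chi_C$, and use the identification $\chi_C'=\Hb_C$ established in (the proof of) Lemma~\ref{l:hopf-colax}. The paper's own proof is just a two-line version of exactly this; your explicit appeal to Proposition~\ref{p:lifting-conjugate} for the existence of the adjunction is fine but not strictly needed, since the proof of Theorem~\ref{t:equivalence-coalgebra} already handles this internally.
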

\begin{proof}
This follows from Theorem~\ref{t:equivalence-coalgebra}, in view of the fact
(shown in the proof of Lemma~\ref{l:hopf-colax}) that $\chi_C'= \Hb_{C}$.
\end{proof}

We state this result under simpler (though stronger) hypotheses.

\begin{theorem}\label{t:hopf-colax-conv}
Let $C$ be a comonoid in $\Cs$ and $D=\Fc(C)$. Assume that:
\begin{itemize}
\item The category $\Ms$ has equalizers of all coreflexive pairs.
\item These equalizers are preserved by the functors $\hC$ and $\Hc$.
\item The functor $\Hc$ is conservative.
\end{itemize}
Then the conclusion of Theorem~\ref{t:hopf-colax} holds.
\end{theorem}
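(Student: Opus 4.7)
The plan is to derive Theorem~\ref{t:hopf-colax-conv} from Theorem~\ref{t:hopf-colax} by showing that the three cleaner assumptions of the present statement are in fact stronger than the four more technical hypotheses appearing there. Thus I would verify, one by one, that \eqref{e:coreflexive}, \eqref{e:Hpreserve}, \eqref{e:Hconserve}, and \eqref{e:preserve} are all implied, and then simply invoke Theorem~\ref{t:hopf-colax}. This parallels the way Theorem~\ref{t:equivalence-coalgebra-conv} is deduced from Theorem~\ref{t:equivalence-coalgebra}.

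The first three conditions are immediate. Any coreflexive $\Hc$-split pair is in particular a coreflexive pair, so the assumption that $\Ms$ has equalizers of all coreflexive pairs supplies hypothesis~\eqref{e:coreflexive}; the assumption that $\Hc$ preserves such equalizers supplies \eqref{e:Hpreserve}; and \eqref{e:Hconserve} is assumed verbatim.

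The one point that deserves an argument is hypothesis~\eqref{e:preserve}, which demands that both $\hC$ and $\hC^2$ preserve the equalizer of every coreflexive $\Hc$-split pair. For $\hC$ this is a direct consequence of the second assumption. For $\hC^2$ I would use the elementary observation that coreflexivity is preserved by arbitrary functors: if $(f,g)$ admits a common retraction $r$, then $(\hC(f),\hC(g))$ admits the common retraction $\hC(r)$. Given a coreflexive pair $(f,g)$ in $\Ms$ with equalizer $e$, the assumption that $\hC$ preserves $e$ identifies $\hC(e)$ as the equalizer of the pair $(\hC(f),\hC(g))$, which by the preceding observation is again coreflexive. Applying the hypothesis once more, $\hC^2(e)$ is the equalizer of $(\hC^2(f),\hC^2(g))$, which is precisely what \eqref{e:preserve} requires.

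With all four hypotheses of Theorem~\ref{t:hopf-colax} in place, I would conclude by citing that theorem to obtain both the existence of the adjunction~\eqref{e:hopf-colax} and the criterion that it is an adjoint equivalence exactly when $\Hb_C$ is invertible. There is no substantial obstacle here beyond the coreflexivity bookkeeping in the previous paragraph; the argument is entirely a reduction to the already-proved result.
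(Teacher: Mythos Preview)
Your proposal is correct and matches the paper's approach exactly. The paper's own proof is a one-line remark that the present hypotheses are stronger than those of Theorem~\ref{t:hopf-colax} (or, equivalently, that one may invoke Theorem~\ref{t:equivalence-coalgebra-conv}); the only substantive point behind that remark is precisely your observation that coreflexive pairs are preserved by arbitrary functors, so preservation of their equalizers by $\hC$ automatically yields preservation by $\hC^2$.
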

\begin{proof}
This follows by noting that the given hypotheses are stronger than those of Theorem~\ref{t:hopf-colax}, or by appealing to Theorem~\ref{t:equivalence-coalgebra-conv}.
\end{proof}

We provide a statement regarding the necessity of the hypotheses on the functor $\Hc$. To this end, let $C=\unit$ be the trivial comonoid in $\Cs$.
Then $\Ms_{\hC}=\Ms$ and $D=\Fc(\unit)$.

\begin{proposition}\label{p:hopf-colax-conv}
In the above situation, consider the following hypotheses.
\begin{enumerate}[(i)]
\item The functor $\resH: \Ms \to\Ns_{\Dc}$ is an equivalence.
\item The functor $\hD$ preserves all existing equalizers in $\Ns$.
\item The transformation $\Hb_{\unit}$ is invertible.
\end{enumerate}
We then have the following statements.
\begin{itemize}
\item If \textup{(i)} holds, then the functor $\Hc$ satisfies hypothesis~\eqref{e:Hconserve}.
\item Assume \textup{(i)} and \textup{(ii)} hold.
Let $(f,g)$ be a parallel pair in $\Ms$ such that $(f,g)$ has an equalizer in $\Ms$ and
$\bigl(\Hc(f),\Hc(g)\bigr)$ has an equalizer in $\Ns$.
Then the functor $\Hc$ preserves the equalizer of  $(f,g)$.
In particular, $\Hc$ satisfies hypothesis~\eqref{e:Hpreserve}.
\item Assume \textup{(i)} and \textup{(iii)} hold. Then $\Hc$ and $\Ms$ satisfy hypothesis~\eqref{e:coreflexive}.
\end{itemize}
\end{proposition}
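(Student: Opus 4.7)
The plan is to obtain Proposition~\ref{p:hopf-colax-conv} as a direct specialization of Proposition~\ref{p:equivalence-coalgebra-conv}, applied to the comonads $\Cc := \Ic$ on $\Ms$ and $\Dc := \hD$ on $\Ns$, with the adjunction $(\Hc, \Kc)$ at hand. Since $C = \unit$, we have $\hC = \Ic$, so $\Ms_{\hC} = \Ms$ and $\Uc_{\hC} = \Ic$; the colax morphism to use is $(\Hc, \chi_{\unit}):(\Ms,\Ic)\to(\Ns,\hD)$ produced by Lemma~\ref{l:comodule-colax}, and the induced functor $\Hc_{\chi_{\unit}}$ coincides with the $\resH$ of the proposition. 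The conjugate comonad specializes to $\cCc = \Hc\Ic\Kc = \Hc\Kc$, and the universal transformation $\sigma':\Hc\Kc\to\hD$ of Proposition~\ref{p:universal} is precisely the Hopf operator $\Hb_{\unit}$, as observed in the proof of Lemma~\ref{l:hopf-colax}. With these identifications in place, each bullet of Proposition~\ref{p:hopf-colax-conv} becomes an instance of the corresponding bullet of Proposition~\ref{p:equivalence-coalgebra-conv}.

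For the first bullet, the first statement of Proposition~\ref{p:equivalence-coalgebra-conv} under (i) yields that $\Hc\Uc_{\Cc}$ is conservative; since $\Uc_{\Cc}=\Ic$, this is just $\Hc$, which is hypothesis~\eqref{e:Hconserve}. For the second bullet, the hypothesis of the second statement of Proposition~\ref{p:equivalence-coalgebra-conv}, namely that both $\Cc$ and $\Dc$ preserve all existing equalizers, holds: $\Cc=\Ic$ trivially and $\Dc=\hD$ by our hypothesis (ii). A parallel pair in $\Ms_{\Cc}=\Ms$ is literally a parallel pair $(f,g)$ in $\Ms$, and the conclusion specializes to: whenever $(f,g)$ has an equalizer in $\Ms$ and $(\Hc(f),\Hc(g))$ has an equalizer in $\Ns$, $\Hc$ preserves the equalizer of $(f,g)$. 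For the ``in particular'' clause, note that any coreflexive $\Hc$-split pair automatically has $(\Hc(f),\Hc(g))$ occurring as part of a split cofork in $\Ns$, whence the equalizer in $\Ns$ exists; consequently $\Hc$ preserves any such equalizer that exists in $\Ms$, which is hypothesis~\eqref{e:Hpreserve}.

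For the third bullet, we invoke the third statement of Proposition~\ref{p:equivalence-coalgebra-conv}: under (i) and the invertibility of $\sigma'$, any coreflexive $\Hc\Uc_{\Cc}$-split pair in $\Ms_{\Cc}$ possesses an equalizer in $\Ms_{\Cc}$. Since here $\sigma' = \Hb_{\unit}$, this invertibility is exactly hypothesis (iii), and since $\Uc_{\Cc}=\Ic$ and $\Ms_{\Cc}=\Ms$, the conclusion reads: every coreflexive $\Hc$-split pair in $\Ms$ has an equalizer in $\Ms$, which is hypothesis~\eqref{e:coreflexive}.

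The main task is really bookkeeping; there is no substantial obstacle beyond verifying the two key specialization facts, namely $\Hc_{\chi_{\unit}} = \resH$ (which is immediate from the definitions of Section~\ref{ss:cores}) and $\sigma' = \Hb_{\unit}$ (which is already contained in the proof of Lemma~\ref{l:hopf-colax}). Once these are in hand, each of the three bullets follows from the corresponding statement of Proposition~\ref{p:equivalence-coalgebra-conv} by plain substitution $\Cc = \Ic$, $\Dc = \hD$.
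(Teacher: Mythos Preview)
Your proposal is correct and takes exactly the same approach as the paper, which simply says ``This follows from Proposition~\ref{p:equivalence-coalgebra-conv}.'' You have spelled out the specialization $\Cc=\Ic$, $\Dc=\hD$, $\Uc_{\Cc}=\Ic$, and $\sigma'=\Hb_{\unit}$ in detail, but the underlying argument is identical.
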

\begin{proof}
This follows from Proposition~\ref{p:equivalence-coalgebra-conv}.
\end{proof}

\subsection{The Fundamental Theorem}\label{ss:fundamental}

We move to the situation of Sections~\ref{ss:comod-monad} and~\ref{ss:hopf-mod}.
We are given a bimonad $\Tc$ on a monoidal category $\Cs$ and
a $\Tc$-comodule-monad $\Sc$ on a $\Cs$-category $\Ms$. In addition,
let $C$ be a comonoid in $\Cs$ and let $Z=\Tc(C)$ be the free
$\Tc$-algebra-comonoid on it, as in~\eqref{e:freecomTalg}. 

Consider the category $\HTS$ of Hopf $(\Tc;\Sc,Z)$-modules, as in Section~\ref{ss:hopf-mod}. The Fundamental Theorem establishes an explicit equivalence
\begin{equation}\label{e:hopf}
\xymatrix@C+20pt{
 \Ms_{\hC}  \ar@<0.5ex>[r]^-{\Ac} & \HTS \ar@<0.5ex>[l]^-{\Bc} 
}
\end{equation}
between $C$-comodules in $\Ms$ and Hopf $(\Tc;\Sc,Z)$-modules.
We proceed to set it up. 

The functor $\Ac$ sends a $C$-comodule $(M,c)$ to
\[
\Ac(M,c) = (\Sc(M),\mu_M,d)
\]
where $\mu$ is the multiplication of the monad $\Sc$ and $d$ is the composite
\begin{equation}\label{e:hopfP}
\Sc(M) \map{\Sc(c)} \Sc(C\bdot M) \map{\chi_{C,M}} \Tc(C)\bdot\Sc(M) = Z\bdot \Sc(M).
\end{equation}
Below we show that $\Ac(M,c)$ is indeed a Hopf $(\Tc;\Sc,Z)$-module.

The functor $\Bc$ sends a Hopf $(\Tc;\Sc,Z)$-module $(M,\sigma,\zeta)$
to the equalizer of the pair
\begin{equation}\label{e:hopfQ}
 C\bdot M \map{\Delta\bdot\id} C\bdot C\bdot M \map{\id\bdot\iota_C\bdot\id} 
 C\bdot \Tc(C)\bdot M = C\bdot Z\bdot M
\qand
C\bdot M \map{\id\bdot\zeta} C\bdot Z\bdot M.
\end{equation}
Here $\iota$ is the unit of the monad $\Tc$.
The functor $\Bc$ is defined when these equalizers exist in $\Ms_{\hC}$.
Below we give conditions which ensure this is the case. 

\begin{theorem}[The Fundamental Theorem of generalized Hopf modules]
\label{t:hopf}
In the above situation, assume that:
\begin{itemize}
\item The category $\Ms$ has equalizers of all coreflexive pairs
which are $\Sc$-split in $\Ms^\Sc$.
\item The functors $\hC$, $\hC^2$, and $\Sc$ preserve such equalizers.
\item The functor $\Sc$ is conservative.
\end{itemize}
Then the functors $\Ac$ and $\Bc$ from~\eqref{e:hopf} form an adjoint equivalence
\[
\Ms_{\hC} \cong \HTS
\]
if and only if the Galois map 
\[
\Gb_{C,M}: \Sc\bigl(C\bdot\Sc(M)\bigr) \to \Tc(C)\bdot\Sc(M) = Z\bdot \Sc(M)
\]
is invertible for all objects $M$ in $\Ms$. 
\end{theorem}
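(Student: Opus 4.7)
The plan is to reduce Theorem~\ref{t:hopf} to Theorem~\ref{t:hopf-colax} applied to the two monadic adjunctions coming from $\Tc$ and $\Sc$, and then to translate between Hopf operators and Galois maps via Corollary~\ref{c:Galois-Hopf}. By Proposition~\ref{p:bimonad-adj}, the monadic adjunction $(\Fc^\Tc,\Uc^\Tc)$ is comonoidal and $(\Fc^\Sc,\Uc^\Sc)$ is a comodule adjunction over it, whose associated bimonad and comodule-monad are $\Tc$ and $\Sc$ respectively. Under $\Fc^\Tc$, the comonoid $C$ of $\Cs$ corresponds to $D=\Fc^\Tc(C)=Z$ viewed as an algebra-comonoid in $\Cs^\Tc$, that is, to the free algebra-comonoid of~\eqref{e:freecomTalg}.

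I would first verify that the hypotheses of Theorem~\ref{t:hopf-colax}, applied with $\Fc=\Fc^\Tc$, $\Gc=\Uc^\Tc$, $\Hc=\Fc^\Sc$, $\Kc=\Uc^\Sc$, reduce to the hypotheses stated in Theorem~\ref{t:hopf}. Indeed, $\Uc^\Sc$ is conservative and creates limits (the dual of Lemma~\ref{l:creation}), so an $\Fc^\Sc$-split pair in $\Ms$ is precisely an $\Sc$-split pair in $\Ms^\Sc$; conservativity of $\Fc^\Sc$ is equivalent to conservativity of $\Sc=\Uc^\Sc\Fc^\Sc$; and $\Fc^\Sc$ preserves a given equalizer if and only if $\Sc$ does. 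Thus the three bullets in the statement of Theorem~\ref{t:hopf} translate directly into~\eqref{e:coreflexive},~\eqref{e:Hpreserve},~\eqref{e:Hconserve} and~\eqref{e:preserve} for this data. Theorem~\ref{t:hopf-colax} then produces an adjunction $\bigl(\Fc^\Sc_{\chi_C},\,(\Uc^\Sc)^{\rho_C}\bigr)$ between $\Ms_{\hC}$ and $(\Ms^\Sc)_{\hZ}$ which is an adjoint equivalence if and only if the Hopf operator $\Hb_{C,-}$ is invertible.

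Next I would identify this adjunction with $(\Ac,\Bc)$. By Proposition~\ref{p:hopf-mod}, $(\Ms^\Sc)_{\hZ}=\HTS$. For the left adjoint, applying formula~\eqref{e:Dcoalg} with $\sigma=\chi_C$ to a $C$-comodule $(M,c)$ yields the $\Sc$-algebra $(\Sc(M),\mu_M)$ endowed with the $Z$-comodule structure of~\eqref{e:hopfP}, so $\Fc^\Sc_{\chi_C}=\Ac$. For the right adjoint, Lemma~\ref{l:comodule-colax2} together with the fact that the comonoidal structure of $\Uc^\Sc$ is the identity gives $\rho_{C,N}=\iota_C\bdot\id_{\Uc^\Sc(N)}$; the equalizer pair~\eqref{e:eq-coalg} defining $(\Uc^\Sc)^{\rho_C}$ then matches precisely the pair~\eqref{e:hopfQ}, so $(\Uc^\Sc)^{\rho_C}=\Bc$. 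Finally, since both $(\Fc^\Tc,\Uc^\Tc)$ and $(\Fc^\Sc,\Uc^\Sc)$ are monadic, Corollary~\ref{c:Galois-Hopf} translates invertibility of $\Hb_{C,(B,b)}$ for every $\Sc$-algebra $(B,b)$ into invertibility of the Galois map $\Gb_{C,M}$ for every $M\in\Ms$, completing the proof.

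The main obstacle is the bookkeeping in the identification of $\Bc$ with $(\Uc^\Sc)^{\rho_C}$. Computing $\rho_C$ via Lemma~\ref{l:comodule-colax2} depends on the invertibility of the comonoidal structure of $\Uc^\Sc$ furnished by Proposition~\ref{p:rightstrong} (and this structure happens to be the identity for the action defined in~\eqref{e:alg-cat}); matching the resulting equalizer pair in $\Ms_{\hC}$ with~\eqref{e:hopfQ} then requires carefully unwinding the comonad structure of $\hC$ and the mate formula for the arrows of~\eqref{e:eq-coalg}. Once this identification is secure, the theorem is a direct consequence of the already-established results.
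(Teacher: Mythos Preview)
Your proposal is correct and follows essentially the same route as the paper's own proof: apply Theorem~\ref{t:hopf-colax} to the monadic adjunctions $(\Fc^\Tc,\Uc^\Tc)$ and $(\Fc^\Sc,\Uc^\Sc)$ via Proposition~\ref{p:bimonad-adj}, translate the hypotheses using the dual of Lemma~\ref{l:creation} and conservativity of $\Uc^\Sc$, identify the resulting adjunction with $(\Ac,\Bc)$ through Proposition~\ref{p:hopf-mod} and Lemma~\ref{l:comodule-colax2}, and finish with Corollary~\ref{c:Galois-Hopf}. The paper carries out exactly these steps, including the careful unwinding of $\rho_C$ (noting $\eta_C=\iota_C$ and that the comodule structure of $\Uc^\Sc$ is the identity) to match~\eqref{e:eq-coalg} with~\eqref{e:hopfQ}.
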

\begin{proof}
Consider the adjunctions
\[
\xymatrix@C+10pt{
\Cs \ar@/^/[r]^{\Fc^\Tc} & \Cs^\Tc  \ar@/^/[l]^{\Uc^\Tc}
}
\qqand
\xymatrix@C+10pt{
\Ms \ar@/^/[r]^{\Fc^\Sc} & \Ms^\Sc  \ar@/^/[l]^{\Uc^\Sc}
}
\]
associated to the bimonad $\Tc$ and the monad $\Sc$,
as in Section~\ref{ss:bimonad-adj}. By Proposition~\ref{p:bimonad-adj},
the latter is a comodule over the former. Thus, we are in the situation of
Section~\ref{ss:hopf-colax}.
We proceed to verify the hypotheses of Theorem~\ref{t:hopf-colax}.

First of all, by Corollary~\ref{c:Galois-Hopf}, the 
invertibility of the transformation $\Gb_{C}$ is equivalent to that of the 
transformation $\Hb_{C}$.

Since $\Sc=\Uc^{\Sc}\Fc^{\Sc}$ is conservative, so is $\Fc^{\Sc}$. Thus 
$\Fc^{\Sc}$ satisfies~\eqref{e:Hconserve}.
The hypothesis on existence of equalizers 
simply states that $\Ms$ and $\Fc^{\Sc}$ satisfy~\eqref{e:coreflexive}.
These equalizers are preserved by $\Sc$ by hypothesis, and 
created by $\Uc^{\Sc}$ by the dual of Lemma~\ref{l:creation}.
Therefore, they are preserved by $\Fc^{\Sc}$.
Thus, $\Fc^{\Sc}$ satisfies~\eqref{e:Hpreserve}.

Finally, the hypotheses on $\hC$ guarantee that $\hC$ satisfies hypothesis~\eqref{e:preserve}.

Therefore, we may apply Theorem~\ref{t:hopf-colax} to the adjunctions
$(\Fc^\Tc,\Uc^\Tc)$ and $(\Fc^\Sc,\Uc^\Sc)$, to obtain an adjoint equivalence
\[
\xymatrix@C+20pt{
\Ms_{\hC} \ar@<0.5ex>[r]^{(\Fc^{\Sc})_{\chi_C}} & (\Ms^{\Sc})_{\hZ} \ar@<0.5ex>[l]^{(\Uc^{\Sc})^{\rho_C}} 
}
\]
where $Z=\Fc^\Tc(C)$. In other words, $Z=\Tc(C)$ viewed
as a comonoid in the category $\Cs^{\Tc}$.

It simply remains to compose with the isomorphism
 \[
( \Ms^{\Sc})_{\hZ}=\HTS
 \]
 of Proposition~\ref{p:hopf-mod} to obtain the desired equivalence. We check that
 the resulting functors $\Ac$ and $\Bc$ are as stated.
 
 According to~\eqref{e:Dcoalg},
 \[
 (\Fc^{\Sc})_{\chi_C}(M,c) = \bigl(\Fc^{\Sc}(M),d\bigr) = \bigl(\Sc(M),\mu_M,d\bigr)
 \]
 where $d$ is the composite
 \[
 d: \Fc^{\Sc}(M) \map{\Fc^{\Sc}(c)} \Fc^{\Sc}(C\bdot M) \map{\chi_{C,M}} Z\bdot \Fc^{\Sc}(M).
\]
The map $\chi_{C,M}$ is described in the proof of Proposition~\ref{p:bimonad-adj};
it follows that $d$ coincides with~\eqref{e:hopfP}. Thus $\Ac$ is as stated. (This also shows that $\Ac(M,c)$ is a Hopf $(\Tc;\Sc,Z)$-module, as announced above.)
 
On a Hopf $(\Tc;\Sc,Z)$-module $(M,\sigma,\zeta)$, the functor
$(\Uc^{\Sc})^{\rho_C}$ is the equalizer of the pair~\eqref{e:eq-coalg}, which in the
present situation consists of the maps
\[
\hC\Uc^{\Sc}(M,\sigma)\map{\delta_{\Uc^{\Sc}(M,\sigma)}} \hC\hC\Uc^{\Sc}(M,\sigma) \map{\hC((\rho_C)_{(M,\sigma)})} \hC\Uc^{\Sc}\hZ(M,\sigma)
\]
and
\[
\hC\Uc^{\Sc}(M,\sigma)\map{\hC\Uc^{\Sc}(d)} \hC\Uc^{\Sc}\hZ(M,\sigma).
\] 
We have $\hC\Uc^{\Sc}(M,\sigma)=C\bdot M$ and $\delta_{\Uc^{\Sc}(M,\sigma)}=\Delta\bdot\id$.
The transformation $\rho_C$ is identified in Lemma~\ref{l:comodule-colax2}.
Note that in the present situation $\eta_C=\iota_C$ (Section~\ref{ss:bimonad-adj})
and $\chi=\id$ (proof of Proposition~\ref{p:bimonad-adj}).
It follows that the above pair coincides with~\eqref{e:hopfQ}
and therefore $\Bc$ is as stated.
\end{proof}
 
\begin{remark}\label{r:weaker-hopf}
Theorem~\ref{t:hopf} holds under weaker assumptions.
Namely, it suffices to require existence  in $\Ms$ and preservation (under
$\hC$, $\hC^2$, and $\Sc$)
of the
equalizers of the parallel pair~\eqref{e:hopfQ}, for each
Hopf $(\Tc;\Sc,Z)$-module $(M,\sigma,\zeta)$.
As explained in the above proof, this pair is~\eqref{e:eq-coalg} for 
the comonads $\Cc=\hC$
and $\Dc=\hD$ and the adjunction $(\Hc,\Kc)=(\Fc^{\Sc},\Uc^{\Sc})$.
Starting with the Conjugate Comonadicity Theorem (Theorem~\ref{t:descent}), all results leading to Theorem~\ref{t:hopf} can be generalized by relaxing the corresponding hypotheses on equalizers and using, when relevant, the proof of Beck's theorem rather than the theorem itself.
\end{remark}

As with Theorems~\ref{t:hopf-colax}--\ref{t:hopf-colax-conv},
stronger hypotheses yield a somewhat simpler statement.

\begin{theorem}[Fundamental Theorem of generalized Hopf modules, second version]
\label{t:hopf-conv}
In the above situation, assume that:
\begin{itemize}
\item The category $\Ms$ has equalizers of all coreflexive pairs.
\item The functors $\hC$ and $\Sc$ preserve such equalizers.
\item The functor $\Sc$ is conservative.
\end{itemize}
Then the conclusion of Theorem~\ref{t:hopf} holds. 
\end{theorem}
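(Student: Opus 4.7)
The plan is to deduce Theorem~\ref{t:hopf-conv} from Theorem~\ref{t:hopf} by verifying that the hypotheses of the former are strictly stronger than those of the latter. This follows exactly the pattern already used to derive Theorem~\ref{t:hopf-colax-conv} from Theorem~\ref{t:hopf-colax}, and of Theorem~\ref{t:equivalence-coalgebra-conv} from Theorem~\ref{t:equivalence-coalgebra}.

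First, the assumption that $\Ms$ has equalizers of \emph{all} coreflexive pairs trivially implies existence of equalizers for the subclass of coreflexive pairs that happen to be $\Sc$-split in $\Ms^\Sc$. Similarly, preservation by $\hC$ and $\Sc$ of equalizers of all coreflexive pairs implies preservation of those in this smaller subclass.

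The one point to check is that $\hC^2$ also preserves such equalizers, as required by the hypotheses of Theorem~\ref{t:hopf}. For this I use the elementary observation that any functor preserves coreflexivity: if $(f,g)$ is a coreflexive pair in $\Ms$ with common retraction $r$, then $(\hC(f),\hC(g))$ is again coreflexive, with common retraction $\hC(r)$. Hence applying $\hC$ to this new coreflexive pair, we see that the preservation property for $\hC$ yields
\[
\hC^2(\Eq(f,g)) \;=\; \hC\bigl(\hC(\Eq(f,g))\bigr) \;=\; \hC\bigl(\Eq(\hC(f),\hC(g))\bigr) \;=\; \Eq(\hC^2(f),\hC^2(g)),
\]
which is exactly the statement that $\hC^2$ preserves the equalizer of $(f,g)$. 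Conservativity of $\Sc$ is identical in both statements.

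With all three hypotheses of Theorem~\ref{t:hopf} verified, a direct appeal to that theorem yields the desired adjoint equivalence $\Ms_{\hC}\cong\HTS$ under invertibility of the Galois map, and the converse implication as well. I anticipate no real obstacle; the whole argument is a routine strengthening comparison analogous to the one for Theorem~\ref{t:hopf-colax-conv}.
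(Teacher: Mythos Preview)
Your proposal is correct and follows exactly the approach the paper intends: the paper states Theorem~\ref{t:hopf-conv} without explicit proof, remarking only that ``as with Theorems~\ref{t:hopf-colax}--\ref{t:hopf-colax-conv}, stronger hypotheses yield a somewhat simpler statement,'' and the proof of Theorem~\ref{t:equivalence-coalgebra-conv} contains the same key observation you give, that coreflexive pairs are preserved by arbitrary functors and hence preservation of their equalizers by $\hC$ implies the same for $\hC^2$. You have simply made explicit what the paper leaves to analogy.
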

%

We provide a statement regarding the necessity of the hypotheses on the functor $\Sc$
in the Fundamental Theorem. To this end, let $C=\unit$ be the trivial comonoid in $\Cs$.
Then $\Ms_{\hC}=\Ms$ and $Z=\Tc(\unit)$.

\begin{proposition}\label{p:hopf-conv}
In the above situation, consider the following hypotheses.
\begin{enumerate}[(i)]
\item The functor $\Ac:  \Ms  \to \HTS$ is an equivalence.
\item The functor $\hZ$ preserves all existing equalizers in $\Ms^{\Sc}$.
\item The transformation $\Gb_{\unit}$ is invertible.
\end{enumerate}
We then have the following statements.
\begin{itemize}
\item If \textup{(i)} holds, then the functor $\Sc$ is conservative..
\item Assume \textup{(i)} and \textup{(ii)} hold.
Let $(f,g)$ be a parallel pair in $\Ms$ such that $(f,g)$ has an equalizer in $\Ms$ and
$\bigl(\Sc(f),\Sc(g)\bigr)$ has an equalizer in $\Ms$.
Then $\Sc$ preserves the equalizer of  $(f,g)$.
\item Assume \textup{(i)} and \textup{(iii)} hold. Then the category $\Ms$ has equalizers of all coreflexive pairs
which are $\Sc$-split in $\Ms^\Sc$.
\end{itemize}
\end{proposition}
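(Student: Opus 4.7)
The plan is to deduce Proposition~\ref{p:hopf-conv} from Proposition~\ref{p:equivalence-coalgebra-conv} by specializing the setup of the proof of Theorem~\ref{t:hopf} to $C=\unit$. Namely, take $\Cc=\hC=\Ic_{\Ms}$, $\Dc=\hZ$, and $(\Hc,\Kc)=(\Fc^{\Sc},\Uc^{\Sc})$; then $\Ms_{\Cc}=\Ms$ and $\Ns_{\Dc}=(\Ms^{\Sc})_{\hZ}$, and the lifted functor $(\Fc^{\Sc})_{\chi_{\unit}}$ becomes $\Ac$ after composition with the isomorphism $(\Ms^{\Sc})_{\hZ}=\HTS$ of Proposition~\ref{p:hopf-mod}. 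By the proof of Theorem~\ref{t:hopf-colax}, the universal transformation $\sigma':\cCc\to\Dc$ coincides with the Hopf operator $\Hb_{\unit}$.

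I would next match the three hypotheses. Condition (i) above is identical to (i) of Proposition~\ref{p:equivalence-coalgebra-conv}. Condition (ii) becomes (ii) there, since $\Cc=\Ic$ trivially preserves every equalizer in $\Ms$. Condition (iii), the invertibility of $\Gb_{\unit}$, matches (iii) there, the invertibility of $\sigma'=\Hb_{\unit}$, by Corollary~\ref{c:Galois-Hopf} applied to the monadic adjunctions $(\Fc^{\Tc},\Uc^{\Tc})$ and $(\Fc^{\Sc},\Uc^{\Sc})$.

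I would then translate the three conclusions. The functor $\Hc\Uc_{\Cc}$ reduces to $\Fc^{\Sc}$, and since $\Uc^{\Sc}$ is conservative (as the forgetful functor of a monad) and $\Sc=\Uc^{\Sc}\Fc^{\Sc}$, the conservativity of $\Fc^{\Sc}$ is equivalent to that of $\Sc$, yielding the first bullet. A coreflexive pair in $\Ms_{\Cc}=\Ms$ is $\Hc\Uc_{\Cc}$-split precisely when it is $\Fc^{\Sc}$-split, that is, $\Sc$-split in $\Ms^{\Sc}$ in the paper's terminology, which gives the third bullet. For the second bullet, one uses that $\Uc^{\Sc}$ preserves and reflects equalizers: an equalizer of $(\Fc^{\Sc}(f),\Fc^{\Sc}(g))$ in $\Ms^{\Sc}$ is, via $\Uc^{\Sc}$, precisely an equalizer of $(\Sc(f),\Sc(g))$ in $\Ms$, and preservation of the equalizer of $(f,g)$ by $\Fc^{\Sc}$ is equivalent to preservation by $\Sc$.

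The main obstacle is the equivalence between the existence of an equalizer of $(\Sc(f),\Sc(g))$ in $\Ms$, as stated in the second bullet, and the existence of one of $(\Fc^{\Sc}(f),\Fc^{\Sc}(g))$ in $\Ms^{\Sc}$, as required by Proposition~\ref{p:equivalence-coalgebra-conv}. The direction $\Ms^{\Sc}\Rightarrow\Ms$ is immediate from $\Uc^{\Sc}$ preserving limits; the converse requires the equalizer in $\Ms$ to lift through $\Uc^{\Sc}$, which by the dual of Lemma~\ref{l:creation} holds provided $\Sc$ and $\Sc^{2}$ preserve it. My plan is to settle this by transporting the equalizer of $(f,g)$ in $\Ms$ through the equivalence $\Ac$ furnished by hypothesis (i), using hypothesis (ii) and Lemma~\ref{l:creation} applied to the comonad $\hZ$ on $\Ms^{\Sc}$ to land back in $\Ms^{\Sc}$ as the required equalizer; careful tracking of these identifications is the step that will need the most attention.
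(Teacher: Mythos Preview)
Your approach is correct and essentially identical to the paper's: reduce to Proposition~\ref{p:equivalence-coalgebra-conv} (the paper routes through its wrapper Proposition~\ref{p:hopf-colax-conv}, but that proposition's proof is one line invoking Proposition~\ref{p:equivalence-coalgebra-conv}) and then translate statements about $\Fc^{\Sc}$ into statements about $\Sc=\Uc^{\Sc}\Fc^{\Sc}$ using that $\Uc^{\Sc}$ is conservative and the dual of Lemma~\ref{l:creation}.

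However, the ``main obstacle'' you identify is a phantom, caused by dualizing Lemma~\ref{l:creation} incorrectly. The original lemma says that the forgetful functor from \emph{coalgebras} creates limits under a preservation hypothesis on the comonad, and creates colimits unconditionally. Dualizing swaps the roles: the forgetful functor $\Uc^{\Sc}:\Ms^{\Sc}\to\Ms$ from \emph{algebras} creates all limits that exist in $\Ms$ with no condition on $\Sc$ (and creates colimits provided $\Sc$ and $\Sc^{2}$ preserve them). Hence if $\bigl(\Sc(f),\Sc(g)\bigr)=\bigl(\Uc^{\Sc}\Fc^{\Sc}(f),\Uc^{\Sc}\Fc^{\Sc}(g)\bigr)$ has an equalizer in $\Ms$, it lifts automatically to an equalizer of $\bigl(\Fc^{\Sc}(f),\Fc^{\Sc}(g)\bigr)$ in $\Ms^{\Sc}$; and if $\Fc^{\Sc}$ preserves the equalizer of $(f,g)$, then so does $\Sc$, since $\Uc^{\Sc}$ is a right adjoint and preserves limits. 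Your proposed workaround via $\Ac$ and hypothesis~(ii) is therefore unnecessary; the paper's proof handles the second bullet with exactly this direct appeal to the (correctly dualized) Lemma~\ref{l:creation}.
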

\begin{proof}
The proof of Theorem~\ref{t:hopf} shows that if $\Ac$ is an equivalence,
then so is the functor $(\Fc^{\Sc})_{\chi_C}$. 
Also, by Corollary~\ref{c:Galois-Hopf}, the 
invertibility of the transformation $\Gb_{\unit}$ implies that of 
$\Hb_{\unit}$.
We may then apply
Proposition~\ref{p:hopf-colax-conv} to conclude a number of
properties satisfied by $\Fc^\Sc$. The desired
properties of $\Sc=\Uc^{\Sc}\Fc^\Sc$ follow from these
in view of the dual of Lemma~\ref{l:creation} and the fact that $\Uc^{\Sc}$ is conservative.
\end{proof}

\subsection{Application: idempotent monads}\label{s:idempotent}

Recall that a monad $(\Sc,\mu,\iota)$ is said to be \emph{idempotent} if
$\mu:\Sc^2\to\Sc$ is an invertible transformation.

Consider the data of Section~\ref{ss:fundamental} in the case when $\Cs$ is the 
trivial monoidal category (the one-arrow category).
The bimonad $\Tc$ and the comonoids $C$ and $Z=\Tc(C)$ are necessarily trivial,
while $\Sc$ is an arbitrary monad on a category $\Ms$. Diagram~\eqref{e:hopf-mod}
commutes automatically, and
the category $\HTS$ of Hopf modules is thus the category of $\Sc$-algebras.

The functors $\Ac$ and $\Bc$ identify respectively
with $\Fc^\Sc$ and $\Uc^\Sc$. (Note that both maps in~\eqref{e:hopfP} are identities.)

Both maps in the pair~\eqref{e:hopfQ} are identities, so existence and
preservation of the equalizers of such pairs is trivially satisfied.
This much implies that $\Fc^\Sc\Uc^\Sc\cong\Ic$ (for reasons that
can be traced back to Beck's Theorem). This recovers~\cite[Proposition~4.2.3]{Bor:1994ii}.

From~\eqref{e:Galois}
we see that the Galois map reduces to the multiplication of the monad $\Sc$. Therefore, the Galois map is invertible if and only
if the monad $\Sc$ is idempotent.

An idempotent monad $\Sc$ need not be conservative. 
(An example is the monad on the category of groups which sends a group
to its abelianization.) 
Assume that $\Sc$ is conservative. Then all hypotheses in Theorem~\ref{t:hopf}
are satisfied. The theorem then states that the
adjunction $(\Fc^\Sc,\Uc^\Sc)$ is an equivalence, and hence
$\Sc=\Uc^\Sc\Fc^\Sc\cong\Ic$. We have thus shown that
a conservative idempotent monad must be isomorphic to the identity.
Here is an elementary proof of this well-known fact:
from the unit axiom one deduces
$\mu^{-1} = \Sc(\iota)$; therefore $\iota:\Ic\to\Sc$ is an isomorphism.

\subsection{The Fundamental Theorem of
Brugui\`eres, Lack and Virelizier} \label{ss:BLV}

Let $\Tc$ be a bimonad on a monoidal category $\Cs$.
We specialize the situation of Section~\ref{ss:fundamental} by setting 
$\Ms=\Cs$, $\Sc=\Tc$, and $C=\unit$, the trivial comonoid.
The results of Section~\ref{ss:fundamental} then reduce to the Fundamental Theorem of
Brugui\`eres, Lack and Virelizier~\cite[Theorem~6.11]{BLV:2011}. 

First of all, as mentioned in Remark~\ref{r:hopf-mod}, a Hopf $(\Tc;\Sc,Z)$-module is
in this case a left Hopf $\Tc$-module in the sense of~\cite[Section~4.2]{BruVir:2007}
and~\cite[Section~6.5]{BLV:2011}.
Given such a Hopf module $M$, the Galois map  $\Gb_{\unit,M}$ is
\begin{equation}\label{e:BLV-fusion}
\Tc^2(M)=\Tc\bigl(\unit\bdot\Tc(M)\bigr) \map{\psi_{\unit,\Tc(M)}} \Tc(\unit)\bdot \Tc^2(M) \map{\id\bdot\mu_M} \Tc(\unit)\bdot \Tc(M).
\end{equation}
This is the left \emph{fusion operator} $H_{\unit,M}$ of~\cite[Section~2.6]{BLV:2011}; 
thus, the transformation $\Gb_{\unit}$ is invertible if and only if $\Tc$ is a left \emph{pre-Hopf monad} in the sense of~\cite[Section~2.7]{BLV:2011}.
Moreover, the pair~\eqref{e:hopfQ} is
\begin{equation}\label{e:BLV-Q}
M = \unit\bdot M \map{\iota_\unit \bdot\id} 
  Z \bdot M 
\qand
 M \map{\zeta} Z\bdot M
\end{equation}
where $Z=\Tc(\unit)$.
The equalizer of this pair is the \emph{coinvariant part} of the Hopf module
$(M,\sigma,\zeta)$ in the sense of~\cite[Section~6.4]{BLV:2011}. As mentioned in Remark~\ref{r:weaker-hopf}, the existence and preservation of these equalizers (plus the
fact that $\Sc=\Tc$ is conservative)
suffice
to derive the equivalence of categories from the invertibility of the Galois map,
as in Theorem~\ref{t:hopf}.
This is one of the implications in~\cite[Theorem~6.11]{BLV:2011}.
The converse implication is closely related to Proposition~\ref{p:hopf-conv}.
Brugui\`eres and Virelizier proved an earlier version of this result in
~\cite[Theorem~4.6]{BruVir:2007}.

\subsection{Sweedler's Fundamental Theorem}\label{ss:sweedler}

Assume that the monoidal category $\Cs$ is braided and $H$ is a bimonoid therein.
Let $\Tc$ be the functor $\hH$ as in Section~\ref{ss:comod-comonoid}.
Then $\Tc$ is a bimonad~\cite[Example~2.2]{BruVir:2007}. Let us go over
the ingredients and hypotheses of the Fundamental Theorem of Brugui\`eres, Lack and Virelizier. The discussion here is a simplified version of that in~\cite[Example~6.13]{BLV:2011}.

First of all, $Z=\Tc(\unit)=H$, and the Hopf $\Tc$-modules are the usual Hopf $H$-modules as in~\cite[Section~1.9]{Mon:1993}.

Since $\unit$ is a retract of $H$, the identity of $\Cs$ is a retract of the functor $\Tc$.
If a functor $\Fc$ admits a conservative retract $\Rc$
\[
\xymatrix@C+5pt{
\Fc \ar@<0.5ex>[r]^-{r}  & \Rc \ar@<0.5ex>[l]^-{s}
},
\qquad
rs=\id,
\]
then $\Fc$ is conservative. (Given $f:A\to B$ with $\Fc(f)$ invertible,
we have $\Rc(f)^{-1}=r_A\Fc(f)^{-1} s_B$.) 
Therefore, $\Tc$ is conservative.

The pair~\eqref{e:BLV-Q} is
\[
M = \unit\bdot M \map{\iota \bdot\id} 
  H \bdot M 
\qand
 M \map{\zeta} H\bdot M
\]
Its equalizer, when existing, is the coinvariant part $\MH$ of $M$.

The map $\Gb_{\unit,M}$ in~\eqref{e:BLV-fusion} is
\[
H\bdot H\bdot M \map{\Delta\bdot\id\bdot\id} H\bdot H\bdot H\bdot M  \map{\id\bdot \mu\bdot\id} H\bdot H\bdot M.
\]
It is known that the following statements are equivalent:
\begin{itemize}
\item $\Gb_{\unit,M}$ is invertible for every $M$.
\item $\Gb_{\unit,\unit}$ is invertible.
\item The bimonoid $H$ is a Hopf monoid.
\end{itemize}
(See~\cite[Example~2.1.2]{Sch:2004} and~\cite[Proposition~5.4]{BLV:2011} for a more general result.)
Assume these statements hold, and let $\apode$ be the antipode of $H$.
It then turns out that the composite map
\begin{equation}\label{e:idempotent}
M \map{\zeta} H\bdot M \map{\apode\bdot\id} H\bdot M\map{\sigma} M
\end{equation}
is idempotent. Brugui\`eres, Lack and Virelizier point out that the coinvariant part 
of $M$ exists if and only if this idempotent splits in the category $\Cs$. In this case,
the pair~\eqref{e:BLV-Q} is
part of a split cofork, with splitting data
\[
 \xymatrix@C+15pt{
\MH & M \ar[l]_-{\sigma(\apode\bdot\id)\zeta} & H\bdot M \ar[l]_-{\sigma(\apode\bdot\id)}.
}
\]
The equalizer of the pair~\eqref{e:BLV-Q} is then preserved by all functors
and the hypotheses of the Fundamental Theorem of Brugui\`eres, Lack and Virelizier
are satisfied.

In conclusion, if $H$ is a Hopf monoid and for all Hopf $H$-modules $M$
the idempotent~\eqref{e:idempotent} splits in $\Cs$, there is 
an equivalence between $\Cs$ and the category of 
Hopf $H$-modules. This is Sweedler's Fundamental Theorem,
in the general context of braided monoidal categories.
In particular, the theorem holds if all equalizers exist in $\Cs$.
In this generality, the Fundamental Theorem has been stated in~\cite[Theorem~3.4]{Tak:1999}, and (with extra assumptions) in~\cite[Theorem~1.1]{Lyu:1995}.
Note that the hypotheses are satisfied if $\Cs$ is the category of modules over 
a commutative ring. In this situation, the Fundamental Theorem appears in~\cite[Theorem~15.5]{BW:2003}.
Sweedler originally proved his theorem in the context of vector spaces~\cite[Theorem~4.1.1]{Swe:69}.

Sweedler's Fundamental Theorem has been extended in other directions
in recent papers. These include
the work of L\'opez-Franco~\cite{Lop:2009} and of Mesablishvili and Wisbauer~\cite{MesWis:2011}.
The contexts considered by these authors are different from the one considered here.

\section{Application: Schneider's theorem}\label{s:schneider}

Let $\field$ be a commutative ring and $H$ a bialgebra over $\field$. 
Let $A$ be a left $H$-comodule-algebra
(a monoid in the monoidal category of left $H$-comodules)~\cite[Definition~4.1.2]{Mon:1993}. We use $(\mu,\iota,\nu)$
to denote its structure. We employ Sweedler's notation
\[
\nu(a)= \sum a_1\otimes a_2
\]
for the comodule structure map $\nu:A\to H\otimes A$ (a morphism of $\field$-algebras).
The $\field$-subalgebra of \emph{$H$-coinvariants} is
\[
\AH := \{a\in A \mid \nu(a) = 1\otimes a\}.
\]
Let $B$ be any $\field$-subalgebra of $\AH$.

Out of this data we build the ingredients for an application of 
the Fundamental Theorem of generalized Hopf modules
and the other results of Section~\ref{ss:fundamental}.

\subsection{The bimonad and the comodule-monad}\label{ss:schneider-bimonad}

We let $\Cs$ be the category of $\field$-modules and define $\Tc:\Cs\to\Cs$ by
\[
\Tc(X) = H\otimes X
\]
(tensor product over $\field$). Since $(\Cs,\otimes)$ is braided monoidal and $H$ is a bimonoid therein, $\Tc$ is a bimonad~\cite[Example~2.2]{BruVir:2007}.

Next, we let $\Ms$ be the category of left $B$-modules. Then $\Cs$ acts on $\Ms$ by
\[
(X,M) \mapsto X\otimes M.
\]
The action of $B$ on $X\otimes M$ is on the right factor:
\[
b\cdot(x\otimes m) = x\otimes bm.
\]

\begin{lemma}\label{l:schneider1}
The map $\nu:A\to H\otimes A$ is a morphism of $B$-bimodules.
\end{lemma}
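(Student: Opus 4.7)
The plan is to unpack what ``$B$-bimodule'' means on each side and then verify the compatibility by a direct computation using the two defining properties of the data.

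First I would fix the $B$-bimodule structures. On $A$, the bimodule structure is the obvious one coming from the inclusion $B\subseteq A$: left and right multiplication by elements of $B$. On $H\otimes A$, since the $\Cs$-action on $\Ms$ places the $B$-action on the right-hand factor ($b\cdot (h\otimes a)=h\otimes ba$), and since $A$ is also a right $B$-module, the natural $B$-bimodule structure on $H\otimes A$ is
\[
b\cdot (h\otimes a)\cdot b' \;=\; h\otimes (b\,a\,b').
\]
(That is, $H$ sits passively on the left while $B$ acts on $A$ on both sides.)

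Next I would use two standing facts. First, because $A$ is an $H$-comodule-algebra, the coaction $\nu:A\to H\otimes A$ is a morphism of $\field$-algebras, so for all $a,a'\in A$,
\[
\nu(a\,a') \;=\; \nu(a)\,\nu(a') \;=\; \sum a_1 a'_1\otimes a_2 a'_2.
\]
Second, because $B\subseteq \AH$, for every $b\in B$ the coaction simplifies to $\nu(b)=1\otimes b$.

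Combining these: for $a\in A$ and $b,b'\in B$,
\[
\nu(b\,a\,b') \;=\; \nu(b)\,\nu(a)\,\nu(b') \;=\; (1\otimes b)\Bigl(\sum a_1\otimes a_2\Bigr)(1\otimes b') \;=\; \sum a_1\otimes b\,a_2\,b',
\]
which is precisely $b\cdot \nu(a)\cdot b'$ in the bimodule structure of $H\otimes A$ described above. Hence $\nu$ intertwines both the left and right $B$-actions, and is therefore a morphism of $B$-bimodules. There is no real obstacle here; the only point that needs to be stated carefully is the identification of the ambient bimodule structures before performing the Sweedler calculation.
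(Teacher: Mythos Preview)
Your proof is correct and follows essentially the same approach as the paper: use that $\nu$ is an algebra homomorphism together with $\nu(b)=1\otimes b$ for $b\in B\subseteq \AH$ to compute $\nu(ba)$ and $\nu(ab)$. The only cosmetic difference is that you treat the left and right actions in a single computation and spell out the bimodule structure on $H\otimes A$ beforehand, whereas the paper does the two sides separately.
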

\begin{proof}
Take $a\in A$ and $b\in B$. Write $\nu(a)=\sum a_1\otimes a_2$.
Since $B\subseteq A^{H}$, we have $\nu(b)=1\otimes b$. Then, since $\nu$
is a morphism of algebras,
\[
\nu(ba)=\nu(b)\nu(a)=\sum a_1\otimes ba_2 = b\cdot \nu(a).
\]
Similarly, $\nu(a b)=\sum a_1\otimes a_2b$.
\end{proof}

Define $\Sc:\Ms\to \Ms$ by
\[
\Sc(M) = A\otimes_{B} M.
\]
The action of $B$ on $A\otimes_{B} M$ is on the left factor:
\[
b\cdot(a\otimes_{B} m) = ba\otimes_{B} m.
\]
Since $B$ is a subalgebra of $A$, we may view $A$ as a $B$-bimodule.
The category of $B$-bimodules is monoidal under $\otimes_{B}$ and
it acts on $\Ms$, again by means of $\otimes_{B}$. Moreover,
$A$ is a monoid in this category, and hence $\Sc$ is a monad.

Let $X$ and $Y$ be $\field$-modules, $M$ a left $B$-module, and $N$ a $B$-bimodule.
We will make use of the map
\begin{align*}
(Y\otimes N)\otimes_{B}(X\otimes M) &\to (Y\otimes X)\otimes(N\otimes_{B} M)\\
\quad
(y\otimes n)\otimes_{B}(x\otimes m) &\mapsto (y\otimes x)\otimes(n\otimes_{B} m).
\end{align*}
This is a well-defined isomorphism of left $B$-modules. We employ this map and the $H$-comodule
structure of $A$ to build the composite
\begin{equation}\label{e:schneider-chi}
A\otimes_{B}(X\otimes M) \map{\nu\otimes_{B}\id} (H\otimes A)\otimes_{B}(X\otimes M)
\map{\cong} (H\otimes X)\otimes(A\otimes_{B} M).
\end{equation}
By Lemma~\ref{l:schneider1}, this is a well-defined morphism of left $B$-modules.
It defines a transformation
\[
\chi_{X,M}: \Sc(X\otimes M) \to \Tc(X)\otimes \Sc(M).
\]

\begin{lemma}\label{l:schneider2}
In this manner, $\Sc$ is a comodule-monad over the bimonad $\Tc$.
\end{lemma}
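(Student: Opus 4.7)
My plan is to verify, in order, the two comodule axioms (Definition~\ref{d:comodule}) and then the two comodule-monad axioms (Definition~\ref{d:comod-monad}). All four verifications will reduce, by routine bookkeeping with the tensor swap isomorphism, to a corresponding property of the $H$-comodule-algebra structure on $A$.

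First I would check naturality of $\chi_{X,M}$ in $X$ and $M$; this is immediate from the definition~\eqref{e:schneider-chi} because each constituent (the swap isomorphism and $\nu\otimes_B\id$) is natural. Next I would verify axiom~\eqref{e:comodule1}. Unwinding both composites around the square and using the obvious coherence of the swap isomorphism, the equation reduces to the commutativity of
\[
\xymatrix@C+10pt{
A \ar[r]^-{\nu} \ar[d]_{\nu} & H\otimes A \ar[d]^{\Delta\otimes\id} \\
H\otimes A \ar[r]_-{\id\otimes \nu} & H\otimes H\otimes A
}
\]
which is the coassociativity of the $H$-comodule structure on $A$. Axiom~\eqref{e:comodule2} similarly reduces to the counit axiom for $\nu$, namely $(\epsilon\otimes\id)\nu=\id_A$.

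For axiom~\eqref{e:comod-monad1}, I would trace the two composites from $\Sc^2(X\bdot M) = A\otimes_B A\otimes_B (X\otimes M)$ to $\Tc(X)\bdot \Sc(M) = (H\otimes X)\otimes(A\otimes_B M)$. After applying the swap isomorphism twice and using that $\mu:\Tc^2\to\Tc$ is induced by the multiplication of $H$, the required equality becomes
\[
\nu(aa')=\nu(a)\nu(a')
\]
for $a,a'\in A$, which holds because $\nu$ is a morphism of $\field$-algebras. Axiom~\eqref{e:comod-monad2} reduces, via the definition of $\iota_{X\bdot M}:X\otimes M\to A\otimes_B(X\otimes M)$, $x\otimes m\mapsto 1\otimes_B(x\otimes m)$, to the single identity $\nu(1_A)=1_H\otimes 1_A$, again a consequence of $\nu$ being a unital algebra map.

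The main obstacle is not conceptual but notational: one must be careful that the swap isomorphism used in~\eqref{e:schneider-chi} is well-defined over $B$ (which is why Lemma~\ref{l:schneider1} was needed) and that all composites are written consistently as $B$-module maps. Once this is verified for $\chi_{X,M}$, the remaining diagrams are equalities of $B$-module maps that can be checked on simple tensors, at which point each axiom collapses to one of the four defining properties of an $H$-comodule-algebra (coassociativity, counit, multiplicativity, unitality of $\nu$).
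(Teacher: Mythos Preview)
Your proposal is correct and follows exactly the same approach as the paper's proof, which simply states that axioms~\eqref{e:comodule1}--\eqref{e:comodule2} follow from coassociativity and counitality of $\nu$, and axioms~\eqref{e:comod-monad1}--\eqref{e:comod-monad2} follow from the fact that $\nu$ is a morphism of algebras. Your write-up is a more detailed unpacking of this one-line argument, with the helpful addition of explicitly noting naturality and the role of Lemma~\ref{l:schneider1} in ensuring well-definedness over $B$.
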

\begin{proof}
Axioms~\eqref{e:comodule1} and~\eqref{e:comodule2}
follow from coassociativity and counitality of $\nu$.
Axioms~\eqref{e:comod-monad1} and~\eqref{e:comod-monad2}
follow from the fact that $\nu$ is a morphism of algebras.
\end{proof}

\subsection{The Hopf modules}\label{ss:schneider-hopf}

Let $(Z,\action,\Delta,\epsilon)$ be a $\Tc$-algebra-comonoid in $\Cs$ (Definition~\ref{d:comTalg}).
First of all, as a $\Tc$-algebra, $Z$
carries a structure of left $H$-module
\[
\action:H\otimes Z\to Z.
\]
Then, as a comonoid, it carries a structure $(\Delta,\epsilon)$ of coalgebra over 
$\field$. Finally, $\action$ should be a morphism of coalgebras, or equivalently
$\Delta$ and $\epsilon$ should be morphisms of $H$-modules. 
The totality is called an \emph{$H$-module-coalgebra} structure on $Z$;
it is the same as a comonoid in the monoidal category of left $H$-modules.

Now consider Hopf $(\Tc;\Sc,Z)$-modules $(M,\sigma,\zeta)$
in $\Ms$ (Definition~\ref{d:hopf-mod}).
Then $M$ is first of all a left $B$-module, but the $\Sc$-algebra structure
\[
\sigma: A\otimes_{B} M \to M
\]
turns it in fact into a left $A$-module, from which the $B$-module structure is inherited.
In addition, 
\[
\zeta:M\to Z\otimes M
\] 
is a left $Z$-comodule structure, and diagram~\eqref{e:hopf-mod} becomes
\[
\xymatrix@C+10pt@R-5pt{
A\otimes_{B} M \ar[r]^-{\sigma} \ar[d]_{\id\otimes_{B}\zeta} & M \ar[dd]^{\zeta} \\
A\otimes_{B}(Z\otimes M) \ar[d]_{\chi_{Z,M}} & \\
(H\otimes Z)\otimes(A\otimes_{B} M)  \ar[r]_-{\action \otimes \sigma} & Z\otimes M 
}
\]
where $\chi_{Z,M}$ is as in~\eqref{e:schneider-chi}.
The commutativity of this diagram is equivalent to that of the following.
\begin{equation}\label{e:schneider-hopfmod}
\begin{gathered}
\xymatrix@C+10pt@R-5pt{
A\otimes M \ar[r]^-{\sigma} \ar[d]_{\nu\otimes \zeta} & M \ar[dd]^{\zeta} \\
H\otimes A \otimes Z\otimes M \ar[d]_{\cong} & \\
H\otimes Z \otimes A\otimes M  \ar[r]_-{\action \otimes \sigma} & Z\otimes M 
}
\end{gathered}
\end{equation}
This diagram expresses the fact that $\zeta$ is a morphism of $A$-modules, or
equivalently that $\sigma$ is a morphism of $H$-comodules.

In summary, the starting data consists of:
\[
\text{a bialgebra $H$,
an $H$-comodule-algebra $A$,
and an $H$-module-coalgebra $Z$.}
\]
Then a Hopf $(\Tc;\Sc,Z)$-module $(M,\sigma,\zeta)$ consists of
\[
\text{a left $A$-module $(M,\sigma)$ and a left $Z$-comodule $(M,\zeta)$,
linked by~\eqref{e:schneider-hopfmod}.}
\]
This is precisely a left $(A,Z)$-Hopf
module in the sense of Doi~\cite[Section~1]{Doi:1992} and
Koppinen~\cite[Section~3]{Kop:1995}. Such objects are sometimes called
\emph{Doi-Koppinen Hopf modules} in recent literature.
In the special case when $Z=H$, they go back at least to
Doi~\cite{Doi:1983} and
they are precisely the left $(A,H)$-Hopf modules of Schneider~\cite[page~177]{Sch:1990}. The notion evolved from the original Hopf
modules of Sweedler~\cite[Section~4.1]{Swe:69} (the case $A=H$)
and the relative Hopf modules of Takeuchi~\cite[Section~1]{Tak:1979} 
(the case in which $A$ is a coideal subalgebra of $H$).

We let $\HHA$
denote the category of Doi-Koppinen Hopf modules. Equivalently, $\HHA=\HTS$ is the
category of Hopf $(\Tc;\Sc,Z)$-modules.

\subsection{The Galois map}\label{ss:schneider-galois}

We make the Galois map $\Gb_{X,M}$
associated to the bimonad $\Tc$ and the comodule-monad $\Sc$ explicit.
According to~\eqref{e:Galois} and the definitions in Section~\ref{ss:schneider-bimonad}, this map is the following composite.
\[
{
\def\objectstyle{\scriptstyle}
\def\labelstyle{\scriptstyle}
\xymatrix@R-5pt{
\Sc\bigl(X\otimes\Sc(M)\bigr)\ar[rr]^-{\chi_{X,\Sc(M)}}  \ar@{=}[d] &
& \Tc(X)\otimes \Sc^2(M) \ar[r]^-{\id\otimes\mu_M}  \ar@{=}[d] &
\Tc(X)\otimes \Sc(M) \ar@{=}[d] \\
A\otimes_{B}\bigl(X\otimes(A\otimes_{B} M)\bigr)
\ar[r]_-{\nu\otimes_{B}\id} &
(H\otimes A)\otimes_{B}\bigl(X\otimes(A\otimes_{B}M)\bigr)
\ar[r]_-{\cong} &
(H\otimes X)\otimes(A\otimes_{B}A\otimes_{B} M)
\ar[r]_-{\id\otimes(\mu\otimes_{B}\id)} &
(H\otimes X)\otimes(A\otimes_{B} M)
}
}
\]
Explicitly,
\begin{equation}\label{e:schneider-gal}
\Gb_{X,M}\Bigl(a\otimes_B\bigl(x\otimes(a'\otimes_B m)\bigr)\Bigr) =
(a_1\otimes x)\otimes(a_2a'\otimes_B m),
\end{equation}
where $a,a'\in A$, $x\in X$, $m\in M$, and $\nu(a)= \sum a_1\otimes a_2$.
In the special case when $X=\field$ and $M=B$, the map is
\begin{equation}\label{e:schneider-can}
A\otimes_{B}A \to H\otimes A,
\quad
a\otimes_{B} a' \mapsto \sum a_1\otimes a_2a'.
\end{equation}
Thus, the map $\Gb_{\field,B}$ coincides with Schneider's \emph{canonical map}~\cite[page~180]{Sch:1990} (the left version of it). It is sometimes called the \emph{$H$-Galois map}
of the extension $B\subseteq A$~\cite[Chapter~8]{Mon:1993}. 
This map and its prominent role
in \emph{Hopf Galois theory} can be traced back to~\cite{CHR:1965},~\cite{CS:1969}
and~\cite{KT:1981}. For a recent survey on Hopf Galois theory, see~\cite{Mon:2009}.

\begin{lemma}\label{l:schneider-galois}
Let $X$ be a $\field$-module. Consider the following statements.
\begin{enumerate}[(i)]
\item $\Gb_{\field,B}$ is invertible.
\item $\Gb_{X,B}$ is invertible.
\item $\Gb_{X,M}$ is invertible for all left $B$-modules $M$.
\end{enumerate}
We have \textup{(i) $\Rightarrow$ (ii) $\iff$ (iii).}
\end{lemma}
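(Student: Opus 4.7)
The plan is to reduce all three invertibility statements to the single canonical map $\Gb_{\field,B}$ by exploiting tensor-product factorizations of $\Gb_{X,M}$. The arguments are essentially computational, based on the explicit formula~\eqref{e:schneider-gal}.

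The implication (iii)$\Rightarrow$(ii) is immediate: take $M=B$ and use the canonical identification $A\otimes_B B\cong A$. For (ii)$\Rightarrow$(iii), I first observe that for fixed $X$, both the domain $A\otimes_B\bigl(X\otimes(A\otimes_B M)\bigr)$ and the codomain $(H\otimes X)\otimes(A\otimes_B M)$ of $\Gb_{X,M}$ factor as tensor products over $B$ with $M$. Setting
\begin{equation*}
P(X) := A\otimes_B(X\otimes A) \qand Q(X) := (H\otimes X)\otimes A,
\end{equation*}
each carries a right $B$-action coming from the rightmost copy of $A$. Because $X$ carries only a $\field$-action and thus passes freely across $\otimes_B$, there are canonical isomorphisms
\begin{equation*}
A\otimes_B\bigl(X\otimes(A\otimes_B M)\bigr)\cong P(X)\otimes_B M \qand (H\otimes X)\otimes(A\otimes_B M)\cong Q(X)\otimes_B M,
\end{equation*}
under which, as one verifies on elementary tensors via~\eqref{e:schneider-gal}, the map $\Gb_{X,M}$ becomes $\Gb_{X,B}\otimes_B\id_M$. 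Invertibility of $\Gb_{X,B}$ then yields invertibility of $\Gb_{X,M}$ with inverse $\Gb_{X,B}^{-1}\otimes_B\id_M$.

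For (i)$\Rightarrow$(ii), I plan to apply an analogous factorization, this time pulling $X$ out. Using the symmetry of $\otimes_\field$ and the fact that $X$ carries no $B$-structure, one obtains canonical identifications
\begin{equation*}
A\otimes_B(X\otimes A)\cong X\otimes(A\otimes_B A) \qand (H\otimes X)\otimes A\cong X\otimes(H\otimes A).
\end{equation*}
Under these, the formula~\eqref{e:schneider-gal} shows that $\Gb_{X,B}$ becomes $\id_X\otimes\Gb_{\field,B}$; so invertibility of $\Gb_{\field,B}$ implies that of $\Gb_{X,B}$, with inverse $\id_X\otimes\Gb_{\field,B}^{-1}$. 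The one delicate point in the whole argument is bookkeeping the left/right $B$-actions on the various copies of $A$ when writing down the asserted natural isomorphisms; the $X$ and $H$ factors cause no trouble since they live over $\field$ alone. There is no serious obstacle — the proof is a careful application of the associativity and commutativity of tensor product, specialized to the explicit formula for the Galois map.
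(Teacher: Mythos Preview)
Your proposal is correct and follows essentially the same approach as the paper: the paper simply records that $\Gb_{X,B}$ is a morphism of right $B$-modules and that $\Gb_{X,M}\cong X\otimes \Gb_{\field,B}\otimes_{B} M \cong \Gb_{X,B}\otimes_B M$, which are precisely the two tensor factorizations you spell out in detail. Your version is more explicit about the bookkeeping, but the argument is the same.
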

\begin{proof}
Note from~\eqref{e:schneider-gal}--\eqref{e:schneider-can} that $\Gb_{X,B}$ is a morphism of right $B$-modules, and moreover
\[
\Gb_{X,M}\cong X\otimes \Gb_{\field,B}\otimes_{B} M \cong \Gb_{X,B}\otimes_B M.
\]
The result follows.
\end{proof}

\subsection{The comodules and the functors}\label{ss:schneider-comodule}

Let $C$ be a coalgebra over $\field$ and let $Z$ be the free $\Tc$-algebra-comonoid on $C$~\eqref{e:freecomTalg}. Then 
\[
Z=\Tc(C)= H\otimes C
\]
and the $H$-module-coalgebra structure is as follows: $H$ acts on the left
component (so $Z$ is the free $H$-module on $C$), and the coalgebra
structure is the tensor product of the coalgebra structures of $H$ and $C$.

The next ingredient to describe is the category $\Ms_{\hC}$. An object of
this category is a left $B$-module $M$ with a left $C$-comodule structure
\[
\cgamma: M\to C\otimes M
\]
that is a morphism of $B$-modules, where $B$ acts on $M$ only. In other words,
the following diagram should commute
\[
\xymatrix@C+5pt@R-5pt{
B\otimes M \ar[r]^-{\ctau} \ar[d]_{\id\otimes\cgamma} & M \ar[dd]^{\cgamma} \\
B\otimes C\otimes M \ar[d]_{\cong} & \\
C\otimes B\otimes M \ar[r]_-{\id\otimes\ctau} & C\otimes M
}
\]
where $\ctau$ denotes the $B$-module structure. Contrast with~\eqref{e:schneider-hopfmod}. We refer to such objects as \emph{$(B,C)$-bimodules}.

We let $\BBC$ denote the category of $(B,C)$-bimodules; that is, 
$\BBC=\Ms_{\hC}$ is the category of $C$-comodules in $\Ms$.

The last ingredients to describe are the functors
\[
\xymatrix@C+10pt{
 \BBC \ar@<0.5ex>[r]^-{\Ac} &  \HHA \ar@<0.5ex>[l]^-{\Bc} 
}
\]
of Section~\ref{ss:fundamental}. They take the following form.

\smallskip

Given a $(B,C)$-bimodule $(M,\cgamma)$, 
\[
\Ac(M,\cgamma)= (A\otimes_B M, \mu\otimes_B\id,\cdelta)
\]
where $\cdelta$ is the composite
\[
A\otimes_B M \map{\id\otimes_B\cgamma} A\otimes_B(C\otimes M) 
\map{\chi_{C,M}} (H\otimes C)\otimes(A\otimes_B M) = Z\otimes(A\otimes_B M)
\]
and $\chi_{C,M}$ is as in~\eqref{e:schneider-chi}.

Given a Doi-Koppinen Hopf module $(M,\sigma,\zeta)$, $\Bc(M,\sigma,\zeta)$
is the equalizer of the pair 
\begin{equation}\label{e:schneiderQ}
\begin{gathered}
 C\otimes M \map{\Delta\otimes\id} C\otimes C\otimes M=C\otimes \field\otimes C\otimes M
 \map{\id\otimes\iota\otimes\id\otimes\id} 
 C\otimes H\otimes C\otimes M \\
C\otimes M \map{\id\otimes\zeta} C\otimes Z\otimes M= C\otimes H\otimes C\otimes M.
\end{gathered}
\end{equation}
in the category $\BBC$. Here $\iota$ denotes the unit map of the algebra $H$.
In particular, when $C=\field$ is the trivial $\field$-coalgebra,
\[
\Bc(M, \sigma, \zeta)  =  \{m\in M \mid  \zeta(m) = 1 \otimes m\}.
\]

\subsection{A generalization of Schneider's theorem}\label{ss:schneider-theorem}

We now make use of all the data in Sections~\ref{ss:schneider-bimonad}--\ref{ss:schneider-comodule}. In particular, we are given:
\begin{itemize}
\item A commutative ring $\field$ and a $\field$-bialgebra $H$.
\item A left $H$-comodule-algebra $A$ and 
a subalgebra $B$ of $\AH$.
\item A $\field$-coalgebra $C$ and $Z=H\otimes C$, an $H$-module-coalgebra as in Section~\ref{ss:schneider-comodule}.
\end{itemize}

\begin{theorem}\label{t:schneider}
Assume that:
\begin{itemize}
\item $A$ is faithfully flat as right $B$-module.
\item $C$ is flat as $\field$-module.

\end{itemize}
Then the following statements are equivalent.
\begin{enumerate}[(i)]
\item The functors $\Ac$ and $\Bc$ from Section~\ref{ss:schneider-comodule}
form an adjoint equivalence 
\[
 \BBC \cong  \HHA
\]
between the category of $(B,C)$-bimodules and the category of
Doi-Koppinen $(A,Z)$-Hopf modules.
\item The Galois map
\[
\Gb_{C,B}: A\otimes_B(C\otimes A) \to Z\otimes A
\]
is invertible.
\end{enumerate}
In particular, if the canonical map~\eqref{e:schneider-can} is invertible, then
the category equivalence holds.
\end{theorem}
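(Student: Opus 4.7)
The plan is to realize Theorem~\ref{t:schneider} as a direct application of the Fundamental Theorem of generalized Hopf modules in its second version (Theorem~\ref{t:hopf-conv}), specialized to the data assembled in Sections~\ref{ss:schneider-bimonad}--\ref{ss:schneider-comodule}. In that setup, $\Ms$ is the category of left $B$-modules, $\Tc(X)=H\otimes X$ is a bimonad on $\Cs=\field\text{-Mod}$, $\Sc(M)=A\otimes_{B}M$ is a $\Tc$-comodule-monad on $\Ms$, and $Z=\Tc(C)=H\otimes C$; furthermore Section~\ref{ss:schneider-comodule} identifies $\BBC=\Ms_{\hC}$, $\HHA=\HTS$, and exhibits the functors $\Ac,\Bc$ as the ones produced by Theorem~\ref{t:hopf-conv}. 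Thus the substantive content of the proof is to (a) check the three hypotheses of Theorem~\ref{t:hopf-conv}, and (b) use Lemma~\ref{l:schneider-galois} to translate between the various forms of the Galois map.

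First I would verify the hypotheses of Theorem~\ref{t:hopf-conv}. The category of left $B$-modules has all equalizers, so it certainly has equalizers of coreflexive pairs. The functor $\hC=C\otimes-$ preserves all equalizers because $C$ is $\field$-flat by assumption, and $\Sc=A\otimes_{B}-$ preserves them because $A$ is flat as a right $B$-module (a consequence of faithful flatness). Finally, $\Sc$ is conservative: if $f:M\to M'$ is a $B$-module map with $A\otimes_{B}f$ invertible, then tensoring the kernel and cokernel exact sequences by the flat module $A$ gives $A\otimes_{B}\ker f=0$ and $A\otimes_{B}\operatorname{coker} f=0$, so faithfulness of $A$ over $B$ forces $\ker f=0=\operatorname{coker} f$, whence $f$ is an isomorphism.

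With these hypotheses in place, Theorem~\ref{t:hopf-conv} asserts that $(\Ac,\Bc)$ is an adjoint equivalence $\BBC\cong\HHA$ if and only if the Galois map $\Gb_{C,M}$ is invertible for every left $B$-module $M$. By Lemma~\ref{l:schneider-galois} (applied with $X=C$), invertibility of $\Gb_{C,M}$ for all $M$ is equivalent to invertibility of the single map $\Gb_{C,B}$. This establishes the equivalence of (i) and (ii).

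For the final assertion, the canonical map~\eqref{e:schneider-can} is precisely $\Gb_{\field,B}$. Invoking the implication (i)$\Rightarrow$(ii) of Lemma~\ref{l:schneider-galois} with $X=C$, invertibility of $\Gb_{\field,B}$ entails invertibility of $\Gb_{C,B}$, and hence the category equivalence by what was just proved. Essentially every step is routine once the correspondence of Sections~\ref{ss:schneider-bimonad}--\ref{ss:schneider-comodule} is in hand; the only point requiring care—and the place where the faithful flatness hypothesis earns its keep—is verifying the conservativity of $\Sc$, which is the bridge between the classical faithful-flatness assumption in Schneider's theorem and the categorical condition~\eqref{e:Hconserve} needed to invoke the Fundamental Theorem.
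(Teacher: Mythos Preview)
Your proposal is correct and follows essentially the same route as the paper: verify the hypotheses of Theorem~\ref{t:hopf-conv} using flatness of $C$ and faithful flatness of $A$, then invoke Lemma~\ref{l:schneider-galois} to pass between invertibility of $\Gb_{C,M}$ for all $M$, of $\Gb_{C,B}$, and of the canonical map $\Gb_{\field,B}$. The only cosmetic difference is that the paper deduces conservativity of $\Sc$ by citing Corollary~\ref{c:con-fai} (exact plus faithful implies conservative between abelian categories), whereas you give the equivalent direct kernel/cokernel argument.
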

\begin{proof}
The category $\Ms$ of left $B$-modules, being abelian
has all equalizers, and the functor $\hC=C\otimes(-)$ preserves them because $C$ is flat over $\field$. Faithful flatness of $A$ over $B$ guarantees that the functor $\Sc$
is exact and faithful, hence conservative by Corollary~\ref{c:con-fai}.
It then follows from Theorem~\ref{t:hopf-conv} that the stated adjoint
equivalence holds if and only if the Galois map $\Gb_{C,M}$ of
Section~\ref{ss:schneider-galois} is invertible for all left $B$-modules $M$.
By Lemma~\ref{l:schneider-galois}, this holds if and only if $\Gb_{C,B}$ is
invertible, and in particular if $\Gb_{\field,B}$ is invertible.
\end{proof}

\begin{remark}\label{r:weaker-schneider}
According to Remark~\ref{r:weaker-hopf}, Theorem~\ref{t:schneider} holds under weaker assumptions.
Namely, it suffices to require the preservation (under
$C\bdot(-)$, $C\bdot C\bdot(-)$, and $A\otimes_B(-)$)
of the
equalizer of the parallel pair~\eqref{e:schneiderQ} for each Doi-Koppinen
Hopf module $(M,\sigma,\zeta)$.
\end{remark}

We provide a converse statement. To this end, let $C=\field$ be the
trivial coalgebra. Then $ \BBC=\Ms$ is the category of left $B$-modules
and $Z=H$.

\begin{theorem}\label{t:schneider-conv}
Let $H$ be a $\field$-bialgebra.
Consider the following statements:
\begin{enumerate}[(i)]
\item $A$ is faithfully flat as right $B$-module
and the canonical map~\eqref{e:schneider-can} is invertible. 
\item The functor
$
\Ac: \Ms \to \HHAH
$
is an equivalence.
\item The functor $\Sc=A\otimes_B(-):\Ms\to\Ms$ is faithful.
\end{enumerate}
Then \textup{(i) $\Rightarrow$ (ii) $\Rightarrow$ (iii).}
If in addition $H$ is flat as a $\field$-module, then 
\textup{(i)} and \textup{(ii)} are equivalent.
\end{theorem}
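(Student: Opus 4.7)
The plan is to reduce each implication to the Fundamental Theorem (Theorem~\ref{t:hopf-conv}, via its Schneider-style consequence Theorem~\ref{t:schneider}) and its partial converse (Proposition~\ref{p:hopf-conv}), specialized to the trivial coalgebra $C=\field$, where $\BBC=\Ms$ and $Z=H$. For (i)$\Rightarrow$(ii), one applies Theorem~\ref{t:schneider} directly: $A$ is faithfully flat over $B$ by hypothesis, $C=\field$ is trivially flat over $\field$, and condition (ii) of that theorem is precisely invertibility of the Galois map $\Gb_{\field,B}$, i.e.\ of the canonical map~\eqref{e:schneider-can}. For (ii)$\Rightarrow$(iii), the first bullet of Proposition~\ref{p:hopf-conv} gives that $\Sc=A\otimes_B(-)$ is conservative. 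Since $\Sc$ is a left adjoint, it preserves all coequalizers, and the abelian category $\Ms$ of left $B$-modules has all coequalizers; hence Proposition~\ref{p:con-fai}(i), in its coequalizer form, upgrades conservativity to faithfulness.

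Now assume (ii) holds and $H$ is flat over $\field$; I aim to recover (i). The crucial preliminary is to verify hypothesis (ii) of Proposition~\ref{p:hopf-conv}, namely that $\hZ$ preserves all existing equalizers in $\Ms^{\Sc}$. This follows because the two-step forgetful functor $\Ms^{\Sc}\to\Ms\to(\field\text{-modules})$ creates equalizers at each stage (by the dual of Lemma~\ref{l:creation}), while $\hZ$ descends on underlying $\field$-modules to the functor $H\otimes_\field(-)$, which is exact by flatness of $H$. The second bullet of Proposition~\ref{p:hopf-conv}, combined with the fact that every parallel pair in $\Ms$ and its $\Sc$-image admit equalizers (as $\Ms$ is abelian), then shows that $\Sc$ preserves all equalizers. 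Together with conservativity, this means $\Sc$ is exact and faithful, i.e.\ $A$ is faithfully flat as a right $B$-module. Finally, reapplying Theorem~\ref{t:schneider} with $C=\field$, whose hypotheses are now in place, the assumed equivalence $\Ac$ forces $\Gb_{\field,B}$ to be invertible; equivalently, the canonical map~\eqref{e:schneider-can} is invertible. This completes (i).

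The only step requiring genuine care is tracing how $\hZ$ acts on $\Ms^{\Sc}$ after forgetting $A$- and $B$-actions: one must confirm that it reduces to plain tensor product by $H$ over $\field$, so that flatness of $H$ is exactly the right hypothesis to invoke. Once this identification is in hand, everything else is a direct application of the machinery already developed in Section~\ref{s:fundamental}, and the main substance of the argument is a careful juggling of the three conditions (equivalence, Galois invertibility, and faithful flatness) that the introduction highlighted as the central bookkeeping challenge of the paper.
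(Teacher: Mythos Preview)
Your proof is correct and follows essentially the same approach as the paper: (i)$\Rightarrow$(ii) via Theorem~\ref{t:schneider} with $C=\field$, (ii)$\Rightarrow$(iii) via the first bullet of Proposition~\ref{p:hopf-conv} together with Proposition~\ref{p:con-fai}, and (ii)$\Rightarrow$(i) under $H$ flat via the second bullet of Proposition~\ref{p:hopf-conv} to get flatness of $A$ over $B$, followed by Theorem~\ref{t:schneider} (the paper invokes Theorem~\ref{t:hopf-conv} directly at this last step, but the content is the same). Your justification that $\hZ$ preserves equalizers in $\Ms^{\Sc}$ via the creation property of the forgetful functors is a welcome elaboration of what the paper states in one line.
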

\begin{proof}
The implication (i) $\Rightarrow$ (ii) is a special case of Theorem~\ref{t:schneider} (the case $C=\field$).

To prove that (ii) $\Rightarrow$ (iii), note that by Proposition~\ref{p:hopf-conv}, the functor $\Sc$ is conservative.  Since it is also right exact, it is faithful, by Corollary\ref{c:con-fai}.

Now assume that $H$ is a flat $\field$-module and that (ii) holds; then, as just noted, (iii) also holds.  Moreover, since $H$ is flat over $\field$, the functor $\hZ$ preserves all
equalizers in $\Ms^{\Sc}$. Note also that $\Ms$ and $\Ms^{\Sc}$ possess
all equalizers, since they are the categories of left modules over $B$ and $A$,
respectively. Hence Proposition~\ref{p:hopf-conv} applies again to yield that $\Sc$ preserves
all equalizers in $\Ms$. This means that $A$ is a flat right $B$-module, and hence faithfully flat, by (iii). 
Finally, our discussion shows that the hypotheses of Theorem~\ref{t:hopf-conv} are satisfied. It then follows that the map $\Gb_{\field,M}$ is invertible
for all left $B$-modules $M$, and in particular, the Galois map  $\Gb_{\field,B}$
is likewise invertible.
\end{proof}

%
%

\subsection{Further remarks on Schneider's theorem}\label{ss:schneider-further}

First of all, we point out that while in principle Theorem~\ref{t:schneider} 
only requires that $B$ be a subalgebra of $\AH$, the remaining hypotheses
imply that $B=\AH$.
More precisely, we have the following result.

\begin{proposition}\label{p:schneider-further}
Consider the following data:
\begin{itemize}
\item A commutative ring $\field$ and a $\field$-bialgebra $H$.
\item A left $H$-comodule-algebra $A$ and 
a subalgebra $B$ of $\AH$.
\end{itemize}
Assume that:
\begin{itemize}
\item $A$ is faithfully flat as right $B$-module.
\item The canonical map~\eqref{e:schneider-can} is invertible. 
\end{itemize}
Then $B=\AH$.
\end{proposition}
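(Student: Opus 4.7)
The plan is to deduce the proposition from Theorem~\ref{t:schneider} specialized to the case $C=\field$. With this choice the coalgebra hypothesis (flatness of $C$ over $\field$) is trivially satisfied, the remaining hypotheses of Theorem~\ref{t:schneider} are exactly those assumed here, and the Galois map $\Gb_{\field,B}$ coincides with the canonical map~\eqref{e:schneider-can}, which is invertible by assumption. By Lemma~\ref{l:schneider-galois} this implies invertibility of $\Gb_{\field,M}$ for every left $B$-module $M$, so Theorem~\ref{t:schneider} yields an adjoint equivalence $\Ac:\Ms\to\HHAH$ with quasi-inverse $\Bc$, where $\Ms$ is the category of left $B$-modules (since $\BBC=\Ms$ when $C=\field$) and $\Bc(M,\sigma,\zeta)=\{m\in M\mid\zeta(m)=1\otimes m\}$.

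Next I would evaluate the equivalence at the distinguished object $M=B$, viewed as a left $B$-module with trivial $\field$-comodule structure. Unwinding the formula for $\Ac$ from Section~\ref{ss:schneider-comodule}, one finds $\Ac(B)=(A\otimes_B B,\mu\otimes_B\id,\cdelta)$; under the canonical isomorphism $A\otimes_B B\cong A$ this becomes $(A,\mu,\nu)$, the standard Hopf $(A,H)$-module structure on $A$. Consequently
\[
\Bc\Ac(B)\;=\;\Bc(A,\mu,\nu)\;=\;\AH .
\]
Since $(\Ac,\Bc)$ is an adjoint equivalence, the unit $\Hat{\eta}_B:B\to\AH$ is an isomorphism, so it suffices to identify $\Hat{\eta}_B$ with the inclusion $B\hookrightarrow\AH$ guaranteed by the assumption $B\subseteq\AH$.

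The identification of $\Hat{\eta}_B$ is the step requiring the most care, and will be the main obstacle. I would appeal to the explicit definition of $\Hat{\eta}$ given in Section~\ref{ss:cores-coind}: for a $\Cc$-coalgebra $(M,c)$, $\Hat{\eta}_{(M,c)}$ is the unique morphism to the equalizer $\indK\resH(M,c)$ whose postcomposition with $\indK\resH(M,c)\to\Cc\Kc\Hc(M)$ equals $e:M\xrightarrow{c}\Cc(M)\xrightarrow{\Cc(\eta_M)}\Cc\Kc\Hc(M)$. In our specialization $C=\field$ the comonad $\Cc=\hC$ is the identity and $c=\id$, so $e$ reduces to the unit of the monadic adjunction $(\Fc^{\Sc},\Uc^{\Sc})$ at $B$, namely $\iota_B:B\to\Sc(B)=A\otimes_B B\cong A$, $b\mapsto 1\otimes_B b\mapsto b$. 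This is precisely the inclusion of $B$ into $A$, which factors through $\AH$ by hypothesis. Therefore $\Hat{\eta}_B:B\to\AH$ is the inclusion map, and as it is an isomorphism we conclude $B=\AH$.
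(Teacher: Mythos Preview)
Your proof is correct and takes a genuinely different route from the paper's. The paper gives a direct, elementary argument: it first uses invertibility of the canonical map to show that the projection $A\otimes_B A\twoheadrightarrow A\otimes_{\AH}A$ is injective, whence any $x\in\AH$ satisfies $x\otimes_B 1=1\otimes_B x$; it then shows, via a bare-hands faithful-flatness argument, that $B\to A\rightrightarrows A\otimes_B A$ is an equalizer diagram (the image under $A\otimes_B(-)$ is a split cofork, and faithful flatness reflects the equalizer), forcing $x\in B$.

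Your approach instead feeds the hypotheses back into Theorem~\ref{t:schneider} with $C=\field$, obtains the adjoint equivalence $(\Ac,\Bc)$, and reads off $B=\AH$ from the fact that the unit $\Hat{\eta}_B$ of this equivalence is precisely the inclusion $B\hookrightarrow\AH$. This is a clean ``corollary-of-the-main-theorem'' argument and makes transparent the remark preceding Proposition~\ref{p:schneider-further}: the proposition is already implicit in the equivalence. The trade-off is that your proof invokes the full machinery of the Fundamental Theorem (and the Conjugate Comonadicity Theorem beneath it), whereas the paper's proof is self-contained and would survive in contexts where that machinery is unavailable. Your identification of $\Hat{\eta}_B$ is correct: with $\Cc=\hat\field=\Ic$ the map $e$ of~\eqref{e:fork2} collapses to the monadic unit $\iota_B\colon B\to A\otimes_B B\cong A$, and the equalizer computing $\Bc\Ac(B)$ is exactly $\AH\hookrightarrow A$, so $\Hat{\eta}_B$ is the inclusion as claimed.
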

\begin{proof}
Since $B\subseteq \AH$, the canonical map factors
through $A\otimes_{\AH} A$:
\[
\xymatrix@-5pt{
A\otimes_{B} A \ar[r] \ar@{->>}[d]& H\otimes A\\
A\otimes_{\AH} A \ar@{-->}[ru]
}
\]
Invertibility of the canonical map then implies that the vertical map is injective.
Hence, if $x\in\AH$, then
\[
x\otimes_B 1 - 1\otimes_B x =0.
\]
It is known that faithful flatness of $A$ over $B$ then implies that $x\in B$; see for instance~\cite[Lemma~3.8]{CR:1965}. We provide an argument below for completeness.

Consider the maps
\[
\xymatrix@C+5pt{
 B \ar[r]^{e} & A \ar@<0.5ex>[r]^-{f} \ar@<-0.5ex>[r]_-{g} & A\otimes_B A
}
\]
given by
\[
e(b) = b,\quad f(a) = a\otimes_B 1,\qand g(a) = 1\otimes_B a.
\]
By the preceding $x$ belongs to equalizer of the pair $(f,g)$. We show below that
the above diagram is an equalizer, whence $x$ must belong to $B$. This proves that
$\AH=B$ as needed.

The diagram is clearly a cofork: $fe=ge$. Applying the functor $\Sc=A\otimes_B(-)$
to it we obtain the cofork
\[
\xymatrix@C+5pt{
 A \ar[r]^-{\Tilde{e}} & A\otimes_B A \ar@<0.5ex>[r]^-{\Tilde{f}} \ar@<-0.5ex>[r]_-{\Tilde{g}} & A\otimes_B A\otimes_B A,
}
\]
where the maps are given by
\[
\Tilde{e}(a) = a\otimes_B 1,\quad \Tilde{f}(a\otimes_B a') = a\otimes_B a'\otimes_B 1,\quad \Tilde{g}(a\otimes_B a') = a\otimes_B 1\otimes_B a'.
\]
This cofork is split by the maps
\[
 \xymatrix@C+5pt{
A & A\otimes_B A \ar[l]_-{p} & A\otimes_B A\otimes_B A \ar[l]_-{q},
}
\]
where 
\[
p(a\otimes_B a') =aa' \qand q(a\otimes_B a'\otimes_B a'')=aa'\otimes_B a''.
\]
Indeed, we have $p\Tilde{e}=\id$, $q\Tilde{g}=\id$, and $q\Tilde{f}=\Tilde{e}p$.
(One may also note that this is a special case of the dual of the first split fork
in~\cite[Section V.2.2, Example~1]{MS:2004}.)
Thus this new cofork is an equalizer.

Finally, since $A$ is faithfully flat as right $B$-module, $\Sc$ reflects equalizers,
and hence the original cofork is an equalizer.
\end{proof}

Theorem~\ref{t:schneider-conv} is
due to Schneider; it is the bi-implication $(2) \Leftrightarrow (5)$ 
in~\cite[Theorem~I]{Sch:1990}. It is sometimes known as Schneider's \emph{Structure
Theorem} for Hopf modules.  

Schneider assumes that the bialgebra $H$ possess an antipode in~\cite[Theorem ~3.7]{Sch:1990} and a bijective antipode in~\cite[Theorem~I]{Sch:1990}.
For the statements we have provided, these hypotheses are not necessary.
(Schneider's results involve several other statements, including other conditions
on $A$ and $\Gb_{\field,B}$ under which the equivalence of categories in
Theorem~\ref{t:schneider-conv} holds.)
Schneider also assumes that $B=\AH$, but as pointed
out in Proposition~\ref{p:schneider-further}, this results in no loss in generality.
In a different direction, Theorem~\ref{t:schneider} generalizes 
the implication $(5) \Rightarrow (2)$ in Schneider's theorem by
introducing the coalgebra $C$, which in his context must be trivial
but in ours may be arbitrary. 

Schneider's theorem has been generalized in a number of directions different from our own.
For more on this one may consult~\cite{Sch:2004},~\cite{SS:2005}, and~\cite{Mon:2009}, apart from Schneider's original paper. One such generalization
is~\cite[Theorem~3.7]{Sch:1990}. This last result is likely to admit a
generalization to the context of bimonads along the lines of Theorem~\ref{t:hopf},
but this appears to be beyond the scope
of this paper. We plan to pursue this question in the future.

If in addition to $C=\field$ we further specialize $A=H$, we recover
Sweedler's Fundamental Theorem.
See Section~\ref{ss:sweedler} for more details.

\subsection{Related cases and historical remarks}\label{ss:schneider-historical}

On~\cite[page~169]{Sch:1990}, Schneider lists several earlier special cases 
of his theorem in the literature. We provide a longer account here.

Let $H$ be a Hopf $\field$-algebra. We may use the antipode of $H$ to turn
left $H$-comodules into right ones and vice versa.
If $H$ is a finitely generated projective $\field$-module,
then an $H$-comodule is an $H^*$-module (with $H^*=\Hom_{\field}(H,\field)$
the dual Hopf algebra), and a Doi-Koppinen $(A,H)$-Hopf module is simply
a module over the \emph{smash product} $A\# H^*$. (For the definition of
smash product, see~\cite[Definition~4.1.3]{Mon:1993}.) Such a module $M$
can be described as a left $A$-module $M$ which is also a left $H^*$-module
with a \emph{semilinear} action of $H^*$; that is,
\begin{equation}\label{e:semilinear}
g\cdot am = \sum (g_1\cdot a) (g_2\cdot m)
\end{equation}
for all $g\in H^*$, $a\in A$, and $m\in M$. Now, since $A$ is an $H$-comodule-algebra, it is itself such a Hopf module, and the structure as a left $A\# H^*$-module
then yields a $\field$-algebra homomorphism
\begin{equation}\label{e:smash-end}
A\# H^* \to \End_B(A)
\end{equation}
with $B=\AH$ and $A$ viewed as a right $B$-module. This map is dual to the
Galois map~\eqref{e:schneider-can}, in the sense that each can be obtained
from the other by an application of $\Hom_A(-,A)$, so the invertibility of one map
is equivalent to that of the other.

In the earliest treatments of Hopf-Galois theory, $A$ and $H$ are commutative
and $A$ is a faithfully flat $\field$-module, from which it follows that $A$ is a faithfully
projective $\field$-module and $B=\field$. In that special case, the category equivalence of
Theorem~\ref{t:schneider-conv} is immediate from the isomorphism~\eqref{e:smash-end} and the Morita theorems~\cite[Theorems~9.3 and~9.6, Corollary~9.7]{CS:1969}.
In a sense, all the various forms of a Fundamental Theorem for relative Hopf modules
can be viewed as the progeny of this simple observation.

See~\cite[Theorem~8.3.3]{Mon:1993} for a generalization of the above argument to not 
necessarily commutative $H$ and $A$ (in which case possibly $B\neq\field$).
A somewhat similar approach to the Fundamental Theorem appears in the
theory of \emph{corings}~\cite{BW:2003,Cae:2004}, where it is observed that the
Galois isomorphism~\eqref{e:schneider-can} is one of $A$-corings, and the
category of comodules over the coring $H\otimes A$ is equivalent to the
category $\HHAH$. In this formulation, the role of the Morita theorems is
played by coring descent theory, which, for $A$ a faithfully flat right $B$-module,
identifies the category of left comodules over the coring $A\otimes_B A$ as equivalent
to the category of left $B$-modules. 
For a very readable discussion of these
matters, see~\cite{Cae:2004}, especially~\cite[Example~1.5 and Proposition~3.8]{Cae:2004}. If an $A$-coring is viewed as an internal category as in~\cite[Section~2.4]{Agu:1997}
(where corings are called \emph{coalgebroids}), then the isomorphism~\eqref{e:schneider-can} of corings becomes a close analogue of the familiar bijection
\[
G\times X \to X\times X, \qquad (g,x) \mapsto (g\cdot x, x)
\]
for $X$ a principal homogeneous space over a group $G$, which can be interpreted
as an isomorphism of small categories with object set $X$.

\subsection{Noncommutative Hilbert's Theorem 90}\label{ss:hilbert}

We turn to the ``classical'' case in which 
$A$ is a commutative $H$-comodule $\field$-algebra
 and $H=\field^G$ is the Hopf algebra of $\field$-valued functions on a finite group $G$. Then $H^*=\field G$ is the group $\field$-algebra of $G$ and the semilinearity condition~\eqref{e:semilinear} for a Doi-Koppinen $(A,H)$-Hopf module $M$ reduces to
\begin{equation}\label{e:semilinear-diag}
g\cdot am = (g\cdot a)(g\cdot m)
\end{equation}
for $g\in G$, $a\in A$, and $m\in M$. 
In particular, $A$ is simply a commutative algebra on which $G$ acts by automorphisms. The functor $\Bc$ of Theorem~\ref{ss:schneider-comodule} then maps the Hopf module $M$ to the $\field$-submodule $M^G$ of elements of $M$ left fixed by $G$.
In this context, we briefly review the
connection between such Hopf modules and the noncommutative \emph{Hilbert's Theorem 90}~\cite[Proposition~3]{Ser:1979}. For similar computations in a slightly
different context, see~\cite[Proposition~4]{Ser:1979}.

Let $N$ be an $(A,H)$-Hopf module, fixed throughout the discussion, and
$\Aut_A(N)$ denote the group of $A$-module automorphisms of $N$.
The group $G$ acts on $\Aut_A(N)$ according to the formula
\[
(g\cdot\alpha)(x) = g\cdot \alpha(g^{-1}\cdot x).
\]
for $g\in G$, $\alpha\in\Aut_A(N)$, and $x\in N$. Then, for all such $g$, $\alpha$, and $x$,
\[
g\cdot \alpha(x) = (g\cdot\alpha)(g\cdot x).
\]
The action is by automorphisms: 
\[
g\cdot \alpha\beta =(g\cdot\alpha)(g\cdot\beta)
\]
for $g\in G$, $\alpha,\beta\in\Aut_A(N)$.

A function $\varphi:G\to\Aut_A(N)$ is called a $1$-cocycle if
\begin{equation}\label{e:cocycle}
\varphi(fg) = \varphi(f) \bigl(f\cdot \varphi(g)\bigr)
\end{equation}
for all $f,g\in G$. We construct a category 
\[
\Zs^1\bigl(G,\Aut_A(N)\bigr)
\]
of which the objects are the $1$-cocycles just defined; a morphism $\alpha:\varphi\to\psi$ is an element $\alpha$ of $\Aut_A(N)$ such that
\[
\psi(g)(g\cdot\alpha) = \alpha \varphi(g)
\]
for all $g\in G$. If $\alpha:\varphi\to\psi$ and $\beta:\psi\to\rho$ are morphisms in 
$\Zs^1\bigl(G,\Aut_A(N)\bigr)$, then the composite $\beta\alpha$ is a morphism $\varphi\to\rho$;
this defines the composition in the category. The identity of $\varphi$ is the
identity automorphism of $N$. The category $\Zs^1\bigl(G,\Aut_A(N)\bigr)$ is a groupoid:
the inverse of $\alpha:\varphi\to\psi$ is $\alpha^{-1}:\psi\to\varphi$, where 
$\alpha^{-1}$ is the inverse automorphism of $N$.

We define 
\[
\Hs^1\bigl(G,\Aut_A(N)\bigr),
\]
the \emph{first cohomology set of $G$ with
coefficients in $\Aut_A(N)$}, to be the set of isomorphism classes of the
groupoid $\Zs^1\bigl(G,\Aut_A(N)\bigr)$. (For this definition in noncategorical
language, see~\cite[page~123]{Ser:1979}.) Two $1$-cocycles $\varphi$ and $\psi$
are called \emph{cohomologous} if they are isomorphic in  $\Zs^1\bigl(G,\Aut_A(N)\bigr)$.

We can now obtain, from a $1$-cocycle $\varphi$ as above, a new 
$(A,H)$-Hopf module $N_\varphi$ by keeping the $A$-module $N$ but twisting the action of $G$ on $N$; namely,
we define
\[
g\cdot_{\varphi} x := \varphi(g)(g\cdot x)
\]
for $g\in G$ and $x\in N$. Associativity for the twisted action follows from
the cocycle condition~\eqref{e:cocycle}, and since
\[
g\cdot_{\varphi} ax = \varphi(g)(g\cdot ax) = \varphi(g)\bigl((g\cdot a)(g\cdot x)\bigr)
= (g\cdot a) \varphi(g)(g\cdot x) =  (g\cdot a) (g\cdot_{\varphi} x),
\]
the semilinearity condition~\eqref{e:semilinear-diag} holds.

Let $\psi$ be another $1$-cocycle, and $\alpha:N_\varphi\to N_\psi$ be
an $A$-module isomorphism. Since $N_\varphi=N= N_\psi$ as $A$-modules,
$\alpha$ may be viewed as an element of $\Aut_A(N)$. A simple computation
shows that:
\begin{multline*}
\text{$\alpha:N_\varphi\to N_\psi$ is a morphism of Hopf modules}\\ 
\text{$\iff$
$\alpha:\varphi\to\psi$ is a morphism in $\Zs^1\bigl(G,\Aut_A(N)\bigr)$.}
\end{multline*}

Now let $M$ be any $(A,H)$-Hopf module which is isomorphic to $N$
as an $A$-module. Let $\theta:M\to N$ be such an isomorphism; then, for each $g\in G$, the mapping
\[
N\ni x \mapsto \theta\bigl( g\cdot \theta^{-1}(g^{-1}\cdot x) \bigr) \in N
\]
is an $A$-module automorphism of $N$ that we denote by $\Tilde{\theta}(g)$.
Thus we obtain a map $\Tilde{\theta}:G\to\Aut_A(N)$ such that
\[
\theta(g\cdot m) = \Tilde{\theta}(g)\bigl(g\cdot \theta(m)\bigr)
\]
for all $g\in G$ and $m\in M$. Another routine computation shows that $\Tilde{\theta}$
is a $1$-cocycle. The previous equation then implies that $\theta:M\to N_{\Tilde{\theta}}$ is an isomorphism of Hopf modules.

Consider the groupoid $\HAN$ whose objects are pairs $(M,\theta)$ as
in the preceding paragraph; a morphism $(M,\theta)\to(M',\theta')$ is an
isomorphism $\kappa:M\to M'$ of $(A,H)$-Hopf modules such that
\[
\xymatrix{
M \ar[rr]^-{\kappa} \ar[rd]_{\theta} & & M' \ar[ld]^{\theta'} \\
& N
}
\]
commutes. ($\HAN$ is a particular comma category.)

The preceding discussion contains the essential components of the
following result. 

\begin{proposition}\label{p:hilbert}
The functors
\[
\Zs^1\bigl(G,\Aut_A(N)\bigr) \to \HAN, \qquad
\varphi \mapsto (N_\varphi,\id_N)
\]
and
\[
\HAN \to \Zs^1\bigl(G,\Aut_A(N)\bigr), \qquad
(M,\theta) \mapsto \Tilde{\theta}
\]
form an equivalence of groupoids.
\end{proposition}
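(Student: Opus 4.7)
The argument essentially assembles the preceding discussion. First extend the two assignments to morphisms: for $F$, a morphism $\alpha\colon\varphi\to\psi$ in $\Zs^1$ is sent, via the equivalence ``$\alpha\colon N_\varphi\to N_\psi$ is Hopf-linear iff $\alpha\colon\varphi\to\psi$ is a morphism in $\Zs^1$'' already recorded, to the Hopf module isomorphism $\alpha\colon N_\varphi\to N_\psi$. For $G$, a morphism $\kappa\colon(M,\theta)\to(M',\theta')$ in $\HAN$ is sent to $\theta'\kappa\theta^{-1}\in\Aut_A(N)$; a direct chase using the defining formula $\Tilde{\theta}(g)(x)=\theta\bigl(g\cdot\theta^{-1}(g^{-1}\cdot x)\bigr)$ together with the Hopf-linearity of $\kappa$ will confirm that this is a morphism $\Tilde{\theta}\to\Tilde{\theta'}$ in $\Zs^1$. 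Preservation of identities and composition by both $F$ and $G$ is then immediate.

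Next verify the two composites. On $\Zs^1$, setting $M=N_\varphi$ and $\theta=\id_N$ in the formula for $\Tilde{\theta}$ and unwinding the twisted action yields $\Tilde{\id_N}(g)(x)=\varphi(g)(x)$; thus $G\circ F=\id$ on objects, and $G(F(\alpha))=\id_N\,\alpha\,\id_N^{-1}=\alpha$ on morphisms. On $\HAN$, the $A$-linear isomorphism $\theta\colon M\to N$, regarded as a map $(M,\theta)\to(N_{\Tilde{\theta}},\id_N)$, is Hopf-linear with respect to the $\Tilde{\theta}$-twisted action (this was established in the paragraph preceding the proposition) and satisfies the defining triangle trivially because $\id_N\circ\theta=\theta$. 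These components assemble into a natural isomorphism $\eta\colon\id_{\HAN}\Rightarrow F\circ G$; naturality with respect to $\kappa\colon(M,\theta)\to(M',\theta')$ is the identity $F(G(\kappa))\circ\theta=\theta'\circ\kappa$, which follows at once from the formula $G(\kappa)=\theta'\kappa\theta^{-1}$.

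The main technical hurdle is the morphism-level verification for $G$: checking that $\theta'\kappa\theta^{-1}$ satisfies the cocycle-morphism identity $\Tilde{\theta'}(g)(g\cdot(\theta'\kappa\theta^{-1}))=(\theta'\kappa\theta^{-1})\Tilde{\theta}(g)$ is a careful but self-contained calculation that exploits the $G$-semilinearity formula for the action on $\Aut_A(N)$ and intertwines the Hopf actions on $M$ and $M'$ using the Hopf-linearity of $\kappa$. Once this compatibility is in hand, everything else — functoriality, the computation $\Tilde{\id_N}=\varphi$, and the triangle identities for $\eta$ — reduces to bookkeeping with the definitions.
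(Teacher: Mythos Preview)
Your proposal is correct and matches the paper's approach: the paper gives no separate proof, merely stating that ``the preceding discussion contains the essential components,'' and what you have written is precisely the assembly of those components into a formal argument.

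One simplification you have missed: in the comma category $\HAN$, a morphism $\kappa\colon(M,\theta)\to(M',\theta')$ satisfies $\theta'\kappa=\theta$ by definition, so your $G(\kappa)=\theta'\kappa\theta^{-1}$ is automatically $\id_N$. What you call the ``main technical hurdle'' then collapses to showing that $\Tilde{\theta}=\Tilde{\theta'}$ (so that $\id_N$ is a legitimate morphism between them in $\Zs^1$), and this follows in one line from $\theta=\theta'\kappa$ and the Hopf-linearity of $\kappa$. Likewise, naturality of $\eta$ becomes the tautology $\id_N\circ\theta=\theta=\theta'\circ\kappa$.
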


\begin{corollary}\label{c:hilbert}
The first cohomology set $\Hs^1\bigl(G,\Aut_A(N)\bigr)$ is in bijection
with the set of isomorphism classes of $(A,H)$-Hopf module structures
on the $A$-module $N$.
\end{corollary}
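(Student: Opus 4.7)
The plan is to deduce the corollary from Proposition~\ref{p:hilbert} by passing to isomorphism classes of objects. By definition, $\Hs^1\bigl(G,\Aut_A(N)\bigr)$ is the set of connected components of the groupoid $\Zs^1\bigl(G,\Aut_A(N)\bigr)$, and an equivalence of groupoids induces a bijection on connected components; so the proposition already gives a bijection between $\Hs^1\bigl(G,\Aut_A(N)\bigr)$ and the set of isomorphism classes of objects of $\HAN$.

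What remains is to match the latter set with the set of isomorphism classes of $(A,H)$-Hopf module structures on the $A$-module $N$. The natural candidate is the map sending $[\varphi]\in\Hs^1$ to the class of the twisted Hopf module $N_\varphi$, viewed as a Hopf structure on the underlying $A$-module $N$. First I would check well-definedness: the routine computation recorded just before Proposition~\ref{p:hilbert} shows that a morphism $\alpha\colon\varphi_1\to\varphi_2$ in $\Zs^1$ is the same data as a Hopf-module isomorphism $\alpha\colon N_{\varphi_1}\to N_{\varphi_2}$, so cohomologous cocycles produce isomorphic Hopf structures on $N$. Next I would verify surjectivity, which is essentially the essential-surjectivity clause of the proposition: given any Hopf structure $M$ on the $A$-module $N$, choosing $\theta=\id_N$ produces a cocycle $\tilde\theta$ such that $M\cong N_{\tilde\theta}$ as Hopf modules. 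Finally, injectivity is the converse half of the same routine computation: a Hopf-module isomorphism $N_{\varphi_1}\to N_{\varphi_2}$ is nothing other than an element $\alpha\in\Aut_A(N)$ satisfying the cocycle identity that defines a morphism $\varphi_1\to\varphi_2$ in $\Zs^1$, so $[\varphi_1]=[\varphi_2]$ in $\Hs^1$.

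I do not anticipate a genuine obstacle; the substantive content is already packaged in the preceding discussion and in Proposition~\ref{p:hilbert}, and the corollary is simply the passage from a groupoid statement to its set of connected components. The one point that merits \emph{explicit} care is the convention that ``isomorphism classes of Hopf module structures on $N$'' means the quotient of the set of such structures by the conjugation action of $\Aut_A(N)$, which is exactly the equivalence relation forced by the morphisms of $\Zs^1$ under the correspondence above.
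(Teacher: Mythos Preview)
Your proposal is correct and follows the paper's approach. The paper gives no explicit proof of the corollary; it simply states it and then clarifies that ``isomorphism classes of $(A,H)$-Hopf module structures on the $A$-module $N$'' means, by definition, isomorphism classes in the groupoid $\HAN$. With that convention, your first paragraph is already the complete argument: an equivalence of groupoids induces a bijection on isomorphism classes, so Proposition~\ref{p:hilbert} yields the corollary immediately. Your second paragraph, which unpacks the identification between isomorphism classes in $\HAN$ and Hopf structures on $N$ modulo $\Aut_A(N)$, is not needed once the paper's definitional convention is adopted, though it is a correct and useful sanity check.
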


By the latter we mean an isomorphism class in the groupoid $\HAN$.

\smallskip

Now, with $H$ as above, assume that $A$ is a faithfully flat $\field$-module
and the Galois map~\eqref{e:schneider-can} is an isomorphism; then,
as noted in Section~\ref{ss:schneider-historical}, $A$ is a faithfully projective
$\field$-module and $B=\field$. Since $H=\field^G$, $A$ is then a Galois
extension of $\field$ with Galois group $G$, as in~\cite{CHR:1965}.
We then obtain from Theorem~\ref{t:schneider-conv} and Corollary~\ref{c:hilbert}
a bijection between $\Hs^1\bigl(G,\Aut_A(N)\bigr)$ and the set of isomorphism
classes of $\field$-modules $M$ with the property that $A\otimes M \cong N$ as $A$-modules. In particular, we may take $N=A\otimes M_0$ with $M_0$ a fixed $\field$-module; $N$ is then an $(A,H)$-Hopf module according to the formula
\[
g\cdot (a\otimes m) = (g\cdot a) \otimes m
\]
for $g\in G$, $a\in A$, and $m\in M$. If $M_0$ is free of rank $n$, then $\Aut_A(N)$
may be identified with the general linear group $\GL(n,A)$. In that case, we obtain
a bijection between $\Hs^1\bigl(G,\GL(n,A)\bigr)$ and the set of isomorphism
classes of projective $\field$-modules $P$ such that $A\otimes P$ is a free $A$-module of rank $n$. If $n=1$, it turns out that this bijection is actually an isomorphism of abelian groups, thus yielding an exact sequence
\[
1 \to \Hs^1(G,A^\times) \to \Pic(\field) \to \Pic(A)
\]
with $\Hs^1(G,A^\times)$ the usual first cohomology group of $G$ with
coefficients in the abelian group $A^\times$ of invertible elements in $A$, 
$\Pic(\field)$ the Picard group of isomorphism classes of invertible $\field$-modules,
and the right-most arrow in the sequence being induced by the functor $A\otimes(-)$.
This exact sequence, slightly modified, is the first part of the $7$-term exact sequence
of~\cite[Corollary~5.5]{CHR:1965}.

Of course, if $\field$ is a field, our discussion shows that $\Hs^1\bigl(G,\GL(n,A)\bigr)$
is trivial; this is the noncommutative Hilbert's Theorem 90.


\bibliographystyle{plain}  
\bibliography{fundhopf}

\begin{thebibliography}{10}

\bibitem{Agu:1997}
Marcelo Aguiar.
\newblock {\em Internal categories and quantum groups}.
\newblock ProQuest LLC, Ann Arbor, MI, 1997.
\newblock Thesis (Ph.D.)--Cornell University.

\bibitem{AguMah:2010}
Marcelo Aguiar and Swapneel Mahajan.
\newblock {\em Monoidal functors, species and {H}opf algebras}, volume~29 of
  {\em CRM Monograph Series}.
\newblock American Mathematical Society, Providence, RI, 2010.

\bibitem{BW:2005}
Michael Barr and Charles Wells.
\newblock Toposes, triples and theories.
\newblock {\em Repr. Theory Appl. Categ.}, (12):x+288, 2005.
\newblock Corrected reprint of the 1985 original.

\bibitem{Bor:1994i}
Francis Borceux.
\newblock {\em Handbook of categorical algebra. \textup1}, volume~50 of {\em
  Encyclopedia Math. Appl.}
\newblock Cambridge Univ. Press, Cambridge, 1994.

\bibitem{Bor:1994ii}
Francis Borceux.
\newblock {\em Handbook of categorical algebra. \textup2}, volume~51 of {\em
  Encyclopedia Math. Appl.}
\newblock Cambridge Univ. Press, Cambridge, 1994.

\bibitem{BLV:2011}
Alain Brugui{\`e}res, Steve Lack, and Alexis Virelizier.
\newblock Hopf monads on monoidal categories.
\newblock {\em Adv. Math.}, 227(2):745--800, 2011.

\bibitem{BruVir:2007}
Alain Brugui{\`e}res and Alexis Virelizier.
\newblock Hopf monads.
\newblock {\em Adv. Math.}, 215(2):679--733, 2007.

\bibitem{BW:2003}
Tomasz Brzezinski and Robert Wisbauer.
\newblock {\em Corings and comodules}, volume 309 of {\em London Mathematical
  Society Lecture Note Series}.
\newblock Cambridge University Press, Cambridge, 2003.

\bibitem{Bur:1973}
{\'E}lisabeth Burroni.
\newblock Lois distributives mixtes.
\newblock {\em C. R. Acad. Sci. Paris S\'er. A-B}, 276:A897--A900, 1973.

\bibitem{Cae:2004}
Stefaan Caenepeel.
\newblock Galois corings from the descent theory point of view.
\newblock In {\em Galois theory, {H}opf algebras, and semiabelian categories},
  volume~43 of {\em Fields Inst. Commun.}, pages 163--186. Amer. Math. Soc.,
  Providence, RI, 2004.

\bibitem{CHR:1965}
Stephen~U. Chase, David~K. Harrison, and Alex Rosenberg.
\newblock Galois theory and {G}alois cohomology of commutative rings.
\newblock {\em Mem. Amer. Math. Soc. No.}, 52:15--33, 1965.

\bibitem{CR:1965}
Stephen~U. Chase and Alex Rosenberg.
\newblock Amitsur cohomology and the {B}rauer group.
\newblock {\em Mem. Amer. Math. Soc. No.}, 52:34--79, 1965.

\bibitem{CS:1969}
Stephen~U. Chase and Moss~E. Sweedler.
\newblock {\em Hopf algebras and {G}alois theory}.
\newblock Lecture Notes in Mathematics, Vol. 97. Springer-Verlag, Berlin, 1969.

\bibitem{Doi:1983}
Yukio Doi.
\newblock On the structure of relative {H}opf modules.
\newblock {\em Comm. Algebra}, 11(3):243--255, 1983.

\bibitem{Doi:1992}
Yukio Doi.
\newblock Unifying {H}opf modules.
\newblock {\em J. Algebra}, 153(2):373--385, 1992.

\bibitem{FreMac:1971}
Armin Frei and John~L. MacDonald.
\newblock Algebras, coalgebras and cotripleability.
\newblock {\em Arch. Math. (Basel)}, 22:1--6, 1971.

\bibitem{JK:2001}
George Janelidze and Gregory~M. Kelly.
\newblock A note on actions of a monoidal category.
\newblock {\em Theory Appl. Categ.}, 9:61--91, 2001/02.
\newblock CT2000 Conference (Como).

\bibitem{Joh:1975}
Peter~T. Johnstone.
\newblock Adjoint lifting theorems for categories of algebras.
\newblock {\em Bull. London Math. Soc.}, 7(3):294--297, 1975.

\bibitem{Joh:2002}
Peter~T. Johnstone.
\newblock {\em Sketches of an elephant: a topos theory compendium. {V}ol. 1},
  volume~43 of {\em Oxford Logic Guides}.
\newblock The Clarendon Press Oxford University Press, New York, 2002.

\bibitem{Kel:1974}
Gregory~M. Kelly.
\newblock Doctrinal adjunction.
\newblock In {\em Category Seminar (Sydney, 1972/1973)}, volume 420 of {\em
  Lecture Notes in Math.}, pages 257--280. Springer, Berlin, 1974.

\bibitem{KelStr:1974}
Gregory~M. Kelly and Ross Street.
\newblock Review of the elements of {$2$}-categories.
\newblock In {\em Category Seminar (Sydney, 1972/1973)}, volume 420 of {\em
  Lecture Notes in Math.}, pages 75--103. Springer, Berlin, 1974.

\bibitem{Kop:1995}
Markku Koppinen.
\newblock Variations on the smash product with applications to group-graded
  rings.
\newblock {\em J. Pure Appl. Algebra}, 104(1):61--80, 1995.

\bibitem{KT:1981}
Herbert~F. Kreimer and Mitsuhiro Takeuchi.
\newblock Hopf algebras and {G}alois extensions of an algebra.
\newblock {\em Indiana Univ. Math. J.}, 30(5):675--692, 1981.

\bibitem{Lei:2004}
Tom Leinster.
\newblock {\em Higher operads\textup, higher categories}, volume 298 of {\em
  London Math. Soc. Lecture Note Ser.}
\newblock Cambridge Univ. Press, Cambridge, 2004.

\bibitem{Lin:1969}
Fred E.~J. Linton.
\newblock Coequalizers in categories of algebras.
\newblock In {\em Sem. on {T}riples and {C}ategorical {H}omology {T}heory
  ({ETH}, {Z}\"urich, 1966/67)}, pages 75--90. Springer, Berlin, 1969.

\bibitem{Lop:2009}
Ignacio~L. L{\'o}pez~Franco.
\newblock Formal {H}opf algebra theory. {I}. {H}opf modules for pseudomonoids.
\newblock {\em J. Pure Appl. Algebra}, 213(6):1046--1063, 2009.

\bibitem{Lyu:1995}
Volodymyr Lyubashenko.
\newblock Modular transformations for tensor categories.
\newblock {\em J. Pure Appl. Algebra}, 98(3):279--327, 1995.

\bibitem{Mac:1998}
Saunders Mac~Lane.
\newblock {\em Categories for the working mathematician}, volume~5 of {\em
  Grad. Texts in Math.}
\newblock Springer, New York, 2nd edition, 1998.

\bibitem{MS:2004}
John MacDonald and Manuela Sobral.
\newblock Aspects of monads.
\newblock In {\em Categorical foundations}, volume~97 of {\em Encyclopedia
  Math. Appl.}, pages 213--268. Cambridge Univ. Press, Cambridge, 2004.

\bibitem{McC:2002}
Paddy McCrudden.
\newblock Opmonoidal monads.
\newblock {\em Theory Appl. Categ.}, 10:No. 19, 469--485, 2002.

\bibitem{MesWis:2011}
Bachuki Mesablishvili and Robert Wisbauer.
\newblock Bimonads and {H}opf monads on categories.
\newblock {\em J. K-Theory}, 7(2):349--388, 2011.

\bibitem{Moe:2002}
Ieke Moerdijk.
\newblock Monads on tensor categories.
\newblock {\em J. Pure Appl. Algebra}, 168(2-3):189--208, 2002.
\newblock Category theory 1999 (Coimbra).

\bibitem{Mon:1993}
Susan Montgomery.
\newblock {\em Hopf algebras and their actions on rings}, volume~82 of {\em
  CBMS Regional Conference Series in Mathematics}.
\newblock Published for the Conference Board of the Mathematical Sciences,
  Washington, DC, 1993.

\bibitem{Mon:2009}
Susan Montgomery.
\newblock Hopf {G}alois theory: a survey.
\newblock In {\em New topological contexts for {G}alois theory and algebraic
  geometry ({BIRS} 2008)}, volume~16 of {\em Geom. Topol. Monogr.}, pages
  367--400. Geom. Topol. Publ., Coventry, 2009.

\bibitem{Sch:2004}
Peter Schauenburg.
\newblock Hopf-{G}alois and bi-{G}alois extensions.
\newblock In {\em Galois theory, {H}opf algebras, and semiabelian categories},
  volume~43 of {\em Fields Inst. Commun.}, pages 469--515. Amer. Math. Soc.,
  Providence, RI, 2004.

\bibitem{SS:2005}
Peter Schauenburg and Hans-J{\"u}rgen Schneider.
\newblock On generalized {H}opf {G}alois extensions.
\newblock {\em J. Pure Appl. Algebra}, 202(1-3):168--194, 2005.

\bibitem{Sch:1990}
Hans-J{\"u}rgen Schneider.
\newblock Principal homogeneous spaces for arbitrary {H}opf algebras.
\newblock {\em Israel J. Math.}, 72(1-2):167--195, 1990.

\bibitem{Ser:1979}
Jean-Pierre Serre.
\newblock {\em Local fields}, volume~67 of {\em Graduate Texts in Mathematics}.
\newblock Springer-Verlag, New York, 1979.
\newblock Translated from the French by Marvin Jay Greenberg.

\bibitem{Str:1972}
Ross Street.
\newblock The formal theory of monads.
\newblock {\em J. Pure Appl. Algebra}, 2(2):149--168, 1972.

\bibitem{Swe:69}
Moss~E. Sweedler.
\newblock {\em Hopf algebras}.
\newblock Math. Lecture Note Ser. W. A. Benjamin, Inc., New York, 1969.

\bibitem{Szl:2003}
Korn{\'e}l Szlach{\'a}nyi.
\newblock The monoidal {E}ilenberg-{M}oore construction and bialgebroids.
\newblock {\em J. Pure Appl. Algebra}, 182(2-3):287--315, 2003.

\bibitem{Tak:1979}
Mitsuhiro Takeuchi.
\newblock Relative {H}opf modules---equivalences and freeness criteria.
\newblock {\em J. Algebra}, 60(2):452--471, 1979.

\bibitem{Tak:1999}
Mitsuhiro Takeuchi.
\newblock Finite {H}opf algebras in braided tensor categories.
\newblock {\em J. Pure Appl. Algebra}, 138(1):59--82, 1999.

\bibitem{Wol:1973}
Harvey Wolff.
\newblock {$V$}-localizations and {$V$}-monads.
\newblock {\em J. Algebra}, 24:405--438, 1973.

\end{thebibliography}

\end{document}